\newcommand{\Rmnum}[1]{\expandafter\@slowromancap\romannumeral #1@}
\font\twelmsb=msbm10 at 12pt 
\font\sevenmsb=msbm10 at 7pt \font\fivemsb=msbm10 at 5pt
\newtheorem{theorem}{Theorem}[section]
\newtheorem{example}[theorem]{Example}
\newtheorem{corollary}[theorem]{Corollary}
\newtheorem{definition}[theorem]{Definition}
\newtheorem{proposition}[theorem]{Proposition}
\newtheorem{lemma}[theorem]{Lemma}
\newtheorem{remark}[theorem]{Remark}
\def\N{\mathbb{N}}
\def\Q{\mathbb{Q}}
\def\Z{\mathbb{Z}}
\def\C{\mathbb{C}}
\def\F{\mathcal{F} }
\def\Q{\mathbb{Q} }
\def\uu{\mathbf{u} }
\def\xx{\mathbf{x} }
\def\g{\mathfrak{g} }
\def\a{\alpha }
\def\hg{\widehat{\mathfrak g}}
\def\CC{\mathcal{C}}
\def\A{\mathcal{A} }
\def\Si{\Sigma}
\def\M{\mathcal{M}}
\numberwithin{equation}{section}
\begin{document}
\preprint{}

\title{Cluster algebra structure on the finite dimensional representations of $U_q(\widehat{A_{3}})$ for $\ell=2$}

\author{Yan-Min Yang, Zhu-Jun Zheng}
\thanks {}\email{zhengzj@scut.edu.cn}
\affiliation{Department of Mathematics, South China University of
 Technology, Guangzhou 510641, P. R. China.}

\begin{abstract}
In this paper, we prove one case of the conjecture given by Hernandez and Leclerc\cite{HL0}. Specifically, we give a cluster algebra structure on the Grothendieck ring of a full subcategory of the finite dimensional representations of a simply-laced quantum affine algebra $U_q(\widehat{\g})$. In the procedure, we also give a specific description of compatible subsets of type $E_{6}$. As a conclusion, for every exchange relation of cluster algebra there exists a exact sequence of the full subcategory corresponding to it.
\end{abstract}

\keywords{ Quantum affine algebra; Cluster algebra; Monoidal categorification }

\maketitle

\section{Introduction}
Cluster algebras were introduced by Fomin and Zelevinsky  \cite{FZ0} in 2002 in the context of total positivity and canonical bases in Lie theory. Since then cluster algebras have been shown to be related to various fields in mathematics, including those indicated in FIGURE 1.

There are two different categorical approaches for cluster algebras. One is an additive category (cluster category \cite{BMRRT}) which is defined as an orbit category of the derived category of finite dimensional representations of a quiver $\Gamma$ under an action of an automorphism. Another one is a monoidal category \cite{HL0} which is a full subcategory of finite dimensional representations of a simply-laced quantum affine algebra. The Grothendieck ring of this monoidal category is a cluster algebra. We use a table (TABLE 1) to illustrate the relationships between the two approaches.

\begin{figure}[H]
\begin{center}
\includegraphics {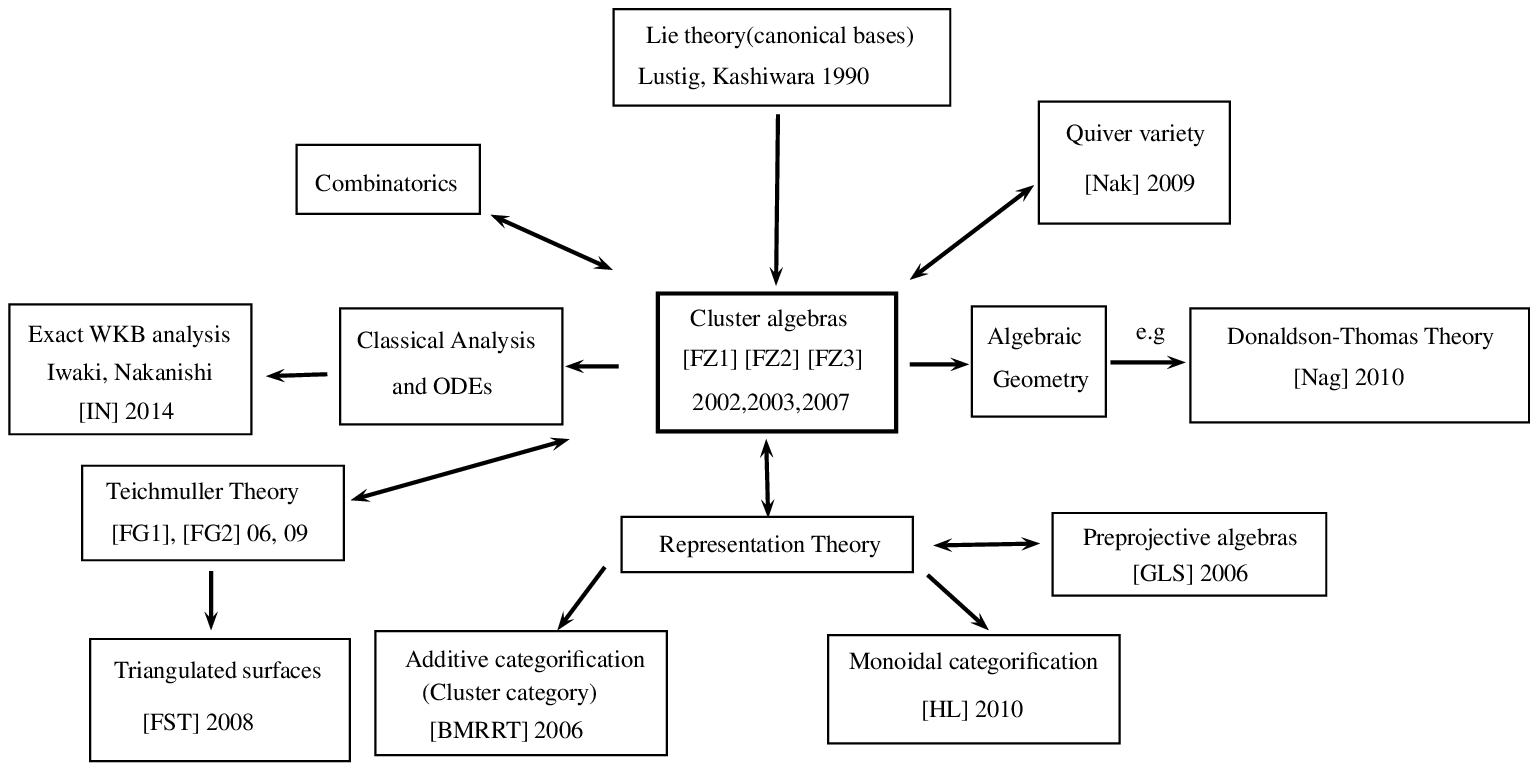}
\caption{relative fields of cluster algebras}
\end{center}
\end{figure}

\begin{table}
\begin{center}
    \begin{tabular}{||c|c|c||}
      \hline
      Cluster algebra & Additive categorification & Monoidal categorification  \\
      \hline
+ & $open$ &	$\oplus$ \\

$\times$ &$\oplus$&$\otimes$	\\

cluster variables &rigid indecomposable objects&	real prime simple objects	\\

clusters &cluster tilting objects&	real simple sets	\\

exchange relations &exchange triangles 	&$0 \rightarrow S \rightarrow M_{i}\otimes M_{i}^{*} \rightarrow S^{'} \rightarrow 0$	\\

dual canonical bases &	$open$ &simple objects	\\

$open$ & $open$	&prime simple objects	\\
\hline
 \end{tabular}
  {Table I}
  \end{center}
\end{table}

In this paper, we focus on the second categorification. Hernandez and Leclerc \cite{HL0} have introduced certain monoidal subcategories $\CC_{\ell}$ of the category $\CC$ of finite dimensional representations of a simply-laced quantum affine algebras, and they conjectured that each $\CC_{\ell}$ is a monoidal categorification of a cluster algebra. In specific, the Grothendieck ring $R_\ell$ of $\CC_\ell$ is isomorphic to a cluster algebra $\A$, and the cluster variables of $\A$ are the classes of all the real prime simple objects of $\CC_\ell$ and the cluster monomials of $\A$ are the classes of all the real simple objects of $\CC_\ell$.
Depending on representation theory, Hernandez and Leclerc  \cite{HL0,HL1} proved this conjecture for $\CC_{1}$ of type $A_{n}$ and $D_{n}$. Almost at the same time, Nakajima \cite{Nak0} using the theory of perverse sheaves and quiver varieties proved the conjecture for $\CC_{1}$ of type $ADE$. And then, Qin \cite{Qin} generalized Nakajima's approach and obtained monoidal categorifications of the cluster algebras associated to any acyclic quiver.

As an application, Hernandez and Leclerc \cite{HL2} described a cluster algebra algorithm for calculating $q$-characters of Kirillov-Reshetikhin representations for any untwisted quantum affine algebra. For simply-laced quantum affine algebra, their formula extends Nakajima's one for $q$-characters of standard representations.

We intend to prove the conjecture \cite{HL0} for $\CC_{2}$ of type $A_{3}$. In the case $\ell=1$, for every simply laced $\g$ the cluster type of Grothendieck ring $R_1$ of $\CC_1$ coincide with the root system of $\g$. But this does not work for $\ell\geq2$. In our case, the cluster type of the ring $R_2$ is of type $E_6$.
In order to write out all cluster monomials, we need to judge the compatibility of any two cluster variables. But the initial quiver $\Gamma_2$ defined in the conjecture is not good enough. So we set a mutation equivalent quiver $\Gamma$ as initial quiver which is an bipartite quiver with underlying graph $E_6$. Two cluster variables are compatible if and only if the corresponding two almost positive roots are compatible, and we give a specific description of compatible subsets of $E_6$ in Appendix~\ref{compatible}. Under the fact that cluster algebras $\A_\Gamma$ and $\A_{\Gamma_2}$ are isomorphic, the conjecture is equivalent to that $\CC_2$ is a monoidal categorification of $\A_\Gamma$. We also denote the map from $\A_\Gamma$ to $R$ by $\iota$. For the image of all cluster variables of $\iota$, we give explicit formulas for the truncated $q$-characters. Then the primary follows. Let $S_1,\  \ldots,\ S_N$ be simple objects of $\CC$, Hernandez \cite{H} proved that the tensor product $S_1\otimes \ldots \otimes S_N$ is simple if and only if $S_i\otimes S_j$ is simple for $i<j$. Thus, we only need to consider any two cluster variables in a cluster.

 If the conjecture is true, we have some conclusions. Since there are $45$ cluster variables in the cluster algebra of type $E_6$ with $3$ frozen variables, the category $\CC_2$ has $45$ real prime simple objects. And the class in $R_2$ of any simple object of $\CC_2$ is a cluster monomial. Besides, for every exchange relation of cluster algebra $\A_\Gamma$, there exists an exact sequence of $\CC_2$ corresponding to it. The set of such exact sequences includes the T-systems.

As shown in Table 1 \cite{HL0}, the cluster algebras $\A_\ell$ defined in the conjecture  \cite{HL0} are of finite type if and only if $\ell=1$ or $n\ell=4,\ 6,\ 8$, where $A_n$ is the type of $\g$. In this paper, we consider the case of $A_3$ for $\ell=2$. The remaining cases can be proved by same way. When cluster algebra $\A_\ell$ is of infinite type, $\A_\ell$ has infinitely many cluster variables and there exists simple objects of $\CC_\ell$ which are not real, our method does not work.

The paper is organized as follows. In Section ~\ref{s2}, we review background on cluster algebras and finite dimensional
representations of quantum affine algebras and give some useful propositions. In Section ~\ref{s3}, we state the main theorem,
Theorem~\ref{thm} firstly. And then we give an equivalent Proposition~\ref{equithm} and prove the equivalence between
Theorem~\ref{thm} and Proposition~\ref{equithm}. In Section ~\ref{s4}, we prove Proposition~\ref{equithm}. We give a polynomial
ring structure on cluster algebra $\A_\Gamma$, the explicit formulas of cluster variables are given in Appendix A.
And then, we give explicit formulas for the truncated $q$-characters of all real prime objects of $\CC_2$. The
compatible subsets of $E_6$ is given in Appendix B.

The explicit formulas of cluster variables and the description of compatible roots are mechanical and lengthy, we put them in Appendices for easily reading.

\section{Definitions and Notations}\label{s2}
In this section, we give a simplified introduction to the theory of cluster algebras \cite{FZ0,FZ1,FZ2} and finite dimensional representations of quantum affine algebras \cite{CP0,CH}.
\subsection{Cluster algebras}\label{s2.1}

Given an $m \times n$ integer matrix $\widetilde{B} = (b_{ij})$,
where the principle part $B$ of  $\widetilde{B}$ is an $n \times n$ antisymmetric square matrix. For $k\in [1,n]$, define
$\mu_k(\widetilde{B}) = (b_{ij}')$ by
$$ b'_{ij} =\left\{\begin{array}{ll}
-b_{ij} & \text{if $i=k$ or $j=k$,} \\[.05in]
b_{ij} + \textup{sgn}(b_{ik}) \ [b_{ik}b_{kj}]_+
 & \text{otherwise,} \end{array}   \right.
$$
where $[x]_+=\textup{max}(x,0)$.
 $\mu_k(\widetilde{B})$ is called the mutation of the matrix $\widetilde{B}$
in direction $k$.

To an $m \times n$ antisymmetric integer matrix there corresponds a quiver $\Gamma$ with vertex set
$\{1,\ldots,m\}$ and with $b_{ij}$ arrows from $i$ to $j$ if $b_{ij}>0$. In particular, $\Gamma$ has no loops nor 2-cycles. Obviously, this corre\-spondence yields a bijection. Under this bijection, given a $\mu_k(\widetilde{B})$ for some $k \in [1,n]$, then there exists an unique quiver corresponding to it, we denote the quiver by $\mu_k(\Gamma)$.

A initial seed is a pair $(\xx,\widetilde{B})$, where  $\xx =(x_1,\ldots,x_m)$ is a free generating set of the field $\F = \Q(x_1,\ldots,x_m)$. For $k \in [1,n]$,  define
\begin{equation}\label{1}
 x'_k x_k=\prod_{b_{ik}> 0} x_i^{b_{ik}} + \prod_{b_{ik}< 0} x_i^{-b_{ik}}.
\end{equation}
The equation~\ref{1} is called a exchange relation.  $\mu_k(\xx)=\{\xx-x_k\}\cup x'_k$. Then the pair
$(\mu_k(\xx),\mu_k(\widetilde{B}))$ is called the mutation of $(\xx,\widetilde{B})$ in direction $k$.
In particular, $\mu_{k}(\mu_k(\xx),\mu_k(\widetilde{B}))=(\xx,\widetilde{B})$.
The $m$-tuple in each seed is called a cluster, and the elements of each cluster are called cluster variables.
Specially, the last $m-n$ elements in each cluster are called frozen variables.

The cluster algebra
$\mathcal{A}(\widetilde{B})$  of rank $n$ is defined as
the subring of $\F$ generated by all the cluster variables of
all seeds.

\begin{remark}
\begin{itemize}
  \item[{\rm (i)}] The initial seed also can be defined by pair $(\xx,\Gamma)$, where $\Gamma$ is the quiver corresponding to $\widetilde{B}$. For $k \in [1,n]$, the mutation of $(\xx,\Gamma)$ in direction $k$ is defined similarly and the corresponding cluster algebra can be written as $\mathcal{A}(\Gamma)$.
  \item[{\rm (ii)}] Two quivers $\Gamma$ and $\Gamma'$ are called mutation equivalent if each of them can be obtained from the other by a sequence of mutations. Denote by $\Gamma\simeq \Gamma'$.
\end{itemize}
\end{remark}

A cluster monomial is a monomial in the cluster variables
which all belong to a same cluster.

Two cluster variables $x$ and $x'$ are said to be compatible if they belong to a same cluster. Otherwise there are some exchange relation such that $xx'=m_1+m_2$ for some cluster monomials $m_1$ and $m_2$.

\begin{theorem}{\cite{FZ0,FZ1}}\label{FZ01}
\begin{itemize}
  \item[{\rm (i)}]  Laurent phenomenon ---  All denominators of all cluster variables are monomials.
In other words, the cluster variables are Laurent polynomials.
  \item[{\rm (ii)}] Classification of cluster algebras --- A cluster algebra is of finite type, $i.e.$ with finitely many different cluster variables if and only if there exists a seed $(\mathbf{x^{'}}, \widetilde{B}^{'})$ such that the quiver attached to the principal part of $\widetilde{B}^{'}$ is a orientation of a simply laced Dynkin diagram $\Gamma$(type $ADE$).

      In this case, $\Gamma$ is unique and there exists a bijection between the set of cluster variables (except for frozen ones) and the set of almost positive roots $\Phi_{\geq-1}$ (the positive roots together with the negatives of the simple roots) of $\Gamma$. If we denote the simple roots by $\alpha_{1},\ldots, \alpha_{n} $, then the bijection is defined by $x_{i} \mapsto -\alpha_{i}, \forall i \in [1,n]$, $\frac{\Box}{\prod x_{i}^{d_{i}}} \mapsto \sum d_{i}\alpha_{i}$ for some $i \in [1,n], d_{i}\in \Z_{\geq0}$.
\end{itemize}
\end{theorem}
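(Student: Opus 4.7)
The plan is to prove the two parts separately, since they require genuinely different techniques.

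For the Laurent phenomenon (i), I would induct on the minimum number of mutation steps needed to produce a cluster variable $x$ starting from the initial cluster $\mathbf{x}$. The base case is immediate from the definition. The subtlety is that the exchange relation $x'_k x_k = M_1 + M_2$ only exhibits $x'_k$ as a Laurent polynomial in the cluster currently containing $x_k$, not a priori in the initial variables. The standard tool to bridge this gap is the Caterpillar Lemma of Fomin and Zelevinsky: one analyses the two alternating mutation paths $\mu_i\mu_j\mu_i\cdots(\mathbf{x},\widetilde{B})$ and $\mu_j\mu_i\mu_j\cdots(\mathbf{x},\widetilde{B})$, shows by direct substitution that denominators cancel at each step, and then combines this with the tree structure of the exchange graph. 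An equivalent route introduces the upper cluster algebra $\bigcap_{(\mathbf{y},\widetilde{C})}\mathbb{Z}[y_1^{\pm 1},\ldots,y_m^{\pm 1}]$, taken over all seeds, and proves that it contains every cluster variable.

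For the classification (ii), I would treat the two directions separately. The \emph{easy} direction (a Dynkin diagram gives finite type) proceeds by explicit construction: using (i), each non-initial cluster variable $x$ has a well-defined denominator vector $(d_1,\ldots,d_n)\in\mathbb{Z}_{\geq 0}^n$, which yields a map $x\mapsto \sum d_i\alpha_i$ into the root lattice. One shows this map, extended by $x_i\mapsto -\alpha_i$, lands in $\Phi_{\geq-1}$ and is injective; since $|\Phi_{\geq-1}|$ is finite, so is the set of cluster variables. The \emph{hard} direction (finite type forces a Dynkin diagram) requires the combinatorial classification of the so-called $2$-finite antisymmetric integer matrices, i.e.\ those whose mutation class is finite. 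One reduces to the rank-$2$ subdiagrams, where $b_{ij}b_{ji}\leq 3$ is forced, then propagates this constraint through arbitrary mutations and excludes every quiver outside the ADE list.

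The main obstacle is this hard direction of (ii). Ruling out non-Dynkin quivers demands a delicate case analysis on $3$- and $4$-vertex subquivers, controlled by a positivity condition on an associated symmetric bilinear form, together with careful bookkeeping to show the exclusion is preserved under mutation. By comparison, once the Caterpillar Lemma is in hand, the Laurent phenomenon and the bijection with $\Phi_{\geq-1}$ become formal consequences of tracking denominators under exchange relations, so my effort would concentrate on the classification of $2$-finite matrices.
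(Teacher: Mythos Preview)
This theorem is not proved in the paper at all: it is stated with a citation to \cite{FZ0,FZ1} as background in the preliminaries section, and the paper provides no argument of its own. So there is no ``paper's proof'' to compare against.

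That said, your sketch is a faithful outline of how the results are actually established in the cited Fomin--Zelevinsky papers. The Caterpillar Lemma for part~(i), the denominator-vector map into $\Phi_{\geq -1}$ for the easy direction of~(ii), and the $2$-finiteness classification for the hard direction are exactly the ingredients used there. Your assessment of where the real difficulty lies (excluding non-Dynkin quivers via the analysis of $2$-finite matrices) is also accurate. If anything, you could note that the bijection with almost positive roots in~(ii) requires not just injectivity of the denominator-vector map but also surjectivity, which in \cite{FZ1} is obtained by relating mutations to the piecewise-linear $\tau_\pm$ involutions on the root lattice; but this is a refinement, not a gap.
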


\begin{definition}Definition2.1\cite{HL0}\label{defmoncat}
Let $\mathcal{A}$ be a cluster algebra and let $\mathcal{M}$ be an abelian monoidal category.
We say that $\mathcal{M}$ is a monoidal categorification of $\mathcal{A}$
if the Grothendieck ring of $\mathcal{M}$ is isomorphic to $\mathcal{A}$,
and if
\begin{itemize}
 \item[{\rm (i)}]
the cluster monomials of $\mathcal{A}$ are the classes of all the
real simple objects of $\mathcal{M}$;
\item[{\rm (ii)}]
the cluster variables of $\mathcal{A}$ (including the frozen ones)
are the classes of all the real prime simple objects of $\mathcal{M}$.
\end{itemize}
\end{definition}
A simple object $S$ of $\mathcal{M}$ is real if $S\otimes S$ is simple.
A non-trivial simple object $S$ of $\mathcal{M}$ is prime
if there exists no non-trivial factorization $S\cong S_1 \otimes S_2$.

If a cluster algebra has a monoidal categorification, then the famous positivity conjecture is true.

\subsection{Finite dimensional representations of quantum affine algebras}\label{s2.2}

Let $\g$ be a simple Lie algebra of type $ADE$. We denote by $I$ the vertex set of its Dynkin diagram,
by $A=[a_{ij}]_{i,j\in I}$ the Cartan matrix of $\g$, and by $\a_i\ (i\in I)$ the simple roots of $\g$.
Let $U_q(\hg)$ denote the corresponding quantum affine algebra,
with parameter $q\in\C^*$ not a root of unity.

Let $\CC$ be the category of finite-dimensional $U_q(\hg)$-representations
(of type 1). Since $U_q(\hg)$ is a Hopf algebra, $\CC$ is an abelian monoidal category.

\begin{theorem}\cite{CP0,CP2}\label{propertysim}
\begin{itemize}
 \item[{\rm (i)}]
Every simple objects of $\CC$ is a highest weight representation.
 \item[{\rm (ii)}]
Let $V$ be a simple objects of $\CC$. Then, $V$ can be parameterized by $I-$tuples $\pi_{V}=(\pi_{i,V}(u);i\in I)$ of polynomials $\pi_{i,V}(u)\in \mathbb{C}[u]$ with constant term $1$. The $I-$tuples $\pi_{V}$ are called the Drinfeld polynomials of $V$.
 \item[{\rm (iii)}]
 If $V_{1}$, $V_{2}$ and $V$ are simple objects of $\CC$, and $\pi_{i,V}(u)=\pi_{i,V_{1}(u)}\pi_{i,V_{2}(u)} (i\in I)$. Then, $V$ is a subquotient of $V_{1}\otimes V_{2}$.
\end{itemize}
\end{theorem}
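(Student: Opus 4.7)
The plan is to establish the three parts in the standard Chari--Pressley order, passing through the Drinfeld realization of $U_q(\hg)$. Recall that this realization supplies, besides the classical Chevalley generators, currents $x_{i,k}^\pm$ and $\phi_{i,k}^\pm$ for $i\in I$, $k\in\Z$, with a commutative ``Cartan'' subalgebra generated by the $\phi_{i,k}^\pm$. I will call a vector $v\in V$ a highest weight vector if $x_{i,k}^+ v=0$ for all $i,k$ and $v$ is a common eigenvector of all $\phi_{i,k}^\pm$; call $V$ a highest weight representation if it is generated by such a $v$.

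For part (i), I would start from a nonzero finite-dimensional simple $V$ in $\CC$ and restrict to the horizontal $U_q(\g)$-subalgebra. Classical $U_q(\g)$-theory gives a decomposition of $V$ into finite-dimensional weight spaces, so there exists a maximal classical weight $\lambda$; every vector in $V_\lambda$ is annihilated by the classical positive generators $x_i^+=x_{i,0}^+$. On $V_\lambda$ the commuting operators $\phi_{i,k}^\pm$ act and, being a finite family of pairwise commuting endomorphisms of a finite-dimensional space, admit a common eigenvector $v$. A weight argument then forces $x_{i,k}^+ v=0$ for every $k$: $x_{i,k}^+$ raises the classical $\a_i$-weight, so $x_{i,k}^+v$ lies above the maximum and vanishes. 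Since $V$ is simple, it is generated by $v$, proving (i).

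For part (ii), assemble the eigenvalues of the Cartan currents into formal series $\phi_i^\pm(u)=\sum_{k\geq 0}\phi_{i,\pm k}^\pm u^{\pm k}$ acting on the highest weight vector. The main task is to show that each pair $(\phi_i^+(u),\phi_i^-(u))$ is the Laurent expansion at $0$ and $\infty$ of a single rational function of the form $q_i^{\deg\pi_i}\,\pi_i(q_i^{-1}u)/\pi_i(q_i u)$ for some $\pi_i(u)\in\C[u]$ with $\pi_i(0)=1$. This is the heart of the theorem and the expected main obstacle: one reduces to rank one by projecting onto each $U_q(\widehat{sl_2})^{(i)}$ subalgebra generated by the $i$-th strand, and then uses the classification of finite-dimensional simples of $U_q(\widehat{sl_2})$ (evaluation modules at $q$-strings, equivalently Drinfeld polynomials). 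Finite-dimensionality of $V$ forces the rank-one restriction to decompose into a finite sum of such evaluation modules, giving the desired polynomial form. Conversely, given any such tuple of polynomials, the standard Verma-type construction followed by taking the simple quotient produces a simple $V$ realizing it; uniqueness follows from (i) and Schur's lemma.

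For part (iii), use the Hopf structure: under the coproduct of $U_q(\hg)$, if $v_1,v_2$ are highest weight vectors of $V_1,V_2$ with Drinfeld polynomials $\pi_{i,V_1},\pi_{i,V_2}$, then $v_1\otimes v_2$ is a highest weight vector of $V_1\otimes V_2$; one checks by direct computation on the triangular decomposition of $\Delta(x_{i,k}^+)$ and $\Delta(\phi_{i,k}^\pm)$ that its eigenvalues correspond to the product $\pi_{i,V_1}\pi_{i,V_2}$. The submodule $W=U_q(\hg)\cdot(v_1\otimes v_2)$ is therefore a highest weight module with these Drinfeld polynomials, and its unique simple quotient must be the simple $V$ with $\pi_{i,V}=\pi_{i,V_1}\pi_{i,V_2}$, which exhibits $V$ as a subquotient of $V_1\otimes V_2$. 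The main work, as noted, sits in part (ii); parts (i) and (iii) are conceptually routine once the Drinfeld realization and its coproduct formulas are in hand.
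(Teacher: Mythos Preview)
The paper does not supply its own proof of this theorem: it is stated with a citation to Chari--Pressley and used as background. There is therefore nothing in the paper to compare your argument against beyond the references themselves.

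Your sketch follows the standard Chari--Pressley line faithfully. Two small points are worth tightening. In part (i), the $\phi_{i,k}^\pm$ are not a finite family; the correct statement is that they are a commuting family of operators on the finite-dimensional space $V_\lambda$ over $\C$, hence admit a common eigenvector. In part (iii), the phrase ``direct computation on the triangular decomposition of $\Delta(x_{i,k}^+)$'' hides real work: the Drinfeld coproduct is not known in closed form, and what one actually uses is that $\Delta(x_{i,k}^+)$ lies in $x_{i,k}^+\otimes K_i + U_q(\hat{\mathfrak b}_+)\otimes U_q(\hat{\mathfrak n}_+)$ (and the analogous triangular estimate for $\Delta(\phi_{i,k}^\pm)$), which is established in the cited Chari--Pressley papers. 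With those caveats your outline is correct and matches the literature you would be citing.
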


For each $i\in I$ and $a \in \C^*$, a fundamental representation $V_{i,a}$ of $U_q(\hg)$ is the simple object with Drinfeld polynomials $ \pi_{j,V_{i,a}}(u) =\left\{\begin{array}{ll}
1-au & \text{if $j=i$ ,} \\[.05in]
1 & \text{if $j\neq i.$} \end{array}   \right.$

For each $i\in I$, $a \in \C^*$ and $k\in \mathbb{N}^{*}$, a Kirillov-Reshetikhin representation $W_{k,a}^{(i)}$ of $U_q(\hg)$ is the simple object with Drinfeld polynomials $ \pi_{j,W_{k,a}^{(i)}}(u) =\left\{\begin{array}{ll}
(1-au)(1-aq^{2}u)\ldots(1-aq^{2k-2}u) & \text{if $j=i$ ,} \\[.05in]
1 & \text{if $j\neq i.$} \end{array}   \right.$

In particular, $W_{1,a}^{(i)}=V_{i,a}$. And $W_{0,a}^{(i)}$ is the trivial representation for every $i$ and $a$.

The $T$-system is the following system of equations:
$$[W^{(i)}_{k,a}][W^{(i)}_{k,aq^2}] = [W^{(i)}_{k+1,a}][W^{(i)}_{k-1,aq^2}]
+ \prod_{j:a_{i,j}=-1} [W^{(j)}_{k,aq}].$$

Let $R$ denote the Grothendieck ring of $\CC$, then it was proved by Frenkel and Reshetikhin that $R$ is a commutative ring which is isomorphic to $\mathbb{Z}\big[[V_{i,a}]\big]_{i\in I,a\in \mathbb{C}^{*}}$.

Since the Dynkin diagram of $\g$ is bipartite, we denote the two parts of $I$ by $I_{0}$ and $I_{1}$.

For $i\in I$, set $ \xi_{i} =\left\{\begin{array}{ll}
0 & \text{if $i\in I_{0}$ ,} \\[.05in]
1 & \text{if $i\in I_{1}.$} \end{array}   \right.$ \quad \quad $\varepsilon_{i}=(-1)^{\xi_{i}}$.

For $\ell\in \Z_{\geq0}$, we define a full subcategory $\CC_{\ell}$ of $\CC$ as follows.
 For any $V$ of $\CC_{\ell}$, the roots of the Drinfeld polynomials of
every composition factor of $V$ belong to $\{q^{-2k-\xi_i}\mid 0\le k \le \ell,\ i\in I \}$.

Denote the Grothendieck ring of $\CC_{\mathbb{\ell}}$ by $R_{\mathbb{\ell}}$, then $R_{\mathbb{\ell}}=\mathbb{Z}\big[[V_{i,q^{2k+\xi_{i}}}]_{i\in I,0\leq k\leq \ell}\big]$.

It is shown  \cite{HL0} that the description of the simple objects of $\CC$ can be reduced to the description of
the simple objects of $\CC_\ell$.

\subsection{$q$--characters\cite{FR,HL0}}\label{s2.3}

To study the finite dimensional representations of $U_q(\hg)$ Frenkel and Reshetikhin \cite{FR} introduced $q$-characters which encodes the (pseudo)-eigenvalues of some commuting elements $\phi_{i,\pm m}^{\pm}(m\geq0)$ of the Cartan subalgebra $U_q(\hat{\mathfrak{h}})\subset U_q(\hg)$:
for $V$ a object of $\CC$, we have
$V=\bigoplus_{\gamma\in\C^{I\times\Z}}V_\gamma$,
where for $\gamma=(\gamma_{i,\pm m}^{(\pm)})_{i\in I,m\geq0}$, $V_\gamma$ is a simultaneous generalized eigenspace ($l$-weight space):
\[
V_{\gamma}=\{x\in V\mid\exists p\in\N,\forall i\in I,\forall m\geq 0,(\phi_{i,\pm m}^{(\pm)}-\gamma_{i,\pm m}^{(\pm)})^p\cdot x=0\}.
\]
The morphism of $q$-characters is an injective ring homomorphism:
\[
\begin{array}{rcl}
\chi_q:R &\rightarrow&\mathcal{Y}=\Z[Y_{i,a}^{\pm}]_{i\in I,a\in\C^*}\\
V&\mapsto & \chi_q(V)=\underset{\gamma\in \C^{I\times \Z}}{\sum}\text{dim}(V_{\gamma})m_{\gamma}
\end{array}
\]
where $m_\gamma\in \mathcal{Y}$ depends of $\gamma$.

For $i\in I, a\in \mathbb{C}^{*}$, define $A_{i,a} = Y_{i,aq}Y_{i,aq^{-1}}\prod_{j\not = i}Y_{j,a}^{a_{ij}}$.
Denote by $\mathcal{M}$ the set of Laurent monomials in $Y_{i,a}$.
Define a partial order on $\M$:
$m \le m' \Longleftrightarrow  \frac{m'}{m}$ is a monomial in the $A_{i,a}$.

A dominant monomial of $\mathcal{M}$ is a  monomial in $Y_{i,a}$ with positive powers only. We will denote the set of dominant monomials by $\mathcal{M}_+$.

Let $V$ be a a simple object of $\CC$ with Drinfeld polynomials:
$ \pi_{i,V}(u) = \prod_{k=1}^{n_i}(1-ua_k^{(i)}),\quad i\in I,$
then the highest weight monomial of $\chi_q(V)$ is
$m_V=\prod_{i\in I}\prod_{k=1}^{n_i}Y_{i,a_k^{(i)}}.$

Conversely, for any dominant monomial $m\in \mathcal{M}_+$,
there exist a unique $I-$tuples of Drinfeld polynomials corre\-sponding to it. We denote the simple object with the highest weight monomial $m$ by $L(m)$.

Frenkel and Mukhin \cite{FM} proved that
\[
 \chi_q(L(m)) = m(1 + \sum_p M_p),
\]
where each $M_p$ is a monomial in variables $A_{i,a}^{-1}$.

A $q$-segment is defined as the string of complex numbers:
$\Si(k,a)=\{a,\ aq^2,\ \cdots, aq^{2k-2}\}.$

We make a little change to the description of the following definition given by Chari and Pressley firstly.
For two $q$-segments $\Si(k_i,a_i)$ and $\Si(k_j,a_j)$, if $\Si(k_i,a_i) \bigcap \Si(k_j,a_j)=\emptyset$, we say that the two $q$-segments are in special position and in general position otherwise; if $\Si(k_i,a_i) \bigcap \Si(k_j,a_j)\in \{\Si(k_i,a_i), \Si(k_j,a_j)\}$, we say the two $q$-segments are in general position.
It is easy to check that every finite multi-set $\{b_{1}, \ldots, b_{s}\}$ of elements of $\mathbb{C}^{*}$ can be written uniquely as a union of segments $\Si(k_i,a_i)$ in such a way that every pair $(\Si(k_i,a_i),\,\Si(k_j,a_j))$ is in general position.

\begin{example}
Let $\g=s\ell_{2}$, then $I=\{1\}$. Drop the index $i\in I$, then
$\mathcal{Y} = \Z[Y_a^\pm ; a\in\C^*]$, $A_a = Y_{aq}Y_{aq^{-1}}$.

Since $W_{1,a}$ can be decomposed as a sum of two 1-dimensional common eigenspaces, $$\chi_q(W_{1,a})=Y_a+Y_{aq^2}^{-1}=Y_a(1+A_{aq}^{-1}).$$
From a equation of $T$--systems: $[W_{1,a}][W_{1,aq^2}] = [W_{2,a}][W_{0,aq^2}]+ 1=[W_{2,a}]+1$, we can get
$$\chi_q(W_{2,a})=Y_aY_{aq^2}+Y_aY_{aq^4}^{-1}+Y_{aq^2}^{-1}Y_{aq^4}^{-1}=Y_aY_{aq^2}(1+A_{aq^{3}}^{-1}(1+A_{aq}^{-1})).
$$
Recursively, for $\forall k\in\N$, set $m_{k,a}=Y_{a}Y_{aq^2}\cdots Y_{aq^{2k-2}}$, then
\[ \chi_q(W_{k,a})=m_{k,a}\left(1+A_{aq^{2k-1}}^{-1}\left(1+A_{aq^{2(k-1)-1}}^{-1}
\left(1+\cdots\left(1+A_{aq^3}^{-1}(1+A_{aq}^{-1})\right)\cdots \right)\right)\right).\]
\end{example}
The authors \cite{CP1} proved that the simple object $V$ of $\CC$ for $\g=s\ell_{2}$ with Drinfeld polynomials: $\pi_{V}(u)=\prod_{m=1}^s(1-ub_m)$
is isomorphic to $\bigotimes_i W_{k_i,a_i}$. Hence we can calculate $q$-characters for all simple objects.

\begin{proposition} Proposition 5.3\cite{HL0}
Let $V$ and $W$ be two objects of $\CC$. If $\chi_q(V)$ and
$\chi_q(W)$ have the same dominant monomials with the same multiplicities,
then $\chi_q(V)=\chi_q(W)$.
\end{proposition}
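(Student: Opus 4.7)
The plan is to show that the multiset of dominant monomials of $\chi_q(V)$ already determines the class $[V]$ in the Grothendieck ring $R$; the statement then follows at once, since $\chi_q$ is a well-defined ring homomorphism. Decomposing in $R$ gives $[V] = \sum_{m\in \M_+} c_m\,[L(m)]$ and $[W] = \sum_{m\in \M_+} d_m\,[L(m)]$ with non-negative integer coefficients, only finitely many non-zero since $V$ and $W$ have finite length. Applying $\chi_q$ yields $\chi_q(V) = \sum_m c_m\,\chi_q(L(m))$ and similarly for $W$, so it is enough to recover each $c_m$ from the dominant part of $\chi_q(V)$.

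The key ingredient is the Frenkel--Mukhin formula $\chi_q(L(m)) = m(1 + \sum_p M_p)$, in which every $M_p$ is a non-trivial monomial in the $A_{i,a}^{-1}$. Hence every monomial occurring in $\chi_q(L(m))$ is $\leq m$ in the partial order on $\M$, with $m$ itself appearing with coefficient exactly one. Writing $D(V)$ for the formal sum of dominant monomials of $\chi_q(V)$ counted with multiplicities, one has $D(V) = \sum_m c_m\, D(L(m))$. Pick a maximal element $m^*$ of the support of $D(V)$. Any $m$ with $c_m > 0$ is dominant and lies in $D(L(m)) \subseteq D(V)$, hence $m \leq m^*$ by maximality; conversely, if $m^*$ occurs in $D(L(m))$ then $m^* \leq m$. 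Together this forces $m = m^*$, so only $L(m^*)$ contributes to the coefficient of $m^*$ in $D(V)$, yielding $c_{m^*} = [D(V):m^*]$. The same reasoning applied to $W$ gives $d_{m^*} = [D(W):m^*]$, and the hypothesis $D(V) = D(W)$ forces $c_{m^*} = d_{m^*}$.

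To conclude, I would replace $\chi_q(V)$ by $\chi_q(V) - c_{m^*}\,\chi_q(L(m^*))$ and similarly for $W$; this removes all occurrences of $m^*$ from the dominant parts while preserving the hypothesis of agreement. Iterating on the strictly smaller remaining supports reconstructs every $c_m = d_m$, whence $[V] = [W]$ in $R$ and $\chi_q(V) = \chi_q(W)$. The only step demanding care is the well-foundedness of the induction, which is automatic because the support of $\{c_m\}$ is finite (the composition series of $V$ is finite). The real content of the argument is the Frenkel--Mukhin bound, which guarantees that a maximal dominant monomial of $\chi_q(V)$ can be produced only by the highest-weight term of a single $\chi_q(L(m))$.
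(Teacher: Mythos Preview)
The paper does not give its own proof of this proposition; it is simply quoted from Hernandez--Leclerc without argument. Your approach is the standard one and is essentially correct: decompose $[V]$ and $[W]$ into classes of simples in $R$, use the Frenkel--Mukhin property that every monomial of $\chi_q(L(m))$ lies below $m$ with $m$ itself appearing with coefficient one, and strip off the multiplicities $c_m$ by downward induction on maximal dominant monomials.

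One sentence needs repair. You write ``Any $m$ with $c_m>0$ \ldots\ hence $m\le m^*$ by maximality,'' but maximality of $m^*$ in the support of $D(V)$ does \emph{not} force every other element of the support to lie below it, since the order on $\M$ is only partial. What you actually need, and what does hold, is this: the terms contributing to the coefficient of $m^*$ in $D(V)=\sum_m c_m\,D(L(m))$ are those $m$ with $c_m>0$ and $m^*$ occurring in $\chi_q(L(m))$, hence $m^*\le m$; since such an $m$ also lies in the support of $D(V)$ (as you correctly observe) and $m^*$ is maximal there, one concludes $m=m^*$. With this rewording the induction goes through exactly as you describe, and the finiteness of the composition series guarantees termination.
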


\subsection{FM--algorithm\cite{FM,HL0}}\label{s2.4}

For any $m\in \mathcal{M}_{+}$, Frenkel and Mukhin define a polynomial $FM(m)$ which equal to $\chi_q(L(m))$ under certain conditions.

Given $i\in I$, we say that $m\in \mathcal{M}$ is $i$-dominant if every
variable $Y_{i,a}$ occurs in $m$ with non-negative
power, and in this case we write $m\in\M_{i,+}$.
\begin{itemize}
 \item[{\rm (i)}] $\forall m=\prod\limits_{k=1}^{n_i}Y_{i,a_k^{(i)}}\prod\limits_{j\in I\backslash\{i\}}\prod\limits_{k=1}^{n_j}Y_{j,a_k^{(j)}}^{b_k^{(j)}}\in \M_{i,+}$, define $\varphi_{i}(m)$ as follows.
  Let $\overline{m}=\prod\limits_{k=1}^{n_i}Y_{a_k^{(i)}}$. Then there is a unique simple object $L(\overline{m})$ of $U_{q}(\hat{s\ell_{2}})$. We can calculate $\chi_q(L(\overline{m}))=\overline{m}(1 + \sum_p \overline{M_{p}})$ exactly.
Then, set $\varphi_i(m)= m(1 + \sum_p M_p)$ where each $M_p$
is obtained from the corresponding $\overline{M_{p}}$ by replacing
each variable $A_a^{-1}$ by $A_{i,a}^{-1}$.
 \item[{\rm (ii)}] Suppose now that $m\in\M_+$, define  $D_m\subseteq\M$ as follows.
  For every $i\in I$, we have $\varphi_i(m)$. Label the monomials occurring in $\varphi_i(m)$
  by $m_0=m,\ m_1,\ \ldots,\ m_r$. For each $m_j\ (1\leq j\leq r)$, there exists some $i\in I$ such that $m_j\in \M_{i,+}$. Thus we have $\varphi_i(m_j)$. Same manner, label the monomials occurring in $\varphi_i(m_j)$ by some $m_k,\ m_{k+1},\ \ldots,\ m_{k+t}$. Repeat this procedure, and denote the set of all such monomials $m_i$ by $D_m$. Obviously, if $m_r\leq m_s$, then $s\leq r$. In particular, for every $m_i\in D_m$, we have $m_i\leq m$.
 \item[{\rm (iii)}] We define the sequences $(s(m_r))_{r\geq 0}$ and $(s_i(m_r))_{r\geq 0}\ (i\in I)$
as follows.
  The initial condition is $s(m_0)=1$ and $s_i(m_0)=0$ for all $i\in I$.
For $t\geq 1$, set\\
$s_i(m_t)=\sum_{\substack{r<t,\\ m_r\in\M_{i,+}}}(s(m_{r})-s_i(m_{r}))[\varphi_i(m_{r}) : m_t], (i\in I), \quad s(m_t) = \max\{s_i(m_t)\mid i\in I\},$
where $[\varphi_i(m_{r}) : m_t]$ denotes the coefficient of $m_t$ in $\varphi_i(m_r)$.
 \item[{\rm (iv)}] We then define
$ FM(m) = \sum_{r\geq 0}s(m_r) m_r.$
\end{itemize}

Let $m\in\M_+$. We say that $L(m)$ is minuscule (special) if $m$ is the only dominant monomial of $\chi_q(L(m))$.
It was proved that \cite{Nak1} all Kirillov-Reshetikhin modules are minuscule. Frenkel and Mukhin proved that
if $L(m)$ is minuscule then $\chi_q(L(m))=FM(m)$.

For any subset $J$ of $I$, we define $\varphi_J(m)$ as a generalization of $\varphi_i(m)$.
A monomial $m\in\M$ is called $J$-dominant if every variable $Y_{j,a}\ (j\in J,\ a\in\C^*)$ occurs in $m$ with non-negative power, and we write $m\in\M_{J,+}$.

For $m=\prod\limits_{j\in J}\prod\limits_{k=1}^{n_j}Y_{j,a_k^{(j)}}\prod\limits_{i\in I-J}\prod\limits_{k=1}^{n_i}Y_{i,a_k^{(i)}}^{b_k^{(i)}}\in \M$, we denote by $\overline{m}=\prod\limits_{j\in J}\prod\limits_{k=1}^{n_j}Y_{j,a_k^{(j)}}$.
If $m\in\M_{J,+}$, then $\overline{m}$ can be regarded as a dominant monomial for the
subalgebra $U_q(\hg_J)$ of $U_q(\hg)$.
Write $\chi_q(L(\overline{m}))=\overline{m}(1 + \sum_p \overline{M}_p)$,
then set $\varphi_J(m) = m(1 + \sum_p M_p)$ where each $M_p$
is obtained from the corresponding $\overline{M}_p$ by replacing
each variable $\overline{A_{j,a}^{-1}}$ by $A_{j,a}^{-1}$.

\begin{proposition}{Proposition 5.10\cite{HL0}}\label{qchadecom}
Let $V$ be an object of $\CC$ and let $J$ be an arbitrary subset of $I$.
Then there is a unique decomposition of $\chi_q(V)$ as a finite sum
\[
 \chi_q(V) = \sum_{\substack{m\in\M_{J,+}\\ \lambda_m\in \Z_{\geq0}}} \lambda_m \varphi_J(m).
\]
\end{proposition}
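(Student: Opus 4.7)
The plan is to prove existence and uniqueness by induction relative to the partial order $\leq_J$ on $\M$ defined by $m'\leq_J m$ iff $m/m'$ is a monomial in the $A_{j,a}$ with $j\in J$. The pivotal observation is that in the formula $\varphi_J(m)=m(1+\sum_p M_p)$ each $M_p$ is a nontrivial monomial in $A_{j,a}^{-1}$ ($j\in J$), so every monomial appearing in $\varphi_J(m)-m$ is strictly less than $m$ in $\leq_J$. Uniqueness then follows from a standard maximality argument: if $\sum_m\alpha_m\varphi_J(m)=0$ with $\alpha_m\in\Z$ and finitely many nonzero, choose $m$ maximal with $\alpha_m\neq 0$; among the $\varphi_J(m')$ in the sum, only $\varphi_J(m)$ contributes to the monomial $m$, so the coefficient of $m$ is $\alpha_m\neq 0$, a contradiction.

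For existence, induct on the finite set of $J$-dominant monomials of $\chi_q(V)$ with positive coefficient. Pick $m$ maximal in $\leq_J$ in this set, let $\lambda_m$ be its coefficient in $\chi_q(V)$, form $R=\chi_q(V)-\lambda_m\varphi_J(m)$, and observe that $R$ has coefficient zero at $m$ while all newly affected monomials lie strictly below $m$ in $\leq_J$. The key claim is that $R$ has non-negative integer coefficients and is itself the $q$-character of an object of $\CC$; granting this, $R$ has strictly fewer $J$-dominant monomials with positive coefficient and the induction closes.

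The key claim is established by restricting $V$ to the subalgebra $U_q(\hg_J)\subseteq U_q(\hg)$. By Theorem~\ref{propertysim} applied to $U_q(\hg_J)$, the class of $V|_{U_q(\hg_J)}$ in the corresponding Grothendieck ring is a non-negative integer combination of classes of simple $U_q(\hg_J)$-modules $L_J(\overline{n})$. Combining this decomposition with the $\leq_J$-maximality of $m$ and the Frenkel--Mukhin characterization of $q$-characters recalled in Section~\ref{s2.4} shows that the $l$-weight space $V_m$ contains $\lambda_m$ linearly independent $U_q(\hg_J)$-highest-weight vectors whose inner Drinfeld data is encoded by $\overline{m}$; these generate a $U_q(\hg_J)$-submodule of $V$ whose contribution to $\chi_q(V)$ is exactly $\lambda_m\varphi_J(m)$, since the lift $\overline{A_{j,a}^{-1}}\mapsto A_{j,a}^{-1}=Y_{j,aq}^{-1}Y_{j,aq^{-1}}^{-1}\prod_{k\neq j}Y_{k,a}$ records precisely how the outer Cartan eigenvalues transform as $U_q(\hg_J)$ acts. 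Subtracting this submodule's character from $\chi_q(V)$ realizes $R$ as the $q$-character of a quotient, completing the induction. The main obstacle is this restriction-and-lift step: one must verify carefully that maximality of $m$ forces the count of relevant $U_q(\hg_J)$-highest-weight vectors in $V_m$ to equal exactly $\lambda_m$ (ruling out cancellations with vectors produced by raising operators $E_{j,r}$, $j\in J$), and that the outer variables indeed transform exactly as the substitution rule dictates.
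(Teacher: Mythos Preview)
The paper does not supply a proof of this proposition; it is quoted from \cite{HL0} (Proposition~5.10 there) and used as a black box. So there is no proof in the present paper to compare against, only the original argument in \cite{HL0}.

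Your overall strategy matches that source: uniqueness from the $\leq_J$-triangularity of $\varphi_J$, existence via restriction to $U_q(\hg_J)$ together with the Frenkel--Mukhin fact that $x_{j,r}^{\pm}V_m\subseteq\bigoplus_a V_{mA_{j,a}^{\pm1}}$ for $j\in J$. But your inductive step contains a genuine gap. You assert that the $U_q(\hg_J)$-submodule $W$ generated by $V_m$ (for $m$ a $\leq_J$-maximal $J$-dominant monomial) contributes \emph{exactly} $\lambda_m\varphi_J(m)$ to $\chi_q(V)$, and that $R=\chi_q(V/W)$ lands back in the setting of the induction hypothesis. Both claims fail in general: $W$ is only a $U_q(\hg_J)$-submodule, so $V/W\notin\CC$ and your induction hypothesis (stated for objects of $\CC$) does not apply to it; and since finite-dimensional $U_q(\hg_J)$-modules are not semisimple, $W$ may absorb further composition factors $L(\overline{m'})$ with $m'<_J m$, so its full $l$-weight character can strictly exceed $\lambda_m\varphi_J(m)$. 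For a concrete failure take $J=I$ and $V$ a non-split extension of $L(m)$ by $L(m')$ with $m'<m$: then $V_m$ is one-dimensional but $W=V$, so $\chi_q(W)=\chi_q(L(m))+\chi_q(L(m'))\neq\varphi_I(m)$. The obstacle you flag (counting highest-weight vectors in $V_m$) is not the real issue; the real issue is controlling the size of the submodule they generate.

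The repair, which is essentially how \cite{HL0} argues, drops the induction on $\CC$. The Frenkel--Mukhin lemma makes each $\sim_J$-class $V_{[m]}:=\bigoplus_{m'\sim_J m}V_{m'}$ a $U_q(\hg_J)$-direct summand of $V$, and on such a block the assignment $m'\mapsto\overline{m'}$ is a bijection between full $l$-weights and $J$-$l$-weights (since the $\overline{A_{j,a}}$ are algebraically independent). Hence any Jordan--H\"older filtration of $V_{[m]}$ over $U_q(\hg_J)$ is automatically graded by full $l$-weights, and each composition factor $L(\overline{n})$ contributes precisely one $\varphi_J(n)$ to $\chi_q(V)$, where $n\in[m]$ is the unique lift of $\overline{n}$. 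Summing over all blocks and all composition factors gives the decomposition at once, with $\lambda_m\in\Z_{\ge0}$ equal to the multiplicity of $L(\overline{m})$ in $V_{[m]}$.
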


Let $m\in\M_+$, $mM$ be a monomial
of $\chi_q(L(m))$, where $M$ is a monomial in the
$A_{i,a}^{-1}\ (i\in I)$.
If $M$ contains no variable $A_{j,a}^{-1}$ with $j\in J$
then $mM\in \M_{J,+}$
and $\varphi_J(mM)$ is contained in $\chi_q(L(m))$.

Let us introduce for $P, \widetilde{P} \in \Z[Y_{i,r}^\pm; i\in I, r\in\Z]$ the notation
\[
 P \le \widetilde{P} \quad \Longleftrightarrow \quad \widetilde{P} - P \in \N[Y_{i,r}^\pm; i\in I, r\in\Z].
\]

We define a useful polynomial $N(m)\leq \chi_q(L(m))$ for $m\in \M_+$. We will see later that $N(m)=\chi_q(L(m))$ for some simple objects $L(m)$.
\begin{itemize}
  \item[{\rm (i)}]  $\forall i\in I$, we have $\varphi_i(m)=m(1+\sum M_{i_p})$, where $M_{i_p}$ are monomials in $A_{i,a}^{-1}$, denote all such $mM_{i_p}$ by $m_{i_p}$;
  \item[{\rm (ii)}]  For every $m_{i_p}$, $m_{i_p}$ is $j$-dominant for $j\in I\backslash\{i\}$. We have $\varphi_j(m_{i_p})$, denote the monomials in $\varphi_j(m_{i_p})$ by $m_{i_{p_k}}$;
  \item[{\rm (iii)}]  Repeat this procedure until every monomial contain all $A_{i,a}^{-1}$ with $i\in I$. Put all such distinguished monomials together and denote it by $N(m)$. Clearly, $N(m)\leq \chi_q(L(m))$.
\end{itemize}

\begin{example}
Take $\g$ of type $A_2$ and $m=Y_{1,1}Y_{2,q^3}$.
We have
\[
\begin{array}{lcllcl}
\varphi_1(m) = m(1+ A_{1,q}^{-1}),
&\varphi_2(m) = m(1 + A_{2,q^4}^{-1}),
\end{array}
\]
The monomial $mA_{1,q}^{-1}$ is 2-dominant, $mA_{2,q^4}^{-1}$ is 1-dominant, so
\[
\begin{array}{lcllcl}
\varphi_2(mA_{1,q}^{-1}) = mA_{1,q}^{-1}(1+A_{2,q^4}^{-1}+A_{2,q^4}^{-1}A_{2,q^2}^{-1}),
&\varphi_1(mA_{2,q^4}^{-1}) = mA_{2,q^4}^{-1}(1 + A_{1,q}^{-1})(1+A_{1,q^5}^{-1}),
\end{array}
\]
Then
\[
\begin{array}{lcl}
N(m)= m(1+A_{1,q}^{-1}+A_{2,q^4}^{-1}+A_{1,q}^{-1}A_{2,q^4}^{-1}+A_{1,q}^{-1}A_{2,q^4}^{-1}A_{2,q^2}^{-1}+A_{2,q^4}^{-1}A_{1,q^5}^{-1}
+A_{2,q^4}^{-1}A_{1,q}^{-1}A_{1,q^5}^{-1}).
\end{array}
\]
\end{example}

\subsection{Truncation}\label{s2.5}

To simplify the calculation, we will define some truncations.

By $R_{\mathbb{\ell}}=\mathbb{Z}\big[[V_{i,q^{2k+\xi_{i}}}]_{i\in I,0\leq k\leq \ell}\big]$, we know that the monomials occur in the $q$-characters of objects of $\CC_\ell$ only involve in $Y_{i,q^r}\ (i\in I,\, r\in\Z)$.
So we write $Y_{i,r}$ and $A_{i,r}$ instead
of $Y_{i,q^r}$ and $A_{i,q^r}$ respectively.

Let $V$ be an object of $\CC_2$, $ \chi_q(V) =m(1+\sum_p M_p)$,
where $m$ is a  dominant monomial in the variables $Y_{i,\xi_i}, Y_{i,\xi_i+2}, Y_{i,\xi_i+4}\ (i\in I)$,
and the $M_p$ are certain monomials in the $A_{i,r}^{-1}=Y_{i,r-1}Y_{i,r+1}\prod_{j\neq i}Y_{j,r}^{a_{ij}}$.
Thus it can be checked that for $r\geq5$, there can not exist a monomial $mM_p$ such that $mM_p$  is domi\-nant.

For this reason, we can define the truncated $q$-character of $V$ to be the polynomial obtained from $\chi_q(V)$ by keeping the monomials $M_p$ which do not contain any $A_{i,r}^{-1}$ with $r\ge 5$.
We denote this truncated polynomial by $\chi_q(V)_{\le 4}$.

\begin{proposition}
The map $V \mapsto \chi_q(V)_{\le 4}$ is an injective homomorphism
from the Grothendieck ring $R_2$ of $\CC_2$ to $\mathcal{Y}$.
\begin{proof}
The way to prove this proposition is similar with the method of Proposition 6.1\cite{HL0}, so we omit it.
\end{proof}
\end{proposition}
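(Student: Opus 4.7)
The plan is to mirror the two-part argument underlying Proposition~6.1 of \cite{HL0}: first show that truncation is compatible with multiplication, and then show that it loses no information. For the ring-homomorphism half, we already know from Section~\ref{s2.3} that $\chi_q\colon R\to\mathcal{Y}$ is an injective ring homomorphism, so the task is just to check that $P\mapsto P_{\le 4}$ is multiplicative on the image of $\chi_q|_{R_2}$. Every monomial occurring in $\chi_q(V)$ for $V\in\CC_2$ can be written as $m\,M$ with $m\in\M_+$ the highest weight monomial of a composition factor of $V$ and $M$ a monomial in the $A_{i,r}^{-1}$; analogously $m'M'$ for $W\in\CC_2$. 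Their product is $(mm')(MM')$, and $MM'$ involves some $A_{i,r}^{-1}$ with $r\ge 5$ exactly when one of $M$ or $M'$ already does. Hence removing such monomials commutes with multiplication.

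For injectivity, since $\chi_q$ itself is injective it suffices to recover $\chi_q(V)$ from $\chi_q(V)_{\le 4}$ for every $V\in\CC_2$. Combined with Proposition~5.3 of \cite{HL0} quoted in Section~\ref{s2.3}, which says that a $q$-character is determined by the multi-set of its dominant monomials, the injectivity reduces to the following \emph{key claim}: for every $V\in\CC_2$, every dominant monomial of $\chi_q(V)$ already survives the truncation, i.e.\ in its expression $mM$ the factor $M$ contains no $A_{i,r}^{-1}$ with $r\ge 5$.

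The key claim is settled by a maximality argument. Write a dominant monomial of $\chi_q(V)$ as $mM$ with $m$ the highest weight monomial of some composition factor of $V$ and $M=\prod_{\beta}A_{i_\beta,r_\beta}^{-1}$. Since $V\in\CC_2$, every variable $Y_{i,s}$ appearing in $m$ satisfies $s\in\{\xi_i,\xi_i+2,\xi_i+4\}$, so in particular $s\le 5$. Suppose, for contradiction, that some $r_\beta\ge 5$, and let $r_0$ be the maximum of the $r_\beta$ with corresponding index $i_0$. Then $M$ contributes a factor $Y_{i_0,r_0+1}^{-1}$ with $r_0+1\ge 6$, which $m$ cannot cancel (no $Y_{i_0,s}$ with $s\ge 6$ appears in $m$). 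Moreover a cancelling positive $Y_{i_0,r_0+1}$ could only arise from an $A_{j,r_0+1}^{-1}$ with $j\sim i_0$, which would contradict the maximality of $r_0$. Hence $mM$ fails to be dominant, a contradiction.

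The only real obstacle is precisely this maximality step; but once the range $s\le \xi_i+4$ of highest weight monomials in $\CC_2$ is pinned down, the verification is a mechanical bookkeeping in the indices, which is exactly why the analogous proposition in \cite{HL0} is quoted without proof.
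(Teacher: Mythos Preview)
Your proposal is correct and follows exactly the approach the paper intends by citing Proposition~6.1 of \cite{HL0}: the multiplicativity of the truncation follows because the $A_{i,r}^{-1}$-factors cannot cancel in a product, and injectivity reduces via Proposition~5.3 of \cite{HL0} to the fact that every dominant monomial survives truncation, which is your maximality argument on the index $r$. Indeed, the paper already states this key claim in the paragraph preceding the definition of $\chi_q(V)_{\le 4}$ (``for $r\geq5$, there can not exist a monomial $mM_p$ such that $mM_p$ is dominant''), and your maximality argument supplies precisely the missing verification.
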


Similarly, we attach a  polynomial $FM(m)_{\leq 4}$ which is obtained from $FM(m)$ by keeping only the monomials which do not contain any $A_{i,r}^{-1}$ with $r\ge 5$.
In detail,
$\forall m\in \M_{i,+}$, replace $\chi_q(\bar{m})$ by $\chi_q(\bar{m})_{\leq 4}$, $\varphi_i(m)_{\leq 4}$ is the polynomial obtained from $\chi_q(\bar{m})_{\leq 4}$ by replacing
each variable $A_a^{-1}$ by $A_{i,a}^{-1}$. The set $D_m$ and the sequences $(s(m_r))_{r\geq 0}$ and $(s_i(m_r))_{r\geq 0}\ (i\in I)$ are defined as the same. Then, $FM(m)_{\leq 4}$ is well-defined.

\begin{proposition}
 \ If $L(m)$ is minuscule then $\chi_q(L(m))_{\leq4}=FM(m)_{\leq4}$.
\end{proposition}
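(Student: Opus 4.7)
The plan is to reduce the equality to the Frenkel--Mukhin identity $\chi_q(L(m)) = FM(m)$ (valid for minuscule $L(m)$) by showing that both truncation operations select, with the same multiplicities, exactly those monomials $m\cdot M$ of the common polynomial whose accumulated factor $M$ (a product of $A^{-1}$'s) contains no $A_{i,r}^{-1}$ with $r\geq 5$.

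The crucial observation is a monotonicity statement for the FM tree. Every monomial $m_t\in D_m$ has the form $m\cdot\prod_k A_{i_k,r_k}^{-1}$, and each recursive step $\varphi_i(m_r)$ multiplies $m_r$ by a polynomial in the $A_{i,a}^{-1}$; hence a monomial $m_t$ produced from $m_r$ at an $i$-step augments $m_r$'s accumulated $A^{-1}$-factors only by new $A_{i,a}^{-1}$'s. In particular, any forbidden factor $A_{i,r}^{-1}$ (with $r\geq 5$) appearing inside some $m_t$ must have been introduced at the unique $i$-step at which it first enters the tree, and is inherited by every descendant of that node.

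Combining this with the coefficient recursion
\[
 s_i(m_t)=\sum_{\substack{r<t,\\ m_r\in\M_{i,+}}}(s(m_r)-s_i(m_r))\,[\varphi_i(m_r):m_t],
\]
I would argue that replacing each $\varphi_i$ by $\varphi_i(\cdot)_{\leq 4}$ throughout the procedure amounts to pruning precisely the branches at which a forbidden $A^{-1}$ is first introduced. By monotonicity, these branches contribute only to monomials that are themselves pruned, so the value $s(m_t)$ is unchanged for every $m_t$ that survives the truncation. Hence $FM(m)_{\leq 4}$ is exactly the admissible part of $FM(m)$ with its original multiplicities, and the Frenkel--Mukhin identity identifies this with $\chi_q(L(m))_{\leq 4}$.

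The main obstacle I expect is the bookkeeping in this last step: one must verify carefully that, for every admissible $m_t$, the coefficients $[\varphi_i(m_r):m_t]$ and the values $s(m_r)-s_i(m_r)$ entering the recursion are literally unchanged when we pass from $\varphi_i$ to $\varphi_i(\cdot)_{\leq 4}$, and that the enumeration of $D_m$ can be chosen so that all admissible predecessors of $m_t$ are processed before $m_t$ itself. The monotonicity observation supplies the mechanism that makes this clean, but these details need to be spelled out carefully at each level of the recursion.
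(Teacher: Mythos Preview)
Your proposal is correct, but it takes a substantially different and more careful route than the paper.

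The paper's own argument is very short: since $L(m)$ is minuscule, $\chi_q(L(m))=FM(m)$; both truncations retain all dominant monomials (because, as observed in \S\ref{s2.5}, no monomial $mM_p$ with a factor $A_{i,r}^{-1}$, $r\ge 5$, can be dominant); hence the two truncated polynomials have the same dominant monomials with the same multiplicities, and ``the proposition follows.'' The last step leans on a Proposition~5.3--type principle (equal dominant parts force equality), and the paper does not spell out why this applies to $FM(m)_{\le 4}$. In fact, if one reads the first sentence of the paper's definition of $FM(m)_{\le 4}$ literally --- ``obtained from $FM(m)$ by keeping only the monomials which do not contain any $A_{i,r}^{-1}$ with $r\ge 5$'' --- then the proposition is an immediate corollary of the polynomial identity $\chi_q(L(m))=FM(m)$, and the dominant--monomial discussion is superfluous.

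What your argument actually proves is the nontrivial point the paper asserts but does not verify: that the \emph{procedural} definition given in the ``In detail'' paragraph (rerunning the FM recursion with $\varphi_i$ replaced by $\varphi_i(\cdot)_{\le 4}$) yields the same polynomial as directly truncating $FM(m)$. Your monotonicity observation --- that admissible monomials have only admissible predecessors in the FM tree, so the recursion coefficients $[\varphi_i(m_r):m_t]$ and hence $s_i(m_t)$, $s(m_t)$ are unchanged for every surviving $m_t$ --- is exactly the mechanism that makes this work. So your approach is more laborious but closes a gap the paper leaves open, while the paper's approach is quick but relies on taking the direct-truncation definition at face value (or on an implicit appeal to a determinacy-by-dominant-monomials principle whose applicability to $FM(m)_{\le 4}$ is not justified).
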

\begin{proof}
$L(m)$ is minuscule, then $\chi_q(L(m))=FM(m)$. That is to say, $\chi_q(L(m))$ and $FM(m)$ have the same dominant monomials with the same multiplicities. Besides, both $\chi_q(L(m))_{\leq4}$ and $FM(m)_{\leq4}$ contain all dominant monomials. Hence, $\chi_q(L(m))_{\leq4}$ and $FM(m)_{\leq4}$  have the same dominant monomials with the same multiplicities. Then the proposition follows.
\end{proof}

Lastly, we define $N(m)_{\leq 4}$ as the polynomial which is obtained from $N(m)$ by keeping only the monomials which do not contain any $A_{i,r}^{-1}$ with $r\ge 5$.


\section{Main Results}\label{s3}

We prove the conjecture  \cite{HL0} for $\CC_{2}$ of type $A_{3}$. Now, we give some useful notations.





We choose $I_{0}=\{1, 3\}$. The quiver $\Gamma_{2}$ is defined by
\[
\begin{matrix} (1,1)& \leftarrow & (1,2) & \rightarrow & (1,3)\cr
        \downarrow && \uparrow && \cr
 (2,1)& \rightarrow & (2,2)& \leftarrow & (2,3) \cr
         \uparrow && \downarrow && \cr
(3,1)& \leftarrow & (3,2)& \rightarrow & (3,3)
\end{matrix}.
\]
$\A_2 = \A(\Gamma_{2})$ be the cluster algebra attached to the initial seed $(\uu, \Gamma_{2})$, where
$\uu=(x_{(i,k)}\mid i\in I,\ 1\le k \le 3)$, $x_{(i,3)}(i\in I)$ are the frozen variables.


To simplify, we write $W_{k,r}^{(i)}$ instead of $W_{k,q^r}^{(i)}$. Then
\[
R_2 = \Z\left[[W_{1,0}^{(1)}], [W_{1,2}^{(1)}], [W_{1,4}^{(1)}], [W_{1,1}^{(2)}], [W_{1,3}^{(2)}], [W_{1,5}^{(2)}], [W_{1,0}^{(3)}], [W_{1,2}^{(3)}], [W_{1,4}^{(3)}]\right].
\]

\begin{theorem}{($\mathbf{Main \ result}$)}\label{thm}
The map
\[
\begin{array}{ccc}
  x_{(1,1)} \mapsto [W^{(1)}_{1,\,2}], & x_{(1,2)} \mapsto [W^{(1)}_{2,\,2}], &
  x_{(1,3)} \mapsto [W^{(1)}_{3,\,0}], \\
  x_{(2,1)} \mapsto [W^{(2)}_{1,\,3}], & x_{(2,2)} \mapsto [W^{(2)}_{2,\,1}], &
  x_{(2,3)} \mapsto [W^{(2)}_{3,\,1}], \\
  x_{(3,1)} \mapsto [W^{(3)}_{1,\,2}], & x_{(3,2)} \mapsto [W^{(3)}_{2,\,2}], &
  x_{(3,3)} \mapsto [W^{(3)}_{3,\,0}]
\end{array}
\]
extends to a ring isomorphism $\iota$ from the cluster algebra $\A_2$
to the Grothendieck ring $R_2$ of $\CC_2$.
If we identify $\A_{2}$ with $R_2$ via $\iota$, $\CC_2$
becomes a monoidal categorification of $\A_{2}$.
\end{theorem}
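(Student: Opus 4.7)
The plan is to establish the correspondence between the combinatorial data of $\A_2$ and the representation-theoretic data of $\CC_2$ by passing through a bipartite quiver of type $E_6$. First I would replace $\Gamma_2$ by the mutation-equivalent quiver $\Gamma$ whose underlying graph is $E_6$, so that $\A_\Gamma \cong \A_2$. Then by Theorem~\ref{FZ01}(ii), $\A_\Gamma$ has $42$ non-frozen cluster variables in bijection with the almost positive roots $\Phi_{\geq -1}$ of $E_6$, giving $45$ cluster variables once the $3$ frozen ones are included. This reduces the theorem to the equivalent Proposition~\ref{equithm}, which asserts that $\CC_2$ is a monoidal categorification of $\A_\Gamma$.

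Next I would construct the homomorphism $\iota$ explicitly. Its values on the initial cluster are prescribed in the statement of the theorem, each being the class of a Kirillov--Reshetikhin module. For every other cluster variable $x_\alpha \in \A_\Gamma$, I would define $\iota(x_\alpha) = [L(m_\alpha)]$ for a distinguished simple object $L(m_\alpha) \in \CC_2$ whose highest weight monomial $m_\alpha$ is tabulated in Appendix A. To see these assignments are compatible with the exchange relations of $\A_\Gamma$, I would compute the truncated $q$-character $\chi_q(L(m_\alpha))_{\le 4}$ of each candidate simple module using Proposition~\ref{qchadecom}, the FM-algorithm, and the auxiliary polynomial $N(m)_{\le 4}$. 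Then for each exchange relation $xx' = m_1 + m_2$, the identity $\chi_q(\iota(x))_{\le 4}\,\chi_q(\iota(x'))_{\le 4} = \chi_q(\iota(m_1))_{\le 4} + \chi_q(\iota(m_2))_{\le 4}$ together with injectivity of truncation on $R_2$ promotes the numerical identity to one in $R_2$, and thus extends $\iota$ to a well-defined ring homomorphism.

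Once $\iota$ is shown to be a ring isomorphism, by matching the nine algebra generators of $R_2$ with the nine cluster variables of the initial seed and invoking the Laurent phenomenon from Theorem~\ref{FZ01}(i), it remains to verify conditions (i) and (ii) of Definition~\ref{defmoncat}. The simple object $L(m_\alpha)$ attached to any cluster variable is prime and real, as can be read off its highest weight monomial. To prove that every cluster monomial maps to the class of a real simple object, I would invoke Hernandez's criterion that a tensor product $S_1\otimes\cdots\otimes S_N$ is simple iff each pairwise product $S_i\otimes S_j$ is simple. This reduces the statement to a pairwise check over compatible cluster variables, which by Theorem~\ref{FZ01}(ii) translates to a check over compatible pairs in $\Phi_{\geq -1}(E_6)$, whose explicit list is given in Appendix~\ref{compatible}. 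For each compatible pair I would verify that $L(m_\alpha)\otimes L(m_\beta)$ has a unique dominant monomial, namely $m_\alpha m_\beta$, so that it is simple by Theorem~\ref{propertysim}(iii) combined with the FM-algorithm.

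The main obstacle is the sheer bulk of verification rather than any single conceptual hurdle: explicit formulas must be produced for the truncated $q$-characters of all $45$ candidate simple modules; each exchange relation of the $E_6$ cluster algebra must be checked as an identity in $\mathcal{Y}$; and the pairwise simplicity of tensor products must be confirmed over every compatible pair from Appendix~\ref{compatible}. The concrete bipartite description of compatible roots of $E_6$ is precisely what makes this finite combinatorial task tractable, and organizing the truncated $q$-character computations so that every exchange identity reduces to a match of dominant monomials with known multiplicities is the key technical device that keeps the verification bounded.
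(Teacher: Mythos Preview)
Your overall strategy matches the paper's: pass to the bipartite $E_6$ quiver, reduce to Proposition~\ref{equithm}, compute truncated $q$-characters of the $45$ candidate prime simples, and verify simplicity of tensor products over compatible pairs using Hernandez's pairwise criterion. However, there is one genuine error and two points where your plan diverges from what actually works.

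The error is your claim that for each compatible pair $(\alpha,\beta)$ the tensor product $L(m_\alpha)\otimes L(m_\beta)$ has a \emph{unique} dominant monomial. This is false in the majority of cases. For instance, already for $\alpha=-\alpha_0$ and $\beta=-\alpha_2$ the product $\chi_q(S(\alpha)\otimes S(\beta))_{\le 4}$ contains both $m=Y^\alpha Y^\beta$ and $mA_{1,1}^{-1}A_{2,2}^{-1}A_{3,3}^{-1}$ as dominant monomials; many other pairs produce three, four, or more. The correct argument (carried out case by case in the paper) is to show that \emph{every} dominant monomial of $\chi_q(S(\alpha))_{\le 4}\chi_q(S(\beta))_{\le 4}$ already appears, with the right multiplicity, in $\chi_q(L(m_\alpha m_\beta))_{\le 4}$. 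This is done by exhibiting each extra dominant monomial inside some $\varphi_J(m')$ with $m'$ known to lie in $\chi_q(L(m_\alpha m_\beta))$, or, in a few stubborn cases, by contradiction using Proposition~\ref{qchadecom} to rule out the minuscule alternative. Your proposal would fail at this step as stated.

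Two smaller points. First, the ring isomorphism is not established in the paper by ``matching the initial seed with generators of $R_2$ and invoking the Laurent phenomenon''; rather, one proves directly (Theorem~4.1) that $\A_\Gamma$ is the polynomial ring $\Z[a,a',b,c,c',d,d',e,f]$ on nine \emph{specific} cluster variables (not the initial ones), which are then sent to the nine fundamental representation classes generating $R_2$. Second, you only address one direction of Definition~\ref{defmoncat}(i): you argue cluster monomials map to real simple objects, but not that every simple object of $\CC_2$ arises this way. The paper handles the converse via the unique cluster expansion (Proposition~\ref{FZ3}) of the root-lattice vector attached to the highest weight monomial of an arbitrary simple, yielding the factorization $L(m)\cong\bigotimes_i F_i^{\otimes\min(a_i,b_i,c_i)}\bigotimes_\alpha S(\alpha)^{\otimes n_\alpha}$.
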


In order to prove Theorem~\ref{thm} we need to determine whether two cluster variables are compatible. When a quiver is bipartite of type ADE, two cluster variables are compatible if and only if the corresponding two roots are compatible. So we take a mutation equivalent quiver $\Gamma$ as initial quiver.

\begin{proposition}\label{equithm}
If $\A$ is the cluster algebra of rank $6$ with initial seed $(\xx,\Gamma)$, where $\xx=(x_0, \ldots, x_5, x_{6}, x_{7}, x_{8})$, $x_{6}, x_{7}, x_{8}$ frozen variables, $\Gamma$  defined by Figure2.
\begin{figure}[H]
\begin{center}
\includegraphics {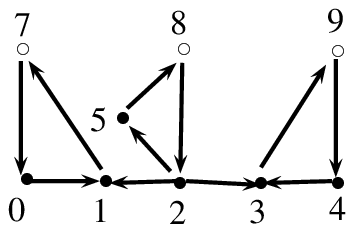}
\caption{mutation equivalent quiver}
\end{center}
\end{figure}
Then the map     $\iota$: $\A \rightarrow R_{2}$,
\[
\begin{array}{lll}
  x_{0} \mapsto [L(Y_{1,0})], & x_{1} \mapsto [L(Y_{1,4}Y_{2,1})], &
  x_{2} \mapsto [L(Y_{1,4}Y_{2,1}Y_{3,4})], \\
  x_{4} \mapsto [L(Y_{3,0})], & x_{3} \mapsto [L(Y_{2,1}Y_{3,4})], & x_{5} \mapsto [L(Y_{2,5})], \\
  x_{6} \mapsto [L(Y_{2,1}Y_{2,3}Y_{2,5})], & x_{7} \mapsto [L(Y_{1,0}Y_{1,2}Y_{1,4})], & x_{8} \mapsto [L(Y_{3,0}Y_{3,2}Y_{3,4})]
\end{array}
\]
can be extended to a ring isomorphism. If we identify $\A$ with $R_2$ via $\iota$, $\CC_2$ becomes a monoidal categorification of $\A$.
\end{proposition}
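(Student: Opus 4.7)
The plan is to exhibit the isomorphism $\iota$ and the monoidal categorification by working through an explicit correspondence between the $45$ cluster variables of $\A$ and $45$ real prime simple objects of $\CC_{2}$. Since the quiver $\Gamma$ is bipartite of type $E_{6}$ (with $3$ frozen vertices), by Theorem~\ref{FZ01} the cluster algebra $\A$ is of finite type $E_{6}$: there are exactly $42$ non-frozen cluster variables, in bijection with the almost positive roots $\Phi_{\geq-1}(E_{6})$, together with the $3$ frozen ones. The first step is to compute, via successive mutations of the initial seed $(\xx,\Gamma)$, the explicit rational expression of each cluster variable in terms of $x_{0},\ldots,x_{8}$. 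These formulas are mechanical but lengthy and are collected in Appendix~A. In parallel, for each cluster variable $x$ I would designate a dominant monomial $m_{x}\in\M_{+}$ extending the initial data in the statement, and then \emph{define} $\iota(x):=[L(m_{x})]$.

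Next, for each of the $45$ candidate monomials $m_{x}$ I would compute the truncated $q$-character $\chi_{q}(L(m_{x}))_{\leq 4}$, using the Frenkel--Mukhin algorithm when $L(m_{x})$ is minuscule and the inductive polynomial $N(m_{x})$ otherwise, and record the result. Once these are in hand, the verification that $\iota$ is a ring homomorphism reduces to checking one identity in $R_{2}$ for each initial exchange relation, namely
\[
[L(m_{x_{k}})]\cdot[L(m_{x_{k}'})]=[L(m_{\mathrm{mon}_{1}})]+[L(m_{\mathrm{mon}_{2}})],
\]
which by the injectivity of $\chi_{q}$ and Proposition~5.3 of \cite{HL0} follows from a corresponding identity of truncated $q$-characters, itself a consequence of $T$-system-type relations. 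The homomorphism $\iota$ is then surjective because the images of the frozen and initial variables generate $R_{2}$ (every $[W_{1,r}^{(i)}]$ is a ratio of such classes), and injective by the Laurent phenomenon combined with the fact that the dominant leading monomials of the $\chi_{q}(L(m_{x}))_{\leq 4}$ are pairwise distinct.

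For the monoidal categorification, the key tool is Hernandez's theorem: a tensor product $S_{1}\otimes\cdots\otimes S_{N}$ of simples in $\CC$ is simple iff each pair $S_{i}\otimes S_{j}$ is. So to prove that every cluster monomial is the class of a real simple object, it suffices to verify, for every pair $(x,x')$ of cluster variables lying in a common cluster, that $L(m_{x})\otimes L(m_{x'})\cong L(m_{x}m_{x'})$ is simple. Because $\Gamma$ is bipartite of type $E_{6}$, such pairs are exactly the pairs of compatible almost positive roots of $E_{6}$, whose explicit description is given in Appendix~B; for each such pair, simplicity of the tensor product is detected by showing that $m_{x}m_{x'}$ is the unique dominant monomial of $\chi_{q}(L(m_{x})\otimes L(m_{x'}))_{\leq 4}$, which is read off from the already-computed truncated $q$-characters. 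Primality and reality of each $L(m_{x})$ are verified the same way, by inspecting the dominant monomials of $\chi_{q}(L(m_{x}))_{\leq 4}$ and of $\chi_{q}(L(m_{x})^{\otimes 2})_{\leq 4}$.

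The hard part is neither deep nor conceptual but combinatorial-computational: one must juggle $45$ truncated $q$-characters and a large number of compatible pairs without error. The subtle point is that for cluster variables whose $m_{x}$ is not a single Kirillov--Reshetikhin monomial, minusculity must be established (or sidestepped using $N(m_{x})$) before $\chi_{q}(L(m_{x}))_{\leq 4}$ can be read off from $FM(m_{x})_{\leq 4}$; this is where the bulk of the technical care goes. The organizational backbone is the classification of compatible subsets of $\Phi_{\geq -1}(E_{6})$, which groups the pairwise verifications into a manageable number of cases and reduces the final check to routine $q$-character bookkeeping.
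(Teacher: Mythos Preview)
Your overall strategy matches the paper's, but there is a genuine gap in the simplicity check. You claim that for a compatible pair $(x,x')$, simplicity of $L(m_x)\otimes L(m_{x'})$ is detected by showing that $m_x m_{x'}$ is the \emph{unique} dominant monomial of $\chi_q(L(m_x)\otimes L(m_{x'}))_{\leq 4}$. This fails for the majority of compatible pairs: the product $\chi_q(S(\alpha))_{\leq4}\cdot\chi_q(S(\beta))_{\leq4}$ typically contains several dominant monomials (e.g.\ already for $\alpha=-\alpha_0$, $\beta=-\alpha_2$ one finds both $m$ and $mA_{1,1}^{-1}A_{2,2}^{-1}A_{3,3}^{-1}$). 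What must actually be shown is that \emph{every} dominant monomial of the product, with its multiplicity, already occurs in $\chi_q(L(m_x m_{x'}))_{\leq4}$. The paper does this case by case using the $\varphi_J$ decomposition of Proposition~\ref{qchadecom} (and in a few cases by a contradiction argument via the FM algorithm when a candidate extra monomial would force an impossible $\varphi_i$-expansion). Simply reading off the product of the two truncated characters gives you the list of dominant monomials, but not their containment in the irreducible; this containment is the actual work and occupies most of Section~\ref{s4}.

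There are two secondary issues. First, your surjectivity argument (``every $[W_{1,r}^{(i)}]$ is a ratio of such classes'') is invalid: ratios need not lie in the image of a ring map. The paper instead proves directly that $\A=\Z[a,a',b,c,c',d,d',e,f]$ is a \emph{polynomial} ring in nine explicit cluster variables, which under $\iota$ go to the nine classes $[L(Y_{i,\xi_i+2k})]$ freely generating $R_2$; the isomorphism is then immediate (Proposition~\ref{ringiso}). Second, you omit the converse direction of monoidal categorification: one must show that \emph{every} simple object of $\CC_2$ is a cluster monomial, not just that cluster monomials give simples. The paper obtains this by writing an arbitrary dominant monomial as $\prod_i(Y_{i,\xi_i}Y_{i,\xi_i+2}Y_{i,\xi_i+4})^{\min(a_i,b_i,c_i)}\cdot Y^{\gamma}$ and invoking the unique cluster expansion of $\gamma$ (Proposition~\ref{FZ3}) together with the pairwise simplicity results and Proposition~\ref{tensor}.
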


\begin{remark}
\begin{itemize}
  \item[{\rm (i)}] If $h$ is the isomrphism from $\A$ to $\A_2$, then $\iota\circ h$ is a map from $\A$ to $R_2$. But we still denote the map from $\A$ to $R_2$ by $\iota$ without confusing.
  \item[{\rm (ii)}] In order to distinguish the label of the vertices of Dynkin diagram $E_6$ from $A_3$, we denote by $\widetilde{I}=[0,5]$ the vertex set of Dynkin diagram $E_6$, and chose $\widetilde{I}_0=\{1,3,5\}$.
\end{itemize}

\end{remark}

\begin{proof}
The proof will be given in next section.
\end{proof}

\begin{theorem}
Theorem~\ref{thm} and Proposition~\ref{equithm} are equivalent.
\end{theorem}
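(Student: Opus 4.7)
The plan is to establish both implications simultaneously by exhibiting an explicit mutation sequence that converts $\Gamma_2$ into $\Gamma$ and tracking the induced transformation of cluster variables. Since a ring homomorphism out of a cluster algebra is determined by its values on a single seed, once we verify that applying the mutation sequence to the initial seed of $\A_2$ and then pushing to $R_2$ via the map of Theorem~\ref{thm} reproduces, entry by entry, the initial images prescribed in Proposition~\ref{equithm}, the two ring homomorphisms into $R_2$ will coincide under the cluster-algebra isomorphism $\A \cong \A_2$ induced by the mutation sequence. The monoidal categorification condition, being a statement about which elements of $R_2$ are classes of real prime or real simple objects of $\CC_2$, depends only on the underlying cluster-algebra structure together with the cluster-to-class assignment, and hence transfers between the two formulations.

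First I would note that the frozen parts already agree: the frozen variables $x_{(1,3)}, x_{(2,3)}, x_{(3,3)}$ of $\A_2$ are sent by Theorem~\ref{thm} to $[W^{(1)}_{3,0}] = [L(Y_{1,0}Y_{1,2}Y_{1,4})]$, $[W^{(2)}_{3,1}] = [L(Y_{2,1}Y_{2,3}Y_{2,5})]$, and $[W^{(3)}_{3,0}] = [L(Y_{3,0}Y_{3,2}Y_{3,4})]$, which coincide with the images of $x_7, x_6, x_8$ in Proposition~\ref{equithm}. Hence only the six mutable cluster variables need to be matched. I would then identify, by direct inspection of the two quivers, a short mutation sequence $\mu_{i_1}\cdots\mu_{i_p}$ converting $\Gamma_2$ into $\Gamma$ (such a sequence exists by the claim $\Gamma \simeq \Gamma_2$ recorded in the introduction), and apply it step by step to the initial seed of $\A_2$ using the exchange relation~\eqref{1}. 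Each resulting relation, pushed to $R_2$ via the map of Theorem~\ref{thm}, should be realized by a genuine identity among $q$-characters: the relations involving only Kirillov-Reshetikhin modules are instances of the $T$-system recalled in Section~\ref{s2.2}, while those producing non-KR objects such as $[L(Y_{1,4}Y_{2,1})]$, $[L(Y_{2,1}Y_{3,4})]$, and $[L(Y_{1,4}Y_{2,1}Y_{3,4})]$ can be verified by computing the relevant $q$-characters with the Frenkel-Mukhin algorithm and the truncation procedure of Section~\ref{s2.5}. After the full sequence, the six mutable images should read $[L(Y_{1,0})]$, $[L(Y_{1,4}Y_{2,1})]$, $[L(Y_{1,4}Y_{2,1}Y_{3,4})]$, $[L(Y_{2,1}Y_{3,4})]$, $[L(Y_{3,0})]$, $[L(Y_{2,5})]$, matching Proposition~\ref{equithm} vertex by vertex.

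The principal obstacle is combinatorial bookkeeping: selecting the correct mutation sequence and verifying that at each step the monomial right-hand side of~\eqref{1} factors, after passage to $R_2$, as a genuine sum of classes of simple modules in $\CC_2$. Once the sequence is fixed, the verifications reduce to finitely many comparisons of (truncated) $q$-characters, which are routine given the tools of Sections~\ref{s2.3}--\ref{s2.5}. Agreement of the initial clusters then propagates by cluster mutation to agreement of the two maps on every cluster variable and monomial, so that the equivalence of Theorem~\ref{thm} and Proposition~\ref{equithm} follows.
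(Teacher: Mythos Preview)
Your approach is essentially the same as the paper's: both establish the equivalence by exhibiting an explicit mutation sequence converting $\Gamma_2$ into $\Gamma$, checking that the induced cluster-algebra isomorphism identifies the two initial seeds and that the images in $R_2$ agree (the paper records the sequence $\mu_{(3,2)}\mu_{(1,2)}\mu_{(2,2)}\mu_{(2,1)}\mu_{(3,1)}\mu_{(1,1)}\mu_{(2,1)}$ explicitly and defers the $q$-character verifications to Section~\ref{s4}). The monoidal categorification statement then transfers because it depends only on the cluster structure and the common target $R_2$, exactly as you argue.
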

\begin{proof}
Theorem~\ref{thm} $\Rightarrow$ Proposition~\ref{equithm}.

Let a finite mutation sequence $\mu=\mu_{(3,2)}\mu_{(1,2)}\mu_{(2,2)}\mu_{(2,1)}\mu_{(3,1)}\mu_{(1,1)}\mu_{(2,1)}$, then $\mu(\Gamma_{2})=\Gamma$ if we ignore the label of vertices. And
\[
\begin{array}{ccc}
  \mu(x_{(1,1)})=x_1, & \mu(x_{(1,2)})=x_0, & \mu(x_{(1,3)})=x_6, \\
  \mu(x_{(2,1)})=x_2, &  \mu(x_{(2,2)})=x_5, &  \mu(x_{(2,3)})=x_7, \\
  \mu(x_{(3,1)})=x_3, & \mu(x_{(3,2)})=x_4, & \mu(x_{(3,3)})=x_8.
\end{array}
\]

Hence, cluster algebras $\A_{2}$ and $\A$ are isomorphism.

In the next section, we give the accurate truncated $q$-characters of all real prime simple objects. Then the result follows.

Proposition~\ref{equithm} $\Rightarrow$ Theorem~\ref{thm}. \ \
The proof is similar.
\end{proof}

\section{The Proof of Main Results}\label{s4}
We prove the proposition by $3$ steps. Firstly, we give a polynomial ring structure on cluster algebra $\A$, moreover, give the ring isomorphism from $\A$ to $R_{2}$. Secondly, we calculate the truncated $q$-characters of all real prime simple objects of $\CC_{2}$. Lastly, we finish the proof of the main result.

Let $Q$ be the root lattice of type $E_{6}$. Let $\Phi\subset Q$ be the root system, the vertices of the Dynkin diagram are labelled as Proposition~\ref{equithm}. Denote by $\Phi_{\ge -1}$ the subset of almost positive roots.

\subsection{Ring isomorphism}\label{s4.1}

For every cluster variable $\frac{\Box}{\prod x_{i}^{d_{i}}}$, there exists a unique almost positive root $\sum d_{i}\alpha_{i}$, where $\alpha_{i} \ (0\leq i\leq 5)$ is the simple root corresponding to  $i \ (0\leq i\leq 5)$. So we denote $x\big[\sum d_{i}\alpha_{i}\big]$ by the variable $\frac{\Box}{\prod x_{i}^{d_{i}}}$.

\begin{theorem}
The cluster algebra $\A$ is equal to the polynomial ring $\Z[a,a',b,c,c',d,d',e,f]$, where $a= x_0, a'= x_4, b= x_5, c=x\big[\alpha_{0}+\alpha_{1}\big], c'=x\big[\alpha_{3}+\alpha_{4}\big],d=x\big[\alpha_{0}+\alpha_{1}+\alpha_{2}+\alpha_{5}\big],  d'= x\big[\alpha_{2}+\alpha_{3}+\alpha_{4}+\alpha_{5}\big], e= x\big[\alpha_{0}+\alpha_{1}+\alpha_{2}+\alpha_{3}+\alpha_{4}+\alpha_{5}\big], f= x\big[\alpha_{2}+\alpha_{5}\big]$.
\end{theorem}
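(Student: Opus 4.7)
The strategy is to establish the equality $\A = \Z[a,a',b,c,c',d,d',e,f]$ by proving two things separately: (i) the nine listed elements generate $\A$ as a $\Z$-subalgebra of $\F$, and (ii) they are algebraically independent over $\Q$.

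The containment $\Z[a,a',b,c,c',d,d',e,f] \subseteq \A$ is immediate, since each of the nine listed elements is a cluster variable of $\A$. For the reverse containment $\A \subseteq \Z[a,a',b,c,c',d,d',e,f]$, Theorem~\ref{FZ01}(ii) applies because the principal part of the initial exchange matrix is a quiver whose underlying graph is $E_6$: thus $\A$ is of finite type $E_6$, and its non-frozen cluster variables are in bijection with $\Phi_{\ge -1}$, giving $6+36=42$ of them; together with the three frozen variables $x_6, x_7, x_8$ there are $45$ cluster variables in total. The plan is first to compute all $42$ non-frozen cluster variables explicitly as Laurent polynomials in the initial seed $(\xx,\Gamma)$ by iterated mutation, with the closed-form expressions collected in Appendix~A. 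Then, by direct inspection of those formulas, I would verify that each of the $45$ cluster variables admits a rewriting as a polynomial with integer coefficients in $a,a',b,c,c',d,d',e,f$. A delicate non-trivial sub-point is that the initial cluster variables $x_1, x_2, x_3$, which are not on the list of nine generators, must themselves be expressible as polynomials in the nine; these are the hidden integrality identities in the $E_6$ cluster algebra that make the polynomial ring description possible.

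For algebraic independence, I would invoke a transcendence degree argument. Since $\A \subseteq \F = \Q(x_0,\ldots,x_8)$, we have $\mathrm{trdeg}_\Q \A \le 9$; conversely the initial variables $x_0,\ldots,x_8 \in \A$ are algebraically independent over $\Q$ (they freely generate $\F$), so $\mathrm{trdeg}_\Q \A = 9$. Once step (i) is done we have nine generators; any integral domain of transcendence degree $9$ generated by $9$ elements is a polynomial ring in those generators. Hence $a,a',b,c,c',d,d',e,f$ are algebraically independent over $\Q$, and therefore over $\Z$, giving the desired polynomial ring identification.

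The main obstacle is the generation step: verifying by direct computation that every cluster variable is a polynomial (not merely a Laurent polynomial) in the nine chosen generators. This requires the complete explicit list of cluster variables (Appendix~A) together with pattern matching against monomials in the nine generators, and is the reason the authors describe the work as mechanical and lengthy. The selection of these particular nine generators is not arbitrary: the corresponding nine elements of $\Phi_{\ge -1}$ are chosen so that, under the map $\iota$ of Proposition~\ref{equithm}, they go to the nine fundamental Kirillov-Reshetikhin classes $[W_{1,r}^{(i)}]$ that freely generate $R_2 = \Z[[W^{(i)}_{1,r}]]$ as a polynomial ring. This target structure motivates the choice but does not short-cut the computational verification in $\A$ itself.
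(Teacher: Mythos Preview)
Your proposal is correct and follows essentially the same approach as the paper: both argue generation by explicitly listing all cluster variables (the paper reduces the list to 24 cases via the symmetry $0\leftrightarrow 4$, $1\leftrightarrow 3$, $7\leftrightarrow 8$) and checking each is an integer polynomial in the nine chosen elements, then invoke algebraic independence. Your transcendence-degree argument for independence is a clean replacement for the paper's unelaborated ``it can be checked that these 9 generators are algebraically independent.''
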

\begin{proof}
The detailed expressions of cluster variables are shown in Appendix A.
Up to the symmetries $0 \leftrightarrow 4$, $1 \leftrightarrow 3$, $7 \leftrightarrow 8$ of $\widetilde{I}$, we reduce to the following 24 variables.

\noindent
$ x_6=bef+dc'+cd'-e-cfc'-bdd'$, \quad $x_7=ac'd+d'-ae-c'f$, \quad  $x\big[-\alpha_{1}\big]=fc'-d'$,\\
$x\big[-\alpha_{2}\big]=e-dc'+cfc'-cd'$, \qquad \qquad $ x\big[\alpha_{0}\big]=dc'-e$, \qquad \qquad $x\big[\alpha_{1}\big]=ac-1$,\\
$x\big[\alpha_{2}\big]=bf-1$, \qquad \qquad \qquad\quad $x\big[\alpha_{5}\big]=ef-dd'$, \qquad \quad $x\big[\alpha_{1}+\alpha_{2}\big]=1-ac+abd-bf$,\\
$x\big[\alpha_{2}+\alpha_{3}+\alpha_{5}\big]=a'd'-f$, \qquad\qquad\qquad \qquad\quad $x\big[\alpha_2+\alpha_3+\alpha_{4}\big]=bd'-c'$,\\
$x\big[\alpha_1+\alpha_2+\alpha_{3}+\alpha_5\big]=f-ad+a'(ae-d')$, \qquad $x\big[\alpha_{0}+\alpha_{1}+\alpha_{2}+\alpha_{3}\big]=c-bd+a'(be-cc')$,\\
$x\big[\alpha_{0}+\alpha_{1}+\alpha_{2}+\alpha_{3}+\alpha_{4}\big]=be-cc'$, \qquad \qquad\quad
$x\big[\alpha_1+\alpha_2+\alpha_{3}+\alpha_4+\alpha_5\big]ae-d'$,\\
$ x\big[\alpha_{1}+\alpha_{2}+\alpha_{3}\big]=-1+a(c-bd)+bf+a'(abe+c'-acc'-bd')$,\\
$x\big[\alpha_{1}+2\alpha_{2}+\alpha_{3}+\alpha_{5}\big]=f[-1+a(c-bd)+bf]+a'[ae+(-ad+f)c'-(ac-abd+bf)d']$,\\
$x\big[\alpha_{0}+2\alpha_{1}+2\alpha_{2}+\alpha_{3}+\alpha_{5}\big]=d[-1+a(c-bd)+bf]+a'[e+abde-bef+c(-ad+f)c'-cd']$,\\
$x\big[\alpha_0+\alpha_1+2\alpha_2+\alpha_{3}+\alpha_5\big]=(c-bd)f+a'(e-dc'-cd'+bdd')$,\\
$x\big[\alpha_0+\alpha_1+2\alpha_2+\alpha_{3}+\alpha_4+\alpha_5\big]=e-dc'+(-c+bd)d'$,\\
$x\big[\alpha_{0}+2\alpha_{1}+2\alpha_{2}+\alpha_{3}+\alpha_{4}+\alpha_{5}\big]=e+abde-bef+c(-ad+f)c'-cd'$,\\
$x\big[\alpha_0+2\alpha_1+2\alpha_2+2\alpha_{3}+\alpha_4+\alpha_5\big]=(cf-d)c'+(c-bd)(ae-d')+ea'(abe+c'-acc'-bd')$,\\
$x\big[\alpha_0+2\alpha_1+3\alpha_2+2\alpha_{3}+\alpha_4+\alpha_5\big]=e-2 b e f+a b c e f-a b^2 d e f+b^2 e f^2+(-c f (-2+a c+b f)+\\
 d(-1+a b c f))c'-cd'+bdd'+a'(be-cc')[a e + (-a d + f)c'-(a c - a b d + b f)d']$,\\
$x\big[\alpha_{0}+2\alpha_{1}+3\alpha_{2}+2\alpha_{3}+\alpha_{4}+2\alpha_{5}\big]=-e (a d - f) (-1 + b f) +[-d + c f + e (1 + a b d - b f)a']d'-\\
ca'd'^2-(ad-f)c'(d-cf+ca'd')$.

As shown, all cluster variables can be represented by some polynomial. And furthermore, it is can be checked  that these $9$ generators are algebraically independent.

Hence, $\A=\Z[a,a',b,c,c',d,d',e,f]$.
\end{proof}

\begin{proposition}\label{ringiso}
The assignment
\[
\begin{array}{ccc}
  x\big[-\alpha_{0}\big] \mapsto [L(Y_{1,0})], & x\big[\alpha_0+\alpha_1+\alpha_2+\alpha_5\big] \mapsto [L(Y_{1,2})], & x\big[\alpha_{3}+\alpha_{4}\big] \mapsto [L(Y_{1,4})],\\
 x\big[-\alpha_{4}\big] \mapsto [L(Y_{3,0})], & x\big[\alpha_2+\alpha_3+\alpha_4+\alpha_5\big] \mapsto [L(Y_{3,2})], & x\big[\alpha_{0}+\alpha_{1}\big] \mapsto [L(Y_{3,4})], \\
  x\big[\alpha_2+\alpha_5\big] \mapsto [L(Y_{2,1})], & x\big[\alpha_{0}+\alpha_{1}+\alpha_{2}+\alpha_{3}+\alpha_{4}+\alpha_{5}\big] \mapsto [L(Y_{2,3})],&x\big[-\alpha_{5}\big] \mapsto [L(Y_{2,5})],
\end{array}
\]
extends to a ring isomorphism $\iota$ from $\A$ to $R_2$.
\end{proposition}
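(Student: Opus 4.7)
The plan is to observe that both sides of $\iota$ carry the structure of a polynomial ring in nine algebraically independent generators, and that the assignment in the statement matches up the two natural free generating sets.

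First, I would invoke the Frenkel--Reshetikhin description of $R$ recalled in Section~\ref{s2.2}: the Grothendieck ring $R$ is freely generated as a commutative ring by the classes $[V_{i,a}]=[L(Y_{i,a})]$ of the fundamental representations. Restricting to $\CC_2$, where $W^{(i)}_{1,r}=V_{i,q^r}=L(Y_{i,r})$, gives
\[
R_2 \;=\; \Z\bigl[[L(Y_{1,0})],[L(Y_{1,2})],[L(Y_{1,4})],[L(Y_{2,1})],[L(Y_{2,3})],[L(Y_{2,5})],[L(Y_{3,0})],[L(Y_{3,2})],[L(Y_{3,4})]\bigr]
\]
as a polynomial ring in nine algebraically independent generators.

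Second, I would use the theorem that immediately precedes the proposition, which presents $\A$ itself as the polynomial ring $\Z[a,a',b,c,c',d,d',e,f]$ in nine algebraically independent cluster variables. Reading off the nine assignments in the statement of Proposition~\ref{ringiso}, one sees that $\iota$ sends the nine polynomial generators of $\A$ bijectively onto the nine polynomial generators of $R_2$: explicitly, $a\mapsto[L(Y_{1,0})]$, $d\mapsto[L(Y_{1,2})]$, $c'\mapsto[L(Y_{1,4})]$, $f\mapsto[L(Y_{2,1})]$, $e\mapsto[L(Y_{2,3})]$, $b\mapsto[L(Y_{2,5})]$, $a'\mapsto[L(Y_{3,0})]$, $d'\mapsto[L(Y_{3,2})]$, $c\mapsto[L(Y_{3,4})]$. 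Any bijection between the free generating sets of two polynomial rings over $\Z$ extends uniquely to a ring isomorphism, so $\iota$ is automatically well-defined, injective, and surjective.

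The real content is therefore concentrated in the preceding theorem rather than in the proposition itself: to trust the argument we must know that the chosen nine cluster variables really are algebraically independent polynomial generators of $\A$. This requires the explicit polynomial expressions for the remaining cluster variables recorded in Appendix~A (with the $0\leftrightarrow 4$, $1\leftrightarrow 3$, $7\leftrightarrow 8$ symmetry used to halve the labour) together with a check of algebraic independence of the nine chosen generators, and this book-keeping is the main obstacle in the overall argument. Once it is accepted, Proposition~\ref{ringiso} collapses to a direct identification of generators on the two sides and no further work is needed.
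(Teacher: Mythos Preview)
Your proposal is correct and follows essentially the same approach as the paper: both observe that $\A$ (via the preceding theorem) and $R_2$ (via Frenkel--Reshetikhin) are polynomial rings in nine generators, and that the stated assignment is a bijection between these generating sets, whence a ring isomorphism. Your write-up is in fact more explicit than the paper's one-line proof, spelling out the generator-by-generator matching and correctly locating the substantive work in the preceding theorem and Appendix~A.
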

\begin{proof}
Both cluster algebra $\A$ and Grothendieck ring $R_2$ are polynomial rings and an one-to-one correspondence among generators is defined as in the proposition, so they are isomorphic.
\end{proof}

\subsection{Truncated $q$-characters of real prime objects}\label{s4.2}

For each $\beta\in \Phi_{\geq-1}$, we attach a simple object $S(\beta)$ of $\CC_2$.
\[
\begin{array}{rclrclrcl}
  S(-\alpha_{0})&=&L(Y_{1,0}), & S(\alpha_0+\alpha_1+\alpha_2+\alpha_5)&=&L(Y_{1,2}), & S(\alpha_{3}+\alpha_{4})&=&L(Y_{1,4}),\\
 S(-\alpha_{4})&=&L(Y_{3,0}), &S(\alpha_2+\alpha_3+\alpha_4+\alpha_5)&=&L(Y_{3,2}),
&S(\alpha_{0}+\alpha_{1})&=&L(Y_{3,4}),\\
S(\alpha_2+\alpha_5)&=&L(Y_{2,1}),&S(\alpha_{0}+\alpha_{1}+\alpha_{2}+\alpha_{3}+\alpha_{4}+\alpha_{5})&=&L(Y_{2,3}),
&S(-\alpha_{5})&=&L(Y_{2,5}),
\end{array}
\]
\[
\begin{array}{rclrclrcl}
S(-\alpha_1)&=& L(Y_{1,4}Y_{2,1}),&S(-\alpha_2)&=& L(Y_{1,4}Y_{2,1}Y_{3,4}),
&S(-\alpha_3)&=& L(Y_{2,1}Y_{3,4}),\\
S(\alpha_0)&=& L(Y_{1,2}Y_{1,4}),
&S(\alpha_4)&=& L(Y_{3,2}Y_{3,4}),
&S(\alpha_5)&=&L(Y_{2,1}Y_{2,3}),\\
S(\alpha_1)&=&L(Y_{1,0}Y_{3,4}),
&S(\alpha_2)&=&L(Y_{2,1}Y_{2,5}),
&S(\alpha_3)&=&L(Y_{1,4}Y_{3,0}),
\end{array}
\]

For any positive roots $\alpha=\sum_{i\in I}a_i\alpha_i\in \Phi_{>0}-\{\Pi,\ \a_0+\a_1,\ \a_3+\a_4,\ \a_2+\a_5\}$, $\alpha$ can be uniquely written as: 
\begin{itemize}
  \item[{\rm (i)}] if $a_1 = a_2 \geq a_3$, $\alpha=a_3(\alpha_{0}+\alpha_{1}+\alpha_{2}+\alpha_{3}+\alpha_{4}+\alpha_{5})+(a_1-a_3)(\alpha_0+\alpha_1+\alpha_2+\alpha_5)+\sum_{i=0,4,5}b_i(-\alpha_i)$ for some $b_i\in \N$,
  \item[{\rm (ii)}] if $a_1 \leq a_2 = a_3$, $\alpha=a_1(\alpha_{0}+\alpha_{1}+\alpha_{2}+\alpha_{3}+\alpha_{4}+\alpha_{5})+(a_3-a_1)(\alpha_2+\alpha_3+\alpha_4+\alpha_5)+\sum_{i=0,4,5}b_i(-\alpha_i)$ for some $b_i\in \N$,
  \item[{\rm (iii)}] if $a_1 = a_3 < a_2$, $\alpha=\lfloor \frac{a_2}{2}\rfloor(\alpha_0+\alpha_1+\alpha_2+\alpha_5)+\lfloor \frac{a_2}{2}\rfloor(\alpha_2+\alpha_3+\alpha_4+\alpha_5)+(a_1-\lfloor \frac{a_2}{2}\rfloor)(\alpha_{0}+\alpha_{1}+\alpha_{2}+\alpha_{3}+\alpha_{4}+\alpha_{5})+\sum_{i=0,4,5}b_i(-\alpha_i)$ for some $b_i\in \N$, where $\lfloor \frac{a_2}{2}\rfloor$ denotes the biggest integer $\leq \frac{a_2}{2}$.
\end{itemize}

Correspondingly, we define $S(\a)$:
\begin{itemize}
  \item[{\rm (i)}] if $a_1 = a_2 \geq a_3$,
      $S(\a)=Y_{1,0}^{b_0}Y_{1,2}^{a_1-a_3}Y_{2,3}^{a_3}Y_{2,5}^{b_5}Y_{3,0}^{b_4}Y_{3,2}^{0}$,
  \item[{\rm (ii)}] if $a_1 \leq a_2 = a_3$,
   $S(\a)=Y_{1,0}^{b_0}Y_{1,2}^{0}Y_{2,3}^{a_1}Y_{2,5}^{b_5}Y_{3,0}^{b_4}Y_{3,2}^{a_3-a_1}$,
  \item[{\rm (iii)}] if $a_1 = a_3 < a_2$,
   $S(\a)=Y_{1,0}^{b_0}Y_{1,2}^{\lfloor \frac{a_2}{2}\rfloor}Y_{2,3}^{a_1-\lfloor
   \frac{a_2}{2}\rfloor}Y_{2,5}^{b_5}Y_{3,0}^{b_4}Y_{3,2}^{\lfloor \frac{a_2}{2}\rfloor}$.
\end{itemize}

For $i\in I_0=\{1,3\}$, we denote $F_i=L(Y_{i,0}Y_{i,2}Y_{i,4})$; for $i=2$, we denote $F_2=L(Y_{2,1}Y_{2,3}Y_{2,5})$.

\begin{lemma}\label{primeqcha}
For $\forall\beta\in \Phi_{\ge -1}$, we can calculate the truncated $q$-character $\chi_q(S(\beta))_{\leq4}$ exactly.
\end{lemma}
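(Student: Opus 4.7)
The plan is to proceed case by case over the $42$ almost positive roots $\beta\in\Phi_{\ge -1}$, organised into three groups matching the displayed assignments at the start of Section~\ref{s4.2}: the nine ``initial'' roots, the nine ``named'' roots, and the $24$ ``generic'' positive roots prescribed by the trichotomy on $(a_1,a_2,a_3)$. In every case we work from the explicit dominant monomial $m_\beta$ with $S(\beta)=L(m_\beta)$ given in the section, and compute $\chi_q(L(m_\beta))_{\le 4}$ using the tools of Sections~\ref{s2.4}--\ref{s2.5}.

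For the nine initial roots $m_\beta=Y_{i,r}$ is a single fundamental monomial, so $L(m_\beta)$ is a Kirillov--Reshetikhin module and hence minuscule; by the truncated Frenkel--Mukhin statement of Section~\ref{s2.5} we have $\chi_q(L(Y_{i,r}))_{\le 4}=FM(Y_{i,r})_{\le 4}$, which we read off from one direct run of the FM-algorithm followed by discarding any monomial carrying an $A_{j,s}^{-1}$ with $s\ge 5$. Three of the ``named'' cases give further KR modules ($L(Y_{1,2}Y_{1,4})=W^{(1)}_{2,2}$, $L(Y_{3,2}Y_{3,4})=W^{(3)}_{2,2}$, $L(Y_{2,1}Y_{2,3})=W^{(2)}_{2,1}$) and are handled in the same way. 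For the remaining $27$ cases (six ``named'' plus all $24$ generic), $m_\beta$ is a product of building blocks drawn from the nine monomials already treated. Here we invoke multiplicativity of the ring homomorphism $\chi_q$: once it is shown that the tensor product of the corresponding KR/fundamental factors is simple, $\chi_q(L(m_\beta))_{\le 4}$ is the truncation of the product of the $q$-characters from the first two steps.

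Verifying the tensor factorisation $L(m_\beta)\cong\bigotimes L(m^{(i)})$ is the main obstacle. When the support of $m_\beta$ sits on a single Dynkin node (as for $Y_{2,1}Y_{2,5}$, or for powers $Y_{i,r}^{k}$ that occur in the trichotomy), the $\widehat{sl_{2}}$-criterion of Chari--Pressley recalled in Section~\ref{s2.3} applies directly: the multi-set of roots of the Drinfeld polynomial decomposes uniquely into $q$-segments in general position, so the associated tensor product of KR modules is already simple and equals $L(m_\beta)$. When the support spans several $A_3$-nodes, a finite combinatorial check over the nine possible building-block spectral parameters shows that at every pair of adjacent nodes the parameters never differ by $q^{\pm 1}$, so the tensor product remains simple, and coincides with $L(m_\beta)$ by Theorem~\ref{propertysim}(iii). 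For any residual case in which this generic analysis is inconclusive we fall back on the auxiliary polynomial $N(m_\beta)_{\le 4}$: applying Proposition~\ref{qchadecom} with $J=\{i\}$ for each $i\in I$ controls the $i$-dominant monomials of $\chi_q(L(m_\beta))_{\le 4}$ tightly enough to force the equality $\chi_q(L(m_\beta))_{\le 4}=N(m_\beta)_{\le 4}$, completing the computation for every $\beta\in\Phi_{\ge -1}$.
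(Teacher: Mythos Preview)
Your central strategy---factorising $L(m_\beta)$ as a tensor product of fundamental or KR modules and then multiplying their $q$-characters---cannot work, because the objects $S(\beta)$ are \emph{prime}: this is exactly part~(iii) of Theorem~\ref{monoicat}. So for every non-fundamental $\beta$ the tensor product of the building blocks is \emph{not} simple, and its $q$-character strictly exceeds $\chi_q(S(\beta))$. Concretely, $L(Y_{2,1})\otimes L(Y_{2,5})$ is not simple (the truncated product has six monomials, while $\chi_q(L(Y_{2,1}Y_{2,5}))_{\le 4}$ has five), even though the $\widehat{sl_2}$ segment test you invoke passes; and $L(Y_{1,0})\otimes L(Y_{2,3})$ is not simple although the adjacent-node parameters differ by $3$, not $\pm 1$. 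Your proposed criteria are neither correct for $U_q(\widehat{A_3})$ nor sufficient, and the ``residual'' case is in fact the generic one.

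The paper does something quite different. For the $18$ KR modules and $8$ further modules it shows minusculity (via Theorem~\ref{propertysim} and Proposition~\ref{qchadecom}), so $\chi_q=FM$. For the remaining prime simples it uses a \emph{sandwich} argument: $N(m)_{\le 4}\le \chi_q(L(m))_{\le 4}\le \chi_q(L(m_1))_{\le 4}\cdot\chi_q(L(m_2))_{\le 4}$ for \emph{two} different factorisations $m=m_1m_2=m_1'm_2'$, and then checks that
\[
\bigl(\chi_q(L(m_1))\chi_q(L(m_2))-N(m)\bigr)_{\le 4}\ \cap\ \bigl(\chi_q(L(m_1'))\chi_q(L(m_2'))-N(m)\bigr)_{\le 4}=\emptyset,
\]
which forces $\chi_q(L(m))_{\le 4}=N(m)_{\le 4}$. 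Your fallback mentions $N(m)$ and Proposition~\ref{qchadecom}, but that proposition only gives a decomposition into $\varphi_J$-pieces; it does not by itself bound $\chi_q(L(m))$ from above. The missing idea is precisely this intersection-of-upper-bounds step.
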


The $18$ Kirillov-Reshetikhin modules of $\CC_2$ are minuscule, their $q$-characters can be obtained by FM-algorithm.

Up to the symmetries $1 \leftrightarrow 3$ of $I$, we list the truncated $q$-characters of the following $12$ Kirillov-Reshetikhin modules.

\noindent
$\chi_q(L(Y_{2,3}))_{\leq4}=Y_{2,3}(1+A_{2,4}^{-1})$,\quad $\chi_q(L(Y_{1,2}))_{\leq4}=Y_{1,2} (1+A_{1,3}^{-1}+A_{1,3}^{-1} A_{2,4}^{-1})$,\\
$\chi_q(L(Y_{1,4}))_{\leq4}=Y_{1,4}$, \quad $\chi_q(L(Y_{1,2}Y_{1,4}))_{\leq4}=Y_{1,2}Y_{1,4}$,\quad $\chi_q(L(Y_{1,0}Y_{1,2}Y_{1,4}))_{\leq4}=Y_{1,0}Y_{1,2}Y_{1,4}$,\\
$\chi_q(L(Y_{2,5}))_{\leq4}=Y_{2,5}$,\quad $\chi_q(L(Y_{2,3}Y_{2,5}))_{\leq4}=Y_{2,3}Y_{2,5}$,\quad $\chi_q(L(Y_{2,1}Y_{2,3}Y_{2,5}))_{\leq4}=Y_{2,1}Y_{2,3}Y_{2,5}$,\\
$\chi_q(L(Y_{2,1}Y_{2,3}))_{\leq4}=Y_{2,1}Y_{2,3}(1+A_{2,4}^{-1}+A_{2,2}^{-1} A_{2,4}^{-1})$,\\
$\chi_q(L(Y_{1,0}))_{\leq4}=Y_{1,0} (1+A_{1,1}^{-1}+A_{1,1}^{-1} A_{2,2}^{-1}+A_{1,1}^{-1} A_{2,2}^{-1} A_{3,3}^{-1})$,\\
$\chi_q(L(Y_{2,1}))_{\leq4}=Y_{2,1}(1+A_{2,2}^{-1}+A_{2,2}^{-1} A_{3,3}^{-1}+A_{2,2}^{-1} A_{1,3}^{-1}+A_{2,2}^{-1} A_{3,3}^{-1}A_{1,3}^{-1}+A_{2,2}^{-1} A_{3,3}^{-1}A_{1,3}^{-1} A_{2,4}^{-1})$,\\
$\chi_q(L(Y_{1,0}Y_{1,2}))_{\leq4}=Y_{1,0}Y_{1,2}(1+A_{1,3}^{-1}+A_{1,1}^{-1} A_{1,3}^{-1}+A_{1,3}^{-1} A_{2,4}^{-1}+A_{1,1}^{-1} A_{1,3}^{-1} A_{2,4}^{-1}+A_{1,1}^{-1} A_{1,3}^{-1} A_{2,4}^{-1} A_{2,2}^{-1})$.

By Theorem~\ref{propertysim} and Proposition~\ref{qchadecom}, we know that the following $8$ objects are also minuscule.
\noindent
$\chi_q(L(Y_{2,5}Y_{1,2}))_{\leq4}=Y_{2,5}Y_{1,2}(1+A_{1,3}^{-1})$,\quad
$\chi_q(L(Y_{1,4}Y_{2,1}))_{\leq4}=Y_{1,4}Y_{2,1}(1+A_{2,2}^{-1}+A_{2,2}^{-1} A_{3,3}^{-1})$,\\
$\chi_q(L(Y_{1,0}Y_{2,3}Y_{2,5}))_{\leq4}=Y_{1,0}Y_{2,3}Y_{2,5}(1+A_{1,1}^{-1})$,\quad
$\chi_q(L(Y_{1,4}Y_{2,1}Y_{3,4}))_{\leq4}=Y_{1,4}Y_{2,1}Y_{3,4}(1+A_{2,2}^{-1})$,\\
$\chi_q(L(Y_{1,0}Y_{3,4}))_{\leq4}=Y_{1,0}Y_{3,4}(1+A_{1,1}^{-1}+A_{2,2}^{-1} A_{1,1}^{-1})$,\\
$\chi_q(L(Y_{1,0}Y_{1,2}Y_{2,5}))_{\leq4}=Y_{1,0}Y_{1,2}Y_{2,5}(1+A_{1,3}^{-1}+A_{1,1}^{-1}A_{1,3}^{-1})$,\\
$\chi_q(L(Y_{2,3}Y_{1,0}))_{\leq4}=Y_{2,3}Y_{1,0}(1+A_{1,1}^{-1}+A_{2,4}^{-1}+A_{2,4}^{-1}A_{1,1}^{-1}+A_{2,4}^{-1}A_{1,1}^{-1} A_{2,2}^{-1})$,\\
$\chi_q(L(Y_{2,1}Y_{2,5}))_{\leq4}=Y_{2,1}Y_{2,5}(1+A_{2,2}^{-1}+A_{2,2}^{-1} A_{3,3}^{-1}+A_{2,2}^{-1} A_{1,3}^{-1}+A_{2,2}^{-1} A_{3,3}^{-1}A_{1,3}^{-1})$.

Though the remaining real prime simple objects are not minuscule, we can calculate them accurately.

Let us introduce for $P, Q \in \Z[Y_{i,r}^\pm; i\in I, r\in\Z]$ the notation
$$P\cap Q =\{ the \ monomials  \ both \ contained \ in \ P \ and \ Q\}.$$

Consider $L(Y_{1,0}Y_{2,3}Y_{3,0})$,

\noindent
$N(Y_{1,0}Y_{2,3}Y_{3,0})_{\leq4}=Y_{1,0} Y_{3,0} Y_{2,3} (1+A_{1,1}^{-1}+A_{2,4}^{-1}+A_{3,1}^{-1}+A_{1,1}^{-1} A_{2,4}^{-1}+A_{1,1}^{-1} A_{2,2}^{-1} A_{2,4}^{-1}+A_{1,1}^{-1} A_{3,1}^{-1}+A_{1,1}^{-1} A_{2,2}^{-1} A_{3,1}^{-1}
+A_{2,4}^{-1} A_{3,1}^{-1}+A_{1,1}^{-1} A_{2,4}^{-1} A_{3,1}^{-1}+A_{2,2}^{-1} A_{2,4}^{-1} A_{3,1}^{-1}+2 A_{1,1}^{-1} A_{2,2}^{-1} A_{2,4}^{-1} A_{3,1}^{-1}+A_{1,1}^{-1} A_{2,2}^{-2} A_{2,4}^{-1} A_{3,1}^{-1})\leq \chi_q(L(Y_{1,0}Y_{2,3}Y_{3,0})).$

By Theorem~\ref{propertysim}, we have\\
$\chi_q(L(Y_{1,0}Y_{2,3}Y_{3,0}))_{\leq4}\leq \chi_q(L(Y_{i,0}Y_{2,3}))_{\leq4}\chi_q(L(Y_{j,0}))_{\leq4}$, where $ i=1,j=3 \ or\  i=3,j=1$.

But $(\chi_q(L(Y_{1,0}Y_{2,3}))_{\leq4}\chi_q(L(Y_{3,0}))_{\leq4}-N(Y_{1,0}Y_{2,3}Y_{3,0})_{\leq4})
\cap
(\chi_q(L(Y_{3,0}Y_{2,3}))_{\leq4}\chi_q(L(Y_{1,0}))_{\leq4}-\\ N(Y_{1,0}Y_{2,3}Y_{3,0})_{\leq4})
 = \emptyset.$

Hence,  $\chi_q(L(Y_{1,0}Y_{2,3}Y_{3,0}))_{\leq4} \ \ =\ \  N(Y_{1,0}Y_{2,3}Y_{3,0})_{\leq4}$.

In same way, we have

$\chi_q(L(Y_{1,2}Y_{3,2}Y_{2,5}))_{\leq4}= N(Y_{1,2}Y_{3,2}Y_{2,5})_{\leq4}=Y_{1,2}Y_{3,2}Y_{2,5}(1+A_{3,3}^{-1}+A_{1,3}^{-1}+A_{1,3}^{-1} A_{3,3}^{-1}+A_{1,3}^{-1} A_{3,3}^{-1} A_{2,4}^{-1})$,

$\chi_q(L(Y_{1,0}Y_{1,2}Y_{2,3}Y_{2,5}))_{\leq4}= N(Y_{1,0}Y_{1,2}Y_{2,3}Y_{2,5})_{\leq4}=Y_{1,0}Y_{1,2}Y_{2,3}Y_{2,5}(1+A_{1,3}^{-1}+A_{1,3}^{-1} A_{1,1}^{-1}+A_{1,3}^{-1} A_{2,4}^{-1}+A_{1,3}^{-1} A_{1,1}^{-1}A_{2,4}^{-1})$,

$\chi_q(L(Y_{1,0}Y_{1,2}Y_{2,3}Y_{2,5}Y_{3,0}))_{\leq4} = N(Y_{1,0}Y_{1,2}Y_{2,3}Y_{2,5}Y_{3,0})_{\leq4}=Y_{1,0}Y_{1,2}Y_{2,3}Y_{2,5}Y_{3,0}(1+A_{1,3}^{-1}+A_{3,1}^{-1}+A_{1,1}^{-1} A_{1,3}^{-1}+
A_{1,3}^{-1} A_{3,1}^{-1}+A_{1,1}^{-1} A_{1,3}^{-1} A_{3,1}^{-1}+A_{1,3}^{-1} A_{2,4}^{-1}+A_{1,3}^{-1} A_{2,4}^{-1} A_{3,1}^{-1}+A_{1,1}^{-1} A_{1,3}^{-1} A_{2,4}^{-1}+A_{1,1}^{-1} A_{1,3}^{-1} A_{2,4}^{-1} A_{3,1}^{-1}+A_{1,1}^{-1} A_{1,3}^{-1} A_{2,2}^{-1} A_{2,4}^{-1} A_{3,1}^{-1})$,

$\chi_q(L(Y_{1,0}Y_{2,3}Y_{2,5}Y_{3,0}))_{\leq4}= N(Y_{1,0}Y_{2,3}Y_{2,5}Y_{3,0})_{\leq4}=Y_{1,0}Y_{2,3}Y_{2,5}Y_{3,0}(1+A_{1,1}^{-1}1+A_{3,1}^{-1}+A_{3,1}^{-1} A_{1,1}^{-1}+A_{3,1}^{-1} A_{2,2}^{-1} A_{1,1}^{-1}) $,

$\chi_q(L(Y_{1,0}Y_{1,2}Y_{2,5}Y_{3,2}))_{\leq4} = N(Y_{1,0}Y_{1,2}Y_{2,5}Y_{3,2})_{\leq4}=Y_{1,0}Y_{1,2}Y_{2,5}Y_{3,2}(1+A_{1,3}^{-1}+A_{3,3}^{-1}+A_{1,1}^{-1} A_{1,3}^{-1}+A_{1,3}^{-1} A_{3,3}^{-1}+
A_{1,1}^{-1} A_{1,3}^{-1} A_{3,3}^{-1}+A_{3,3}^{-1} A_{2,4}^{-1} A_{1,3}^{-1}+A_{1,1}^{-1} A_{3,3}^{-1} A_{2,4}^{-1} A_{1,3}^{-1})$,

$\chi_q(L(Y_{1,0}Y_{1,2}Y_{2,5}Y_{3,0}Y_{3,2}))_{\leq4} = N(Y_{1,0}Y_{1,2}Y_{2,5}Y_{3,0}Y_{3,2})_{\leq4}=Y_{1,0}Y_{1,2}Y_{2,5}Y_{3,0}Y_{3,2}(1+A_{1,3}^{-1}+A_{3,3}^{-1}+A_{1,1}^{-1} A_{1,3}^{-1}
+A_{3,1}^{-1} A_{3,3}^{-1}+A_{1,3}^{-1} A_{3,3}^{-1}+A_{3,1}^{-1} A_{1,3}^{-1} A_{3,3}^{-1}+A_{1,1}^{-1} A_{1,3}^{-1} A_{3,3}^{-1}+A_{1,1}^{-1} A_{1,3}^{-1} A_{3,1}^{-1} A_{3,3}^{-1}+A_{3,3}^{-1} A_{2,4}^{-1} A_{1,3}^{-1}+A_{3,1}^{-1} A_{3,3}^{-1} A_{2,4}^{-1} A_{1,3}^{-1}
+A_{1,1}^{-1} A_{3,3}^{-1} A_{2,4}^{-1} A_{1,3}^{-1}+A_{1,1}^{-1} A_{1,3}^{-1} A_{3,1}^{-1} A_{3,3}^{-1} A_{2,4}^{-1}+A_{1,1}^{-1} A_{1,3}^{-1} A_{3,1}^{-1} A_{3,3}^{-1} A_{2,2}^{-1} A_{2,4}^{-1})$,

$\chi_q(L(Y_{1,0}Y_{1,2}Y_{2,3}Y_{2,5}Y_{3,0}Y_{3,2}))_{\leq4} = N(Y_{1,0}Y_{1,2}Y_{2,3}Y_{2,5}Y_{3,0}Y_{3,2})_{\leq4}=Y_{1,0}Y_{1,2}Y_{2,3}Y_{2,5}Y_{3,0}Y_{3,2}(1+A_{1,3}^{-1}+
A_{3,3}^{-1}
+A_{1,1}^{-1} A_{1,3}^{-1}+A_{3,1}^{-1} A_{3,3}^{-1}+A_{2,4}^{-1} A_{1,3}^{-1}+A_{1,3}^{-1} A_{3,3}^{-1}+A_{3,1}^{-1} A_{1,3}^{-1} A_{3,3}^{-1}+A_{1,1}^{-1} A_{2,4}^{-1} A_{1,3}^{-1}+A_{1,1}^{-1} A_{1,3}^{-1} A_{3,3}^{-1}+A_{1,1}^{-1} A_{1,3}^{-1} A_{3,1}^{-1} A_{3,3}^{-1}
+A_{2,4}^{-1} A_{3,3}^{-1}+A_{3,3}^{-1} A_{2,4}^{-1} A_{3,1}^{-1}+2 A_{3,3}^{-1} A_{2,4}^{-1} A_{1,3}^{-1}+A_{3,3}^{-1} A_{2,4}^{-2} A_{1,3}^{-1}+2 A_{3,1}^{-1} A_{3,3}^{-1} A_{2,4}^{-1} A_{1,3}^{-1}+A_{3,1}^{-1} A_{3,3}^{-1} A_{2,4}^{-2} A_{1,3}^{-1}+
2 A_{1,1}^{-1} A_{3,3}^{-1} A_{2,4}^{-1} A_{1,3}^{-1}+A_{1,1}^{-1} A_{3,3}^{-1} A_{2,4}^{-2} A_{1,3}^{-1}+2 A_{1,1}^{-1} A_{1,3}^{-1} A_{3,1}^{-1} A_{3,3}^{-1} A_{2,4}^{-1}+A_{1,1}^{-1} A_{1,3}^{-1} A_{3,1}^{-1} A_{3,3}^{-1} A_{2,2}^{-1} A_{2,4}^{-1}
+A_{1,1}^{-1} A_{1,3}^{-1} A_{3,1}^{-1} A_{3,3}^{-1} A_{2,4}^{-2}+A_{1,1}^{-1} A_{1,3}^{-1} A_{3,1}^{-1} A_{3,3}^{-1} A_{2,2}^{-1} A_{2,4}^{-2}),$

$\chi_q(L(Y_{1,0} Y_{2,3}^2 Y_{2,5} Y_{3,0}))_{\leq4}= N(Y_{1,0} Y_{2,3}^2 Y_{2,5} Y_{3,0})_{\leq4}=Y_{1,0} Y_{2,3}^2 Y_{2,5} Y_{3,0} (1+A_{3,1}^{-1}+A_{1,1}^{-1}+A_{2,4}^{-1}+A_{1,1}^{-1} A_{2,4}^{-1}
+A_{3,1}^{-1} A_{1,1}^{-1}+A_{3,1}^{-1} A_{2,4}^{-1}+A_{1,1}^{-1} A_{2,4}^{-1} A_{3,1}^{-1}+A_{3,1}^{-1} A_{2,2}^{-1} A_{2,4}^{-1} A_{1,1}^{-1})$,

$\chi_q(L(Y_{1,0} Y_{1,2} Y_{2,3} Y_{2,5}^2 Y_{3,0} Y_{3,2}))_{\leq4}=  N(Y_{1,0} Y_{1,2} Y_{2,3} Y_{2,5}^2 Y_{3,0} Y_{3,2})_{\leq4}=Y_{1,0} Y_{1,2} Y_{2,3} Y_{2,5}^2 Y_{3,0} Y_{3,2} (1+A_{1,3}^{-1}+A_{3,3}^{-1}
+A_{1,1}^{-1} A_{1,3}^{-1}+A_{3,1}^{-1} A_{3,3}^{-1}+A_{1,3}^{-1} A_{3,3}^{-1}+A_{3,1}^{-1} A_{1,3}^{-1} A_{3,3}^{-1}+A_{1,1}^{-1} A_{1,3}^{-1} A_{3,3}^{-1}+A_{1,1}^{-1} A_{1,3}^{-1} A_{3,1}^{-1} A_{3,3}^{-1}+A_{3,3}^{-1} A_{2,4}^{-1} A_{1,3}^{-1}
+A_{3,1}^{-1} A_{3,3}^{-1} A_{2,4}^{-1} A_{1,3}^{-1}+A_{1,1}^{-1} A_{3,3}^{-1} A_{2,4}^{-1} A_{1,3}^{-1}+A_{1,1}^{-1} A_{1,3}^{-1} A_{3,1}^{-1} A_{3,3}^{-1} A_{2,4}^{-1}).$


\begin{theorem}\label{isomor}

We have
\[
   \iota(x\big[\beta\big]) = [S(\beta)],\quad \iota(x_6) = [F_2],\quad \iota(x_7) = [F_1],\quad  \iota(x_8) = [F_3],\qquad (\beta\in \Phi_{\ge -1}).
\]
\end{theorem}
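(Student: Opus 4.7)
The plan is to exploit the fact that $\iota$ has already been established in Proposition~\ref{ringiso} as a ring isomorphism from the polynomial ring $\A = \Z[a,a',b,c,c',d,d',e,f]$ onto $R_2$, with the nine algebra generators sent to the classes of the nine fundamental modules $[L(Y_{i,r})]$ of $\CC_2$. The theorem in Section~\ref{s4.1} has expressed each cluster variable $x[\beta]$ and each frozen variable $x_6,x_7,x_8$ explicitly as a polynomial in these nine generators, so the image $\iota(x[\beta])$ (respectively $\iota(x_j)$) is already determined as a polynomial expression in $R_2$. Hence the statement reduces to checking, for each $\beta\in\Phi_{\geq-1}$ and for $j=6,7,8$, an equality of classes in $R_2$. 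Since the truncation map $V\mapsto \chi_q(V)_{\leq 4}$ is an injective ring homomorphism from $R_2$ into $\mathcal Y$, it suffices to verify these equalities after applying $\chi_q(\cdot)_{\leq 4}$.

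The concrete procedure is then as follows. First I would read off $\chi_q(\iota(g))_{\leq 4}$ for each of the nine generators $g\in\{a,a',b,c,c',d,d',e,f\}$ directly from the list of truncated $q$-characters of fundamental modules given in Lemma~\ref{primeqcha}. Next, for each of the 24 representative cluster variables (the symmetries $0\leftrightarrow 4$, $1\leftrightarrow 3$, $7\leftrightarrow 8$ of $\widetilde I$ cover the remaining ones), I would substitute these nine truncated $q$-characters into the explicit polynomial formula for $x[\beta]$, expand in $\mathcal Y$, and compare with the truncated $q$-character $\chi_q(S(\beta))_{\leq 4}$ listed in Lemma~\ref{primeqcha}. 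The same procedure applied to the polynomials $x_6=bef+dc'+cd'-e-cfc'-bdd'$ and its analogues $x_7,x_8$ must reduce to the clean Kirillov-Reshetikhin truncations $Y_{2,1}Y_{2,3}Y_{2,5}$, $Y_{1,0}Y_{1,2}Y_{1,4}$ and $Y_{3,0}Y_{3,2}Y_{3,4}$ respectively, which identifies the frozen variables with $[F_2]$, $[F_1]$, $[F_3]$.

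The conceptual content is light, so the obstacle is bookkeeping. The heaviest cases are the higher cluster variables such as $x[\alpha_0+2\alpha_1+3\alpha_2+2\alpha_3+\alpha_4+\alpha_5]$ and $x[\alpha_0+2\alpha_1+3\alpha_2+2\alpha_3+\alpha_4+2\alpha_5]$: these involve products of six factors of $\chi_q$'s of non-trivial objects, and the resulting sums contain dozens of monomials in $Y_{i,r}^{\pm 1}$ whose cancellations must match, monomial by monomial, the $N(m)_{\leq 4}$ expressions of the non-minuscule $S(\beta)$. To control this I would organise the verification by the value of the pair $(a_1,a_2,a_3)$ that determines the case (i)--(iii) of the decomposition of $\beta$ used to define $S(\beta)$, and in each case exploit the symmetry $1\leftrightarrow 3$ to halve the computation. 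Once all 24 comparisons and the three frozen ones succeed, the full statement of the theorem follows by symmetry, and together with the ring isomorphism from Proposition~\ref{ringiso} this completes the identification $\iota(x[\beta])=[S(\beta)]$ and $\iota(x_j)=[F_i]$ for all indices.
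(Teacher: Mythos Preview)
Your proposal is correct and matches the paper's approach exactly: the paper's own proof is the single line ``The result follows from Lemma~\ref{primeqcha},'' and what you have written is precisely the unpacking of that line --- use the polynomial expressions of Section~\ref{s4.1} for the cluster variables, push them through the ring isomorphism $\iota$ of Proposition~\ref{ringiso}, and compare via the injective truncation map with the truncated $q$-characters computed in Lemma~\ref{primeqcha}. There is nothing to add.
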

\begin{proof}
The result  follows from lemma~\ref{primeqcha}.
\end{proof}

\subsection{Compatible subsets}

In this subsection, we will give a specific description of compatible subsets of type $E_{6}$.

For any $\alpha,\ \beta\in \Phi_{\geq-1}$, Fomin and Zelevinsky \cite{FZ2} associate a nonnegative integer $(\alpha||\beta)$, known as the compatibility degree.

Let $Q$ denote the root lattice, $Q^\vee$ the root lattice for the dual root system. The vertices of the Dynkin diagram are labelled as Proposition~\ref{equithm}. We have a partition $\widetilde{I}=\widetilde{I}_0 \sqcup \widetilde{I}_1$, and we choose $\widetilde{I}_0=\{1,3,5\}$. For $\alpha \in Q$, we denote by $[\alpha : \alpha_i]$
the coefficient of $\alpha_i$ in the expansion of $\alpha$ in the
basis $\{\a_i\ : i\in \widetilde{I}\}$.

Let $\tau_+$ and $\tau_-$ denote the automorphisms of $Q$ given by
\[
[\tau_\varepsilon  \alpha: \alpha_i] =\left\{\begin{array}{ll}
- [\alpha: \alpha_i] - \sum_{j \neq i}  a_{ij} \max ([\alpha: \alpha_j], 0) & \text{if $\varepsilon (i) = \varepsilon$;} \\[.05in]
[\alpha: \alpha_i]
 & \text{otherwise.} \end{array}   \right.
\]
Both $\tau_+$ and $\tau_-$ are involutions and preserve $\Phi_{\ge -1}$.

An bilinear pairing
\[
\begin{array}{ccc}
Q^\vee \times Q &\to& \Z \\
(\xi,\gamma) &\mapsto & \{\xi, \gamma\}
\end{array}
\]
defined by
$\{\xi, \gamma\} = \sum_{i \in \widetilde{I}} \varepsilon (i) [\xi : \alpha_i^\vee]
[\gamma: \alpha_i] .$

\begin{proposition}Proposition 3.1\cite{FZ2}
The compatibility degree $(\alpha \| \beta)$ is given by
\[
(\alpha \| \beta) = \max(\{\alpha^\vee, \tau_+ \beta\},
\{\tau_+ \alpha^\vee, \beta\}, 0)\,.
\]
\end{proposition}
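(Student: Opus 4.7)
The plan is to follow the original Fomin--Zelevinsky strategy, namely to invoke the axiomatic characterization of the compatibility degree on $\Phi_{\geq -1}$ and check that the right--hand side of the claimed formula satisfies the same two defining properties. Concretely, $(\alpha \| \beta)$ is the unique $\Z_{\geq 0}$--valued function on $\Phi_{\geq -1}\times\Phi_{\geq -1}$ that obeys the boundary condition
\[
(-\alpha_i \| \beta) = \max\bigl([\beta : \alpha_i],\,0\bigr)\qquad (i\in \widetilde{I},\ \beta\in\Phi_{\geq -1}),
\]
and the dihedral invariance $(\tau_\varepsilon \alpha \| \tau_\varepsilon \beta) = (\alpha \| \beta)$ for $\varepsilon\in\{+,-\}$. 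So the goal is to verify these two properties for the candidate function $F(\alpha,\beta):=\max(\{\alpha^\vee,\tau_+\beta\},\{\tau_+\alpha^\vee,\beta\},0)$.

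First I would handle the base case $\alpha=-\alpha_i$. Here $\alpha^\vee=-\alpha_i^\vee$, and by the definition of the pairing, $\{-\alpha_i^\vee,\tau_+\beta\}=-\varepsilon(i)\,[\tau_+\beta:\alpha_i]$. Splitting into the cases $i\in\widetilde{I}_0$ and $i\in\widetilde{I}_1$, I would unwind $\tau_+$ on $\alpha_i^\vee$ and on $\beta$ using its piecewise--linear definition, and show that exactly one of the two quantities $\{-\alpha_i^\vee,\tau_+\beta\}$ and $\{\tau_+(-\alpha_i^\vee),\beta\}$ equals $[\beta:\alpha_i]$ while the other equals $-[\beta:\alpha_i]$ (or is dominated by $0$ in the other sign). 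Taking the maximum with $0$ then collapses $F(-\alpha_i,\beta)$ to $[\beta:\alpha_i]_+$, matching the boundary axiom.

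Next I would establish the $\tau_\pm$--invariance of $F$. The core identity to establish is a duality for the bilinear pairing,
\[
\{\tau_+ \xi,\tau_- \gamma\}=\{\xi,\gamma\},
\]
together with the compatibility $\tau_+\tau_- = (\tau_-\tau_+)^{-1}$. Combined with the definition of $\tau_\varepsilon$, this lets one rewrite $\{(\tau_\varepsilon\alpha)^\vee,\tau_+\tau_\varepsilon\beta\}$ as one of $\{\alpha^\vee,\tau_+\beta\}$ or $\{\tau_+\alpha^\vee,\beta\}$ (depending on $\varepsilon$), and symmetrically for the companion term; the set $\{\{\alpha^\vee,\tau_+\beta\},\{\tau_+\alpha^\vee,\beta\}\}$ is thus permuted, leaving the $\max$ (and hence $F$) unchanged. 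Finally, since the dihedral subgroup $\langle\tau_+,\tau_-\rangle$ acts on $\Phi_{\geq -1}$ with every orbit meeting the set $\{-\alpha_i:i\in\widetilde{I}\}$, the two axioms together force $F=(\cdot\|\cdot)$ everywhere.

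The hard part will be the invariance step, because the definitions of $\tau_\pm$ involve the piecewise--linear operation $[\,\cdot\,]_+$, while the bilinear pairing $\{\cdot,\cdot\}$ is linear with signs $\varepsilon(i)$; reconciling these demands a careful case split according to the signs of the coordinates $[\alpha:\alpha_i]$, and of $[\beta:\alpha_j]$ on the two halves $\widetilde{I}_0,\widetilde{I}_1$ of the bipartition. A pragmatic fallback, should the general identity prove cumbersome, is to specialize to type $E_6$ and verify $F=(\cdot\|\cdot)$ by a direct tabulation against the explicit list of compatible pairs in Appendix~\ref{compatible}; this would not be conceptually illuminating but would suffice for the application in this paper.
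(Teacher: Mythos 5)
The paper does not prove this statement at all: it is imported verbatim as Proposition~3.1 of Fomin--Zelevinsky \cite{FZ2}, and the lists in Appendix~B are then computed from it. So there is no in-paper argument to match; what you have written is, in substance, a reconstruction of the original Fomin--Zelevinsky proof, and the strategy is the right one. The compatibility degree is indeed characterized as the unique function on $\Phi_{\geq -1}\times\Phi_{\geq -1}$ satisfying $(-\alpha_i\|\beta)=[\beta:\alpha_i]_+$ and $\langle\tau_+,\tau_-\rangle$-invariance (uniqueness resting on the fact, also from \cite{FZ2}, that every $\langle\tau_+,\tau_-\rangle$-orbit meets $-\Pi$), and verifying these two axioms for $F(\alpha,\beta)=\max(\{\alpha^\vee,\tau_+\beta\},\{\tau_+\alpha^\vee,\beta\},0)$ is exactly how the closed formula is established. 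Note that $\tau_+$-invariance of $F$ is immediate (since $\tau_+^2=\mathrm{id}$, the two pairing terms simply swap); the genuine work, as you anticipate, is $\tau_-$-invariance, which requires the piecewise-linear compatibility of $\tau_\pm$ with the pairing and a case analysis on signs of coordinates.

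Two small corrections. In the base case your description of the two terms is not quite right: for $i\in\widetilde{I}_1$ one finds $\{-\alpha_i^\vee,\tau_+\beta\}=\{\tau_+(-\alpha_i^\vee),\beta\}=[\beta:\alpha_i]$ (both terms coincide), while for $i\in\widetilde{I}_0$ one term equals $[\beta:\alpha_i]$ and the other equals $[\beta:\alpha_i]+\sum_{j\neq i}a_{ij}[\beta:\alpha_j]_+\leq[\beta:\alpha_i]$, not $-[\beta:\alpha_i]$; either way the maximum with $0$ is $[\beta:\alpha_i]_+$, so the conclusion stands. More seriously, your proposed fallback --- tabulating $F$ against the compatible pairs listed in Appendix~\ref{compatible} --- is circular in the context of this paper, because those lists are themselves generated by applying the formula of this proposition; a non-circular finite check would have to recompute compatibility independently (e.g.\ from the cluster-complex definition via exchange relations in type $E_6$).
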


Two roots $\alpha,\beta\in\Phi_{\geq -1}$ are compatible if $(\alpha \| \beta) = 0$.









For $\a\in \Phi_{\geq-1}$, denote $C_\a$ be the set of almost positive roots which are compatible with $\a$.
We list all such set $C_\a$ in Appendix B.

\subsection{Simple tensor products}

\begin{proposition}Theorem 1.1\cite{H}\label{tensor}
Let $S_1,\cdots, S_N$ be simple objects of $\CC$. The tensor product $S_1\otimes \cdots \otimes S_N$ is simple if and only if $S_i\otimes S_j$ is simple for any $i < j$.
\end{proposition}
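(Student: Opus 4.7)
The plan is to prove both implications by induction on $N$, reducing the $N$-fold statement to its pairwise analogue.

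For the forward direction, suppose $\bigotimes_i S_i$ is simple; then it equals $L(m)$ with $m = \prod_i m_{S_i}$, and in the Grothendieck ring one has $[L(m)] = \prod_i [S_i]$. Fix $i<j$ and write $[S_i][S_j] = \sum_{m''} n_{m''}[L(m'')]$, the sum being over dominant monomials $m'' \le m_{S_i}m_{S_j}$ with $n_{m_{S_i}m_{S_j}} = 1$ by Theorem~\ref{propertysim}. If some $n_{m''} \ne 0$ for $m'' < m_{S_i}m_{S_j}$, then multiplying by $\prod_{k\ne i,j}[S_k]$ and comparing highest $l$-weights produces in $\prod_k[S_k]$ a composition factor $[L(m'' \prod_{k\ne i,j}m_{S_k})] \ne [L(m)]$, contradicting simplicity of the whole product. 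Hence $S_i \otimes S_j = L(m_{S_i}m_{S_j})$ is simple.

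For the reverse direction, induct on $N$; the case $N=2$ is tautological. Assume the statement for fewer than $N$ factors and set $T = S_1 \otimes \cdots \otimes S_{N-1}$, simple by the inductive hypothesis applied to the pairwise-simple subfamily. After reordering the factors (using Chari's theorem that simple tensor products commute as $U_q(\hg)$-modules, which applies to each $S_i \otimes S_N$), the module $T \otimes S_N$ becomes an $l$-highest weight module with unique simple quotient $L(m)$, $m = \prod_i m_{S_i}$. Simplicity of $T \otimes S_N$ is then equivalent to the $q$-character identity $\chi_q(T)\,\chi_q(S_N) = \chi_q(L(m))$, which by Proposition 5.3 of \cite{HL0} reduces to showing that $m$ is the unique dominant monomial of $\prod_i \chi_q(S_i)$.

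The main obstacle is this dominant-monomial uniqueness. The strategy is contrapositive: a spurious dominant monomial $M = \prod_i M_i \ne m$ in $\prod_i \chi_q(S_i)$ has some $M_{i_0} \ne m_{S_{i_0}}$. The negative variables $A_{j,a}^{-1}$ carried by $M_{i_0}$ must be absorbed by the highest-weight factors $m_{S_i}$ ($i \ne i_0$) in order for $M$ to be dominant; a careful bookkeeping on $l$-weights localizes this absorption to a single partner index $j_0$, so that $M_{i_0} \cdot m_{S_{j_0}}$ is already dominant. This produces a dominant monomial of $\chi_q(S_{i_0}) \chi_q(S_{j_0})$ distinct from $m_{S_{i_0}} m_{S_{j_0}}$, contradicting the pairwise simplicity $S_{i_0} \otimes S_{j_0} = L(m_{S_{i_0}} m_{S_{j_0}})$. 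The delicate point is ensuring that the absorption can always be localized to a single pair, which requires the detailed analysis of $l$-weight multiplicities at the heart of Hernandez's argument.
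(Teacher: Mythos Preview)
The paper does not prove this proposition; it is stated as a citation of Theorem~1.1 of Hernandez~\cite{H}, so there is no ``paper's own proof'' to compare against. What you have written is therefore an attempt to reconstruct Hernandez's argument.

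Your forward implication is fine: if some $S_i\otimes S_j$ had an extra composition factor $L(m'')$ with $m''<m_{S_i}m_{S_j}$, then multiplying by the remaining $[S_k]$ produces in $\prod_k[S_k]$ composition factors of highest $l$-weight strictly below $m$, contradicting simplicity of the full product.

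Your reverse implication, however, has a genuine gap. The reduction you invoke via Proposition~5.3 of \cite{HL0} is misstated: that proposition says two $q$-characters agree once their dominant monomials (with multiplicities) agree. This does \emph{not} reduce simplicity of $T\otimes S_N$ to showing that $m$ is the \emph{unique} dominant monomial of $\prod_i\chi_q(S_i)$; that would only suffice when $L(m)$ is minuscule. In general $\chi_q(L(m))$ has many dominant monomials, and you must show that they match those of $\prod_i\chi_q(S_i)$---exactly the kind of case-by-case check the present paper carries out for its specific pairs in \S4.5. Your contrapositive localization argument (``the absorption can be confined to a single partner $j_0$'') is then aimed at the wrong target, and you yourself concede it is not actually established.

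Hernandez's proof in \cite{H} does not proceed by dominant monomial counting at all. It uses a criterion that a tensor product of simples is simple if and only if it is simultaneously $l$-highest weight (cyclic on the tensor of highest $l$-weight vectors) and $l$-lowest weight, together with Chari's results on when reordered tensor products are cyclic and a duality argument. The pairwise hypothesis enters through these cyclicity criteria, not through a combinatorial localization of dominant monomials.
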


\subsection{Proof of Theorem~\ref{monoicat}}

Let $\gamma\in Q$, a cluster expansion of $\gamma$ is a way to express $\gamma$ as
$\gamma=\sum_{\a\in \Phi_{\geq-1}}m_\a\a,$
where all $m_\a\in \Z_{\geq0}$, and  $m_\a m_\beta=0$ whenever $(\a\|\beta)\neq0$.

\begin{proposition}Theorem 3.11\cite{FZ2}\label{FZ3}
Every element of the root lattice has a unique cluster expansion.
\end{proposition}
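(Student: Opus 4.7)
The plan is to deduce uniqueness and existence simultaneously from the structural fact that the collection of clusters of $\Phi_{\geq -1}$ forms a \emph{simplicial fan} subdividing the ambient real vector space $Q \otimes_{\Z} \mathbb{R}$, with each maximal cone spanned by a single cluster. Concretely, I would separate the statement into two assertions: (i) every cluster $C = \{\beta_1,\ldots,\beta_n\}$ (maximal compatible subset) is a $\Z$-basis of the root lattice $Q$; and (ii) the nonnegative cones $\mathbb{R}_{\geq 0} C = \sum_i \mathbb{R}_{\geq 0}\beta_i$, as $C$ ranges over all clusters, tile $Q \otimes_{\Z} \mathbb{R}$ with interiors meeting only along common faces. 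Granting (i) and (ii), the proposition follows immediately: given $\gamma \in Q$, pick the (unique up to boundary) cluster $C$ whose cone contains $\gamma$; by (i) one can write $\gamma = \sum_{\beta \in C} m_\beta \beta$ with $m_\beta \in \Z$, and membership in the cone forces $m_\beta \geq 0$. Extending by zero on $\Phi_{\geq -1} \setminus C$ gives the required cluster expansion, and uniqueness of the coefficients follows because the compatibility condition $m_\alpha m_\beta = 0$ for $(\alpha\|\beta)\neq 0$ forces the support of any expansion to lie inside a single cluster, in which the $m_\beta$ are then forced by linear independence.

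For (i), the strategy is to use the two involutions $\tau_+$ and $\tau_-$ which act on $\Phi_{\geq -1}$ and preserve the compatibility relation. The initial cluster $\{-\alpha_0, \ldots, -\alpha_{n-1}\}$ of negative simple roots is obviously a $\Z$-basis of $Q$. One then shows that if $C$ is a cluster which is a $\Z$-basis, then so is any cluster obtained from $C$ by exchanging one element, using the exchange relations of the cluster complex to write the new basis element as $\pm$(old basis element) plus an integral combination of the others. Since the cluster complex of finite type is connected under such exchanges, induction gives the claim for every cluster.

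For (ii), the plan is to invoke Fomin--Zelevinsky's description of the cluster fan: the normal fan of the generalized associahedron associated to the root system (here of type $E_6$). Completeness of the fan can be proved either by directly counting that the total Euclidean volume of the cones $\mathbb{R}_{\geq 0} C$ equals that of $Q \otimes \mathbb{R}$ (in finite type one has an explicit combinatorial enumeration of clusters), or by a continuity/propagation argument: pick a generic $\gamma$, follow a path from a point in the fundamental cluster cone $\mathbb{R}_{\geq 0}\{-\alpha_0,\ldots,-\alpha_{n-1}\}$ to $\gamma$, and each time the path crosses a wall separating two adjacent cones $\mathbb{R}_{\geq 0} C$ and $\mathbb{R}_{\geq 0} C'$, use (i) and the exchange relation to continue the expansion in the adjacent cluster.

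The main obstacle is precisely step (ii): there is no reason \emph{a priori} that the cones attached to clusters cover all of $Q \otimes \mathbb{R}$ without overlap. Verifying this amounts to the hardest combinatorial content of \cite{FZ2} and rests on the identification of the cluster complex with the dual simplicial complex of the generalized associahedron, together with the fact that the map $\gamma \mapsto$ (its cluster expansion) is continuous and piecewise linear. In our present setting, since we only use this proposition to justify that compatibility-based expansions of elements of the root lattice of $E_6$ are canonical, I would simply cite \cite{FZ2} for (ii) and provide the self-contained arguments for (i) and for the reduction above.
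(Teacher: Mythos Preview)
The paper does not prove this proposition at all: it is quoted verbatim as Theorem~3.11 of \cite{FZ2} and used as a black box. There is nothing to compare your argument against in the paper itself.

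That said, your sketch is a faithful outline of the Fomin--Zelevinsky argument from \cite{FZ2}. The reduction to (i) every cluster is a $\Z$-basis of $Q$ and (ii) the cones $\mathbb{R}_{\ge 0}C$ form a complete simplicial fan is exactly how the original proof is organised, and you correctly identify (ii) as the substantial step. One small gap in your uniqueness paragraph: knowing that the support of a cluster expansion is a pairwise compatible set only tells you it sits inside \emph{some} maximal compatible set; to conclude uniqueness you also need that any two clusters containing the support give the same coefficients, which is precisely the ``cones meet only in common faces'' part of (ii), not just linear independence inside one cluster. You implicitly use (ii) twice, for existence and for uniqueness, so it is worth saying so explicitly. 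Since you already plan to cite \cite{FZ2} for (ii), this is only a matter of presentation.
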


For any $\a\in \Phi_{\ge -1}$, we have attached a simple object $S(\a)=L(m_\a)$ for some highest weight monomial $m_\a$. Set $Y^{\a}=m_\a$, then for $\gamma = \sum_{\a\in\Phi_{\ge -1}} n_\a \a$, define
$Y^{\gamma}=\prod_{\a\in\Phi_{\ge -1}}(Y^{\a})^{n_\a}.$

Let $V$ be a simple object in $\CC_2$. Its highest weight monomial of the form
$m = \prod_{i\in I} Y_{i,\xi_i}^{a_i} Y_{i,\xi_i+2}^{b_i}Y_{i,\xi_i+4}^{c_i}$
for some $a_i, b_i, c_i\in \Z_{\geq0}$.
Write $m$ as
\[
\begin{array}{ccl}
 m&=&\prod_{i\in I} (Y_{i,\xi_i}Y_{i,\xi_i+2}Y_{i,\xi_i+4})^{\min(a_i,b_i,c_i)}Y^\gamma \\
  &=&\prod_{i\in I} (Y_{i,\xi_i}Y_{i,\xi_i+2}Y_{i,\xi_i+4})^{\min(a_i,b_i,c_i)}
\prod_{\a\in\Phi_{\ge -1}} (Y^\a)^{n_\a}
\end{array}
\]
for some unique cluster expansion $\gamma=\sum_{\a\in\Phi_{\ge -1}} n_\a \a\in Q$, and the factorization of $m$ is unique.

\begin{theorem}\label{monoicat}
$\CC_2$ is a monoidal categorification of $\A$. That is, the cluster monomials of $\A$ are the classes of all the real simple objects of $\M$, and the cluster variables of $\A$ (including the frozen ones)
are the classes of all the real prime simple objects of $\M$.
\end{theorem}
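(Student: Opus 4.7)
The plan is to verify both clauses of Definition~\ref{defmoncat} by transporting the combinatorial structure of $\A$ across the ring isomorphism $\iota$ from Proposition~\ref{ringiso}, using three ingredients: the unique cluster expansion (Proposition~\ref{FZ3}), Hernandez's simplicity criterion (Proposition~\ref{tensor}), and the explicit truncated $q$--characters tabulated in Lemma~\ref{primeqcha}. The factorization of highest weight monomials displayed just above Theorem~\ref{monoicat} is the bridge that converts the decomposition of a simple object into a cluster expansion and back.

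The first and most substantive step is to show that for any two compatible almost positive roots $\a,\beta\in\Phi_{\geq-1}$ (and trivially for pairs involving the frozen classes $F_1,F_2,F_3$, which are compatible with everything), the tensor product $S(\a)\otimes S(\beta)$ is isomorphic to the simple module $L(Y^{\a}Y^{\beta})$. Since by Theorem~\ref{propertysim}(iii) the module $L(Y^{\a}Y^{\beta})$ is already a subquotient of $S(\a)\otimes S(\beta)$, it suffices to check the equality
\[
\chi_{q}(S(\a))_{\leq 4}\cdot\chi_{q}(S(\beta))_{\leq 4}\;=\;\chi_{q}(L(Y^{\a}Y^{\beta}))_{\leq 4},
\]
where the right-hand side is computed either by FM--algorithm (when the module is minuscule) or by the $N(m)_{\leq 4}$ procedure of Section~\ref{s2.5}. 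The list of pairs to be checked is given by the compatible subsets of $E_{6}$ described in Appendix~\ref{compatible}, so this is a finite verification.

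Granted that step, the two clauses of Definition~\ref{defmoncat} follow cleanly. For the cluster-monomial direction, any cluster monomial $m$ is a product of pairwise compatible cluster variables together with powers of the frozens. Each factor corresponds via $\iota$ to a class $[S(\a)]$ or $[F_i]$, and the previous step shows that every pairwise tensor product of these factors is simple. Proposition~\ref{tensor} then implies that the full tensor product is simple with class $\iota(m)$; applying the same proposition to the doubled list shows the object is real. Conversely, if $V=L(m)$ is a simple object of $\CC_{2}$, write
\[
m\;=\;\prod_{i\in I}(Y_{i,\xi_{i}}Y_{i,\xi_{i}+2}Y_{i,\xi_{i}+4})^{\min(a_{i},b_{i},c_{i})}\,Y^{\gamma}
\]
for some $\gamma\in Q$, and apply Proposition~\ref{FZ3} to obtain a unique cluster expansion $\gamma=\sum n_{\a}\a$ with pairwise compatible support. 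The associated cluster monomial in $\A$ maps under $\iota$ to $[V]$, so every simple class (in particular every real simple class) is a cluster monomial. Primality of each $S(\a)$ and $F_{i}$ follows by contradiction: a nontrivial tensor factorization would split the highest weight monomial $Y^{\a}$ into two nontrivial dominant factors, producing two distinct cluster expansions of the same lattice element and violating Proposition~\ref{FZ3}.

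The principal obstacle is the pairwise verification in the first step. While each individual check is mechanical, the number of compatible pairs in $E_{6}$ is large, and particular care is required for pairs $(\a,\beta)$ for which $L(Y^{\a}Y^{\beta})$ is \emph{not} minuscule, where one must fall back on $N(Y^{\a}Y^{\beta})_{\leq 4}$ together with the kind of intersection argument (displayed for $L(Y_{1,0}Y_{2,3}Y_{3,0})$) used in the proof of Lemma~\ref{primeqcha}. Once this table is in place, the rest of the theorem is essentially an unpacking of Propositions~\ref{tensor} and~\ref{FZ3}.
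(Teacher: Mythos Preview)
Your overall architecture matches the paper's: reduce equation~(\ref{equation}) to pairwise tensor products via Proposition~\ref{tensor}, then run through all compatible pairs $(\a,\beta)$ from Appendix~\ref{compatible}. Two points, however, separate your proposal from a working proof.

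\textbf{The pairwise verification method.} You propose to check $\chi_q(S(\a))_{\le4}\,\chi_q(S(\beta))_{\le4}=\chi_q(L(Y^\a Y^\beta))_{\le4}$ by computing the right-hand side with FM (if minuscule) or with $N(m)_{\le4}$ plus the intersection trick otherwise. For a large fraction of the pairs this will not go through: $L(Y^\a Y^\beta)$ is often \emph{not} minuscule, and $N(m)_{\le4}$ is only a lower bound for $\chi_q(L(m))_{\le4}$ with no general mechanism to close the gap. What the paper actually does is different and lighter: list the dominant monomials of the product $\chi_q(S(\a))_{\le4}\,\chi_q(S(\beta))_{\le4}$, and for each one exhibit a chain $\varphi_{J_1},\varphi_{J_2},\dots$ starting from $Y^\a Y^\beta$ that produces it inside $\chi_q(L(Y^\a Y^\beta))_{\le4}$ (invoking Proposition~\ref{qchadecom}). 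Equality of dominant monomials then forces equality of truncated characters. In a handful of stubborn cases (e.g.\ $\a=\a_2$, $\beta=\a_2+\a_5$) even this fails directly, and the paper argues by contradiction: assume the extra dominant monomial is absent, so the module is minuscule, compute $FM(m)_{\le4}$, and show it violates the $\varphi_i$-decomposition of Proposition~\ref{qchadecom}. Your proposal does not anticipate either device.

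\textbf{The primality argument.} Your contradiction argument has a genuine gap. If $S(\a)\cong L(m_1)\otimes L(m_2)$ then $m_1m_2=Y^\a$, and each $m_j$ has its own canonical factorization $m_j=(\text{frozen})\cdot\prod_\beta (Y^\beta)^{n_\beta^{(j)}}$ with pairwise compatible support. Multiplying gives $Y^\a=\prod_\beta (Y^\beta)^{n_\beta^{(1)}+n_\beta^{(2)}}$, but this expression need \emph{not} be a cluster expansion of $\a$: nothing forces $\beta$ in the support of $\gamma_1$ to be compatible with $\beta'$ in the support of $\gamma_2$. So you cannot invoke Proposition~\ref{FZ3}. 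To repair this you would need the converse of step~(ii), namely that $(\a\|\beta)\neq 0$ implies $S(\a)\otimes S(\beta)$ is not simple; this is not free and you have not argued it. The paper instead checks primality directly: for each $S(\a)$ it lists the finitely many possible monomial factorizations of $Y^\a$ and rules them out by comparing truncated $q$-characters (or cites the homological criterion of Chari--Moura--Young).
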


\begin{proof}
In order to prove this theorem, we then need to prove that
\begin{equation}\label{equation}
 L(m) \cong  \bigotimes_{i\in I} F_i^{\otimes\min(a_i,b_i,c_i)}
\bigotimes_{\a\in\Phi_{\ge -1}} S(\a)^{\otimes n_\a}.
\end{equation}

Because of Proposition~\ref{tensor}, 
 the proof of equation ~\ref{equation}  reduces to check that
\begin{itemize}
\item[(i)] $F_i\otimes S(\a)$ and $F_i\otimes F_j$ are simple for each $\a\in\Phi_{\ge -1}$ and $i,j\in I$;
\item[(ii)] if $(\a\|\beta)=0$ ,then $S(\a)\otimes S(\beta)$ is simple;
\item[(iii)] for every $\a\in\Phi_{\geq-1}$, $S(\a)$ is prime;
\item[(iv)] for every $i\in I$, $F_i$ is prime.
\end{itemize}

Consider (iii) and (iv) firstly.

Note that the simple objects $L(Y_{i,\xi_{i}+2k})(i\in I, \ 0\leq k\leq2)$ are clearly prime.

Indeed, for $\a=\a_i(i\in \widetilde{I})$ or $-\a_1$ or $-\a_3$, $S(\a)=L(Y_{i,l}Y_{j,k})$ for some $i,j,k,l$, if $S(\a)$ is not prime we only have $S(\a)\cong L(Y_{i,l})\otimes L(Y_{j,k})$, and $\chi_q(S(\a))<\chi_q(L(Y_{i,l}))\chi_q(L(Y_{j,k}))$.

For $S(-\a_2)=L(Y_{1,4}Y_{2,1}Y_{3,4})$, if $S(-\a_2)$ is not prime we have $3$ possible factorizations, but all factorizations can not be established by calculating their truncated $q$-characters.

Since we have given explicit formulas of $\chi_q(F_i)_{\leq4}(i\in I)$ and $\chi_q(S(\a))_{\leq4}$ for all $\a\in \Phi_{\geq-1}$. We can check that every $S(\a)$ is prime. Thus (iii) and (iv) follows.

There are second method. Recently, the authors \cite{CMY} they explore the relation between self extensions of simple objects of $\CC$ and the property of a simple object being prime. By Theorem 2 \cite{CMY}, we can quickly check that $S(\a)(\a\in\Phi_{\geq-1})$ and $F_i(i\in I)$ are prime.

Now we dispose of (i).

For every $i,j\in I$,
$\chi_q(F_i \otimes F_j)_{\leq4}=\chi_q(F_i)_{\leq4} \chi_q( F_j)_{\leq4}=Y_{i,\xi_i}Y_{i,\xi_i+2}Y_{i,\xi_i+4}Y_{j,\xi_j}Y_{j,\xi_j+2}Y_{j,\xi_j+4},$
hence $F_i \otimes F_j$ is simple.

$N(Y_{i,\xi_i}Y_{i,\xi_i+2}Y_{i,\xi_i+4}Y^{\a})_{\leq4}=Y_{i,\xi_i}Y_{i,\xi_i+2}Y_{i,\xi_i+4}N(Y^{\a})_{\leq4}=
\chi_q(F_i)_{\leq4}\chi_q(L(Y^{\a})_{\leq4},$
hence $F_i \otimes S(\a)$ is simple.

We dispose of (ii) finally.

We have described explicitly the pair of compatible roots of $E_6$ in Appendix B. Up to the symmetries
\smallskip
$0 \leftrightarrow 4$, $1 \leftrightarrow 3$ of $\widetilde{I}$, we are thus reduced to those $C_\a$, where $\a\in C=\{\pm\a_i(i=0,1,2,5),\a_1+\a_2,\a_2+\a_5,\a_3+
\smallskip
\a_4,\a_1+\a_2+\a_3,\a_0+\a_1+\a_2,\a_1+\a_2+\a_5,\a_1+\a_2+\a_3+\a_4,
\a_0+\a_1+\a_2+\a_5,\a_1+\a_2+\a_3+\a_5,\a_1+
\smallskip
2\a_2+\a_3+\a_5,\a_1+\a_2+\a_3+\a_4+\a_5,\a_0+2\a_1+2\a_2+\a_3+\a_5,
\a_0+\a_1+2\a_2+\a_3+\a_5,\a_0+\a_1+\a_2+
\smallskip
\a_3+\a_4,\a_0+2\a_1+2\a_2+\a_3+\a_4+\a_5,\a_0+2\a_1+2\a_2+2\a_3+\a_4+\a_5,
\a_0+\a_1+\a_2+\a_3+\a_4+\a_5,\a_0+\a_1+2\a_2+\a_3+\a_4+\a_5,\a_0+2\a_1+3\a_2+2\a_3+\a_4+\a_5,\a_0+2\a_1+3\a_2+2\a_3+\a_4+2\a_5\}.$

For every $\a\in C,\ \beta\in C_\a$, by Theorem~\ref{propertysim}, we know that $\chi_q(L(Y^{\a}Y^{\beta}))\leq\chi_q(S(\a)\otimes S(\beta))$. In order to prove that $S(\a)\otimes S(\beta)\cong L(Y^{\a}Y^{\beta})$. We only need to check that all the dominant monomials of $\chi_q(S(\a)\otimes S(\beta))$ belong to $\chi_q(L(Y^{\a}Y^{\beta}))$.

For the most of the cases, we can check that each dominant monomial occurs in some $\varphi_J(Y^{\a}Y^{\beta}M)\leq\chi_q(L(Y^{\a}Y^{\beta}))$, where $J\subseteq I$, $M$ is a $J$-dominant monomial. For such cases, we give a brief descriptions.

\medskip
(1) $\a=-\a_0$, $Y^{\a}=Y_{1,0}$. If $\beta\in C_\a\backslash\{-\a_2,-\a_3,\a_1,\a_4,\a_5,\a_1+\a_2,\a_1+\a_2+\a_3,\a_1+\a_2+\a_5,\a_1+\a_2+\a_3+\a_4,\a_1+\a_2+\a_3+\a_5,
\a_1+2\a_2+\a_3+\a_5,\a_1+\a_2+\a_3+\a_4+\a_5,\a_1+2\a_2+\a_3+\a_4+\a_5,\a_1+2\a_2+2\a_3+\a_4+\a_5\}$, then $\chi_q(S(\a)\otimes S(\beta))_{\leq4}$ contains a unique dominant monomial and $S(\a)\bigotimes S(\beta)$ is simple.

If $\beta=-\a_2$, $Y^{\beta}=Y_{1,4}Y_{2,1}Y_{3,4}$, denote $m=Y^{\a}Y^{\beta}$. The only dominant monomials in $\chi_q(S(\a)\otimes S(\beta))_{\leq4}$ are $m$ and $mA_{1,1}^{-1}A_{2,2}^{-1}A_{3,3}^{-1}$.

Since $mA_{1,1}^{-1}=Y_{1,2}^{-1}Y_{1,4}Y_{2,1}^{2}Y_{3,4}$ is $\{2,3\}$-dominant, consider $\varphi_{\{2,3\}}(mA_{1,1}^{-1})$, $\overline{mA_{1,1}^{-1}}=Y_{2,1}(Y_{2,1}Y_{3,4})$, by Proposition~\ref{qchadecom}, we have
$$\varphi_{\{2,3\}}(mA_{1,1}^{-1})_{\leq4}=mA_{1,1}^{-1}(1+A_{2,2}^{-1})(1+A_{2,2}^{-1}+A_{2,2}^{-1}A_{3,3}^{-1})
\leq\chi_q(L(m))_{\leq4}.$$


If $\beta=-\a_3$, $Y^{\beta}=Y_{2,1}Y_{3,4}$, denote $m=Y^{\a}Y^{\beta}$. The only dominant monomials in $\chi_q(S(\a)\otimes S(\beta))_{\leq4}$ are $m$ and $mA_{1,1}^{-1}A_{2,2}^{-1}A_{3,3}^{-1}$. The case is similar to the above.




If $\beta=\a_1$, $Y^{\beta}=Y_{1,0}Y_{3,4}$, denote $m=Y^{\a}Y^{\beta}$. The only dominant monomials in $\chi_q(S(\a)\otimes S(\beta))_{\leq4}$ are $m$ and $mA_{1,1}^{-1}A_{2,2}^{-1}A_{3,3}^{-1}$.

It is can be checked that $N(m)_{\leq4}+mA_{1,1}^{-1}A_{2,2}^{-1}A_{3,3}^{-1}=\chi_q(S(\a)\bigotimes S(\beta))_{\leq4}$. If $mA_{1,1}^{-1}A_{2,2}^{-1}A_{3,3}^{-1}$ is not contained in $\chi_q(L(m))_{\leq4}$, $L(m)$ thus be minuscule and $\chi_q(L(m))=FM(m)$. But, $mA_{1,1}^{-2}A_{2,2}^{-1}A_{3,3}^{-1}\leq N(m)$ is not contained in $FM(m)$.

If $\beta=\a_4$, $Y^{\beta}=Y_{3,2}Y_{3,4}$, denote $m=Y^{\a}Y^{\beta}$. The only dominant monomials in $\chi_q(S(\a)\otimes S(\beta))_{\leq4}$ are $m$ and $mA_{1,1}^{-1}A_{2,2}^{-1}A_{3,3}^{-1}$.
\[
\begin{array}{lll}
  \varphi_1(m)_{\leq4}=m(1+A_{1,1}^{-1}), &
  \varphi_2(mA_{1,1}^{-1})_{\leq4}=mA_{1,1}^{-1}(1+A_{2,2}^{-1}),&
  \varphi_3(mA_{1,1}^{-1}A_{2,2}^{-1})_{\leq4}=mA_{1,1}^{-1}A_{2,2}^{-1}(1+A_{3,3}^{-1}).
\end{array}
\]
Hence $mA_{1,1}^{-1}A_{2,2}^{-1}A_{3,3}^{-1}$ is contained in $\chi_q(L(m))_{\leq4}.$

If $\beta=\a_5$, $Y^{\beta}=Y_{2,1}Y_{2,3}$, denote $m=Y^{\a}Y^{\beta}$. The only dominant monomials in $\chi_q(S(\a)\otimes S(\beta))_{\leq4}$ are $m$ and $mA_{1,1}^{-1}A_{2,2}^{-1}$.
\[
\begin{array}{ll}
  \varphi_1(m)_{\leq4}=m(1+A_{1,1}^{-1}), &  \varphi_2(mA_{1,1}^{-1})_{\leq4}=mA_{1,1}^{-1}(1+A_{2,2}^{-1})(1+A_{2,4}^{-1}+A_{2,4}^{-1}A_{2,2}^{-1}).
\end{array}
\]
Hence $mA_{1,1}^{-1}A_{2,2}^{-1}$ is contained in $\chi_q(L(m))_{\leq4}.$

If $\beta=\a_1+\a_2$, $Y^{\beta}=Y_{1,0}Y_{1,2}Y_{2,5}$, denote $m=Y^{\a}Y^{\beta}$. The only dominant monomials in $\chi_q(S(\a)\otimes S(\beta))_{\leq4}$ are $m$ and $mA_{1,1}^{-1}$.

$mA_{1,1}^{-1}$ is contained in $\varphi_1(m)$, hence occurs in $\chi_q(L(m))_{\leq4}.$

If $\beta=\a_1+\a_2+\a_3$, $Y^{\beta}=Y_{1,0}Y_{2,3}Y_{2,5}Y_{3,0}$, denote $m=Y^{\a}Y^{\beta}$. The only dominant monomials in $\chi_q(S(\a)\otimes S(\beta))_{\leq4}$ are $m$, $mA_{1,1}^{-1}A_{2,2}^{-1}$ and $2mA_{1,1}^{-1}A_{2,2}^{-1}A_{3,1}^{-1}.$

Since $\varphi_{\{1,2\}}(m)_{\leq4}=\varphi_{\{1,2\}}(Y_{1,0})_{\leq4}\varphi_{\{1,2\}}(Y_{1,0}Y_{2,3}Y_{2,5}Y_{3,0})_{\leq4}
=m(1+A_{1,1}^{-1}+A_{1,1}^{-1}A_{2,2}^{-1})(1+A_{1,1}^{-1})$. So, we have $mA_{1,1}^{-1}A_{2,2}^{-1}+2mA_{1,1}^{-1}\leq \chi_{q}(L(m)).$
\[
\begin{array}{cc}
  \varphi_3(2mA_{1,1}^{-1})_{\leq4}=2mA_{1,1}^{-1}(1+A_{3,1}^{-1}), &
  \varphi_2(2mA_{1,1}^{-1}A_{3,1}^{-1})_{\leq4}=2mA_{1,1}^{-1}A_{3,1}^{-1}(1+A_{2,2}^{-1}),
\end{array}
\]
hence $2mA_{1,1}^{-1}A_{2,2}^{-1}A_{3,1}^{-1}$ occurs in $\chi_q(L(m))_{\leq4}.$

If $\beta=\a_1+\a_2+\a_5$, $Y^{\beta}=Y_{1,0}Y_{1,2}$, denote $m=Y^{\a}Y^{\beta}$. The only dominant monomials in $\chi_q(S(\a)\otimes S(\beta))_{\leq4}$ are $m$ and $mA_{1,1}^{-1}$.

$mA_{1,1}^{-1}$ is contained in $\varphi_1(m)$, hence occurs in $\chi_q(L(m))_{\leq4}.$

If $\beta=\a_1+\a_2+\a_3+\a_4$, $Y^{\beta}=Y_{1,0}Y_{2,3}Y_{2,5}$, denote $m=Y^{\a}Y^{\beta}$. The only dominant monomials in $\chi_q(S(\a)\otimes S(\beta))_{\leq4}$ are $m$ and $mA_{1,1}^{-1}A_{2,2}^{-1}$.
\[
mA_{1,1}^{-1}A_{2,2}^{-1}\leq \varphi_{\{1,2\}}(m)_{\leq4}=\varphi_{\{1,2\}}(Y_{1,0})_{\leq4}\varphi_{\{1,2\}}(Y_{1,0}Y_{2,3}Y_{2,5})_{\leq4}
=m(1+A_{1,1}^{-1}+A_{1,1}^{-1}A_{2,2}^{-1})(1+A_{1,1}^{-1}).
\]

If $\beta=\a_1+\a_2+\a_3+\a_5$, $Y^{\beta}=Y_{1,0}Y_{2,3}Y_{3,0}$, denote $m=Y^{\a}Y^{\beta}$. The only dominant monomials in $\chi_q(S(\a)\otimes S(\beta))_{\leq4}$ are $m$, $mA_{1,1}^{-1}A_{2,2}^{-1}$ and $2mA_{1,1}^{-1}A_{2,2}^{-1}A_{3,1}^{-1}$.

Since $\varphi_{\{1,2\}}(m)_{\leq4}
=m(1+A_{1,1}^{-1}+A_{1,1}^{-1}A_{2,2}^{-1})(1+A_{1,1}^{-1}+A_{2,4}^{-1}+A_{2,4}^{-1}A_{1,1}^{-1}+A_{2,4}^{-1}A_{1,1}^{-1} A_{2,2}^{-1})$, we have \\ $mA_{1,1}^{-1}A_{2,2}^{-1}+2mA_{1,1}^{-1}\leq \chi_{q}(L(m)).$
\[
\begin{array}{cc}
  \varphi_3(2mA_{1,1}^{-1})_{\leq4}=2mA_{1,1}^{-1}(1+A_{3,1}^{-1}), &
  \varphi_2(2mA_{1,1}^{-1}A_{3,1}^{-1})_{\leq4}=2mA_{1,1}^{-1}A_{3,1}^{-1}(1+A_{2,2}^{-1})(1+A_{2,4}^{-1}+A_{2,4}^{-1}A_{2,2}^{-1}),
\end{array}
\]
hence $2mA_{1,1}^{-1}A_{2,2}^{-1}A_{3,1}^{-1}$ occurs in $\chi_q(L(m))_{\leq4}.$

If $\beta=\a_1+2\a_2+\a_3+\a_5$, $Y^{\beta}=Y_{1,0}Y_{1,2}Y_{2,5}Y_{3,0}Y_{3,2}$, denote $m=Y^{\a}Y^{\beta}$. The only dominant monomials in $\chi_q(S(\a)\otimes S(\beta))_{\leq4}$ are $m$, $mA_{1,1}^{-1}$, $mA_{1,3}^{-1}A_{3,3}^{-1}A_{2,4}^{-1}$, $mA_{1,1}^{-1}A_{1,3}^{-1}A_{3,3}^{-1}A_{2,2}^{-1}A_{2,4}^{-1}$ and $2mA_{1,1}^{-1}A_{1,3}^{-1}A_{3,1}^{-1}A_{3,3}^{-1}A_{2,2}^{-1}A_{2,4}^{-1}$.
\[
\begin{array}{ll}
  \varphi_1(m)_{\leq4}=m(1+A_{1,1}^{-1})(1+A_{1,3}^{-1}+A_{1,3}^{-1}A_{1,1}^{-1}), &
  \varphi_3(m)_{\leq4}=m(1+A_{3,3}^{-1}+A_{3,3}^{-1}A_{3,1}^{-1}),
\end{array}
\]
\[
\begin{array}{ll}
\medskip
\varphi_3(2mA_{1,3}^{-1}A_{1,1}^{-1})_{\leq4}=2mA_{1,3}^{-1}A_{1,1}^{-1}(1+A_{3,3}^{-1}+A_{3,3}^{-1}A_{3,1}^{-1}),\\
\varphi_2(2mA_{1,3}^{-1}A_{1,1}^{-1}A_{3,3}^{-1}A_{3,1}^{-1})_{\leq4}=2mA_{1,3}^{-1}A_{1,1}^{-1}A_{3,3}^{-1}A_{3,1}^{-1}
  (1+A_{2,4}^{-1}+A_{2,4}^{-1}A_{2,2}^{-1}).
\end{array}
\]

Consider $\varphi_{\{1,2\}}(mA_{3,3}^{-1})_{\leq4}$, $\overline{mA_{3,3}^{-1}}=(Y_{1,0}Y_{1,2})(Y_{1,0}Y_{2,3}Y_{2,5})$, so
\[
\varphi_{\{1,2\}}(mA_{3,3}^{-1})_{\leq4}=mA_{3,3}^{-1}(1+A_{1,3}^{-1}+A_{1,1}^{-1} A_{1,3}^{-1}+A_{1,3}^{-1} A_{2,4}^{-1}+A_{1,1}^{-1} A_{1,3}^{-1} A_{2,4}^{-1}+A_{1,1}^{-1} A_{1,3}^{-1} A_{2,4}^{-1} A_{2,2}^{-1})(1+A_{1,1}^{-1}).
\]

Hence all dominant monomials occur in $\chi_q(L(m))_{\leq4}.$

If $\beta=\a_1+\a_2+\a_3+\a_4+\a_5$, $Y^{\beta}=Y_{1,0}Y_{2,3}$, denote $m=Y^{\a}Y^{\beta}$. The only dominant monomials in $\chi_q(S(\a)\otimes S(\beta))_{\leq4}$ are $m$ and $mA_{1,1}^{-1}A_{2,2}^{-1}$.

Consider $\varphi_{\{1,2\}}(m)$, $\overline{m}=(Y_{1,0})(Y_{1,0}Y_{2,3})$, so
$$\varphi_{\{1,2\}}(m)=m(1+A_{1,1}^{-1}+A_{1,1}^{-1}A_{2,2}^{-1})(1+A_{1,1}^{-1}+A_{2,4}^{-1}+A_{2,4}^{-1}A_{1,1}^{-1}+A_{2,4}^{-1}A_{1,1}^{-1} A_{2,2}^{-1}),$$
hence $mA_{1,1}^{-1}A_{2,2}^{-1}$ occurs in $\chi_q(L(m))_{\leq4}.$

If $\beta=\a_1+2\a_2+\a_3+\a_4+\a_5$, $Y^{\beta}=Y_{1,0}Y_{1,2}Y_{2,5}Y_{3,2}$, denote $m=Y^{\a}Y^{\beta}$. The only dominant monomials in $\chi_q(S(\a)\otimes S(\beta))_{\leq4}$ are $m$, $mA_{1,1}^{-1}$, $mA_{1,3}^{-1}A_{3,3}^{-1}A_{2,4}^{-1}$ and $mA_{1,1}^{-1}A_{1,3}^{-1}A_{3,3}^{-1}A_{2,2}^{-1}A_{2,4}^{-1}$.

$\varphi_1(m)_{\leq4}=m(1+A_{1,1}^{-1})(1+A_{1,3}^{-1}+A_{1,3}^{-1}A_{1,1}^{-1})$, \ $\varphi_3(m)_{\leq4}=m(1+A_{3,3}^{-1})$.

Consider $\varphi_{\{1,2\}}(mA_{3,3}^{-1})$, $\overline{mA_{3,3}^{-1}}=(Y_{1,0}Y_{1,2})(Y_{1,0}Y_{2,3}Y_{2,5})$ which has been calculated above.

Hence $mA_{1,3}^{-1}A_{3,3}^{-1}A_{2,4}^{-1}$ and $mA_{1,1}^{-1}A_{1,3}^{-1}A_{3,3}^{-1}A_{2,2}^{-1}A_{2,4}^{-1}$ occur in $\varphi_{\{1,2\}}(mA_{3,3}^{-1})$.

If $\beta=\a_1+2\a_2+2\a_3+\a_4+\a_5$, $Y^{\beta}=Y_{1,0}Y_{2,3}Y_{2,5}Y_{3,0}Y_{3,2}$, denote $m=Y^{\a}Y^{\beta}$. The only dominant monomials in $\chi_q(S(\a)\otimes S(\beta))_{\leq4}$ are $m$, $mA_{1,1}^{-1}A_{2,2}^{-1}$, $mA_{3,3}^{-1}A_{2,4}^{-1}$, $mA_{1,1}^{-1}A_{2,2}^{-1}A_{2,4}^{-1}A_{3,3}^{-1}$ and $2mA_{1,1}^{-1}A_{3,1}^{-1}A_{3,3}^{-1}A_{2,2}^{-1}A_{2,4}^{-1}$.

It is can be checked that
\[
\begin{array}{ll}
\medskip
  mA_{3,3}^{-1}A_{2,4}^{-1}\leq\varphi_2(mA_{3,3}^{-1}), & mA_{1,1}^{-1}A_{2,2}^{-1}A_{2,4}^{-1}A_{3,3}^{-1}\leq\varphi_{\{1,2\}}(mA_{3,3}^{-1})_{\leq4} , \\
  mA_{1,1}^{-1}A_{2,2}^{-1}\leq\varphi_{\{1,2\}}(m)_{\leq4}, &
2mA_{1,1}^{-1}A_{3,1}^{-1}A_{3,3}^{-1}A_{2,2}^{-1}A_{2,4}^{-1}\leq \varphi_2(2mA_{1,1}^{-1}A_{3,1}^{-1}A_{3,3}^{-1})_{\leq4}.
\end{array}
\]

\medskip
(2) $\a=-\a_1$, $Y^{\a}=Y_{1,4}Y_{2,1}$. If $\beta\in C_\a\backslash\{-\a_2,-\a_3,-\a_4,\a_2,\a_4,\a_5,\a_2+\a_5\}$, then $\chi_q(S(\a)\otimes S(\beta))_{\leq4}$ contains a unique dominant monomial and $S(\a)\otimes S(\beta)$ is simple.

If $\beta=-\a_2$, $Y^{\beta}=Y_{1,4}Y_{2,1}Y_{3,4}$, denote $m=Y^{\a}Y^{\beta}$. The only dominant monomials in $\chi_q(S(\a)\otimes S(\beta))_{\leq4}$ are $m$ and $mA_{2,2}^{-1}A_{3,3}^{-1}.$
\[
mA_{2,2}^{-1}A_{3,3}^{-1}\leq\varphi_{\{2,3\}}(m)_{\leq4}\leq \chi_q(L(m)).
\]

If $\beta=-\a_3$, $Y^{\beta}=Y_{2,1}Y_{3,4}$, denote $m=Y^{\a}Y^{\beta}$. The only dominant monomials in $\chi_q(S(\a)\otimes S(\beta))_{\leq4}$ are $m$, $mA_{2,2}^{-1}A_{3,3}^{-1}$, $mA_{2,2}^{-1}A_{1,3}^{-1}$ and $mA_{1,3}^{-1}A_{2,2}^{-2}A_{3,3}^{-1}$.
\[
\begin{array}{ccc}
  mA_{2,2}^{-1}A_{1,3}^{-1}\leq\varphi_{\{1,2\}}(m)_{\leq4}, &
  mA_{2,2}^{-1}A_{3,3}^{-1}\leq\varphi_{\{2,3\}}(m)_{\leq4}, &
  mA_{1,3}^{-1}A_{2,2}^{-2}A_{3,3}^{-1}\leq\varphi_3(mA_{1,3}^{-1}A_{2,2}^{-2})_{\leq4}.
\end{array}
\]

If $\beta=-\a_4$, $Y^{\beta}=Y_{3,0}$, denote $m=Y^{\a}Y^{\beta}$. The only dominant monomials in $\chi_q(S(\a)\otimes S(\beta))_{\leq4}$ are $m$ and $mA_{1,3}^{-1}A_{2,2}^{-1}A_{3,1}^{-1}$.
\[
mA_{1,3}^{-1}A_{2,2}^{-1}A_{3,1}^{-1}\leq\varphi_{\{1,2\}}(mA_{3,1}^{-1})_{\leq4}\leq \chi_q(L(m)).
\]

If $\beta=\a_2$, $Y^{\beta}=Y_{2,1}Y_{2,5}$, denote $m=Y^{\a}Y^{\beta}$. The only dominant monomials in $\chi_q(S(\a)\otimes S(\beta))_{\leq4}$ are $m$ and $mA_{1,3}^{-1}A_{2,2}^{-1}$.
\[
mA_{1,3}^{-1}A_{2,2}^{-1}\leq\varphi_{\{1,2\}}(m)_{\leq4}\leq \chi_q(L(m)).
\]

If $\beta=\a_4$, $Y^{\beta}=Y_{3,2}Y_{3,4}$, denote $m=Y^{\a}Y^{\beta}$. The only dominant monomials in $\chi_q(S(\a)\otimes S(\beta))_{\leq4}$ are $m$ and $mA_{3,3}^{-1}A_{2,2}^{-1}$.
\[
mA_{3,3}^{-1}A_{2,2}^{-1}\leq\varphi_{3}(mA_{2,2}^{-1})_{\leq4}\leq \chi_q(L(m)).
\]

If $\beta=\a_5$, $Y^{\beta}=Y_{2,1}Y_{2,3}$, denote $m=Y^{\a}Y^{\beta}$. The only dominant monomials in $\chi_q(S(\a)\otimes S(\beta))_{\leq4}$ are $m$ and $mA_{2,2}^{-1}$.
\[
mA_{2,2}^{-1}\leq\varphi_{2}(m)_{\leq4}\leq \chi_q(L(m)).
\]

If $\beta=\a_2+\a_5$, $Y^{\beta}=Y_{2,1}$, denote $m=Y^{\a}Y^{\beta}$. The only dominant monomials in $\chi_q(S(\a)\otimes S(\beta))_{\leq4}$ are $m$ and $mA_{2,2}^{-1}A_{1,3}^{-1}$.
\[
mA_{2,2}^{-1}A_{1,3}^{-1}\leq\varphi_{\{1,2\}}(m)_{\leq4}\leq \chi_q(L(m)).
\]

\medskip
(3) $\a=-\a_2$, $Y^{\a}=Y_{1,4}Y_{2,1}Y_{3,4}$. If $\beta\in C_\a\backslash\{-\a_0,-\a_1,-\a_3,\a_5\}$, then $\chi_q(S(\a)\otimes S(\beta))_{\leq4}$ contains a unique dominant monomial and $S(\a)\otimes S(\beta)$ is simple.

For $\beta=-\a_0,-\a_1$, we have done.

If $\beta=-\a_3$, $Y^{\beta}=Y_{2,1}Y_{3,4}$, denote $m=Y^{\a}Y^{\beta}$. The only dominant monomials in $\chi_q(S(\a)\otimes S(\beta))_{\leq4}$ are $m$ and $mA_{2,2}^{-1}A_{1,3}^{-1}$.

We have $mA_{2,2}^{-1}A_{1,3}^{-1}\leq\varphi_{\{1,2\}}(m)_{\leq4}\leq \chi_q(L(m)).$

If $\beta=\a_5$, $Y^{\beta}=Y_{2,1}Y_{2,3}$, denote $m=Y^{\a}Y^{\beta}$. The only dominant monomials in $\chi_q(S(\a)\otimes S(\beta))_{\leq4}$ are $m$ and $mA_{2,2}^{-1}$.

We have $mA_{2,2}^{-1}\leq\varphi_{2}(m)_{\leq4}\leq \chi_q(L(m)).$

\medskip
(4) $\a=-\a_5$, $Y^{\a}=Y_{2,5}$. If $\beta\in C_\a\backslash\{-\a_0,-\a_1,-\a_2,-\a_3,-\a_4,\a_1+\a_2+\a_3\}$, then $\chi_q(S(\a)\otimes S(\beta))_{\leq4}$ contains a unique dominant monomial and $S(\a)\otimes S(\beta)$ is simple.

For $\beta=-\a_0,-\a_1,-\a_2,-\a_3,-\a_4$, we have done.

If $\beta=\a_1+\a_2+\a_3$, $Y^{\beta}=Y_{1,0}Y_{2,3}Y_{2,5}Y_{3,0}$, denote $m=Y^{\a}Y^{\beta}$. The only dominant monomials in
$\chi_q(S(\a)\otimes S(\beta))_{\leq4}$ are $m$ and $mA_{1,1}^{-1}A_{2,2}^{-1}A_{3,1}^{-1}$.
\[
\begin{array}{ll}
  \varphi_3(mA_{1,1}^{-1})_{\leq4}=mA_{1,1}^{-1}(1+A_{3,1}^{-1}), &
  \varphi_2(mA_{1,1}^{-1}A_{3,1}^{-1})_{\leq4}=mA_{1,1}^{-1}A_{3,1}^{-1}(1+A_{2,2}^{-1})\leq\chi_q(L(m)).
\end{array}
\]

\medskip
(5) $\a=\a_0$, $Y^{\a}=Y_{1,2}Y_{1,4}$. If $\beta\in C_\a\backslash\{-\a_1,-\a_2,-\a_3,-\a_4,-\a_5,\a_2,\a_2+\a_5,\a_0+\a_1+\a_2,\a_0+\a_1+\a_2+\a_5,\a_0+\a_1+2\a_2+\a_3+\a_5,
\a_0+\a_1+2\a_2+2\a_3+\a_4+\a_5,\a_0+\a_1+2\a_2+\a_3+\a_4+\a_5\}$, then $\chi_q(S(\a)\otimes S(\beta))_{\leq4}$ contains a unique dominant monomial and $S(\a)\otimes S(\beta)$ is simple.

For $\beta=-\a_1,-\a_2,-\a_3,-\a_4,-\a_5$, we have done.

If $\beta=\a_2$, $Y^{\beta}=Y_{2,1}Y_{2,5}$, denote $m=Y^{\a}Y^{\beta}$. The only dominant monomials in $\chi_q(S(\a)\otimes S(\beta))_{\leq4}$ are $m$ and $mA_{2,2}^{-1}A_{1,3}^{-1}$.
\[
\begin{array}{cc}
  \varphi_2(m)_{\leq4}=m(1+A_{2,2}^{-1}), &
  \varphi_1(mA_{2,2}^{-1})_{\leq4}=mA_{2,2}^{-1}(1+A_{1,3}^{-1})\leq\chi_q(L(m)).
\end{array}
\]

If $\beta=\a_2+\a_5$, $Y^{\beta}=Y_{2,1}$, denote $m=Y^{\a}Y^{\beta}$. The only dominant monomials in $\chi_q(S(\a)\otimes S(\beta))_{\leq4}$ are $m$ and $mA_{2,2}^{-1}A_{1,3}^{-1}$.
\[
\begin{array}{cc}
  \varphi_2(m)_{\leq4}=m(1+A_{2,2}^{-1}), &
  \varphi_1(mA_{2,2}^{-1})_{\leq4}=mA_{2,2}^{-1}(1+A_{1,3}^{-1})\leq\chi_q(L(m)).
\end{array}
\]

If $\beta=\a_0+\a_1+\a_2$, $Y^{\beta}=Y_{1,2}Y_{2,5}$, denote $m=Y^{\a}Y^{\beta}$. The only dominant monomials in $\chi_q(S(\a)\otimes S(\beta))_{\leq4}$ are $m$ and $mA_{1,3}^{-1}$.
\[
\begin{array}{c}
  \varphi_1(m)_{\leq4}=m(1+A_{1,3}^{-1})\leq\chi_q(L(m)).
\end{array}
\]

If $\beta=\a_0+\a_1+\a_2+\a_5$, $Y^{\beta}=Y_{1,2}$, denote $m=Y^{\a}Y^{\beta}$. The only dominant monomials in $\chi_q(S(\a)\otimes S(\beta))_{\leq4}$ are $m$ and $mA_{1,3}^{-1}$.
\[
\begin{array}{c}
  \varphi_1(m)_{\leq4}=m(1+A_{1,3}^{-1})\leq\chi_q(L(m)).
\end{array}
\]

If $\beta=\a_0+\a_1+2\a_2+\a_3+\a_5$, $Y^{\beta}=Y_{1,2}Y_{2,5}Y_{3,0}Y_{3,2}$, denote $m=Y^{\a}Y^{\beta}$. The only dominant monomials in $\chi_q(S(\a)\otimes S(\beta))_{\leq4}$ are $m$, $mA_{1,3}^{-1}$ and $mA_{1,3}^{-1}A_{2,4}^{-1}A_{3,3}^{-1}$.
\[
\begin{array}{ll}
  \varphi_1(m)_{\leq4}=m(1+A_{1,3}^{-1}),&
  \varphi_2(mA_{1,3}^{-1}A_{3,3}^{-1})_{\leq4}=mA_{1,3}^{-1}A_{3,3}^{-1}(1+A_{2,4}^{-1}).
\end{array}
\]

If $\beta=\a_0+\a_1+2\a_2+2\a_3+\a_4+\a_5$, $Y^{\beta}=Y_{2,3}Y_{2,5}Y_{3,0}Y_{3,2}$, denote $m=Y^{\a}Y^{\beta}$. The only dominant monomials in $\chi_q(S(\a)\otimes S(\beta))_{\leq4}$ are $m$ and $mA_{3,3}^{-1}A_{2,4}^{-1}$.
\[
\begin{array}{cc}
  \varphi_3(m)_{\leq4}=m(1+A_{3,3}^{-1}+A_{3,3}^{-1}A_{3,1}^{-1}),&
  \varphi_2(mA_{3,3}^{-1})_{\leq4}=mA_{3,3}^{-1}(1+A_{2,4}^{-1})\leq\chi_q(L(m)).
\end{array}
\]

If $\beta=\a_0+\a_1+2\a_2+\a_3+\a_4+\a_5$, $Y^{\beta}=Y_{1,2}Y_{2,5}Y_{3,2}$, denote $m=Y^{\a}Y^{\beta}$. The only dominant monomials in $\chi_q(S(\a)\otimes S(\beta))_{\leq4}$ are $m$, $mA_{1,3}^{-1}$ and $mA_{1,3}^{-1}A_{2,4}^{-1}A_{3,3}^{-1}$.
\[
\begin{array}{ccc}
  \varphi_1(m)_{\leq4}=m(1+A_{1,3}^{-1}),&
  \varphi_3(mA_{1,3}^{-1})_{\leq4}=mA_{1,3}^{-1}(1+A_{3,3}^{-1}),&
  \varphi_2(mA_{1,3}^{-1}A_{3,3}^{-1})_{\leq4}=mA_{1,3}^{-1}A_{3,3}^{-1}(1+A_{2,4}^{-1}).
\end{array}
\]

\medskip
(6) $\a=\a_1$, $Y^{\a}=Y_{1,0}Y_{3,4}$. If $\beta\in C_\a\backslash\{-\a_0,-\a_2,-\a_3,-\a_4,-\a_5,\a_5,\a_1+\a_2,\a_1+\a_2+\a_5,\a_1+\a_2+\a_3,\a_1+\a_2+\a_3+\a_4,
\a_1+\a_2+\a_3+\a_5,\a_1+\a_2+\a_3+\a_4+\a_5\}$, then $\chi_q(S(\a)\otimes S(\beta))_{\leq4}$ contains a unique dominant monomial and $S(\a)\otimes S(\beta)$ is simple.

For $\beta=-\a_0,-\a_2,-\a_3,-\a_4,-\a_5$, we have done.

If $\beta=\a_5$, $Y^{\beta}=Y_{2,1}Y_{2,3}$, denote $m=Y^{\a}Y^{\beta}$. The only dominant monomials in $\chi_q(S(\a)\otimes S(\beta))_{\leq4}$ are $m$ and $m A_{1,1}^{-1}A_{2,2}^{-1}$.
\[
\begin{array}{cc}
  \varphi_1(m)_{\leq4}=m(1+A_{1,1}^{-1}), &
  \varphi_2(mA_{1,1}^{-1})_{\leq4}=mA_{1,1}^{-1}(1+A_{2,2}^{-1})\leq\chi_q(L(m)).
\end{array}
\]

If $\beta=\a_1+\a_2$, $Y^{\beta}=Y_{1,0}Y_{1,2}Y_{2,5}$, denote $m=Y^{\a}Y^{\beta}$. The only dominant monomials in $\chi_q(S(\a)\otimes S(\beta))_{\leq4}$ are $m$ and $m A_{1,1}^{-1}$.
\[
\begin{array}{c}
  mA_{1,1}^{-1}\leq\varphi_1(m)_{\leq4}=m(1+A_{1,1}^{-1})(1+A_{1,3}^{-1}+A_{1,3}^{-1}A_{1,1}^{-1})\leq\chi_q(L(m)).
\end{array}
\]

If $\beta=\a_1+\a_2+\a_3$, $Y^{\beta}=Y_{1,0}Y_{2,3}Y_{2,5}Y_{3,0}$, denote $m=Y^{\a}Y^{\beta}$. The only dominant monomials in $\chi_q(S(\a)\otimes S(\beta))_{\leq4}$ are $m$, $m A_{1,1}^{-1}A_{2,2}^{-1}$ and $2m A_{1,1}^{-1}A_{2,2}^{-1}A_{3,1}^{-1}$.

Since $\overline{m}_{\{1,2\}}=(Y_{1,0})(Y_{1,0}Y_{2,3}Y_{2,5}),$
\[
\begin{array}{c}
  \varphi_{\{1,2\}}(m)_{\leq4}=m(1+A_{1,1}^{-1}+A_{1,1}^{-1}A_{2,2}^{-1})(1+A_{1,1}^{-1})\leq\chi_q(L(m)).
\end{array}
\]
Besides, $\varphi_3(2mA_{1,1}^{-1})_{\leq4}=2mA_{1,1}^{-1}(1+A_{3,1}^{-1})$,
 $ \varphi_2(2mA_{3,1}^{-1}A_{1,1}^{-1})_{\leq4}=2mA_{3,1}^{-1}A_{1,1}^{-1}(1+A_{2,2}^{-1})\leq\chi_q(L(m))$.

Hence all dominant monomials occur in $\chi_q(L(m))_{\leq4}.$

If $\beta=\a_1+\a_2+\a_5$, $Y^{\beta}=Y_{1,0}Y_{1,2}$, denote $m=Y^{\a}Y^{\beta}$. The only dominant monomials in $\chi_q(S(\a)\otimes S(\beta))_{\leq4}$ are $m$ and $m A_{1,1}^{-1}$.
\[
\begin{array}{c}
  mA_{1,1}^{-1}\leq\varphi_1(m)_{\leq4}=m(1+A_{1,1}^{-1})(1+A_{1,3}^{-1}+A_{1,3}^{-1}A_{1,1}^{-1})\leq\chi_q(L(m)).
\end{array}
\]

If $\beta=\a_1+\a_2+\a_3+\a_4$, $Y^{\beta}=Y_{1,0}Y_{2,3}Y_{2,5}$, denote $m=Y^{\a}Y^{\beta}$. The only dominant monomials in $\chi_q(S(\a)\otimes S(\beta))_{\leq4}$ are $m$ and $m A_{1,1}^{-1}A_{2,2}^{-1}$.
\[
\begin{array}{c}
  \varphi_{\{1,2\}}(m)_{\leq4}=m(1+A_{1,1}^{-1}+A_{1,1}^{-1}A_{2,2}^{-1})(1+A_{1,1}^{-1})\leq\chi_q(L(m)).
\end{array}
\]

If $\beta=\a_1+\a_2+\a_3+\a_5$, $Y^{\beta}=Y_{1,0}Y_{2,3}Y_{3,0}$, denote $m=Y^{\a}Y^{\beta}$. The only dominant monomials in $\chi_q(S(\a)\otimes S(\beta))_{\leq4}$ are $m$, $m A_{1,1}^{-1}A_{2,2}^{-1}$ and $2m A_{1,1}^{-1}A_{2,2}^{-1}A_{3,1}^{-1}$.
\[
\begin{array}{c}
  \varphi_{\{1,2\}}(m)_{\leq4}=m(1+A_{1,1}^{-1}+A_{1,1}^{-1}A_{2,2}^{-1})(1+A_{1,1}^{-1}+A_{2,4}^{-1}+A_{2,4}^{-1}A_{1,1}^{-1}+A_{2,4}^{-1}A_{1,1}^{-1} A_{2,2}^{-1})\leq\chi_q(L(m)).
\end{array}
\]
Besides, $\varphi_3(2mA_{1,1}^{-1})_{\leq4}=2mA_{1,1}^{-1}(1+A_{3,1}^{-1})$, $\varphi_2(2mA_{3,1}^{-1}A_{1,1}^{-1})_{\leq4}=2mA_{3,1}^{-1}A_{1,1}^{-1}(1+A_{2,2}^{-1})(1+A_{2,4}^{-1}+
 A_{2,4}^{-1}A_{2,2}^{-1})$.

Hence all dominant monomials occur in $\chi_q(L(m))_{\leq4}.$

If $\beta=\a_1+\a_2+\a_3+\a_4+\a_5$, $Y^{\beta}=Y_{1,0}Y_{2,3}$, denote $m=Y^{\a}Y^{\beta}$. The only dominant monomials in $\chi_q(S(\a)\otimes S(\beta))_{\leq4}$ are $m$ and $m A_{1,1}^{-1}A_{2,2}^{-1}$.
\[
\begin{array}{c}
 \varphi_{\{1,2\}}(m)_{\leq4}=m(1+A_{1,1}^{-1}+A_{1,1}^{-1}A_{2,2}^{-1})(1+A_{1,1}^{-1}+A_{2,4}^{-1}+A_{2,4}^{-1}A_{1,1}^{-1}+A_{2,4}^{-1}A_{1,1}^{-1} A_{2,2}^{-1})\leq\chi_q(L(m)).
\end{array}
\]

\medskip
(7) $\a=\a_2$, $Y^{\a}=Y_{2,1}Y_{2,5}$. If $\beta\in C_\a\backslash\{-\a_0,-\a_1,-\a_3,-\a_4,-\a_5,\a_0,\a_4,\a_2+\a_5\}$, then $\chi_q(S(\a)\otimes S(\beta))_{\leq4}$ contains a unique dominant monomial and $S(\a)\otimes S(\beta)$ is simple.

For $\beta=-\a_0,-\a_1,-\a_3,-\a_4,-\a_5,\a_0$, we have done.

If $\beta=\a_4$, $Y^{\beta}=Y_{3,2}Y_{3,4}$, denote $m=Y^{\a}Y^{\beta}$. The only dominant monomials in $\chi_q(S(\a)\otimes S(\beta))_{\leq4}$ are $m$ and $mA_{2,2}^{-1}A_{3,3}^{-1}$.
\[
\begin{array}{cc}
  \varphi_2(m)_{\leq4}=m(1+A_{2,2}^{-1}),&
  \varphi_3(mA_{2,2}^{-1})_{\leq4}=mA_{2,2}^{-1}(1+A_{3,3}^{-2})\leq\chi_q(L(m)).
\end{array}
\]

If $\beta=\a_2+\a_5$, $Y^{\beta}=Y_{2,1}$, denote $m=Y^{\a}Y^{\beta}$. The only dominant monomials in $\chi_q(S(\a)\otimes S(\beta))_{\leq4}$ are $m$ and $mA_{1,3}^{-1}A_{3,3}^{-1}A_{2,2}^{-1}A_{2,4}^{-1}$.

Easily check that $\chi_q(L(Y^{\a})\otimes L(Y^{\beta}))_{\leq4}$ contains only two dominant monomials, so $\chi_q(L(Y^{\a}Y^{\beta}))_{\leq4}$ has two dominant monomials at most. If $mA_{1,3}^{-1}A_{3,3}^{-1}A_{2,2}^{-1}A_{2,4}^{-1}$ is not contained in $\chi_q(L(Y^{\a}Y^{\beta}))_{\leq4}$,
then $L(Y^{\a}Y^{\beta})$ is minuscule, and $\chi_q(L(Y^{\a}Y^{\beta}))=FM(Y^{\a}Y^{\beta})$.

$\chi_q(L(m))_{\leq4}=FM(m)_{\leq4}=m(1+2A_{2,2}^{-1}+A_{2,2}^{-2}+2A_{2,2}^{-1}A_{1,3}^{-1}+2A_{2,2}^{-1}A_{3,3}^{-1}+2A_{2,2}^{-2}A_{1,3}^{-1}
+2A_{2,2}^{-2}A_{3,3}^{-1}+A_{2,2}^{-2}A_{3,3}^{-2}
+A_{2,2}^{-2}A_{1,3}^{-2}+2A_{2,2}^{-1}A_{1,3}^{-1}A_{3,3}^{-1}+4A_{2,2}^{-2}A_{1,3}^{-1}A_{3,3}^{-1}+2A_{2,2}^{-2}A_{1,3}^{-1}A_{3,3}^{-2}+2A_{2,2}^{-2}A_{1,3}^{-2}A_{3,3}^{-1}+
A_{2,2}^{-2}A_{1,3}^{-2}A_{3,3}^{-2}+A_{2,2}^{-2}A_{1,3}^{-2}A_{3,3}^{-2}A_{2,4}^{-1}).$

By Proposition~\ref{qchadecom}, we fix $i=1$, then there is a
unique decomposition of $\chi_q(L(m))$ as a finite sum
$ \chi_q(L(m)) = \sum_{\substack{m\in\M_{1,+}\\ \lambda_m\geq0}} \lambda_m \varphi_1(m).$
In other word, any monomial in $\chi_q(L(m))$ must be contained in some $\varphi_1(m')$, where $m'\in \M_{1,+}$ and $m'\leq\chi_q(L(m)$. But, the monomial $A_{2,2}^{-2}A_{1,3}^{-2}A_{3,3}^{-2}A_{2,4}^{-1}$ does not satisfy this property. Contradict.

\medskip
(8) $\a=\a_5$, $Y^{\a}=Y_{2,1}Y_{2,3}$. If $\beta\in C_\a\backslash\{-\a_0,-\a_1,-\a_2,-\a_3,-\a_4,\a_1,\a_3,\a_2+\a_5,\a_1+\a_2+\a_3+\a_5\}$, then $\chi_q(S(\a)\otimes S(\beta))_{\leq4}$ contains a unique dominant monomial and $S(\a)\otimes S(\beta)$ is simple.

For $\beta=-\a_0,-\a_1,-\a_2,-\a_3,-\a_4,\a_1,\a_3$, we have done.

If $\beta=\a_2+\a_5$, $Y^{\beta}=Y_{2,1}$, denote $m=Y^{\a}Y^{\beta}$. The only dominant monomials in $\chi_q(S(\a)\otimes S(\beta))_{\leq4}$ are $m$ and $m A_{2,2}^{-1}$.
\[
\begin{array}{c}
  mA_{2,2}^{-1}\leq\varphi_2(m)_{\leq4}=m(1+A_{2,2}^{-1})(1+A_{2,4}^{-1}+A_{2,4}^{-1}A_{2,2}^{-1})\leq\chi_q(L(m)).
\end{array}
\]

If $\beta=\a_1+\a_2+\a_3+\a_5$, $Y^{\beta}=Y_{1,0}Y_{2,3}Y_{3,0}$, denote $m=Y^{\a}Y^{\beta}$. The only dominant monomials in $\chi_q(S(\a)\otimes S(\beta))_{\leq4}$ are $m$ and $mA_{1,1}^{-1}A_{2,2}^{-1}A_{3,1}^{-1}$.
\[
\varphi_2(mA_{3,1}^{-1}A_{1,1}^{-1})_{\leq4}=mA_{3,1}^{-1}A_{1,1}^{-1}(1+A_{2,2}^{-1})(1+A_{2,4}^{-1}
+A_{2,4}^{-1}A_{2,2}^{-1})^{2}\leq\chi_q(L(m)).
\]

\medskip
(9) $\a=\a_1+\a_2$, $Y^{\a}=Y_{1,0}Y_{1,2}Y_{2,5}$. If $\beta\in C_\a\backslash\{-\a_0,-\a_3,-\a_4,-\a_5,\a_1,\a_2+\a_5,\a_1+\a_2+\a_3,\a_1+\a_2+\a_5,\a_1+\a_2+\a_3+\a_4,\a_0+\a_1+\a_2+\a_5,
\a_1+2\a_2+\a_3+\a_5,\a_0+\a_1+2\a_2+\a_3+\a_5,\a_1+2\a_2+\a_3+\a_4+\a_5,\a_0+2\a_1+2\a_2+\a_3+\a_4+\a_5,
\a_0+\a_1+2\a_2+\a_3+\a_4+\a_5,\a_0+2\a_1+3\a_2+2\a_3+\a_4+\a_5\}$, then $\chi_q(S(\a)\otimes S(\beta))_{\leq4}$ contains a unique dominant monomial and $S(\a)\otimes S(\beta)$ is simple.

For $\beta=-\a_0,-\a_3,-\a_4,-\a_5,\a_1$, we have done.

If $\beta=\a_2+\a_5$, $Y^{\beta}=Y_{2,1}$, denote $m=Y^{\a}Y^{\beta}$. The only dominant monomials in $\chi_q(S(\a)\otimes S(\beta))_{\leq4}$ are $m$ and $m A_{1,3}^{-1}A_{3,3}^{-1}A_{2,2}^{-1}A_{2,4}^{-1}$.

As before, $\chi_q(L(Y^{\a})\otimes L(Y^{\beta}))_{\leq4}$ contains only two dominant monomials, so $\chi_q(L(Y^{\a}Y^{\beta}))_{\leq4}$ has two dominant monomials at most. If $mA_{1,3}^{-1}A_{3,3}^{-1}A_{2,2}^{-1}A_{2,4}^{-1}$ is not contained in $\chi_q(L(Y^{\a}Y^{\beta}))_{\leq4}$,
then $L(Y^{\a}Y^{\beta})$ is minuscule, and $\chi_q(L(Y^{\a}Y^{\beta}))=FM(Y^{\a}Y^{\beta})$.

$FM(m)_{\leq4}=m(1+A_{1,3}^{-1}+A_{1,1}^{-1}A_{1,3}^{-1}+A_{2,2}^{-1}+A_{1,1}^{-1}A_{1,3}^{-1}A_{2,2}^{-1}
+2A_{2,2}^{-1}A_{1,3}^{-1}+A_{2,2}^{-1}A_{1,3}^{-2}+A_{2,2}^{-1}A_{1,3}^{-2}A_{1,1}^{-1}+A_{2,2}^{-1}A_{3,3}^{-1}+
A_{1,1}^{-1}A_{1,3}^{-1}A_{2,2}^{-1}A_{3,3}^{-1}+2A_{1,3}^{-1}A_{2,2}^{-1}A_{3,3}^{-1}+
A_{1,3}^{-2}A_{2,2}^{-1}A_{3,3}^{-1}+A_{1,1}^{-1}A_{1,3}^{-2}A_{2,2}^{-1}A_{3,3}^{-1}+A_{1,3}^{-2}A_{2,2}^{-1}A_{2,4}^{-1}A_{3,3}^{-1}
+A_{1,1}^{-1}A_{1,3}^{-2}A_{2,2}^{-1}A_{2,4}^{-1}A_{3,3}^{-1}).$

By Proposition~\ref{qchadecom}, we fix $i=1$, then there is a
unique decomposition of $\chi_q(L(m))$ as a finite sum
$ \chi_q(L(m)) =  \sum_{\substack{m\in\M_{1,+}\\ \lambda_m\geq0}} \lambda_m \varphi_1(m).$
In other word, any monomial in $\chi_q(L(m))$ must be contained in some $\varphi_1(m')$, where $m'\in \M_{1,+}$ and $m'\leq\chi_q(L(m)$. But, the monomial $A_{1,3}^{-2}A_{2,2}^{-1}A_{2,4}^{-1}A_{3,3}^{-1}$ does not satisfy this property.

If $\beta=\a_1+\a_2+\a_3$, $Y^{\beta}=Y_{1,0}Y_{2,3}Y_{2,5}Y_{3,0}$, denote $m=Y^{\a}Y^{\beta}$. The only dominant monomials in $\chi_q(S(\a)\otimes S(\beta))_{\leq4}$ are $m$, $mA_{1,1}^{-1}$ and $m A_{1,1}^{-1}A_{3,1}^{-1}A_{2,2}^{-1}$.
\[
\begin{array}{cc}
\varphi_1(m)_{\leq4}=m(1+A_{1,1}^{-1})(1+A_{1,3}^{-1}+A_{1,3}^{-1}A_{1,1}^{-1}),&
\varphi_2(mA_{1,1}^{-1}A_{3,1}^{-1})_{\leq4}=mA_{1,1}^{-1}A_{3,1}^{-1}(1+A_{2,2}^{-1}).
\end{array}
\]


If $\beta=\a_1+\a_2+\a_5$, $Y^{\beta}=Y_{1,0}Y_{1,2}$, denote $m=Y^{\a}Y^{\beta}$. The only dominant monomials in $\chi_q(S(\a)\otimes S(\beta))_{\leq4}$ are $m$, $mA_{1,3}^{-1}A_{2,4}^{-1}$ and $mA_{1,1}^{-1}A_{1,3}^{-1}A_{2,4}^{-1}$.
\[
\begin{array}{c}
 \varphi_{\{1,2\}}(m)_{\leq4}=m(1+A_{1,3}^{-1}+A_{1,1}^{-1} A_{1,3}^{-1}+A_{1,3}^{-1} A_{2,4}^{-1}+A_{1,1}^{-1} A_{1,3}^{-1} A_{2,4}^{-1}+A_{1,1}^{-1} A_{1,3}^{-1} A_{2,4}^{-1} A_{2,2}^{-1})(1+A_{1,3}^{-1}+A_{1,1}^{-1}A_{1,3}^{-1}).
\end{array}
\]

If $\beta=\a_1+\a_2+\a_3+\a_4$, $Y^{\beta}=Y_{1,0}Y_{2,3}Y_{2,5}$, denote $m=Y^{\a}Y^{\beta}$. The only dominant monomials in $\chi_q(S(\a)\otimes S(\beta))_{\leq4}$ are $m$ and $m A_{1,1}^{-1}$.
\[
\begin{array}{c}
  mA_{1,1}^{-1}\leq\varphi_1(m)_{\leq4}=m(1+A_{1,1}^{-1})(1+A_{1,3}^{-1}+A_{1,3}^{-1}A_{1,1}^{-1})\leq\chi_q(L(m)).
\end{array}
\]

If $\beta=\a_0+\a_1+\a_2+\a_5$, $Y^{\beta}=Y_{1,2}$, denote $m=Y^{\a}Y^{\beta}$. The only dominant monomials in $\chi_q(S(\a)\otimes S(\beta))_{\leq4}$ are $m$ and $m A_{1,3}^{-1}A_{2,4}^{-1}$.
\[
\begin{array}{c}
 \varphi_{\{1,2\}}(m)_{\leq4}=m(1+A_{1,3}^{-1}+A_{1,3}^{-1}A_{2,4}^{-1})(1+A_{1,3}^{-1}+A_{1,1}^{-1}A_{1,3}^{-1}).
\end{array}
\]

If $\beta=\a_1+2\a_2+\a_3+\a_5$, $Y^{\beta}=Y_{1,0}Y_{1,2}Y_{3,0}Y_{3,2}Y_{2,5}$, denote $m=Y^{\a}Y^{\beta}$. The only dominant monomials in $\chi_q(S(\a)\otimes S(\beta))_{\leq4}$ are $m$, $mA_{1,3}^{-1}A_{3,3}^{-1}A_{2,4}^{-1}$, $mA_{1,1}^{-1}A_{1,3}^{-1}A_{3,3}^{-1}A_{2,4}^{-1}$ and $mA_{1,1}^{-1}A_{1,3}^{-1}A_{3,1}^{-1}A_{3,3}^{-1}A_{2,2}^{-1}A_{2,4}^{-1}.$

$mA_{1,3}^{-1}A_{3,3}^{-1}A_{2,4}^{-1}+mA_{1,1}^{-1}A_{1,3}^{-1}A_{3,3}^{-1}A_{2,4}^{-1}\leq\varphi_{\{1,2\}}(mA_{3,3}^{-1})$,

$mA_{1,1}^{-1}A_{1,3}^{-1}A_{3,1}^{-1}A_{3,3}^{-1}A_{2,2}^{-1}A_{2,4}^{-1}\leq\varphi_{\{1,2\}}(mA_{3,1}^{-1}A_{3,3}^{-1})$.


If $\beta=\a_0+\a_1+2\a_2+\a_3+\a_5$, $Y^{\beta}=Y_{1,2}Y_{2,5}Y_{3,0}Y_{3,2}$, denote $m=Y^{\a}Y^{\beta}$. The only dominant monomials in $\chi_q(S(\a)\otimes S(\beta))_{\leq4}$ are $m$ and $mA_{1,3}^{-1}A_{3,3}^{-1}A_{2,4}^{-1}$.
$$mA_{1,3}^{-1}A_{3,3}^{-1}A_{2,4}^{-1}\leq \varphi_{\{1,2\}}(mA_{3,3}^{-1}).$$
If $\beta=\a_0+2\a_1+2\a_2+\a_3+\a_5$, $Y^{\beta}=Y_{1,0}Y_{1,2}Y_{2,3}Y_{2,5}Y_{3,0}$, denote $m=Y^{\a}Y^{\beta}$. The only dominant monomials in $\chi_q(S(\a)\otimes S(\beta))_{\leq4}$ are $m$, $mA_{1,3}^{-1}A_{2,4}^{-1}$, $m A_{1,1}^{-1}A_{1,3}^{-1}A_{2,4}^{-1}$ and $mA_{1,1}^{-1}A_{1,3}^{-1}A_{2,2}^{-1}A_{2,4}^{-1}A_{3,1}^{-1}$.
\[
\begin{array}{ll}
mA_{1,3}^{-1}A_{2,4}^{-1}+m A_{1,1}^{-1}A_{1,3}^{-1}A_{2,4}^{-1}\leq\varphi_{\{1,2\}}(m)_{\leq4},&
mA_{1,1}^{-1}A_{1,3}^{-1}A_{2,2}^{-1}A_{2,4}^{-1}A_{3,1}^{-1}\leq\varphi_{\{1,2\}}(mA_{3,1}^{-1})_{\leq4}.
\end{array}
\]

If $\beta=\a_1+2\a_2+\a_3+\a_4+\a_5$, $Y^{\beta}=Y_{1,0}Y_{1,2}Y_{2,5}Y_{3,2}$, denote $m=Y^{\a}Y^{\beta}$. The only dominant monomials in $\chi_q(S(\a)\otimes S(\beta))_{\leq4}$ are $m$, $mA_{1,3}^{-1}A_{3,3}^{-1}A_{2,4}^{-1}$ and $mA_{1,1}^{-1}A_{1,3}^{-1}A_{3,3}^{-1}A_{2,4}^{-1}$.
\[
mA_{1,3}^{-1}A_{3,3}^{-1}A_{2,4}^{-1}+mA_{1,1}^{-1}A_{1,3}^{-1}A_{3,3}^{-1}A_{2,4}^{-1}\leq \varphi_{\{1,2\}}(mA_{3,3}^{-1})\leq\chi_q(L(m)).
\]

If $\beta=\a_0+2\a_1+2\a_2+\a_3+\a_4+\a_5$, $Y^{\beta}=Y_{1,0}Y_{1,2}Y_{2,3}Y_{2,5}$, denote $m=Y^{\a}Y^{\beta}$. The only dominant monomials in $\chi_q(S(\a)\otimes S(\beta))_{\leq4}$ are $m$, $mA_{1,3}^{-1}A_{2,4}^{-1}$ and $mA_{1,1}^{-1}A_{1,3}^{-1}A_{2,4}^{-1}$.
\[
mA_{1,3}^{-1}A_{2,4}^{-1}+mA_{1,1}^{-1}A_{1,3}^{-1}A_{2,4}^{-1}\leq \varphi_{\{1,2\}}(m)\leq\chi_q(L(m)).
\]

If $\beta=\a_0+\a_1+2\a_2+\a_3+\a_4+\a_5$, $Y^{\beta}=Y_{1,2}Y_{2,5}Y_{3,2}$, denote $m=Y^{\a}Y^{\beta}$. The only dominant monomials in $\chi_q(S(\a)\otimes S(\beta))_{\leq4}$ are $m$ and $mA_{1,3}^{-1}A_{3,3}^{-1}A_{2,4}^{-1}$.
\[
mA_{1,3}^{-1}A_{3,3}^{-1}A_{2,4}^{-1}\leq \varphi_{\{1,2\}}(mA_{3,3}^{-1})\leq\chi_q(L(m)).
\]

If $\beta=\a_0+2\a_1+3\a_2+2\a_3+\a_4+\a_5$, $Y^{\beta}=Y_{1,0}Y_{1,2}Y_{2,3}Y_{2,5}^{2}Y_{3,0}Y_{3,2}$, denote $m=Y^{\a}Y^{\beta}$. The only dominant monomials in $\chi_q(S(\a)\otimes S(\beta))_{\leq4}$ are $m$, $mA_{1,3}^{-1}A_{3,3}^{-1}A_{2,4}^{-1}$ and $mA_{1,1}^{-1}A_{1,3}^{-1}A_{3,3}^{-1}A_{2,4}^{-1}$.
\[
mA_{1,3}^{-1}A_{3,3}^{-1}A_{2,4}^{-1}+mA_{1,1}^{-1}A_{1,3}^{-1}A_{3,3}^{-1}A_{2,4}^{-1}\leq \varphi_{\{1,2\}}(mA_{3,3}^{-1})\leq\chi_q(L(m)).
\]

\medskip
(10) $\a=\a_2+\a_5$, $Y^{\a}=Y_{2,1}$. If $\beta\in C_\a\backslash\{-\a_0,-\a_1,-\a_3,-\a_4,\a_0,\a_2,\a_4,\a_5,\a_1+\a_2,\a_2+\a_3,\a_2+\a_3+\a_4,\a_0+\a_1+\a_2,
\a_1+2\a_2+\a_3+\a_5,\a_0+\a_1+2\a_2+\a_3+\a_5,\a_1+2\a_2+\a_3+\a_4+\a_5,\a_0+\a_1+2\a_2+\a_3+\a_4+\a_5\}$, then $\chi_q(S(\a)\otimes S(\beta))_{\leq4}$ contains a unique dominant monomial and $S(\a)\otimes S(\beta)$ is simple.

For $\beta=-\a_0,-\a_1,-\a_3,-\a_4,\a_0,\a_2,\a_4,\a_5,\a_1+\a_2,\a_2+\a_3$, we have done.

If $\beta=\a_2+\a_3+\a_4$, $Y^{\beta}=Y_{2,5}Y_{3,2}$, denote $m=Y^{\a}Y^{\beta}$. The only dominant monomials in $\chi_q(S(\a)\otimes S(\beta))_{\leq4}$ are $m$ and $mA_{1,3}^{-1}A_{3,3}^{-1}A_{2,2}^{-1}A_{2,4}^{-1}$.

Since $\chi_q(L(m))_{\leq4}$ has two dominant monomials at most. If $mA_{1,3}^{-1}A_{3,3}^{-1}A_{2,2}^{-1}A_{2,4}^{-1}$ is not contained in $\chi_q(L(m))_{\leq4}$,
$L(m)$ is minuscule, and $\chi_q(L(m))=FM(m)$.
\[
FM(m)_{\leq4}=m(1+A_{3,3}^{-1}+A_{2,2}^{-1}+A_{1,3}^{-1}A_{2,2}^{-1}+2A_{3,3}^{-1}A_{2,2}^{-1}+
A_{3,3}^{-2}A_{2,2}^{-1}+
\]
\[
2A_{1,3}^{-1}A_{2,2}^{-1}A_{3,3}^{-1}+A_{1,3}^{-1}A_{2,2}^{-1}A_{3,3}^{-2}
+A_{1,3}^{-1}A_{2,2}^{-1}A_{3,3}^{-2}A_{2,4}^{-1}.
\]
By Proposition~\ref{qchadecom}, we fix $i=3$, then there is a
unique decomposition of $\chi_q(L(m))$ as a finite sum
\[
 \chi_q(L(m)) =  \sum_{\substack{m\in\M_{3,+}\\ \lambda_m\geq0}} \lambda_m \varphi_3(m).
\]
In other word, any monomial in $\chi_q(L(m)$ must be contained in some $\varphi_3(m')$, where $m'\in \M_{3,+}$ and $m'\leq\chi_q(L(m)$. But, the monomial $A_{1,3}^{-1}A_{2,2}^{-1}A_{3,3}^{-2}A_{2,4}^{-1}$ does not satisfy this property.

If $\beta=\a_0+\a_1+\a_2$, $Y^{\beta}=Y_{1,2}Y_{2,5}$, denote $m=Y^{\a}Y^{\beta}$. The only dominant monomials in $\chi_q(S(\a)\otimes S(\beta))_{\leq4}$ are $m$ and $mA_{1,3}^{-1}A_{3,3}^{-1}A_{2,2}^{-1}A_{2,4}^{-1}$.

This case is similar to the case $\beta=\a_2+\a_3+\a_4$.

If $\beta=\a_1+2\a_2+\a_3+\a_5$, $Y^{\beta}=Y_{1,0}Y_{1,2}Y_{2,5}Y_{3,0}Y_{3,2}$, denote $m=Y^{\a}Y^{\beta}$. The only dominant monomials in $\chi_q(S(\a)\otimes S(\beta))_{\leq4}$ are $m,\ mA_{1,3}^{-1}A_{3,3}^{-1}A_{2,4}^{-1},\ 2mA_{1,1}^{-1}A_{1,3}^{-1}A_{3,1}^{-1}A_{3,3}^{-1}A_{2,2}^{-1}A_{2,4}^{-1},\ mA_{1,3}^{-1}A_{3,1}^{-1}A_{3,3}^{-1}A_{2,2}^{-1}A_{2,4}^{-1},\ \\ mA_{1,1}^{-1}A_{1,3}^{-1}A_{3,3}^{-1}A_{2,2}^{-1}A_{2,4}^{-1}$ and $2mA_{1,3}^{-1}A_{3,3}^{-1}A_{2,2}^{-1}A_{2,4}^{-1}$.
\[
\begin{array}{ll}
\medskip
 mA_{1,3}^{-1}A_{3,3}^{-1}A_{2,4}^{-1}\leq\varphi_{2}(mA_{1,3}^{-1}A_{3,3}^{-1}),&
 2mA_{1,1}^{-1}A_{1,3}^{-1}A_{3,1}^{-1}A_{3,3}^{-1}A_{2,2}^{-1}A_{2,4}^{-1}\leq
 \varphi_{2}(mA_{1,1}^{-1}A_{1,3}^{-1}A_{3,3}^{-1}A_{3,1}^{-1}),\\
 mA_{1,3}^{-1}A_{3,1}^{-1}A_{3,3}^{-1}A_{2,2}^{-1}A_{2,4}^{-1}\leq\varphi_{2}(mA_{1,3}^{-1}A_{3,3}^{-1}A_{3,1}^{-1}),&
 mA_{3,3}^{-1}A_{1,1}^{-1}A_{1,3}^{-1}A_{2,2}^{-1}A_{2,4}^{-1}\leq\varphi_{2}(mA_{3,3}^{-1}A_{1,3}^{-1}A_{1,1}^{-1}).
\end{array}
\]

$2mA_{1,3}^{-1}A_{3,3}^{-1}A_{2,2}^{-1}A_{2,4}^{-1}\leq\varphi_{\{1,2\}}(2mA_{3,3}^{-1}A_{2,2}^{-1})_{\leq4}.$

If $\beta=\a_0+\a_1+2\a_2+\a_3+\a_5$, $Y^{\beta}=Y_{1,2}Y_{2,5}Y_{3,0}Y_{3,2}$, denote $m=Y^{\a}Y^{\beta}$. The only dominant monomials in $\chi_q(S(\a)\otimes S(\beta))_{\leq4}$ are $m$, $mA_{1,3}^{-1}A_{3,3}^{-1}A_{2,4}^{-1}$, $mA_{1,3}^{-1}A_{3,1}^{-1}A_{3,3}^{-1}A_{2,2}^{-1}A_{2,4}^{-1}$ and $2mA_{1,3}^{-1}A_{3,3}^{-1}A_{2,2}^{-1}A_{2,4}^{-1}$.
\[
\begin{array}{ll}
\medskip
 mA_{1,3}^{-1}A_{3,3}^{-1}A_{2,4}^{-1}\leq\varphi_{2}(mA_{1,3}^{-1}A_{3,3}^{-1}),&
 mA_{1,3}^{-1}A_{3,1}^{-1}A_{3,3}^{-1}A_{2,2}^{-1}A_{2,4}^{-1}\leq\varphi_{2}(mA_{1,3}^{-1}A_{3,3}^{-1}A_{3,1}^{-1}),\\
2mA_{1,3}^{-1}A_{3,3}^{-1}A_{2,2}^{-1}A_{2,4}^{-1}\leq\varphi_{\{1,2\}}(2mA_{3,3}^{-1}A_{2,2}^{-1}).&
\end{array}
\]

\medskip
(11) $\a=\a_3+\a_4$, $Y^{\a}=Y_{1,4}$. If $\beta\in C_\a\backslash\{-\a_0,-\a_1,-\a_2,-\a_5,\a_1+\a_2+\a_3,
\a_1+\a_2+\a_3+\a_5,\a_1+2\a_2+2\a_3+\a_4+\a_5,\a_0+2\a_1+2\a_2+2\a_3+\a_4+\a_5,\a_0+\a_1+2\a_2+2\a_3+\a_4+\a_5
\}$, then $\chi_q(S(\a)\otimes S(\beta))_{\leq4}$ contains a unique dominant monomial and $S(\a)\otimes S(\beta)$ is simple.

For $\beta=-\a_0,-\a_1,-\a_2,-\a_5$, we have done.

If $\beta=\a_1+\a_2+\a_3$, $Y^{\beta}=Y_{1,0}Y_{2,3}Y_{2,5}Y_{3,0}$, denote $m=Y^{\a}Y^{\beta}$. The only dominant monomials in $\chi_q(S(\a)\otimes S(\beta))_{\leq4}$ are $m$ and $mA_{1,1}^{-1}A_{2,2}^{-1}A_{3,1}^{-1}$.
\[
\begin{array}{ccc}
  \varphi_{1}(m)_{\leq4}=m(1+A_{1,1}^{-1}), & \varphi_{3}(mA_{1,1}^{-1})_{\leq4}=mA_{1,1}^{-1}(1+A_{3,1}^{-1}),&
 \varphi_{2}(mA_{1,1}^{-1}A_{3,1}^{-1})_{\leq4}=mA_{1,1}^{-1}A_{3,1}^{-1}(1+A_{2,2}^{-1}).
\end{array}
\]

If $\beta=\a_1+\a_2+\a_3+\a_5$, $Y^{\beta}=Y_{1,0}Y_{2,3}Y_{3,0}$, denote $m=Y^{\a}Y^{\beta}$. The only dominant monomials in $\chi_q(S(\a)\otimes S(\beta))_{\leq4}$ are $m$ and $mA_{1,1}^{-1}A_{2,2}^{-1}A_{3,1}^{-1}$.
\[
mA_{1,1}^{-1}A_{2,2}^{-1}A_{3,1}^{-1}\leq\varphi_{2}(mA_{1,1}^{-1}A_{3,1}^{-1})_{\leq4}=mA_{1,1}^{-1}A_{3,1}^{-1}(1+
A_{2,2}^{-1})(1+A_{2,4}^{-1}+A_{2,4}^{-1}A_{2,2}^{-1})\leq\chi_q(L(m)).
\]

If $\beta=\a_1+2\a_2+2\a_3+\a_4+\a_5$, $Y^{\beta}=Y_{1,0}Y_{2,3}Y_{2,5}Y_{3,0}Y_{3,2}$, denote $m=Y^{\a}Y^{\beta}$. The only dominant monomials in $\chi_q(S(\a)\otimes S(\beta))_{\leq4}$ are $m$, $mA_{2,4}^{-1}A_{3,3}^{-1}$ and $mA_{1,1}^{-1}A_{2,2}^{-1}A_{2,4}^{-1}A_{3,1}^{-1}A_{3,3}^{-1}$.
\[
\begin{array}{ll}
mA_{2,4}^{-1}A_{3,3}^{-1}\leq\varphi_{2}(mA_{3,3}^{-1})_{\leq4},&
mA_{1,1}^{-1}A_{2,2}^{-1}A_{2,4}^{-1}A_{3,1}^{-1}A_{3,3}^{-1}\leq
\varphi_{2}(mA_{1,1}^{-1}A_{3,1}^{-1}A_{3,3}^{-1})_{\leq4}.
\end{array}
\]

If $\beta=\a_0+2\a_1+2\a_2+2\a_3+\a_4+\a_5$, $Y^{\beta}=Y_{1,0}Y_{2,3}^{2}Y_{2,5}Y_{3,0}$, denote $m=Y^{\a}Y^{\beta}$. The only dominant monomials in $\chi_q(S(\a)\otimes S(\beta))_{\leq4}$ are $m$, $mA_{2,4}^{-1}$ and $mA_{1,1}^{-1}A_{2,2}^{-1}A_{2,4}^{-1}A_{3,1}^{-1}$.
\[
\begin{array}{ll}
mA_{2,4}^{-1}\leq\varphi_{2}(m)_{\leq4}, &
mA_{1,1}^{-1}A_{2,2}^{-1}A_{2,4}^{-1}A_{3,1}^{-1}\leq\varphi_{2}(mA_{1,1}^{-1}A_{3,1}^{-1})_{\leq4}.
\end{array}
\]

If $\beta=\a_0+\a_1+2\a_2+2\a_3+\a_4+\a_5$, $Y^{\beta}=Y_{2,3}Y_{2,5}Y_{3,0}Y_{3,2}$, denote $m=Y^{\a}Y^{\beta}$. The only dominant monomials in $\chi_q(S(\a)\otimes S(\beta))_{\leq4}$ are $m$ and  $mA_{3,3}^{-1}A_{2,4}^{-1}$.
\[
\begin{array}{cc}
\varphi_{3}(m)_{\leq4}=m(1+A_{3,3}^{-1}+A_{3,3}^{-1}A_{3,1}^{-1}), &
mA_{3,3}^{-1}A_{2,4}^{-1}\leq\varphi_{2}(mA_{3,3}^{-1})_{\leq4}=mA_{3,3}^{-1}(1+A_{2,4}^{-1})\leq\chi_q(L(m)).
\end{array}
\]

\medskip
(12) $\a=\a_1+\a_2+\a_3$, $Y^{\a}=Y_{1,0}Y_{2,3}Y_{2,5}Y_{3,0}$. In this case, $\chi_q(L(Y^{\a}))$ has two dominant monomials, so we need to consider every $\beta\in C_\a.$

For $\beta=-\a_0,-\a_4,-\a_5,\a_1,\a_3,\a_0+\a_1,\a_1+\a_2,\a_2+\a_3,\a_3+\a_4$, we have done.

Up to the symmetries $1\leftrightarrow 3$, $0\leftrightarrow 4$ of $\widetilde{I}$, we reduces to the cases:
$\beta\in \{\a_1+\a_2+\a_3,\a_1+\a_2+\a_5,\a_0+\a_1+\a_2+\a_3,
\a_1+2\a_2+\a_3+\a_5,\a_1+\a_2+\a_3+\a_5,\a_0+\a_1+\a_2+\a_3+\a_5,\a_0+2\a_1+2\a_2+\a_3+\a_5,\a_0+2\a_1+2\a_2+2\a_3+\a_4+\a_5
\}$.

If $\beta=\a_1+\a_2+\a_3$, $Y^{\beta}=Y^{\a}=Y_{1,0}Y_{2,3}Y_{2,5}Y_{3,0}$, denote $m=Y^{2\a}$. The only dominant monomials in $\chi_q(S(\a)\otimes S(\a))_{\leq4}$ are $m$, $2mA_{1,1}^{-1}A_{2,2}^{-1}A_{3,1}^{-1}$ and $mA_{1,1}^{-2}A_{2,2}^{-2}A_{3,1}^{-2}$.

$mA_{1,1}^{-2}A_{2,2}^{-2}A_{3,1}^{-2}\leq\varphi_2(mA_{1,1}^{-2}A_{3,1}^{-2})_{\leq4}$,\quad $2mA_{1,1}^{-1}A_{2,2}^{-1}A_{3,1}^{-1}\leq\varphi_{\{2,3\}}(2mA_{1,1}^{-1})$.

If $\beta=\a_1+\a_2+\a_5$, $Y^{\beta}=Y_{1,0}Y_{1,2}$, denote $m=Y^{\a}Y^{\beta}$. The only dominant monomials in $\chi_q(S(\a)\otimes S(\beta))_{\leq4}$ are $m,\ mA_{1,1}^{-1},\ mA_{1,1}^{-1}A_{2,2}^{-1}A_{3,1}^{-1},\ mA_{1,3}^{-1}A_{2,4}^{-1},\ mA_{1,1}^{-1}A_{1,3}^{-1}A_{2,2}^{-1}A_{2,4}^{-1}$ and $2mA_{1,1}^{-1}A_{1,3}^{-1}A_{2,2}^{-1}A_{2,4}^{-1}A_{3,1}^{-1}$.

$mA_{1,1}^{-1}\leq\varphi_{1}(m)_{\leq4},$ \quad $mA_{1,3}^{-1}A_{2,4}^{-1}\leq\varphi_2(mA_{1,3}^{-1})_{\leq4},$ \quad
$mA_{1,1}^{-1}A_{2,2}^{-1}A_{3,1}^{-1}\leq\varphi_2(mA_{1,1}^{-1}A_{3,1}^{-1})_{\leq4},$

$2mA_{1,1}^{-1}A_{1,3}^{-1}A_{2,2}^{-1}A_{2,4}^{-1}A_{3,1}^{-1}\leq
\varphi_{2}(2mA_{1,1}^{-1}A_{1,3}^{-1}A_{3,1}^{-1})_{\leq4},$\quad
$mA_{1,1}^{-1}A_{1,3}^{-1}A_{2,2}^{-1}A_{2,4}^{-1}\leq\varphi_{\{1,2\}}(m)_{\leq4}.$

If $\beta=\a_0+\a_1+\a_2+\a_3$, $Y^{\beta}=Y_{2,3}Y_{2,5}Y_{3,0}$, denote $m=Y^{\a}Y^{\beta}$. The only dominant monomials in $\chi_q(S(\a)\otimes S(\beta))_{\leq4}$ are $m$ and $mA_{1,1}^{-1}A_{2,2}^{-1}A_{3,1}^{-1}$.

$mA_{1,1}^{-1}A_{2,2}^{-1}A_{3,1}^{-1}\leq\varphi_{\{2,3\}}(mA_{1,1}^{-1})_{\leq4}.$

If $\beta=\a_1+2\a_2+\a_3+\a_5$, $Y^{\beta}=Y_{1,0}Y_{1,2}Y_{2,5}Y_{3,0}Y_{3,2}$, denote $m=Y^{\a}Y^{\beta}$. The only dominant monomials in $\chi_q(S(\a)\otimes S(\beta))_{\leq4}$ are $m,\ mA_{1,1}^{-1},\ mA_{3,1}^{-1},\ mA_{1,1}^{-1}A_{3,1}^{-1},\ mA_{1,3}^{-1}A_{3,3}^{-1}A_{2,4}^{-1},\ mA_{1,1}^{-1}A_{2,2}^{-1}A_{3,1}^{-1}$ and \\ $2mA_{1,1}^{-1}A_{1,3}^{-1}A_{3,1}^{-1}A_{3,3}^{-1}A_{2,2}^{-1}A_{2,4}^{-1}$.
\[
\begin{array}{ll}
\medskip
  mA_{1,1}^{-1}\leq\varphi_1(m)_{\leq4}, &  mA_{3,1}^{-1}\leq\varphi_3(m)_{\leq4},\\
  \medskip
  mA_{1,1}^{-1}A_{3,1}^{-1}\leq\varphi_3(mA_{1,1}^{-1})_{\leq4},&
  mA_{1,3}^{-1}A_{3,3}^{-1}A_{2,4}^{-1}\leq\varphi_2(mA_{1,3}^{-1}A_{3,3}^{-1})_{\leq4},\\
  mA_{1,1}^{-1}A_{2,2}^{-1}A_{3,1}^{-1}\leq\varphi_2(mA_{1,1}^{-1}A_{3,1}^{-1})_{\leq4},&
  2mA_{1,1}^{-1}A_{1,3}^{-1}A_{3,1}^{-1}A_{3,3}^{-1}A_{2,2}^{-1}A_{2,4}^{-1}\leq
\varphi_{\{2,3\}}(2mA_{1,1}^{-1}A_{1,3}^{-1})_{\leq4}.
\end{array}
\]

If $\beta=\a_1+\a_2+\a_3+\a_5$, $Y^{\beta}=Y_{1,0}Y_{2,3}Y_{3,0}$, denote $m=Y^{\a}Y^{\beta}$. The only dominant monomials in $\chi_q(S(\a)\otimes S(\beta))_{\leq4}$ are $m,\ mA_{2,4}^{-1},\ mA_{1,1}^{-1}A_{2,2}^{-1}A_{2,4}^{-1},\  mA_{3,1}^{-1}A_{2,2}^{-1}A_{2,4}^{-1},\ mA_{1,1}^{-2}A_{2,2}^{-2}A_{3,1}^{-2},\  2mA_{1,1}^{-1}A_{2,2}^{-1}A_{3,1}^{-1}$ and $5mA_{1,1}^{-1}A_{3,1}^{-1}A_{2,2}^{-1}A_{2,4}^{-1}$.
\[
\begin{array}{ll}
\medskip
  \varphi_2(m)_{\leq4}=m(1+A_{2,4}^{-1}), &
  \varphi_1(m)_{\leq4}=m(1+A_{1,1}^{-1})^{2},\\
  \varphi_3(mA_{1,1}^{-2})_{\leq4}=mA_{1,1}^{-2}(1+A_{3,1}^{-1})^{2},&
  \varphi_2(mA_{1,1}^{-2}A_{3,1}^{-2})_{\leq4}=mA_{1,1}^{-2}A_{3,1}^{-2}(1+A_{2,2}^{-1})^{2}
  (1+A_{2,4}^{-1}+A_{2,4}^{-1}A_{2,2}^{-1}),
\end{array}
\]
Consider $\varphi_{\{2,3\}}(2mA_{1,1}^{-1})$, $\overline{mA_{1,1}^{-1}}=(Y_{2,1}Y_{2,3}Y_{2,5})(Y_{2,3}Y_{3,0})(Y_{3,0})$, thus
$2mA_{1,1}^{-1}A_{2,2}^{-1}A_{3,1}^{-1}+4mA_{1,1}^{-1}A_{3,1}^{-1}A_{2,2}^{-1}A_{2,4}^{-1}\\
\leq
\varphi_{\{2,3\}}(2mA_{1,1}^{-1})_{\leq4}.$  Besides, $\varphi_3(mA_{1,1}^{-1}A_{2,2}^{-1}A_{2,4}^{-1})_{\leq4}=mA_{1,1}^{-1}A_{2,2}^{-1}A_{2,4}^{-1}(1+A_{3,1}^{-1})$,
so $mA_{1,1}^{-1}A_{3,1}^{-1}A_{2,2}^{-1}A_{2,4}^{-1}$ has coefficient $5$.
Consider $\varphi_{\{1,2\}}(m)$, we have $\overline{m}=(Y_{1,0}Y_{2,3}Y_{2,5})(Y_{1,0}Y_{2,3})$, thus
$mA_{1,1}^{-1}A_{2,2}^{-1}A_{2,4}^{-1}\leq
\varphi_{\{1,2\}}(m)_{\leq4}.$

If $\beta=\a_0+\a_1+\a_2+\a_3+\a_5$, $Y^{\beta}=Y_{2,3}Y_{3,0}$, denote $m=Y^{\a}Y^{\beta}$. The only dominant monomials in $\chi_q(S(\a)\otimes S(\beta))_{\leq4}$ are $m,\ mA_{2,4}^{-1}, \ mA_{3,1}^{-1}A_{2,2}^{-1}A_{2,4}^{-1}, \ mA_{1,1}^{-1}A_{2,2}^{-1}A_{3,1}^{-1}$,  and  $2mA_{1,1}^{-1}A_{3,1}^{-1}A_{2,2}^{-1}A_{2,4}^{-1}$.
\[
\begin{array}{ll}
\medskip
  mA_{2,4}^{-1}\leq\varphi_2(m)_{\leq4}, &
 mA_{3,1}^{-1}A_{2,2}^{-1}A_{2,4}^{-1}\leq\varphi_{\{2,3\}}(m)_{\leq4},\\
  mA_{1,1}^{-1}A_{2,2}^{-1}A_{3,1}^{-1}\leq \varphi_{\{2,3\}}(mA_{1,1}^{-1})_{\leq4},&
 2mA_{1,1}^{-1}A_{3,1}^{-1}A_{2,2}^{-1}A_{2,4}^{-1}\leq \varphi_2(2mA_{1,1}^{-1}A_{3,1}^{-1})_{\leq4}.
\end{array}
\]

If $\beta=\a_0+2\a_1+2\a_2+\a_3+\a_5$, $Y^{\beta}=Y_{1,0}Y_{1,2}Y_{2,3}Y_{2,5}Y_{3,0}$, denote $m=Y^{\a}Y^{\beta}$. The only domi\-nant monomials in $\chi_q(S(\a)\otimes S(\beta))_{\leq4}$ are $m,\ mA_{1,1}^{-1},\ mA_{1,3}^{-1}A_{2,4}^{-1},\ 2mA_{1,1}^{-1}A_{1,3}^{-1}A_{3,1}^{-1}A_{2,2}^{-1}A_{2,4}^{-1},\ mA_{1,1}^{-1}A_{2,2}^{-1}A_{3,1}^{-1}$  and  $mA_{1,1}^{-2}A_{1,3}^{-1}A_{2,2}^{-2}A_{2,4}^{-1}A_{3,1}^{-2}$.
\[
\begin{array}{ll}
\medskip
  mA_{1,1}^{-1}\leq\varphi_1(m)_{\leq4}, &
  mA_{1,3}^{-1}A_{2,4}^{-1}\leq\varphi_{2}(mA_{1,3}^{-1})_{\leq4},\\
  \medskip
  mA_{1,1}^{-1}A_{2,2}^{-1}A_{3,1}^{-1}\leq \varphi_{\{2,3\}}(mA_{1,1}^{-1})_{\leq4},&
 2mA_{1,1}^{-1}A_{1,3}^{-1}A_{3,1}^{-1}A_{2,2}^{-1}A_{2,4}^{-1}\leq \varphi_{\{2,3\}}(2mA_{1,1}^{-1}A_{1,3}^{-1})_{\leq4},\\
 mA_{1,1}^{-2}A_{1,3}^{-1}A_{2,2}^{-2}A_{2,4}^{-1}A_{3,1}^{-2}\leq
 \varphi_{2}(mA_{1,1}^{-2}A_{1,3}^{-1}A_{3,1}^{-2})_{\leq4}.
\end{array}
\]

If $\beta=\a_0+2\a_1+2\a_2+2\a_3+\a_4+\a_5$, $Y^{\beta}=Y_{1,0}Y_{2,3}^{2}Y_{2,5}Y_{3,0}$, denote $m=Y^{\a}Y^{\beta}$. The only dominant monomials in $\chi_q(S(\a)\otimes S(\beta))_{\leq4}$ are $m$, \ $mA_{2,4}^{-1}$,\ $mA_{1,1}^{-1}A_{2,2}^{-1}A_{3,1}^{-1}$,\ $2mA_{1,1}^{-1}A_{3,1}^{-1}A_{2,2}^{-1}A_{2,4}^{-1}$, \  and  $mA_{1,1}^{-2}A_{2,2}^{-2}A_{2,4}^{-1}A_{3,1}^{-2}$.

We can check that all the monomials excepting $mA_{1,1}^{-1}A_{2,2}^{-1}A_{3,1}^{-1}$ of $\chi_q(S(\a)\otimes S(\beta))_{\leq4}$ are contained in $\chi_q(L(m))\leq4$. If
$mA_{1,1}^{-1}A_{2,2}^{-1}A_{3,1}^{-1}$ is not contained in $\chi_q(L(m))_{\leq4}$,
by Proposition~\ref{qchadecom}, we fix $i=1$, then there is a
unique decomposition of $\chi_q(L(m))$ as a finite sum
$ \chi_q(L(m)) = \sum_{\substack{m\in\M_{1,+}\\ \lambda_m\geq0}} \lambda_m \varphi_1(m).$
In other word, any monomial in $\chi_q(L(m))$ must be contained in some $\varphi_1(m')$, where $m'\in \M_{1,+}$ and $m'\leq\chi_q(L(m)$. But, obviously there does not exist such monomial $m'$ such that $mA_{1,1}^{-2}A_{2,2}^{-1}A_{3,1}^{-1}\leq\varphi_1(m')$.


\medskip
(13) $\a=\a_0+\a_1+\a_2$, $Y^{\a}=Y_{1,2}Y_{2,5}$. If $\beta\in C_\a\backslash\{-\a_3,-\a_4,-\a_5,\a_0,\a_2+\a_5,\a_0+\a_1+\a_2+\a_5,
\a_0+\a_1+2\a_2+\a_3+\a_5,\a_0+\a_1+2\a_2+\a_3+\a_4+\a_5\}$, then $\chi_q(S(\a)\otimes S(\beta))_{\leq4}$ contains a unique dominant monomial and $S(\a)\otimes S(\beta)$ is simple.

For $\beta=-\a_3,-\a_4,-\a_5,\a_0,\a_2+\a_5$, we have done.

If $\beta=\a_0+\a_1+\a_2+\a_5$, $Y^{\beta}=Y_{1,2}$, denote $m=Y^{\a}Y^{\beta}$. The only dominant monomials in $\chi_q(S(\a)\otimes S(\beta))_{\leq4}$ are $m$ and $mA_{1,3}^{-1}A_{2,4}^{-1}$.

Consider $\varphi_{\{1,2\}}(m)$,  $\overline{m}=(Y_{1,2}Y_{2,5})(Y_{1,2})$, thus
$mA_{1,3}^{-1}A_{2,4}^{-1}\leq\varphi_{\{1,2\}}(m)_{\leq4}.$

If $\beta=\a_0+\a_1+2\a_2+\a_3+\a_5$, $Y^{\beta}=Y_{1,2}Y_{2,5}Y_{3,0}Y_{3,2}$, denote $m=Y^{\a}Y^{\beta}$. The only dominant monomials in $\chi_q(S(\a)\otimes S(\beta))_{\leq4}$ are $m$ and $mA_{1,3}^{-1}A_{2,4}^{-1}A_{3,3}^{-1}$.

Consider $\varphi_{\{1,2\}}(mA_{3,3}^{-1})$, $\overline{mA_{3,3}^{-1}}=(Y_{2,3}Y_{2,5})(Y_{1,2}Y_{2,5})(Y_{1,2})$, thus
$mA_{3,3}^{-1}A_{1,3}^{-1}A_{2,4}^{-1}\leq\varphi_{\{1,2\}}(mA_{3,3}^{-1})_{\leq4}.$

If $\beta=\a_0+\a_1+2\a_2+\a_3+\a_4+\a_5$, $Y^{\beta}=Y_{1,2}Y_{2,5}Y_{3,2}$, denote $m=Y^{\a}Y^{\beta}$. The only dominant monomials in $\chi_q(S(\a)\otimes S(\beta))_{\leq4}$ are $m$ and $mA_{1,3}^{-1}A_{2,4}^{-1}A_{3,3}^{-1}$.

Consider $\varphi_{\{1,2\}}(mA_{3,3}^{-1})$, $\overline{mA_{3,3}^{-1}}=(Y_{2,3}Y_{2,5})(Y_{1,2}Y_{2,5})(Y_{1,2})$, thus 
$mA_{3,3}^{-1}A_{1,3}^{-1}A_{2,4}^{-1}\leq\varphi_{\{1,2\}}(mA_{3,3}^{-1})_{\leq4}.$

\medskip
(14) $\a=\a_1+\a_2+\a_5$, $Y^{\a}=Y_{1,0}Y_{1,2}$. If $\beta\in C_\a\backslash\{-\a_0,-\a_3,-\a_4,\a_1,\a_1+\a_2,\a_1+\a_2+\a_3,\a_1+\a_2+\a_3+\a_4,\a_1+2\a_2+\a_3+\a_5,
\a_1+\a_2+\a_3+\a_5,\a_1+\a_2+\a_3+\a_4+\a_5,\a_0+2\a_1+2\a_2+\a_3+\a_5,\a_1+2\a_2+\a_3+\a_4+\a_5,
\a_1+2\a_2+2\a_3+\a_4+\a_5,\a_0+2\a_1+2\a_2+2\a_3+\a_4+\a_5,\a_0+2\a_1+2\a_2+\a_3+\a_4+\a_5,
\a_0+2\a_1+3\a_2+2\a_3+\a_4+2\a_5\}$, then $\chi_q(S(\a)\otimes S(\beta))_{\leq4}$ contains a unique dominant monomial and $S(\a)\otimes S(\beta)$ is simple.

For $\beta=-\a_0,-\a_3,-\a_4,\a_1,\a_1+\a_2,\a_1+\a_2+\a_3$, we have done.

If $\beta=\a_1+\a_2+\a_3+\a_4$, $Y^{\beta}=Y_{1,0}Y_{2,3}Y_{2,5}$, denote $m=Y^{\a}Y^{\beta}$. The only dominant monomials in $\chi_q(S(\a)\otimes S(\beta))_{\leq4}$ are $m$, $mA_{1,1}^{-1}$, $mA_{1,3}^{-1}A_{2,4}^{-1}$ and $mA_{1,1}^{-1}A_{1,3}^{-1}A_{2,2}^{-1}A_{2,4}^{-1}$.

Consider $\varphi_{\{1,2\}}(m)$, we have $\overline{m}=(Y_{1,0}Y_{2,3}Y_{2,5})(Y_{1,0}Y_{1,2})$, thus all the $4$ dominant monomials occur in $\varphi_{\{1,2\}}(m)$.

If $\beta=\a_1+2\a_2+\a_3+\a_5$, $Y^{\beta}=Y_{1,0}Y_{1,2}Y_{3,0}Y_{3,2}Y_{2,5}$, denote $m=Y^{\a}Y^{\beta}$. The only dominant monomials in $\chi_q(S(\a)\otimes S(\beta))_{\leq4}$ are $m,\  mA_{1,3}^{-1}A_{2,4}^{-1},\  mA_{1,1}^{-1}A_{1,3}^{-1}A_{2,4}^{-1},\  mA_{1,1}^{-1}A_{1,3}^{-1}A_{2,2}^{-1}A_{2,4}^{-1}A_{3,3}^{-1},\\
2mA_{1,1}^{-1}A_{1,3}^{-1}A_{2,4}^{-1}A_{3,3}^{-1},\
 2mA_{1,3}^{-1}A_{2,4}^{-1}A_{3,3}^{-1}$ and $2mA_{1,1}^{-1}A_{1,3}^{-1}A_{2,2}^{-1}A_{2,4}^{-1}A_{3,1}^{-1}A_{3,3}^{-1}$.
\[
\begin{array}{l}
\medskip
2mA_{1,1}^{-1}A_{1,3}^{-1}A_{2,2}^{-1}A_{2,4}^{-1}A_{3,1}^{-1}A_{3,3}^{-1}\leq \varphi_{2}(2mA_{1,1}^{-1}A_{1,3}^{-1}A_{3,1}^{-1}A_{3,3}^{-1})_{\leq4},\\
\medskip
mA_{1,3}^{-1}A_{2,4}^{-1}+mA_{1,1}^{-1}A_{1,3}^{-1}A_{2,4}^{-1}\leq \varphi_{\{1,2\}}(m)_{\leq4},\\
mA_{1,1}^{-1}A_{1,3}^{-1}A_{2,2}^{-1}A_{2,4}^{-1}A_{3,3}^{-1}+2mA_{1,1}^{-1}A_{1,3}^{-1}A_{2,4}^{-1}A_{3,3}^{-1}+
2mA_{1,3}^{-1}A_{2,4}^{-1}A_{3,3}^{-1}\leq \varphi_{\{1,2\}}(mA_{3,3}^{-1})_{\leq4}.
\end{array}
\]

If $\beta=\a_1+\a_2+\a_3+\a_5$, $Y^{\beta}=Y_{1,0}Y_{2,3}Y_{3,0}$, denote $m=Y^{\a}Y^{\beta}$. The only dominant monomials in $\chi_q(S(\a)\otimes S(\beta))_{\leq4}$ are $m,\ mA_{1,1}^{-1}$ and $mA_{1,1}^{-1}A_{2,2}^{-1}A_{3,1}^{-1}$.
\[
\begin{array}{ll}
mA_{1,1}^{-1}\leq\varphi_1(m)_{\leq4}, &
 mA_{1,1}^{-1}A_{2,2}^{-1}A_{3,1}^{-1}\leq\varphi_2(mA_{1,1}^{-1}A_{3,1}^{-1})_{\leq4}.
\end{array}
\]

If $\beta=\a_1+\a_2+\a_3+\a_4+\a_5$, $Y^{\beta}=Y_{1,0}Y_{2,3}$, denote $m=Y^{\a}Y^{\beta}$. The only dominant monomials in $\chi_q(S(\a)\otimes S(\beta))_{\leq4}$ are $m$ and $mA_{1,1}^{-1}$.
\[
mA_{1,1}^{-1}\leq\varphi_1(m)_{\leq4}=m(1+A_{1,1}^{-1})(1+A_{1,3}^{-1}+A_{1,3}^{-1}A_{1,1}^{-1}).
\]

If $\beta=\a_0+2\a_1+2\a_2+\a_3+\a_5$, $Y^{\beta}=Y_{1,0}Y_{1,2}Y_{2,3}Y_{2,5}Y_{3,0}$, denote $m=Y^{\a}Y^{\beta}$. The only dominant monomials in $\chi_q(S(\a)\otimes S(\beta))_{\leq4}$ are $m$, $mA_{1,1}^{-1}A_{1,3}^{-1}A_{2,4}^{-1}A_{2,2}^{-1}$, $2mA_{1,3}^{-1}A_{2,4}^{-1}$, $2mA_{1,1}^{-1}A_{1,3}^{-1}A_{2,4}^{-1}A_{2,2}^{-1}A_{3,1}^{-1}$ and $2mA_{1,1}^{-1}A_{1,3}^{-1}A_{2,4}^{-1}$.
\[
\begin{array}{l}
\medskip
 2mA_{1,1}^{-1}A_{1,3}^{-1}A_{2,4}^{-1}A_{2,2}^{-1}A_{3,1}^{-1}\leq
 \varphi_{2}(2mA_{1,1}^{-1}A_{1,3}^{-1}A_{3,1}^{-1})_{\leq4},\\
 m+2mA_{1,1}^{-1}A_{1,3}^{-1}+mA_{1,1}^{-1}A_{1,3}^{-1}A_{2,4}^{-1}A_{2,2}^{-1}+2mA_{1,1}^{-1}A_{1,3}^{-1}A_{2,4}^{-1}
\leq\varphi_{\{1,2\}}(m)_{\leq4}.
\end{array}
\]


If $\beta=\a_1+2\a_2+\a_3+\a_4+\a_5$, $Y^{\beta}=Y_{1,0}Y_{1,2}Y_{2,5}Y_{3,2}$, denote $m=Y^{\a}Y^{\beta}$. The only dominant monomials in $\chi_q(S(\a)\otimes S(\beta))_{\leq4}$ are $m,\ mA_{1,1}^{-1}A_{1,3}^{-1}A_{2,4}^{-1},\  mA_{1,3}^{-1}A_{2,4}^{-1},\  2mA_{1,3}^{-1}A_{2,4}^{-1}A_{3,3}^{-1},\
2mA_{1,1}^{-1}A_{1,3}^{-1}A_{2,4}^{-1}A_{3,3}^{-1}$ and $mA_{1,1}^{-1}A_{1,3}^{-1}A_{2,2}^{-1}A_{2,4}^{-1}A_{3,3}^{-1}$.
\[
\begin{array}{l}
\medskip
 m+mA_{1,3}^{-1}A_{2,4}^{-1}+mA_{1,1}^{-1}A_{1,3}^{-1}A_{2,4}^{-1}
\leq\varphi_{\{1,2\}}(m)_{\leq4},\\
2mA_{1,3}^{-1}A_{2,4}^{-1}A_{3,3}^{-1}+2mA_{1,1}^{-1}A_{1,3}^{-1}A_{2,4}^{-1}A_{3,3}^{-1}+
mA_{1,1}^{-1}A_{1,3}^{-1}A_{2,2}^{-1}A_{2,4}^{-1}A_{3,3}^{-1}
\leq\varphi_{\{1,2\}}(mA_{3,3}^{-1})_{\leq4}.
\end{array}
\]

If $\beta=\a_1+2\a_2+2\a_3+\a_4+\a_5$, $Y^{\beta}=Y_{1,0}Y_{2,3}Y_{2,5}Y_{3,0}Y_{3,2}$, denote $m=Y^{\a}Y^{\beta}$. The only dominant monomials in $\chi_q(S(\a)\otimes S(\beta))_{\leq4}$ are $m,\ mA_{1,1}^{-1},\ mA_{1,3}^{-1}A_{2,4}^{-1},\ mA_{3,3}^{-1}A_{2,4}^{-1},\ mA_{1,1}^{-1}A_{1,3}^{-1}A_{2,2}^{-1}A_{2,4}^{-1},\\ mA_{1,1}^{-1}A_{2,2}^{-1}A_{2,4}^{-1}A_{3,1}^{-1}A_{3,3}^{-1},\
mA_{1,1}^{-1}A_{2,4}^{-1}A_{3,3}^{-1},\ 2mA_{1,3}^{-1}A_{2,4}^{-1}A_{3,3}^{-1},\ 2mA_{1,1}^{-1}A_{1,3}^{-1}A_{2,2}^{-1}A_{2,4}^{-1}A_{3,1}^{-1}A_{3,3}^{-1}$ and \\ $mA_{1,1}^{-1}A_{1,3}^{-1}A_{2,2}^{-1}A_{2,4}^{-1}A_{3,3}^{-1}$.
\[
\begin{array}{l}
\medskip
mA_{1,1}^{-1}+mA_{1,3}^{-1}A_{2,4}^{-1}+mA_{1,1}^{-1}A_{1,3}^{-1}A_{2,2}^{-1}A_{2,4}^{-1}\leq
\varphi_{\{1,2\}}(m)_{\leq4},\\
\medskip
mA_{3,3}^{-1}(A_{2,4}^{-1}+A_{1,1}^{-1}A_{2,4}^{-1}+2A_{1,3}^{-1}A_{2,4}^{-1}+
A_{1,1}^{-1}A_{1,3}^{-1}A_{2,2}^{-1}A_{2,4}^{-1})
\leq\varphi_{\{1,2\}}(mA_{3,3}^{-1})_{\leq4},\\
mA_{3,1}^{-1}A_{3,3}^{-1}(A_{1,1}^{-1}A_{2,2}^{-1}A_{2,4}^{-1}+2A_{1,1}^{-1}A_{1,3}^{-1}A_{2,2}^{-1}A_{2,4}^{-1})
\leq\varphi_{\{1,2\}}(mA_{3,1}^{-1}A_{3,3}^{-1})_{\leq4}.
\end{array}
\]

If $\beta=\a_0+2\a_1+2\a_2+2\a_3+\a_4+\a_5$, $Y^{\beta}=Y_{1,0}Y_{2,3}^{2}Y_{2,5}Y_{3,0}$, denote $m=Y^{\a}Y^{\beta}$. The only dominant monomials in $\chi_q(S(\a)\otimes S(\beta))_{\leq4}$ are $m,\ mA_{1,1}^{-1},\ mA_{2,4}^{-1},\ mA_{1,1}^{-1}A_{2,4}^{-1},\ 2mA_{1,3}^{-1}A_{2,4}^{-1},\\ mA_{1,1}^{-1}A_{1,3}^{-1}A_{2,2}^{-1}A_{2,4}^{-1},\
mA_{1,1}^{-1}A_{3,1}^{-1}A_{2,2}^{-1}A_{2,4}^{-1}$ and
$2mA_{1,1}^{-1}A_{1,3}^{-1}A_{2,2}^{-1}A_{2,4}^{-1}A_{3,1}^{-1}$.
\[
\begin{array}{l}
\medskip
mA_{1,1}^{-1}+mA_{2,4}^{-1}+mA_{1,1}^{-1}A_{2,4}^{-1}+2mA_{1,3}^{-1}A_{2,4}^{-1}+ mA_{1,1}^{-1}A_{1,3}^{-1}A_{2,2}^{-1}A_{2,4}^{-1}
\leq\varphi_{\{1,2\}}(m)_{\leq4},\\
mA_{3,1}^{-1}(A_{1,1}^{-1}A_{2,2}^{-1}A_{2,4}^{-1}+2A_{1,1}^{-1}A_{1,3}^{-1}A_{2,2}^{-1}A_{2,4}^{-1})
\leq\varphi_{\{1,2\}}(mA_{3,1}^{-1})_{\leq4}.
\end{array}
\]


If $\beta=\a_0+2\a_1+2\a_2+\a_3+\a_4+\a_5$, $Y^{\beta}=Y_{1,0}Y_{1,2}Y_{2,3}Y_{2,5}$, denote $m=Y^{\a}Y^{\beta}$. The only dominant monomials in $\chi_q(S(\a)\otimes S(\beta))_{\leq4}$ are $m$, $2mA_{1,3}^{-1}A_{2,4}^{-1}$, $2mA_{1,1}^{-1}A_{1,3}^{-1}A_{2,4}^{-1}$ and $mA_{1,1}^{-1}A_{1,3}^{-1}A_{2,2}^{-1}A_{2,4}^{-1}$.
\[
\varphi_{\{1,2\}}(m)_{\leq4}=m(1+A_{1,3}^{-1}+A_{1,1}^{-1} A_{1,3}^{-1}+A_{1,3}^{-1} A_{2,4}^{-1}+A_{1,1}^{-1} A_{1,3}^{-1} A_{2,4}^{-1}+A_{1,1}^{-1} A_{1,3}^{-1} A_{2,4}^{-1} A_{2,2}^{-1})(1+A_{1,3}^{-1}+
\]
\[
A_{1,3}^{-1} A_{1,1}^{-1}+A_{1,3}^{-1} A_{2,4}^{-1}+A_{1,3}^{-1} A_{1,1}^{-1}A_{2,4}^{-1}).
\]


If $\beta=\a_0+2\a_1+3\a_2+2\a_3+\a_4+2\a_5$, $Y^{\beta}=Y_{1,0}Y_{1,2}Y_{2,3}Y_{2,5}Y_{3,0}Y_{3,2}$, denote $m=Y^{\a}Y^{\beta}$. The only dominant monomials in $\chi_q(S(\a)\otimes S(\beta))_{\leq4}$ are $m$, $2mA_{1,3}^{-1}A_{2,4}^{-1}$, $mA_{3,3}^{-1}A_{2,4}^{-1}$, $2mA_{1,1}^{-1}A_{1,3}^{-1}A_{2,4}^{-1}$,
$4mA_{1,3}^{-1}A_{3,3}^{-1}A_{2,4}^{-1}$, $mA_{1,1}^{-1}A_{1,3}^{-1}A_{2,2}^{-1}A_{2,4}^{-1}$, $4mA_{1,1}^{-1}A_{1,3}^{-1}A_{3,3}^{-1}A_{2,4}^{-1}$, $mA_{1,1}^{-1}A_{1,3}^{-1}A_{2,2}^{-1}A_{2,4}^{-1}A_{3,3}^{-1}$ and \\ $2mA_{1,1}^{-1}A_{1,3}^{-1}A_{3,1}^{-1}A_{3,3}^{-1}A_{2,2}^{-1}A_{2,4}^{-1}$.
\[
\begin{array}{l}
2mA_{1,1}^{-1}A_{1,3}^{-1}A_{3,1}^{-1}A_{3,3}^{-1}A_{2,2}^{-1}A_{2,4}^{-1}\leq
\varphi_{2}(2mA_{1,1}^{-1}A_{1,3}^{-1}A_{3,1}^{-1}A_{3,3}^{-1})_{\leq4},\\
2mA_{1,3}^{-1}A_{2,4}^{-1}+2mA_{1,1}^{-1}A_{1,3}^{-1}A_{2,4}^{-1}+mA_{1,1}^{-1}A_{1,3}^{-1}A_{2,2}^{-1}A_{2,4}^{-1}
\leq\varphi_{\{1,2\}}(m)_{\leq4},\\
mA_{3,3}^{-1}(A_{2,4}^{-1}+4A_{1,3}^{-1}A_{2,4}^{-1}+4A_{1,1}^{-1}A_{1,3}^{-1}A_{2,4}^{-1}+
A_{1,1}^{-1}A_{1,3}^{-1}A_{2,2}^{-1}A_{2,4}^{-1})
\leq\varphi_{\{1,2\}}(mA_{3,3}^{-1})_{\leq4}.
\end{array}
\]

\medskip
(15) $\a=\a_1+\a_2+\a_3+\a_4$, $Y^{\a}=Y_{1,0}Y_{2,3}Y_{2,5}$. If $\beta\in C_\a\backslash\{-\a_0, -\a_5, \a_1, \a_1+\a_2, \a_1+\a_2+\a_5, \a_2+\a_3+\a_4+\a_5, \a_1+2\a_2+\a_3+\a_5, \a_1+\a_2+\a_3+\a_4+\a_5, \a_0+2\a_1+2\a_2+\a_3+\a_5, \a_1+2\a_2+\a_3+\a_4+\a_5,
\a_1+2\a_2+2\a_3+\a_4+\a_5, \a_0+2\a_1+2\a_2+2\a_3+\a_4+\a_5,\ a_0+2\a_1+2\a_2+\a_3+\a_4+\a_5, \a_0+\a_1+\a_2+\a_3+\a_4+\a_5, \a_0+\a_1+2\a_2+2\a_3+\a_4+\a_5, \a_0+2\a_1+3\a_2+2\a_3+\a_4+\a_5, \a_0+2\a_1+3\a_2+2\a_3+\a_4+2\a_5\}$, then $\chi_q(S(\a)\otimes S(\beta))_{\leq4}$ contains a unique dominant monomial and $S(\a)\otimes S(\beta)$ is simple.

For $\beta=-\a_0,-\a_5,\a_1, \a_1+\a_2,\a_1+\a_2+\a_5$, we have done.

If $\beta=\a_2+\a_3+\a_4+\a_5$, $Y^{\beta}=Y_{3,2}$, denote $m=Y^{\a}Y^{\beta}$. The only dominant monomials in $\chi_q(S(\a)\otimes S(\beta))_{\leq4}$ are $m$ and $mA_{3,3}^{-1}A_{2,4}^{-1}$.
\[
\begin{array}{cc}
  \varphi_{3}(m)_{\leq4}=m(1+A_{3,3}^{-1}), &
  \varphi_{2}(mA_{3,3}^{-1})_{\leq4}=mA_{3,3}^{-1}(1+A_{2,4}^{-1}).
\end{array}
\]

If $\beta=\a_1+2\a_2+\a_3+\a_5$, $Y^{\beta}=Y_{1,0}Y_{1,2}Y_{2,5}Y_{3,0}Y_{3,2}$, denote $m=Y^{\a}Y^{\beta}$. The only dominant monomials in $\chi_q(S(\a)\otimes S(\beta))_{\leq4}$ are $m$, $mA_{1,1}^{-1}$,
$mA_{3,3}^{-1}A_{2,4}^{-1}A_{1,3}^{-1}$ and $mA_{1,1}^{-1}A_{1,3}^{-1}A_{3,1}^{-1}A_{3,3}^{-1}A_{2,2}^{-1}A_{2,4}^{-1}$.
\[
\begin{array}{ccc}
  mA_{1,1}^{-1}\leq\varphi_1(m)_{\leq4}, &
  mA_{3,3}^{-1}A_{2,4}^{-1}A_{1,3}^{-1}\leq\varphi_2(mA_{3,3}^{-1}A_{1,3}^{-1})_{\leq4}, &
  mA_{1,1}^{-1}A_{1,3}^{-1}A_{3,1}^{-1}A_{3,3}^{-1}A_{2,2}^{-1}A_{2,4}^{-1}\leq
  \varphi_{\{1,2\}}(mA_{3,3}^{-1}A_{3,1}^{-1})_{\leq4}.
\end{array}
\]

If $\beta=\a_1+\a_2+\a_3+\a_4+\a_5$, $Y^{\beta}=Y_{1,0}Y_{2,3}$, denote $m=Y^{\a}Y^{\beta}$. The only dominant monomials in $\chi_q(S(\a)\otimes S(\beta))_{\leq4}$ are $m$, $mA_{2,4}^{-1}$ and $mA_{1,1}^{-1}A_{2,2}^{-1}A_{2,4}^{-1}$.
\[
\chi_q(S(\a)\otimes S(\beta))_{\leq4}= \varphi_{\{1,2\}}(m)_{\leq4}\leq\chi_q(L(m)).
\]

If $\beta=\a_0+2\a_1+2\a_2+\a_3+\a_5$, $Y^{\beta}=Y_{1,0}Y_{1,2}Y_{2,3}Y_{2,5}Y_{3,0}$, denote $m=Y^{\a}Y^{\beta}$. The only dominant monomials in $\chi_q(S(\a)\otimes S(\beta))_{\leq4}$ are $m$, $mA_{1,1}^{-1}$,
$mA_{2,4}^{-1}A_{1,3}^{-1}$ and $mA_{1,1}^{-1}A_{1,3}^{-1}A_{3,1}^{-1}A_{2,2}^{-1}A_{2,4}^{-1}$.
\[
\begin{array}{ccc}
  mA_{1,1}^{-1}\leq\varphi_1(m)_{\leq4}, &
  mA_{2,4}^{-1}A_{1,3}^{-1}\leq\varphi_2(mA_{1,3}^{-1})_{\leq4}, &
  mA_{1,1}^{-1}A_{1,3}^{-1}A_{3,1}^{-1}A_{2,2}^{-1}A_{2,4}^{-1}\leq
  \varphi_{\{1,2\}}(mA_{3,1}^{-1})_{\leq4}.
\end{array}
\]

If $\beta=\a_1+2\a_2+\a_3+\a_4+\a_5$, $Y^{\beta}=Y_{1,0}Y_{1,2}Y_{2,5}Y_{3,2}$, denote $m=Y^{\a}Y^{\beta}$. The only dominant monomials in $\chi_q(S(\a)\otimes S(\beta))_{\leq4}$ are $m$, $mA_{1,1}^{-1}$ and
$mA_{3,3}^{-1}A_{2,4}^{-1}A_{1,3}^{-1}$.
\[
\begin{array}{cc}
  mA_{1,1}^{-1}\leq\varphi_1(m)_{\leq4}, &
  mA_{3,3}^{-1}A_{2,4}^{-1}A_{1,3}^{-1}\leq\varphi_2(mA_{1,3}^{-1}A_{3,3}^{-1})_{\leq4}.
\end{array}
\]

If $\beta=\a_1+2\a_2+2\a_3+\a_4+\a_5$, $Y^{\beta}=Y_{1,0}Y_{2,3}Y_{2,5}Y_{3,0}Y_{3,2}$, denote $m=Y^{\a}Y^{\beta}$. The only dominant monomials in $\chi_q(S(\a)\otimes S(\beta))_{\leq4}$ are $m$, $mA_{3,3}^{-1}A_{2,4}^{-1}$ and
$mA_{1,1}^{-1}A_{2,2}^{-1}A_{2,4}^{-1}A_{3,1}^{-1}A_{3,3}^{-1}$.
\[
\begin{array}{cc}
  mA_{3,3}^{-1}A_{2,4}^{-1}\leq\varphi_2(mA_{3,3}^{-1})_{\leq4}, &
  mA_{1,1}^{-1}A_{2,2}^{-1}A_{2,4}^{-1}A_{3,1}^{-1}A_{3,3}^{-1}\leq\varphi_2(mA_{3,1}^{-1}A_{3,3}^{-1})_{\leq4}.
\end{array}
\]

If $\beta=\a_0+2\a_1+2\a_2+\a_3+\a_4+\a_5$, $Y^{\beta}=Y_{1,0}Y_{1,2}Y_{2,3}Y_{2,5}$, denote $m=Y^{\a}Y^{\beta}$. The only dominant monomials in $\chi_q(S(\a)\otimes S(\beta))_{\leq4}$ are $m$,
$mA_{1,1}^{-1}$ and $mA_{1,3}^{-1}A_{2,4}^{-1}$.
\[
\begin{array}{cc}
  mA_{1,1}^{-1}\leq\varphi_1(m)_{\leq4}, &
  mA_{1,3}^{-1}A_{2,4}^{-1}\leq\varphi_2(mA_{1,3}^{-1})_{\leq4}.
\end{array}
\]

If $\beta=\a_0+2\a_1+2\a_2+2\a_3+\a_4+\a_5$, $Y^{\beta}=Y_{1,0}Y_{2,3}^{2}Y_{2,5}Y_{3,0}$, denote $m=Y^{\a}Y^{\beta}$. The only dominant monomials in $\chi_q(S(\a)\otimes S(\beta))_{\leq4}$ are $m$,
$mA_{2,4}^{-1}$ and $mA_{1,1}^{-1}A_{3,1}^{-1}A_{2,2}^{-1}A_{2,4}^{-1}$.
\[
\begin{array}{cc}
  mA_{2,4}^{-1}\leq\varphi_2(m)_{\leq4}, &
  mA_{1,1}^{-1}A_{3,1}^{-1}A_{2,2}^{-1}A_{2,4}^{-1}\leq\varphi_{\{1,2\}}(mA_{3,1}^{-1})_{\leq4}.
\end{array}
\]

If $\beta=\a_0+\a_1+\a_2+\a_3+\a_4+\a_5$, $Y^{\beta}=Y_{2,3}$, denote $m=Y^{\a}Y^{\beta}$. The only dominant monomials in $\chi_q(S(\a)\otimes S(\beta))_{\leq4}$ are $m$ and $mA_{2,4}^{-1}$.
\[
\begin{array}{c}
  mA_{2,4}^{-1}\leq\varphi_2(m)_{\leq4}=m(1+A_{2,4}^{-1}).
\end{array}
\]

If $\beta=\a_0+\a_1+2\a_2+2\a_3+\a_4+\a_5$, $Y^{\beta}=Y_{2,3}Y_{2,5}Y_{3,0}Y_{3,2}$, denote $m=Y^{\a}Y^{\beta}$. The only dominant monomials in $\chi_q(S(\a)\otimes S(\beta))_{\leq4}$ are $m$ and $mA_{3,3}^{-1}A_{2,4}^{-1}$.
\[
\begin{array}{cc}
  \varphi_3(m)_{\leq4}=m(1+A_{3,3}^{-1}+A_{3,3}^{-1}A_{3,1}^{-1}),&
  \varphi_2(mA_{3,3}^{-1})_{\leq4}=mA_{3,3}^{-1}(1+A_{2,4}^{-1}).
\end{array}
\]

If $\beta=\a_0+2\a_1+3\a_2+2\a_3+\a_4+\a_5$, $Y^{\beta}=Y_{1,0}Y_{1,2}Y_{2,3}Y_{2,5}^{2}Y_{3,0}Y_{3,2}$, denote $m=Y^{\a}Y^{\beta}$. The only dominant monomials in $\chi_q(S(\a)\otimes S(\beta))_{\leq4}$ are $m$, $mA_{1,1}^{-1}$ and $mA_{1,3}^{-1}A_{3,3}^{-1}A_{2,4}^{-1}$.
\[
\begin{array}{cc}
  \varphi_1(m)_{\leq4}=m(1+A_{1,1}^{-1})(1+A_{1,3}^{-1}+A_{1,3}^{-1}A_{1,1}^{-1}),&
  \varphi_2(mA_{1,3}^{-1}A_{3,3}^{-1})_{\leq4}=mmA_{1,3}^{-1}A_{3,3}^{-1}(1+A_{2,4}^{-1}).
\end{array}
\]

If $\beta=\a_0+2\a_1+3\a_2+2\a_3+\a_4+2\a_5$, $Y^{\beta}=Y_{1,0}Y_{1,2}Y_{2,3}Y_{2,5}Y_{3,0}Y_{3,2}$, denote $m=Y^{\a}Y^{\beta}$. The only dominant monomials in $\chi_q(S(\a)\otimes S(\beta))_{\leq4}$ are $m$, $mA_{1,1}^{-1}$, $mA_{1,3}^{-1}A_{2,4}^{-1}$, $mA_{3,3}^{-1}A_{2,4}^{-1}$,  $2mA_{1,3}^{-1}A_{3,3}^{-1}A_{2,4}^{-1}$, $mA_{1,1}^{-1}A_{3,3}^{-1}A_{2,4}^{-1}$, $mA_{1,3}^{-1}A_{3,3}^{-1}A_{2,4}^{-2}$, $mA_{1,1}^{-1}A_{1,3}^{-1}A_{2,2}^{-1}A_{2,4}^{-1}A_{3,1}^{-1}A_{3,3}^{-1}$ and $mA_{1,1}^{-1}A_{1,3}^{-1}A_{2,2}^{-1}A_{2,4}^{-2}A_{3,1}^{-1}A_{3,3}^{-1}$.
\[
\begin{array}{l}
\bigskip
mA_{3,3}^{-1}(A_{2,4}^{-1}+2A_{1,3}^{-1}A_{2,4}^{-1}+A_{1,1}^{-1}A_{2,4}^{-1}+A_{1,3}^{-1}A_{2,4}^{-2})
\leq\varphi_{\{1,2\}}(mA_{3,3}^{-1})_{\leq4},\\
mA_{3,1}^{-1}A_{3,3}^{-1}(A_{1,1}^{-1}A_{1,3}^{-1}A_{2,2}^{-1}A_{2,4}^{-1}+A_{1,1}^{-1}A_{1,3}^{-1}A_{2,2}^{-1}A_{2,4}^{-2})
\leq\varphi_{\{1,2\}}(mA_{3,1}^{-1}A_{3,3}^{-1})_{\leq4}.
\end{array}
\]
\[
\begin{array}{cc}
  \varphi_1(m)_{\leq4}=m(1+A_{1,1}^{-1})(1+A_{1,3}^{-1}+A_{1,3}^{-1}A_{1,1}^{-1}), &
  \varphi_2(mA_{1,3}^{-1})_{\leq4}=mA_{1,3}^{-1}(1+A_{2,4}^{-1}).
\end{array}
\]

\medskip
(16) $\a=\a_0+\a_1+\a_2+\a_5$, $Y^{\a}=Y_{1,2}$. If $\beta\in C_\a\backslash\{-\a_3, -\a_4, \a_0, \a_1+\a_2,
\a_0+\a_1+\a_2, \a_0+\a_1+\a_2+\a_3, \a_1+2\a_2+\a_3+\a_5, \a_0+2\a_1+2\a_2+\a_3+\a_5, \a_0+\a_1+2\a_2+\a_3+\a_5, \a_1+2\a_2+\a_3+\a_4+\a_5, \a_0+\a_1+\a_2+\a_3+\a_4, \a_0+2\a_1+2\a_2+\a_3+\a_4+\a_5, \a_0+\a_1+2\a_2+2\a_3+\a_4+\a_5,  \a_0+\a_1+2\a_2+\a_3+\a_4+\a_5, \a_0+2\a_1+3\a_2+2\a_3+\a_4+\a_5, \a_0+2\a_1+3\a_2+2\a_3+\a_4+2\a_5\}$, then $\chi_q(S(\a)\otimes S(\beta))_{\leq4}$ contains a unique dominant monomial and $S(\a)\otimes S(\beta)$ is simple.

For $\beta=-\a_3, -\a_4, \a_0, \a_1+\a_2,\a_0+\a_1+\a_2,\a_0+\a_1+\a_2+\a_3$, we have done.

If $\beta=\a_1+2\a_2+\a_3+\a_5$, $Y^{\beta}=Y_{1,0}Y_{1,2}Y_{3,0}Y_{3,2}Y_{2,5}$, denote $m=Y^{\a}Y^{\beta}$. The only dominant monomials in $\chi_q(S(\a)\otimes S(\beta))_{\leq4}$ are $m$, $mA_{1,3}^{-1}A_{2,4}^{-1}$,  $2mA_{1,3}^{-1}A_{3,3}^{-1}A_{2,4}^{-1}$, $mA_{1,1}^{-1}A_{1,3}^{-1}A_{3,3}^{-1}A_{2,4}^{-1}$ and $mA_{1,1}^{-1}A_{1,3}^{-1}A_{3,1}^{-1}A_{3,3}^{-1}A_{2,2}^{-1}A_{2,4}^{-1}$.
\[
\begin{array}{ll}
\medskip
  mA_{1,3}^{-1}A_{2,4}^{-1}\leq\varphi_{\{1,2\}}(m)_{\leq4},&
  2mA_{1,3}^{-1}A_{3,3}^{-1}A_{2,4}^{-1}\leq\varphi_2(2mA_{1,3}^{-1}A_{3,3}^{-1})_{\leq4},\\
  mA_{1,1}^{-1}A_{1,3}^{-1}A_{3,3}^{-1}A_{2,4}^{-1}\leq\varphi_2(mA_{1,1}^{-1}A_{1,3}^{-1}A_{3,3}^{-1})_{\leq4},&
  mA_{1,1}^{-1}A_{1,3}^{-1}A_{3,1}^{-1}A_{3,3}^{-1}A_{2,2}^{-1}A_{2,4}^{-1}\leq
  \varphi_2(mA_{1,1}^{-1}A_{1,3}^{-1}A_{3,1}^{-1}A_{3,3}^{-1})_{\leq4}.
\end{array}
\]

If $\beta=\a_0+2\a_1+2\a_2+\a_3+\a_5$, $Y^{\beta}=Y_{1,0}Y_{1,2}Y_{2,3}Y_{2,5}Y_{3,0}$, denote $m=Y^{\a}Y^{\beta}$. The only dominant monomials in $\chi_q(S(\a)\otimes S(\beta))_{\leq4}$ are $m$, $2mA_{1,3}^{-1}A_{2,4}^{-1}$,  $m A_{1,1}^{-1}A_{1,3}^{-1}A_{2,4}^{-1}$ and $mA_{1,1}^{-1}A_{1,3}^{-1}A_{2,2}^{-1}A_{2,4}^{-1}A_{3,1}^{-1}$.
\[
\begin{array}{cc}
  2mA_{1,3}^{-1}A_{2,4}^{-1}\leq\varphi_{2}(2mA_{1,3}^{-1})_{\leq4},&
  m A_{1,1}^{-1}A_{1,3}^{-1}(A_{2,4}^{-1}+A_{2,2}^{-1}A_{2,4}^{-1}A_{3,1}^{-1})\leq
  \varphi_{\{1,2\}}(mA_{1,1}^{-1}A_{1,3}^{-1})_{\leq4}.
\end{array}
\]

If $\beta=\a_0+\a_1+2\a_2+\a_3+\a_5$, $Y^{\beta}=Y_{1,2}Y_{2,5}Y_{3,0}Y_{3,2}$, denote $m=Y^{\a}Y^{\beta}$. The only dominant monomials in $\chi_q(S(\a)\otimes S(\beta))_{\leq4}$ are $m$, $mA_{1,3}^{-1}A_{2,4}^{-1}$ and   $2m A_{3,3}^{-1}A_{1,3}^{-1}A_{2,4}^{-1}$.
\[
\begin{array}{cc}
  mA_{1,3}^{-1}A_{2,4}^{-1}\leq\varphi_{\{1,2\}}(m)_{\leq4},&
  2m A_{3,3}^{-1}A_{1,3}^{-1}A_{2,4}^{-1}\leq
  \varphi_{2}(2mA_{1,3}^{-1}A_{3,3}^{-1})_{\leq4}.
\end{array}
\]

If $\beta=\a_1+2\a_2+\a_3+\a_4+\a_5$, $Y^{\beta}=Y_{1,0}Y_{1,2}Y_{2,5}Y_{3,2}$, denote $m=Y^{\a}Y^{\beta}$. The only dominant monomials in $\chi_q(S(\a)\otimes S(\beta))_{\leq4}$ are $m$, $mA_{1,3}^{-1}A_{2,4}^{-1}$,
$2mA_{3,3}^{-1}A_{1,3}^{-1}A_{2,4}^{-1}$ and $mA_{1,1}^{-1}A_{1,3}^{-1}A_{2,4}^{-1}A_{3,3}^{-1}$.
\[
\begin{array}{lll}
mA_{1,3}^{-1}A_{2,4}^{-1}\leq\varphi_{\{1,2\}}(m),&
2m A_{3,3}^{-1}A_{1,3}^{-1}A_{2,4}^{-1}\leq\varphi_{2}(2mA_{1,3}^{-1}A_{3,3}^{-1}),&
mA_{1,1}^{-1}A_{1,3}^{-1}A_{2,4}^{-1}A_{3,3}^{-1}\leq\varphi_{2}(mA_{1,1}^{-1}A_{1,3}^{-1}A_{3,3}^{-1}).
\end{array}
\]

If $\beta=\a_0+\a_1+\a_2+\a_3+\a_4$, $Y^{\beta}=Y_{2,3}Y_{2,5}$, denote $m=Y^{\a}Y^{\beta}$. The only dominant monomials in $\chi_q(S(\a)\otimes S(\beta))_{\leq4}$ are $m$ and $mA_{1,3}^{-1}A_{2,4}^{-1}$.
\[
mA_{1,3}^{-1}A_{2,4}^{-1} \leq\varphi_{2}(mA_{1,3}^{-1})_{\leq4}.
\]

If $\beta=\a_0+2\a_1+2\a_2+\a_3+\a_4+\a_5$, $Y^{\beta}=Y_{1,0}Y_{1,2}Y_{2,3}Y_{2,5}$, denote $m=Y^{\a}Y^{\beta}$. The only dominant monomials in $\chi_q(S(\a)\otimes S(\beta))_{\leq4}$ are $m$, $2mA_{1,3}^{-1}A_{2,4}^{-1}$ and $mA_{1,1}^{-1}A_{1,3}^{-1}A_{2,4}^{-1}$.
\[
\begin{array}{ll}
2mA_{1,3}^{-1}A_{2,4}^{-1}\leq\varphi_{2}(2mA_{1,3}^{-1}),&
mA_{1,1}^{-1}A_{1,3}^{-1}A_{2,4}^{-1}\leq\varphi_{2}(mA_{1,1}^{-1}A_{1,3}^{-1}).
\end{array}
\]

If $\beta=\a_0+\a_1+2\a_2+2\a_3+\a_4+\a_5$, $Y^{\beta}=Y_{2,3}Y_{2,5}Y_{3,0}Y_{3,2}$, denote $m=Y^{\a}Y^{\beta}$. The only dominant monomials in $\chi_q(S(\a)\otimes S(\beta))_{\leq4}$ are $m$, $mA_{1,3}^{-1}A_{2,4}^{-1}$, $mA_{3,3}^{-1}A_{2,4}^{-1}$ and $2mA_{1,3}^{-1}A_{3,3}^{-1}A_{2,4}^{-1}$.
\[
\begin{array}{lll}
mA_{1,3}^{-1}A_{2,4}^{-1}\leq\varphi_{2}(mA_{1,3}^{-1}), &
mA_{3,3}^{-1}A_{2,4}^{-1}\leq\varphi_{2}(mA_{3,3}^{-1}),&
2mA_{1,3}^{-1}A_{3,3}^{-1}A_{2,4}^{-1}\leq\varphi_{\{1,2\}}(mA_{1,3}^{-1}).
\end{array}
\]

If $\beta=\a_0+2\a_1+3\a_2+2\a_3+\a_4+2\a_5$, $Y^{\beta}=Y_{1,0}Y_{1,2}Y_{2,3}Y_{2,5}Y_{3,0}Y_{3,2}$, denote $m=Y^{\a}Y^{\beta}$. The only dominant monomials in $\chi_q(S(\a)\otimes S(\beta))_{\leq4}$ are $m$, $2mA_{1,3}^{-1}A_{2,4}^{-1}$, $mA_{3,3}^{-1}A_{2,4}^{-1}$, $4mA_{1,3}^{-1}A_{3,3}^{-1}A_{2,4}^{-1}$, $mA_{1,1}^{-1}A_{1,3}^{-1}A_{2,4}^{-1}$, $2mA_{1,1}^{-1}A_{1,3}^{-1}A_{3,3}^{-1}A_{2,4}^{-1}$ and $mA_{1,1}^{-1}A_{1,3}^{-1}A_{3,1}^{-1}A_{3,3}^{-1}A_{2,2}^{-1}A_{2,4}^{-1}$.
\[
\begin{array}{ll}
\medskip
mA_{1,1}^{-1}A_{1,3}^{-1}(A_{3,1}^{-1}A_{3,3}^{-1}A_{2,2}^{-1}A_{2,4}^{-1}+A_{2,4}^{-1})\leq
  \varphi_{\{2,3\}}(mA_{1,1}^{-1}A_{1,3}^{-1}),&
2mA_{1,3}^{-1}A_{2,4}^{-1}\leq\varphi_{2}(2mA_{1,3}^{-1}), \\
mA_{3,3}^{-1}(4A_{1,3}^{-1}A_{2,4}^{-1}+2A_{1,1}^{-1}A_{1,3}^{-1}A_{2,4}^{-1}+A_{2,4}^{-1})\leq
  \varphi_{\{1,2\}}(mA_{3,3}^{-1}).
\end{array}
\]

If $\beta=\a_0+\a_1+2\a_2+\a_3+\a_4+\a_5$, $Y^{\beta}=Y_{1,2}Y_{2,5}Y_{3,2}$, denote $m=Y^{\a}Y^{\beta}$. The only dominant monomials in $\chi_q(S(\a)\otimes S(\beta))_{\leq4}$ are $m$, $mA_{1,3}^{-1}A_{2,4}^{-1}$ and $2mA_{1,3}^{-1}A_{3,3}^{-1}A_{2,4}^{-1}$.
\[
\begin{array}{ll}
mA_{1,3}^{-1}A_{2,4}^{-1}\leq\varphi_{\{1,2\}}(m), &
2mA_{1,3}^{-1}A_{3,3}^{-1}A_{2,4}^{-1}\leq\varphi_{2}(2mA_{1,3}^{-1}A_{3,3}^{-1}).
\end{array}
\]

If $\beta=\a_0+2\a_1+3\a_2+2\a_3+\a_4+\a_5$, $Y^{\beta}=Y_{1,0}Y_{1,2}Y_{2,3}Y_{2,5}^{2}Y_{3,0}Y_{3,2}$, denote $m=Y^{\a}Y^{\beta}$. The only dominant monomials in $\chi_q(S(\a)\otimes S(\beta))_{\leq4}$ are $m$, $mA_{1,3}^{-1}A_{2,4}^{-1}$, $2mA_{1,3}^{-1}A_{3,3}^{-1}A_{2,4}^{-1}$, $mA_{1,1}^{-1}A_{1,3}^{-1}A_{3,3}^{-1}A_{2,4}^{-1}$ and $mA_{1,3}^{-2}A_{3,3}^{-1}A_{2,4}^{-2}$.
\[
\begin{array}{ll}
mA_{1,3}^{-1}A_{2,4}^{-1}\leq\varphi_{\{1,2\}}(m), &
mA_{3,3}^{-1}(2A_{1,3}^{-1}A_{2,4}^{-1}+A_{1,1}^{-1}A_{1,3}^{-1}A_{2,4}^{-1}+A_{1,3}^{-2}A_{2,4}^{-2})\leq
  \varphi_{\{1,2\}}(mA_{3,3}^{-1}).
\end{array}
\]

\medskip
(17) $\a=\a_1+\a_2+\a_3+\a_5$, $Y^{\a}=Y_{1,0}Y_{2,3}Y_{3,0}$. In this case, $\chi_q(L(Y^{\a}))$ has two dominant monomials, so we need to consider every $\beta\in C_\a.$

For $\beta=-\a_0, -\a_4, \a_1, \a_3, \a_5, \a_0+\a_1,  \a_3+\a_4,  \a_1+\a_2+\a_3, \a_2+\a_3+\a_5, \a_1+\a_2+\a_5$, we have done.

Up to the symmetries $1\leftrightarrow 3$, $0\leftrightarrow 4$ of $\widetilde{I}$, we reduces to the cases:
$\beta\in \{\a_1+\a_2+\a_3+\a_5,\a_0+\a_1+\a_2+\a_3+\a_5\}$.

If $\beta=\a_1+\a_2+\a_3+\a_5$, $Y^{\beta}=Y^{\a}$, denote $m=Y^{\a}Y^{\beta}$. The only dominant monomials in $\chi_q(S(\a)\otimes S(\beta))_{\leq4}$ are $m$, $2mA_{1,1}^{-1}A_{3,1}^{-1}A_{2,2}^{-1}$ and $mA_{1,1}^{-2}A_{3,1}^{-2}A_{2,2}^{-2}$.
\[
\begin{array}{ll}
mA_{1,1}^{-2}A_{3,1}^{-2}A_{2,2}^{-2}\leq\varphi_{2}(mA_{1,1}^{-2}A_{3,1}^{-2}),&
2mA_{1,1}^{-1}A_{3,1}^{-1}A_{2,2}^{-1}\leq\varphi_{\{2,3\}}(2mA_{1,1}^{-1}).
\end{array}
\]

If $\beta=\a_0+\a_1+\a_2+\a_3+\a_5$, $Y^{\beta}=Y_{2,3}Y_{3,0}$, denote $m=Y^{\a}Y^{\beta}$. The only dominant monomials in $\chi_q(S(\a)\otimes S(\beta))_{\leq4}$ are $m$ and $mA_{1,1}^{-1}A_{3,1}^{-1}A_{2,2}^{-1}$.
\[
mA_{1,1}^{-1}A_{3,1}^{-1}A_{2,2}^{-1}\leq\varphi_{\{2,3\}}(mA_{1,1}^{-1}).
\]

\medskip
(18) $\a=\a_1+2\a_2+\a_3+\a_5$, $Y^{\a}=Y_{1,0}Y_{1,2}Y_{2,5}Y_{3,0}Y_{3,2}$. In this case, $\chi_q(L(Y^{\a}))$ has $3$ dominant monomials, so we need to consider every $\beta\in C_\a.$

For $\beta=-\a_0, -\a_4, \a_1+\a_2, \a_2+\a_5, \a_2+\a_3, \a_1+\a_2+\a_3, \a_1+\a_2+\a_5, \a_2+\a_3+\a_5,
\a_1+\a_2+\a_3+\a_4, \a_0+\a_1+\a_2+\a_3, \a_0+\a_1+\a_2+\a_5,\a_2+\a_3+\a_4+\a_5$, we have done.

Up to the symmetries $1\leftrightarrow 3$, $0\leftrightarrow 4$ of $\widetilde{I}$, we reduce to the cases:
$\beta\in \{\a_1+2\a_2+\a_3+\a_5, \a_0+2\a_1+2\a_2+\a_3+\a_5, \a_0+\a_1+2\a_2+\a_3+\a_5, \a_0+2\a_1+2\a_2+\a_3+\a_4+\a_5, \a_0+2\a_1+3\a_2+2\a_3+\a_4+\a_5, \a_0+2\a_1+3\a_2+2\a_3+\a_4+2\a_5\}$.

If $\beta=\a_1+2\a_2+\a_3+\a_5$, $Y^{\beta}=Y^{\a}$, denote $m=Y^{\a}Y^{\beta}$. The only dominant monomials in $\chi_q(S(\a)\otimes S(\beta))_{\leq4}$ are
\smallskip
$m,\  2mA_{1,3}^{-1}A_{3,3}^{-1}A_{2,4}^{-1},\ 2mA_{1,1}^{-1}A_{1,3}^{-1}A_{3,1}^{-1}A_{3,3}^{-1}A_{2,4}^{-1},\ 2mA_{1,1}^{-1}A_{1,3}^{-1}A_{3,3}^{-1}A_{2,4}^{-1},\\
2mA_{1,3}^{-1}A_{3,1}^{-1}A_{3,3}^{-1}A_{2,4}^{-1},\
mA_{1,3}^{-2}A_{3,3}^{-2}A_{2,4}^{-2}, 2mA_{1,1}^{-1}A_{1,3}^{-1}A_{3,1}^{-1}A_{3,3}^{-1}A_{2,2}^{-1}A_{2,4}^{-1}, \ 2mA_{1,1}^{-1}A_{1,3}^{-2}A_{3,1}^{-1}A_{3,3}^{-2}A_{2,2}^{-1}A_{2,4}^{-2}$ and $mA_{1,1}^{-2}A_{1,3}^{-2}A_{3,1}^{-2}A_{3,3}^{-2}A_{2,2}^{-2}A_{2,4}^{-2}$.
\[
\begin{array}{ll}
\bigskip
mA_{1,3}^{-2}A_{3,3}^{-2}A_{2,4}^{-2}\leq\varphi_2(mA_{1,3}^{-2}A_{3,3}^{-2}),&
mA_{1,1}^{-2}A_{1,3}^{-2}A_{3,1}^{-2}A_{3,3}^{-2}A_{2,2}^{-2}A_{2,4}^{-2}\leq
  \varphi_2(A_{1,1}^{-2}A_{1,3}^{-2}A_{3,1}^{-2}A_{3,3}^{-2}),\\
2mA_{1,1}^{-1}A_{1,3}^{-2}A_{3,1}^{-1}A_{3,3}^{-2}A_{2,2}^{-1}A_{2,4}^{-2}\leq
\varphi_{\{2,3\}}(2mA_{1,1}^{-1}A_{1,3}^{-2}),&
2mA_{1,3}^{-1}(A_{3,3}^{-1}A_{2,4}^{-1}+A_{3,1}^{-1}A_{3,3}^{-1}A_{2,4}^{-1})\leq
\varphi_{\{2,3\}}(2mA_{1,3}^{-1}),
\end{array}
\]

$2mA_{1,1}^{-1}A_{1,3}^{-1}(A_{3,3}^{-1}A_{2,4}^{-1}+A_{3,1}^{-1}A_{3,3}^{-1}A_{2,4}^{-1}+
A_{3,1}^{-1}A_{3,3}^{-1}A_{2,2}^{-1}A_{2,4}^{-1})\leq\varphi_{\{2,3\}}(2mA_{1,1}^{-1}A_{1,3}^{-1}).$

If $\beta=\a_0+2\a_1+2\a_2+\a_3+\a_5$, $Y^{\beta}=Y_{1,0}Y_{1,2}Y_{2,3}Y_{2,5}Y_{3,0}$, denote $m=Y^{\a}Y^{\beta}$. The only dominant monomials in $\chi_q(S(\a)\otimes S(\beta))_{\leq4}$ are
$m,\ mA_{3,1}^{-1},\  mA_{1,3}^{-1}A_{2,4}^{-1},\ mA_{1,1}^{-1}A_{1,3}^{-1}A_{2,4}^{-1},\ mA_{1,3}^{-1}A_{2,4}^{-1}A_{3,1}^{-1},\\
mA_{1,1}^{-1}A_{1,3}^{-1}A_{2,4}^{-1}A_{3,1}^{-1},\
 2mA_{1,3}^{-1}A_{2,4}^{-1}A_{3,3}^{-1},\ mA_{1,3}^{-2}A_{2,4}^{-2}A_{3,3}^{-1},\ 2mA_{1,1}^{-1}A_{1,3}^{-1}A_{2,4}^{-1}A_{3,3}^{-1},\ mA_{1,1}^{-1}A_{1,3}^{-1}A_{2,2}^{-1}A_{2,4}^{-1}A_{3,1}^{-1},\\ 2mA_{1,1}^{-1}A_{1,3}^{-1}A_{2,2}^{-1}A_{2,4}^{-1}A_{3,1}^{-1}A_{3,3}^{-1},  2mA_{1,1}^{-1}A_{1,3}^{-2}A_{2,2}^{-1}A_{2,4}^{-2}A_{3,1}^{-1}A_{3,3}^{-1}$ and
$mA_{1,1}^{-2}A_{1,3}^{-2}A_{2,2}^{-2}A_{2,4}^{-2}A_{3,1}^{-2}A_{3,3}^{-1}$.

We have
\[
\begin{array}{l}
\medskip
  (mA_{1,3}^{-1}A_{2,4}^{-1}+mA_{1,1}^{-1}A_{1,3}^{-1}A_{2,4}^{-1})\leq\varphi_{\{1,2\}}(m)_{\leq4},\\
\medskip
  m A_{3,1}^{-2}A_{3,3}^{-1}A_{1,1}^{-2}A_{1,3}^{-2}A_{2,2}^{-2}A_{2,4}^{-2}\leq
   \varphi_{\{1,2\}}(m A_{3,1}^{-2}A_{3,3}^{-1})_{\leq4},\\
\medskip
  mA_{3,3}^{-1}(2A_{1,3}^{-1}A_{2,4}^{-1}+A_{1,1}^{-1}A_{1,3}^{-1}A_{2,4}^{-1}+A_{1,3}^{-2}A_{2,4}^{-2})
     \leq\varphi_{\{1,2\}}(mA_{3,3}^{-1})_{\leq4},\\
\medskip
  2mA_{3,1}^{-1}A_{3,3}^{-1}(A_{1,1}^{-1}A_{1,3}^{-1}A_{2,2}^{-1}A_{2,4}^{-1}+
  A_{1,1}^{-1}A_{1,3}^{-2}A_{2,2}^{-1}A_{2,4}^{-2})\leq\varphi_{\{1,2\}}(2mA_{3,1}^{-1}A_{3,3}^{-1})_{\leq4},\\
  mA_{3,1}^{-1}(1+A_{1,1}^{-1}A_{1,3}^{-1}A_{2,2}^{-1}A_{2,4}^{-1}+A_{1,1}^{-1}A_{1,3}^{-1}A_{2,4}^{-1}+
     A_{1,3}^{-1}A_{2,4}^{-1})\leq\varphi_{\{1,2\}}(mA_{3,1}^{-1})_{\leq4}.
\end{array}
\]

If $\beta=\a_0+\a_1+2\a_2+\a_3+\a_5$, $Y^{\beta}=Y_{1,2}Y_{2,5}Y_{3,0}Y_{3,2}$, denote $m=Y^{\a}Y^{\beta}$. The only dominant monomials in $\chi_q(S(\a)\otimes S(\beta))_{\leq4}$ are
\medskip
$m,\ 2mA_{1,3}^{-1}A_{2,4}^{-1}A_{3,3}^{-1},\  mA_{1,3}^{-2}A_{2,4}^{-2}A_{3,3}^{-2},\ mA_{1,1}^{-1}A_{1,3}^{-1}A_{2,4}^{-1}A_{3,3}^{-1},\\
2m A_{1,3}^{-1}A_{3,1}^{-1}A_{3,3}^{-1}A_{2,4}^{-1},\  mA_{1,1}^{-1}A_{1,3}^{-1}A_{3,1}^{-1}A_{3,3}^{-1}A_{2,4}^{-1},\  mA_{1,1}^{-1}A_{1,3}^{-1}A_{2,2}^{-1}A_{2,4}^{-1}A_{3,1}^{-1}A_{3,3}^{-1}$ and $mA_{1,1}^{-1}A_{1,3}^{-2}A_{2,2}^{-1}A_{2,4}^{-2}A_{3,1}^{-1}A_{3,3}^{-2}$.

We have

\noindent\medskip
  $mA_{1,3}^{-2}A_{2,4}^{-2}A_{3,3}^{-2}\leq\varphi_{\{2,3\}}(mA_{1,3}^{-2})_{\leq4},$ \quad
  $mA_{1,1}^{-1}A_{1,3}^{-2}A_{2,2}^{-1}A_{2,4}^{-2}A_{3,1}^{-1}A_{3,3}^{-2}\leq
  \varphi_{\{2,3\}}(mA_{1,1}^{-1}A_{1,3}^{-2})_{\leq4},$ \\
  \medskip
$2mA_{1,3}^{-1}(A_{2,4}^{-1}A_{3,3}^{-1}+A_{3,1}^{-1}A_{3,3}^{-1}A_{2,4}^{-1})\leq
  \varphi_{\{2,3\}}( 2mA_{1,3}^{-1})_{\leq4},$\\
  $mA_{1,1}^{-1}A_{1,3}^{-1}(A_{2,4}^{-1}A_{3,3}^{-1}+A_{3,1}^{-1}A_{3,3}^{-1}A_{2,4}^{-1}+A_{2,2}^{-1}A_{2,4}^{-1}A_{3,1}^{-1}A_{3,3}^{-1})
  \leq\varphi_{\{2,3\}}(mA_{1,1}^{-1}A_{1,3}^{-1})_{\leq4}.$

If $\beta=\a_0+2\a_1+2\a_2+\a_3+\a_4+\a_5$, $Y^{\beta}=Y_{1,0}Y_{1,2}Y_{2,3}Y_{2,5}$, denote $m=Y^{\a}Y^{\beta}$. The only dominant monomials in $\chi_q(S(\a)\otimes S(\beta))_{\leq4}$ are $m$, $mA_{1,3}^{-1}A_{2,4}^{-1}$,  $2mA_{1,3}^{-1}A_{2,4}^{-1}A_{3,3}^{-1}$, $mA_{1,3}^{-2}A_{2,4}^{-2}A_{3,3}^{-1}$, $mA_{1,1}^{-1}A_{1,3}^{-1}A_{2,2}^{-1}A_{2,4}^{-1}A_{3,1}^{-1}A_{3,3}^{-1}$, $mA_{1,1}^{-1}A_{1,3}^{-1}A_{2,4}^{-1}$,
$2m A_{1,1}^{-1}A_{1,3}^{-1}A_{2,4}^{-1}A_{3,3}^{-1}$ and $mA_{1,1}^{-1}A_{1,3}^{-2}A_{2,2}^{-1}A_{2,4}^{-2}A_{3,1}^{-1}A_{3,3}^{-1}$.

We have
\[
\begin{array}{l}
  (mA_{1,3}^{-1}A_{2,4}^{-1}+mA_{1,1}^{-1}A_{1,3}^{-1}A_{2,4}^{-1})\leq\varphi_{\{1,2\}}(m)_{\leq4},\\
  mA_{3,3}^{-1}(2A_{1,3}^{-1}A_{2,4}^{-1}+2A_{1,1}^{-1}A_{1,3}^{-1}A_{2,4}^{-1}+A_{1,3}^{-2}A_{2,4}^{-2})
     \leq\varphi_{\{1,2\}}(mA_{3,3}^{-1})_{\leq4},\\
  mA_{3,1}^{-1}A_{3,3}^{-1}(A_{1,1}^{-1}A_{1,3}^{-1}A_{2,2}^{-1}A_{2,4}^{-1}+A_{1,1}^{-1}A_{1,3}^{-2}A_{2,2}^{-1}A_{2,4}^{-2})
   \leq\varphi_{\{1,2\}}(mA_{3,1}^{-1}A_{3,3}^{-1})_{\leq4}.
\end{array}
\]

If $\beta=\a_0+2\a_1+3\a_2+2\a_3+\a_4+2\a_5$, $Y^{\beta}=Y_{1,0}Y_{1,2}Y_{2,3}Y_{2,5}Y_{3,0}Y_{3,2}$, denote $m=Y^{\a}Y^{\beta}$. The only dominant monomials in $\chi_q(S(\a)\otimes S(\beta))_{\leq4}$ are
$m,\ mA_{1,3}^{-1}A_{2,4}^{-1},  mA_{3,3}^{-1}A_{2,4}^{-1}, 5mA_{1,3}^{-1}A_{2,4}^{-1}A_{3,3}^{-1}, \\ 2mA_{1,3}^{-2}A_{2,4}^{-2}A_{3,3}^{-1},2mA_{1,3}^{-1}A_{2,4}^{-2}A_{3,3}^{-2}, 3mA_{1,3}^{-2}A_{2,4}^{-1}A_{3,3}^{-2}, mA_{1,3}^{-1}A_{2,4}^{-2}A_{3,3}^{-1},  mA_{2,4}^{-1}A_{3,1}^{-1}A_{3,3}^{-1},m A_{1,1}^{-1}A_{1,3}^{-1}A_{2,4}^{-1},\\
2mA_{1,1}^{-1}A_{1,3}^{-1}A_{2,2}^{-1}A_{2,4}^{-1}A_{3,1}^{-1}A_{3,3}^{-1},
m A_{1,1}^{-1}A_{1,3}^{-1}A_{3,3}^{-1}A_{2,4}^{-2},
 mA_{1,3}^{-1}A_{3,1}^{-1}A_{3,3}^{-1}A_{2,4}^{-2},  5mA_{1,1}^{-1}A_{1,3}^{-1}A_{3,3}^{-1}A_{2,4}^{-1},  5mA_{1,3}^{-1}A_{3,1}^{-1}A_{3,3}^{-1}A_{2,4}^{-1},\\
4mA_{1,1}^{-1}A_{1,3}^{-2}A_{2,2}^{-1}A_{2,4}^{-2}A_{3,1}^{-1}A_{3,3}^{-2},
2mA_{1,1}^{-1}A_{1,3}^{-2}A_{2,2}^{-1}A_{2,4}^{-2}A_{3,1}^{-1}A_{3,3}^{-1},
 2mA_{1,3}^{-2}A_{3,1}^{-1}A_{3,3}^{-1}A_{2,4}^{-2},
 5mA_{1,1}^{-1}A_{1,3}^{-1}A_{2,4}^{-1}A_{3,1}^{-1}A_{3,3}^{-1},\\
mA_{1,1}^{-2}A_{1,3}^{-2}A_{2,2}^{-2}A_{2,4}^{-2}A_{3,1}^{-2}A_{3,3}^{-2},
 2mA_{1,1}^{-1}A_{1,3}^{-1}A_{3,3}^{-2}A_{2,4}^{-2},
 mA_{1,1}^{-1}A_{1,3}^{-1}A_{2,4}^{-2}A_{3,1}^{-1}A_{3,3}^{-1},  2mA_{1,1}^{-1}A_{1,3}^{-1}A_{2,2}^{-1}A_{2,4}^{-2}A_{3,1}^{-1}A_{3,3}^{-2}$ and
$mA_{1,1}^{-1}A_{1,3}^{-1}A_{2,2}^{-1}A_{2,4}^{-2}A_{3,1}^{-1}A_{3,3}^{-1}.$

Up to the symmetries $1\leftrightarrow 3$ of $I$, we reduce to the monomials:

\noindent\medskip$mA_{1,3}^{-1}A_{2,4}^{-1},  5mA_{1,3}^{-1}A_{2,4}^{-1}A_{3,3}^{-1}, 2mA_{1,3}^{-1}A_{2,4}^{-2}A_{3,3}^{-2},
3mA_{1,3}^{-2}A_{2,4}^{-1}A_{3,3}^{-2},
2mA_{1,1}^{-1}A_{1,3}^{-1}A_{2,2}^{-1}A_{2,4}^{-1}A_{3,1}^{-1}A_{3,3}^{-1},
 mA_{1,3}^{-1}A_{2,4}^{-2}A_{3,3}^{-1},\\
\medskip
m A_{1,1}^{-1}A_{1,3}^{-1}A_{2,4}^{-1},\
m A_{1,1}^{-1}A_{1,3}^{-1}A_{3,3}^{-1}A_{2,4}^{-2},\
5mA_{1,1}^{-1}A_{1,3}^{-1}A_{3,3}^{-1}A_{2,4}^{-1},\
4mA_{1,1}^{-1}A_{1,3}^{-2}A_{2,2}^{-1}A_{2,4}^{-2}A_{3,1}^{-1}A_{3,3}^{-2},\
2m A_{1,1}^{-1}A_{1,3}^{-1}A_{3,3}^{-2}A_{2,4}^{-2},\ \\
\medskip
2mA_{1,1}^{-1}A_{1,3}^{-1}A_{2,2}^{-1}A_{2,4}^{-2}A_{3,1}^{-1}A_{3,3}^{-2},
5mA_{1,1}^{-1}A_{1,3}^{-1}A_{2,4}^{-1}A_{3,1}^{-1}A_{3,3}^{-1},
mA_{1,1}^{-2}A_{1,3}^{-2}A_{2,2}^{-2}A_{2,4}^{-2}A_{3,1}^{-2}A_{3,3}^{-2},
mA_{1,1}^{-1}A_{1,3}^{-1}A_{2,4}^{-2}A_{3,1}^{-1}A_{3,3}^{-1}, \\
mA_{1,1}^{-1}A_{1,3}^{-1}A_{2,2}^{-1}A_{2,4}^{-2}A_{3,1}^{-1}A_{3,3}^{-1}.$

After some calculation, we have

\noindent\medskip
  $mA_{1,3}^{-1}A_{2,4}^{-1}+m A_{1,1}^{-1}A_{1,3}^{-1}A_{2,4}^{-1}\leq\varphi_{\{1,2\}}(m)_{\leq4},$
  $4mA_{1,1}^{-1}A_{1,3}^{-2}A_{2,2}^{-1}A_{2,4}^{-2}A_{3,1}^{-1}A_{3,3}^{-2}\leq
  \varphi_{\{2,3\}}(2mA_{1,1}^{-1}A_{1,3}^{-2})_{\leq4},$ \\
  \medskip
  $mA_{1,3}^{-1}A_{2,4}^{-1}(A_{3,3}^{-1}+A_{2,4}^{-1}A_{3,3}^{-1})\leq
  \varphi_{\{2,3\}}(mA_{1,3}^{-1}A_{2,4}^{-1})_{\leq4},$ $mA_{1,1}^{-2}A_{1,3}^{-2}A_{2,2}^{-2}A_{2,4}^{-2}A_{3,1}^{-2}A_{3,3}^{-2}\leq
  \varphi_{\{2,3\}}(mA_{1,1}^{-2}A_{1,3}^{-2})_{\leq4},$\\
  \medskip
  $3mA_{1,3}^{-2}A_{2,4}^{-1}A_{3,3}^{-2}\leq\varphi_{\{2,3\}}(mA_{1,3}^{-2})_{\leq4},$ \quad
  $2mA_{1,3}^{-1}(2A_{2,4}^{-2}A_{3,3}^{-1}+A_{2,4}^{-2}A_{3,3}^{-2})\leq
  \varphi_{\{2,3\}}(2mA_{1,3}^{-1})_{\leq4},$\\
  \medskip
 $mA_{1,1}^{-1}A_{1,3}^{-1}A_{2,4}^{-1}(A_{3,3}^{-1}+A_{2,4}^{-1}A_{3,3}^{-1}+A_{2,4}^{-1}A_{3,1}^{-1}A_{3,3}^{-1}+
  A_{2,2}^{-1}A_{2,4}^{-1}A_{3,1}^{-1}A_{3,3}^{-1})\leq
  \varphi_{\{2,3\}}( mA_{1,1}^{-1}A_{1,3}^{-1}A_{2,4}^{-1})_{\leq4},$ \\ $2mA_{1,1}^{-1}A_{1,3}^{-1}(2A_{2,4}^{-2}A_{3,3}^{-1}+A_{2,4}^{-2}A_{3,3}^{-2}+2A_{2,4}^{-1}A_{3,1}^{-1}A_{3,3}^{-1}
  +A_{2,2}^{-1}A_{2,4}^{-1}A_{3,1}^{-1}A_{3,3}^{-1}+A_{2,2}^{-1}A_{2,4}^{-2}A_{3,1}^{-1}A_{3,3}^{-2})\leq\\
  \varphi_{\{2,3\}}(2mA_{1,1}^{-1}A_{1,3}^{-1})_{\leq4}.$

If $\beta=\a_0+2\a_1+3\a_2+2\a_3+\a_4+\a_5$, $Y^{\beta}=Y_{1,0}Y_{1,2}Y_{2,3}Y_{2,5}^{2}Y_{3,0}Y_{3,2}$, denote
\[
\begin{array}{lll}
\medskip
  m=Y^{\a}Y^{\beta}, & m_1=mA_{1,1}^{-1}A_{1,3}^{-1}A_{2,2}^{-1}A_{2,4}^{-1}A_{3,1}^{-1}A_{3,3}^{-1}, &
  m_2=mA_{1,1}^{-1}A_{1,3}^{-2}A_{2,2}^{-1}A_{2,4}^{-1}A_{3,1}^{-1}A_{3,3}^{-1},\\
  m_3=mA_{1,1}^{-1}A_{1,3}^{-1}A_{2,2}^{-1}A_{2,4}^{-1}A_{3,1}^{-1}A_{3,3}^{-2},&
  m_4=mA_{1,1}^{-1}A_{1,3}^{-2}A_{2,2}^{-1}A_{2,4}^{-1}A_{3,1}^{-1}A_{3,3}^{-2},&
  m_5=mA_{1,1}^{-1}A_{1,3}^{-2}A_{2,2}^{-1}A_{2,4}^{-2}A_{3,1}^{-1}A_{3,3}^{-2}.
\end{array}
\]

We can check that all the monomials of $\chi_q(S(\a)\otimes S(\beta))_{\leq4}$ excluding $m_i(1\leq i\leq 5)$ are contained in $\chi_q(L(m))_{\leq4}$. And denote the set of monomials occur in $\chi_q(L(m))_{\leq4}$ by $\mathcal{L}$.

If all $m_i$ are not contained in $\chi_q(L(m))_{\leq4}$. Consider the monomial $A_{1,1}^{-1}A_{1,3}^{-1}m_1$, by Proposition 2.14, we fix $i=1$, then there is a
unique decomposition of $\chi_q(L(m))$ as a finite sum
\[
 \chi_q(L(m)) = \sum_{\substack{m\in\M_{1,+}\\ \lambda_m\geq0}} \lambda_m \varphi_1(m).
\]
But, obviously, for any $m'\in \mathcal{L}$, $A_{1,1}^{-1}A_{1,3}^{-1}m_1$ does not occur in $\varphi_1(m')$.
And only for $m_1$, we have
\[
A_{1,1}^{-1}A_{1,3}^{-1}m_1\leq\varphi_1(m_1)_{\leq4}=m_1(1+A_{1,3}^{-1}+A_{1,1}^{-1}A_{1,3}^{-1}).
\]
So, $m_1+m_2$ is contained in $\chi_q(L(m))_{\leq4}$.

Consider the monomial $A_{3,1}^{-1}A_{3,3}^{-1}m_1$, fix $i=3$, we can similarly check that $m_3$ is contained in $\chi_q(L(m))_{\leq4}$.

Consider the monomial $A_{1,1}^{-1}A_{1,3}^{-1}A_{3,3}^{-1}A_{2,4}^{-1}m_1$, fix $i=1$, only for $m_5$, $A_{1,1}^{-1}A_{1,3}^{-1}A_{3,3}^{-1}A_{2,4}^{-1}m_1$ occur in $\varphi_1(m_5)$.

Since $m_4$ is not dominant, we have proved that all dominant monomials of $\chi_q(S(\a)\otimes S(\beta))_{\leq4}$ are contained in $\chi_q(L(m))_{\leq4}$.

\medskip
(19) $\a=\a_1+\a_2+\a_3+\a_4+\a_5$, $Y^{\a}=Y_{1,0}Y_{2,3}$. If $\beta\in C_\a\backslash\{-\a_0, \a_1, \a_1+\a_2+\a_5, \a_1+\a_2+\a_3+\a_4, \a_1+\a_2+\a_3+\a_5, \a_1+2\a_2+2\a_3+\a_4+\a_5, \a_0+2\a_1+2\a_2+2\a_3+\a_4+\a_5\}$, then $\chi_q(S(\a)\otimes S(\beta))_{\leq4}$ contains a unique dominant monomial and $S(\a)\otimes S(\beta)$ is simple.

For $\beta=-\a_0, \a_1, \a_1+\a_2+\a_5, \a_1+\a_2+\a_3+\a_4, \a_1+\a_2+\a_3+\a_5$, we have done.

If $\beta=\a_1+2\a_2+2\a_3+\a_4+\a_5$, $Y^{\beta}=Y_{1,0}Y_{2,3}Y_{2,5}Y_{3,0}Y_{3,2}$, denote $m=Y^{\a}Y^{\beta}$. The only dominant monomials in $\chi_q(S(\a)\otimes S(\beta))_{\leq4}$ are $m,\ mA_{2,4}^{-1},\ 2mA_{3,3}^{-1}A_{2,4}^{-1},\ mA_{1,1}^{-1}A_{2,2}^{-1}A_{2,4}^{-1},\ mA_{1,1}^{-1}A_{2,2}^{-1}A_{2,4}^{-1}A_{3,3}^{-1},$ and  $2mA_{1,1}^{-1}A_{2,2}^{-1}A_{2,4}^{-1}A_{3,1}^{-1}A_{3,3}^{-1}.$
\[
\begin{array}{ll}
\medskip
  mA_{2,4}^{-1}+mA_{1,1}^{-1}A_{2,2}^{-1}A_{2,4}^{-1}\leq\varphi_{\{1,2\}}(m)_{\leq4}, &
  2mA_{3,3}^{-1}A_{2,4}^{-1}\leq\varphi_{2}(mA_{3,3}^{-1})_{\leq4}, \\
  2mA_{1,1}^{-1}A_{2,2}^{-1}A_{2,4}^{-1}A_{3,1}^{-1}A_{3,3}^{-1}\leq
  \varphi_{2}(2mA_{1,1}^{-1}A_{3,1}^{-1}A_{3,3}^{-1})_{\leq4}, &
  mA_{1,1}^{-1}A_{2,2}^{-1}A_{2,4}^{-1}A_{3,3}^{-1}\leq
  \varphi_{3}(mA_{1,1}^{-1}A_{2,2}^{-1}A_{2,4}^{-1})_{\leq4}.
\end{array}
\]

If $\beta=\a_0+2\a_1+2\a_2+2\a_3+\a_4+\a_5$, $Y^{\beta}=Y_{1,0}Y_{2,3}^{2}Y_{2,5}Y_{3,0}$, denote $m=Y^{\a}Y^{\beta}$. The only dominant monomials in $\chi_q(S(\a)\otimes S(\beta))_{\leq4}$ are $m,\ 2mA_{2,4}^{-1},\  mA_{1,1}^{-1}A_{2,2}^{-1}A_{2,4}^{-1},$  and $2mA_{1,1}^{-1}A_{2,2}^{-1}A_{2,4}^{-1}A_{3,1}^{-1}.$
\[
\begin{array}{ll}
  2mA_{2,4}^{-1}+mA_{1,1}^{-1}A_{2,2}^{-1}A_{2,4}^{-1}\leq\varphi_{\{1,2\}}(m)_{\leq4}, &
  2mA_{1,1}^{-1}A_{2,2}^{-1}A_{2,4}^{-1}A_{3,1}^{-1}\leq
  \varphi_{2}(2mA_{1,1}^{-1}A_{3,1}^{-1})_{\leq4}.
\end{array}
\]

\medskip
(20) $\a=\a_0+2\a_1+2\a_2+\a_3+\a_5$, $Y^{\a}=Y_{1,0}Y_{1,2}Y_{2,3}Y_{2,5}Y_{3,0}$. In this case, $\chi_q(L(Y^{\a}))$ has $3$ dominant monomials, so we need to consider every $\beta\in C_\a.$

For $\beta=-\a_4, \a_0+\a_1,\ \a_1+\a_2,\ \a_1+\a_2+\a_3,\ \a_2+\a_3+\a_5,\ \a_1+\a_2+\a_5,\ \a_0+\a_1+\a_2+\a_3,\a_0+\a_1+\a_2+\a_5,\ \a_1+\a_2+\a_3+\a_4,\ \a_1+2\a_2+\a_3+\a_5,\ \a_0+\a_1+\a_2+\a_3+\a_5$, we have done.

If $\beta=\a_0+2\a_1+2\a_2+\a_3+\a_5$, $Y^{\beta}=Y^{\a}$, denote $m=Y^{\a}Y^{\beta}$. The only dominant monomials in $\chi_q(S(\a)\otimes S(\beta))_{\leq4}$ are
\smallskip
$m,\ 2mA_{1,3}^{-1}A_{2,4}^{-1},\ mA_{1,3}^{-2}A_{2,4}^{-2},\  2mA_{1,1}^{-1}A_{1,3}^{-1}A_{2,2}^{-1}A_{2,4}^{-1}A_{3,1}^{-1},\ 2mA_{1,1}^{-1}A_{1,3}^{-2}A_{2,2}^{-1}A_{2,4}^{-2}A_{3,1}^{-1},\ \\ mA_{1,1}^{-2}A_{1,3}^{-2}A_{2,2}^{-2}A_{2,4}^{-2}A_{3,1}^{-2},$ and $2mA_{1,1}^{-1}A_{1,3}^{-1}A_{2,4}^{-1}.$

We have
\[
2mA_{1,3}^{-1}A_{2,4}^{-1}+mA_{1,3}^{-2}A_{2,4}^{-2}+2mA_{1,1}^{-1}A_{1,3}^{-1}A_{2,4}^{-1}+2mA_{1,1}^{-1}A_{1,3}^{-1}
+2mA_{1,1}^{-1}A_{1,3}^{-2}+mA_{1,1}^{-2}A_{1,3}^{-2}\leq\varphi_{\{1,2\}}(m)_{\leq4}.
\]
\[
\begin{array}{ll}
\medskip
  2mA_{1,1}^{-1}A_{1,3}^{-1}A_{2,2}^{-1}A_{2,4}^{-1}A_{3,1}^{-1}\leq
  \varphi_{\{2,3\}}(2mA_{1,1}^{-1}A_{1,3}^{-1})_{\leq4}, &
   2mA_{1,1}^{-1}A_{1,3}^{-2}A_{2,2}^{-1}A_{2,4}^{-2}A_{3,1}^{-1}\leq
    \varphi_{\{2,3\}}( 2mA_{1,1}^{-1}A_{1,3}^{-2})_{\leq4}, \\
  mA_{1,1}^{-2}A_{1,3}^{-2}A_{2,2}^{-2}A_{2,4}^{-2}A_{3,1}^{-2}\leq
    \varphi_{\{2,3\}}( mA_{1,1}^{-2}A_{1,3}^{-2})_{\leq4}.
\end{array}
\]

If $\beta=\a_1+2\a_2+2\a_3+\a_4+\a_5$, $Y^{\beta}=Y_{1,0}Y_{2,3}Y_{2,5}Y_{3,0}Y_{3,2}$, denote $m=Y^{\a}Y^{\beta}$. The only dominant monomials in $\chi_q(S(\a)\otimes S(\beta))_{\leq4}$ are
$m,\ mA_{1,1}^{-1},\ mA_{3,1}^{-1},\ mA_{1,1}^{-1}A_{3,1}^{-1},\  mA_{1,3}^{-1}A_{2,4}^{-1},\ mA_{3,3}^{-1}A_{2,4}^{-1},\ \\
mA_{1,1}^{-1}A_{3,3}^{-1}A_{2,4}^{-1}, mA_{1,3}^{-1}A_{3,1}^{-1}A_{2,4}^{-1},\
2mA_{1,3}^{-1}A_{3,3}^{-1}A_{2,4}^{-1},\ mA_{1,1}^{-1}A_{1,3}^{-1}A_{2,2}^{-1}A_{2,4}^{-1}A_{3,1}^{-1},\
mA_{1,1}^{-1}A_{2,2}^{-1}A_{2,4}^{-1}A_{3,1}^{-1}A_{3,3}^{-1},\ \\ 2mA_{1,1}^{-1}A_{1,3}^{-1}A_{2,2}^{-1}A_{2,4}^{-1}A_{3,1}^{-1}A_{3,3}^{-1},
mA_{1,3}^{-1}A_{3,3}^{-1}A_{2,4}^{-2},\
2mA_{1,1}^{-1}A_{1,3}^{-1}A_{2,2}^{-1}A_{2,4}^{-2}A_{3,1}^{-1}A_{3,3}^{-1},$ and $mA_{1,1}^{-2}A_{1,3}^{-1}A_{2,2}^{-2}A_{2,4}^{-2}A_{3,1}^{-2}A_{3,3}^{-1}.$

Up to the symmetries $1\leftrightarrow 3$ of $I$, we reduce to the monomials:

\noindent\medskip
$mA_{1,1}^{-1},\ mA_{1,1}^{-1}A_{3,1}^{-1},\ mA_{1,3}^{-1}A_{2,4}^{-1},\   mA_{1,1}^{-1}A_{3,3}^{-1}A_{2,4}^{-1},\ 2mA_{1,3}^{-1}A_{3,3}^{-1}A_{2,4}^{-1},\
mA_{1,3}^{-1}A_{3,3}^{-1}A_{2,4}^{-2},\
mA_{1,1}^{-1}A_{2,2}^{-1}A_{2,4}^{-1}A_{3,1}^{-1}A_{3,3}^{-1},\ \\
2mA_{1,1}^{-1}A_{1,3}^{-1}A_{2,2}^{-1}A_{2,4}^{-1}A_{3,1}^{-1}A_{3,3}^{-1},\ 2mA_{1,1}^{-1}A_{1,3}^{-1}A_{2,2}^{-1}A_{2,4}^{-2}A_{3,1}^{-1}A_{3,3}^{-1},\ \and\ \ mA_{1,1}^{-2}A_{1,3}^{-1}A_{2,2}^{-2}A_{2,4}^{-2}A_{3,1}^{-2}A_{3,3}^{-1}.$

We have
\[
\begin{array}{l}
\medskip
mA_{1,3}^{-1}(A_{2,4}^{-1}+2A_{3,3}^{-1}A_{2,4}^{-1}+A_{3,3}^{-1}A_{2,4}^{-2})\leq
   \varphi_{\{2,3\}}( mA_{1,3}^{-1})_{\leq4}, \\
\medskip
mA_{1,1}^{-1}(1+A_{3,1}^{-1}+A_{3,3}^{-1}A_{2,4}^{-1}+A_{2,2}^{-1}A_{2,4}^{-1}A_{3,1}^{-1}A_{3,3}^{-1})\leq
   \varphi_{\{2,3\}}( mA_{1,1}^{-1})_{\leq4}, \\
   2mA_{1,1}^{-1}A_{1,3}^{-1}(A_{2,2}^{-1}A_{2,4}^{-1}A_{3,1}^{-1}A_{3,3}^{-1}+A_{2,2}^{-1}A_{2,4}^{-2}A_{3,1}^{-1}A_{3,3}^{-1})
   \leq\varphi_{\{2,3\}}( 2mA_{1,1}^{-1}A_{1,3}^{-1})_{\leq4},
\end{array}
\]

Lastly, we have $mA_{1,1}^{-2}A_{1,3}^{-1}A_{2,2}^{-2}A_{2,4}^{-2}A_{3,1}^{-2}A_{3,3}^{-1}\leq\varphi_{2}( mA_{1,1}^{-2}A_{1,3}^{-1}A_{3,1}^{-2}A_{3,3}^{-1})_{\leq4}$.

If $\beta=\a_0+\a_1+\a_2+\a_3+\a_4+\a_5$, $Y^{\beta}=Y_{2,3}$, denote $m=Y^{\a}Y^{\beta}$. The only dominant monomials in $\chi_q(S(\a)\otimes S(\beta))_{\leq4}$ are $m,\ mA_{2,4}^{-1},\ 2mA_{1,3}^{-1}A_{2,4}^{-1},\
\and\ \ mA_{1,1}^{-1}A_{1,3}^{-1}A_{2,2}^{-1}A_{2,4}^{-1}A_{3,1}^{-1}$.
\[
\begin{array}{ccc}
 mA_{2,4}^{-1}\leq\varphi_{2}(m)_{\leq4}, &
 2mA_{1,3}^{-1}A_{2,4}^{-1}\leq\varphi_{2}(mA_{1,3}^{-1})_{\leq4}, &
 mA_{1,1}^{-1}A_{1,3}^{-1}A_{2,2}^{-1}A_{2,4}^{-1}A_{3,1}^{-1}\leq\varphi_{2}(mA_{1,1}^{-1}A_{1,3}^{-1}A_{3,1}^{-1})_{\leq4}.
\end{array}
\]

If $\beta=\a_0+2\a_1+2\a_2+2\a_3+\a_4+\a_5$, $Y^{\beta}=Y_{1,0}Y_{2,3}^{2}Y_{2,5}Y_{3,0}$, denote $m=Y^{\a}Y^{\beta}$. The only dominant monomials in $\chi_q(S(\a)\otimes S(\beta))_{\leq4}$ are
$m,\ mA_{1,1}^{-1},\  mA_{2,4}^{-1},\ mA_{1,1}^{-1}A_{2,4}^{-1},\ 2mA_{1,3}^{-1}A_{2,4}^{-1},\ mA_{1,3}^{-1}A_{2,4}^{-2},\ \\
mA_{1,1}^{-1}A_{2,2}^{-1}A_{2,4}^{-1}A_{3,1}^{-1},\  2mA_{1,1}^{-1}A_{1,3}^{-1}A_{2,2}^{-1}A_{2,4}^{-1}A_{3,1}^{-1},\ 2mA_{1,1}^{-1}A_{1,3}^{-1}A_{2,2}^{-1}A_{2,4}^{-2}A_{3,1}^{-1},$
and $ mA_{1,1}^{-2}A_{1,3}^{-1}A_{2,2}^{-2}A_{2,4}^{-2}A_{3,1}^{-2}$.
\[
\begin{array}{ll}
  mA_{1,1}^{-1}A_{2,2}^{-1}A_{2,4}^{-1}A_{3,1}^{-1}\leq\varphi_{\{2,3\}}( mA_{1,1}^{-1})_{\leq4}, &
  mA_{1,1}^{-2}A_{1,3}^{-1}A_{2,2}^{-2}A_{2,4}^{-2}A_{3,1}^{-2}\leq\varphi_2(mA_{1,1}^{-2}A_{1,3}^{-1}A_{3,1}^{-2})_{\leq4},
\end{array}
\]
\[
\begin{array}{l}
\medskip
  2mA_{3,1}^{-1}(A_{1,1}^{-1}A_{1,3}^{-1}A_{2,2}^{-1}A_{2,4}^{-1}+A_{1,1}^{-1}A_{1,3}^{-1}A_{2,2}^{-1}A_{2,4}^{-2})
  \leq\varphi_{\{1,2\}}(2mA_{3,1}^{-1})_{\leq4}, \\
  (mA_{1,1}^{-1}+mA_{2,4}^{-1}+mA_{1,1}^{-1}A_{2,4}^{-1}+2mA_{1,3}^{-1}A_{2,4}^{-1}+mA_{1,3}^{-1}A_{2,4}^{-2})
  \leq\varphi_{\{1,2\}}(m)_{\leq4}.
\end{array}
\]

If $\beta=\a_0+2\a_1+2\a_2+\a_3+\a_4+\a_5$, $Y^{\beta}=Y_{1,0}Y_{1,2}Y_{2,3}Y_{2,5}$, denote $m=Y^{\a}Y^{\beta}$. The only dominant monomials in $\chi_q(S(\a)\otimes S(\beta))_{\leq4}$ are $m,\ 2mA_{1,3}^{-1}A_{2,4}^{-1},\ mA_{1,3}^{-2}A_{2,4}^{-2},\ 2mA_{1,1}^{-1}A_{1,3}^{-1}A_{2,4}^{-1},\ \\ mA_{1,1}^{-1}A_{1,3}^{-1}A_{2,2}^{-1}A_{2,4}^{-1}A_{3,1}^{-1}$
and $ mA_{1,1}^{-1}A_{1,3}^{-2}A_{2,2}^{-1}A_{2,4}^{-2}A_{3,1}^{-1}$.
\[
\begin{array}{l}
\medskip
  (2mA_{1,3}^{-1}A_{2,4}^{-1}+mA_{1,3}^{-2}A_{2,4}^{-2}+2mA_{1,1}^{-1}A_{1,3}^{-1}A_{2,4}^{-1})
  \leq\varphi_{\{1,2\}}(m)_{\leq4},\\
  mA_{3,1}^{-1}(A_{1,1}^{-1}A_{1,3}^{-1}A_{2,2}^{-1}A_{2,4}^{-1}+A_{1,1}^{-1}A_{1,3}^{-2}A_{2,2}^{-1}A_{2,4}^{-2})
  \leq\varphi_{\{1,2\}}( mA_{3,1}^{-1})_{\leq4}.
\end{array}
\]

If $\beta=\a_0+2\a_1+3\a_2+2\a_3+\a_4+2\a_5$, $Y^{\beta}=Y_{1,0}Y_{1,2}Y_{2,3}Y_{2,5}Y_{3,0}Y_{3,2}$, denote $m=Y^{\a}Y^{\beta}$. The only dominant monomials in $\chi_q(S(\a)\otimes S(\beta))_{\leq4}$ are
\medskip
$m,\ mA_{3,1}^{-1},\  2mA_{1,3}^{-1}A_{2,4}^{-1},\ mA_{1,3}^{-2}A_{2,4}^{-2},\ mA_{3,3}^{-1}A_{2,4}^{-1},\ \\ 4mA_{1,3}^{-1}A_{2,4}^{-1}A_{3,3}^{-1},
2mA_{1,3}^{-1}A_{2,4}^{-2}A_{3,3}^{-1},\ 3mA_{1,3}^{-2}A_{2,4}^{-2}A_{3,3}^{-1},\ 2mA_{1,1}^{-1}A_{1,3}^{-1}A_{2,4}^{-1},\ 2mA_{1,3}^{-1}A_{2,4}^{-1}A_{3,1}^{-1},\ mA_{1,3}^{-2}A_{2,4}^{-2}A_{3,1}^{-1},\ \\
2mA_{1,1}^{-1}A_{1,3}^{-1}A_{2,4}^{-1}A_{3,1}^{-1},
2mA_{1,1}^{-1}A_{1,3}^{-1}A_{2,4}^{-2}A_{3,3}^{-1},\
4mA_{1,1}^{-1}A_{1,3}^{-1}A_{2,4}^{-1}A_{3,3}^{-1},\
4mA_{1,1}^{-1}A_{1,3}^{-2}A_{2,2}^{-1}A_{2,4}^{-2}A_{3,1}^{-1}A_{3,3}^{-1},\ \\
2mA_{1,1}^{-1}A_{1,3}^{-1}A_{2,2}^{-1}A_{2,4}^{-1}A_{3,1}^{-1}A_{3,3}^{-1},
mA_{1,1}^{-1}A_{1,3}^{-2}A_{2,2}^{-1}A_{2,4}^{-2}A_{3,1}^{-1},\
mA_{1,1}^{-1}A_{1,3}^{-1}A_{2,2}^{-1}A_{2,4}^{-1}A_{3,1}^{-1},\
2mA_{1,1}^{-1}A_{1,3}^{-1}A_{2,2}^{-1}A_{2,4}^{-2}A_{3,1}^{-1}A_{3,3}^{-1}$ and
$mA_{1,1}^{-2}A_{1,3}^{-2}A_{2,2}^{-2}A_{2,4}^{-2}A_{3,1}^{-2}A_{3,3}^{-1}.$

We have
\[
\begin{array}{l}
\medskip
  4mA_{1,1}^{-1}A_{1,3}^{-2}A_{2,2}^{-1}A_{2,4}^{-2}A_{3,1}^{-1}A_{3,3}^{-1}\leq
  \varphi_{\{1,2\}}(2mA_{3,1}^{-1}A_{3,3}^{-1})_{\leq4},\\
  \medskip
  mA_{1,1}^{-2}A_{1,3}^{-2}A_{2,2}^{-2}A_{2,4}^{-2}A_{3,1}^{-2}A_{3,3}^{-1}\leq
  \varphi_{2}( mA_{1,1}^{-2}A_{1,3}^{-2}A_{3,1}^{-2}A_{3,3}^{-1})_{\leq4},\\
  \medskip
  (2mA_{1,3}^{-1}A_{2,4}^{-1}+mA_{1,3}^{-2}A_{2,4}^{-2}+2mA_{1,1}^{-1}A_{1,3}^{-1}A_{2,4}^{-1}) \leq\varphi_{\{1,2\}}(m)_{\leq4},\\
  \medskip
  mA_{3,3}^{-1}(A_{2,4}^{-1}+4A_{1,3}^{-1}A_{2,4}^{-1}+2A_{1,3}^{-1}A_{2,4}^{-2}+3A_{1,3}^{-2}A_{2,4}^{-2})
  \leq\varphi_{\{1,2\}}(mA_{3,3}^{-1})_{\leq4},\\
  \medskip
  2mA_{1,1}^{-1}A_{1,3}^{-1}(2A_{3,3}^{-1}A_{2,4}^{-1}+A_{3,3}^{-1}A_{2,4}^{-2}+A_{2,2}^{-1}A_{2,4}^{-1}A_{3,1}^{-1}A_{3,3}^{-1}
  +A_{2,2}^{-1}A_{2,4}^{-2}A_{3,1}^{-1}A_{3,3}^{-1})
   \leq\varphi_{\{2,3\}}( 2mA_{1,1}^{-1}A_{1,3}^{-1})_{\leq4},\\
  mA_{3,1}^{-1}(1+2A_{1,3}^{-1}A_{2,4}^{-1}+A_{1,3}^{-2}A_{2,4}^{-2}+2A_{1,1}^{-1}A_{1,3}^{-1}A_{2,4}^{-1}+
 A_{1,1}^{-1}A_{1,3}^{-1}A_{2,2}^{-1}A_{2,4}^{-1}+A_{1,1}^{-1}A_{1,3}^{-2}A_{2,2}^{-1}A_{2,4}^{-2})
 \leq\varphi_{\{1,2\}}(mA_{3,1}^{-1})_{\leq4}.
\end{array}
\]

\medskip
(21) $\a=\a_0+\a_1+2\a_2+\a_3+\a_5$, $Y^{\a}=Y_{1,2}Y_{2,5}Y_{3,0}Y_{3,2}$. In this case, $\chi_q(L(Y^{\a}))$ has $2$ dominant monomials, so we need to consider every $\beta\in C_\a.$

For $\beta=-\a_4, \a_0,\ \a_1+\a_2,\ \a_2+\a_3,\ \a_2+\a_5,\ \a_0+\a_1+\a_2,\ \a_2+\a_3+\a_5,\  \a_0+\a_1+\a_2+\a_3,\ \a_0+\a_1+\a_2+\a_5,\ \a_2+\a_3+\a_4+\a_5,\  \a_1+2\a_2+\a_3+\a_5$, we have done.

If $\beta=\a_0+\a_1+2\a_2+\a_3+\a_5$, $Y^{\beta}=Y^{\a}$, denote $m=Y^{\a}Y^{\beta}$. The only dominant monomials in $\chi_q(S(\a)\otimes S(\beta))_{\leq4}$ are $m,\ 2mA_{1,3}^{-1}A_{2,4}^{-1}A_{3,3}^{-1},\
mA_{1,3}^{-2}A_{2,4}^{-2}A_{3,3}^{-2},\ \and\ \ 2mA_{1,3}^{-1}A_{2,4}^{-1}A_{3,1}^{-1}A_{3,3}^{-1}.$
\[
\begin{array}{ll}
  mA_{1,3}^{-2}A_{2,4}^{-2}A_{3,3}^{-2}\leq\varphi_{\{2,3\}}(mA_{1,3}^{-2})_{\leq4}, &
  2mA_{1,3}^{-1}(A_{2,4}^{-1}A_{3,3}^{-1}+A_{2,4}^{-1}A_{3,1}^{-1}A_{3,3}^{-1})
  \leq\varphi_{\{2,3\}}(2mA_{1,3}^{-1})_{\leq4}.
\end{array}
\]

If $\beta=\a_1+2\a_2+\a_3+\a_4+\a_5$, $Y^{\beta}=Y_{1,0}Y_{1,2}Y_{2,5}Y_{3,2}$, denote $m=Y^{\a}Y^{\beta}$. The only dominant monomials in $\chi_q(S(\a)\otimes S(\beta))_{\leq4}$ are $m, 2mA_{1,3}^{-1}A_{2,4}^{-1}A_{3,3}^{-1},\
mA_{1,3}^{-2}A_{2,4}^{-2}A_{3,3}^{-2},\ m A_{1,1}^{-1}A_{1,3}^{-1}A_{2,4}^{-1}A_{3,3}^{-1}$ and  $mA_{1,3}^{-1}A_{2,4}^{-1}A_{3,1}^{-1}A_{3,3}^{-1}.$
\[
\begin{array}{ll}
 \medskip
  mA_{1,3}^{-2}A_{2,4}^{-2}A_{3,3}^{-2}\leq\varphi_{2}(mA_{1,3}^{-2}A_{3,3}^{-2})_{\leq4}, &
  2mA_{1,3}^{-1}A_{2,4}^{-1}A_{3,3}^{-1}\leq\varphi_{\{2,3\}}(2mA_{1,3}^{-1})_{\leq4}, \\
  m A_{1,1}^{-1}A_{1,3}^{-1}A_{2,4}^{-1}A_{3,3}^{-1}\leq\varphi_{\{2,3\}}(m A_{1,1}^{-1}A_{1,3}^{-1})_{\leq4},&
  mA_{1,3}^{-1}A_{2,4}^{-1}A_{3,1}^{-1}A_{3,3}^{-1}\leq\varphi_{\{1,2\}}(m A_{3,1}^{-1}A_{3,3}^{-1})_{\leq4}.
\end{array}
\]

If $\beta=\a_0+\a_1+\a_2+\a_3+\a_4$, $Y^{\beta}=Y_{2,3}Y_{2,5}$, denote $m=Y^{\a}Y^{\beta}$. The only dominant monomials in $\chi_q(S(\a)\otimes S(\beta))_{\leq4}$ are $m$  and $mA_{1,3}^{-1}A_{2,4}^{-1}A_{3,3}^{-1}.$

We have $mA_{1,3}^{-1}A_{2,4}^{-1}A_{3,3}^{-1}\leq\varphi_{2}(mA_{1,3}^{-1}A_{3,3}^{-1})_{\leq4}.$

If $\beta=\a_0+2\a_1+3\a_2+2\a_3+\a_4+\a_5$, $Y^{\beta}=Y_{1,0}Y_{1,2}Y_{2,3}Y_{2,5}^{2}Y_{3,0}Y_{3,2}$, denote $m=Y^{\a}Y^{\beta}$. The only dominant monomials in $\chi_q(S(\a)\otimes S(\beta))_{\leq4}$ are $m,\ 2mA_{1,3}^{-1}A_{2,4}^{-1}A_{3,3}^{-1},\
mA_{1,3}^{-2}A_{2,4}^{-2}A_{3,3}^{-2},\ m A_{1,1}^{-1}A_{1,3}^{-1}A_{2,4}^{-1}A_{3,3}^{-1},\\ 2mA_{1,3}^{-1}A_{2,4}^{-1}A_{3,1}^{-1}A_{3,3}^{-1}\and\ \
m A_{1,1}^{-1}A_{1,3}^{-1}A_{3,1}^{-1}A_{3,3}^{-1}A_{2,4}^{-1}.$
\[
\begin{array}{ll}
\medskip
  2mA_{1,3}^{-1}(A_{2,4}^{-1}A_{3,3}^{-1}+A_{2,4}^{-1}A_{3,1}^{-1}A_{3,3}^{-1})
  \leq\varphi_{\{2,3\}}(2mA_{1,3}^{-1})_{\leq4}, &
  mA_{1,3}^{-2}A_{2,4}^{-2}A_{3,3}^{-2}\leq\varphi_{\{2,3\}}(mA_{1,3}^{-2})_{\leq4},  \\
  m A_{1,1}^{-1}A_{1,3}^{-1}(A_{2,4}^{-1}A_{3,3}^{-1}+A_{3,1}^{-1}A_{3,3}^{-1}A_{2,4}^{-1})
  \leq\varphi_{\{2,3\}}( m A_{1,1}^{-1}A_{1,3}^{-1})_{\leq4}.
\end{array}
\]

If $\beta=\a_0+\a_1+2\a_2+\a_3+\a_4+\a_5$, $Y^{\beta}=Y_{1,2}Y_{2,5}Y_{3,2}$, denote $m=Y^{\a}Y^{\beta}$. The only dominant monomials in $\chi_q(S(\a)\otimes S(\beta))_{\leq4}$ are $m,\ 2mA_{1,3}^{-1}A_{2,4}^{-1}A_{3,3}^{-1},\
mA_{1,3}^{-2}A_{2,4}^{-2}A_{3,3}^{-2} \and\ \ mA_{1,3}^{-1}A_{2,4}^{-1}A_{3,1}^{-1}A_{3,3}^{-1}.$
\[
\begin{array}{ll}
\medskip
  mA_{1,3}^{-2}A_{2,4}^{-2}A_{3,3}^{-2}\leq\varphi_{2}(mA_{1,3}^{-2}A_{3,3}^{-2})_{\leq4}, &
  2mA_{1,3}^{-1}A_{2,4}^{-1}A_{3,3}^{-1}\leq\varphi_{\{2,3\}}(2mA_{1,3}^{-1})_{\leq4},\\
  mA_{1,3}^{-1}A_{2,4}^{-1}A_{3,1}^{-1}A_{3,3}^{-1}\leq\varphi_{\{1,2\}}(mA_{3,1}^{-1}A_{3,3}^{-1})_{\leq4}.
\end{array}
\]

If $\beta=\a_0+\a_1+2\a_2+2\a_3+\a_4+\a_5$, $Y^{\beta}=Y_{2,3}Y_{2,5}Y_{3,0}Y_{3,2}$, denote $m=Y^{\a}Y^{\beta}$. The only dominant monomials in $\chi_q(S(\a)\otimes S(\beta))_{\leq4}$ are $m,\ mA_{2,4}^{-1}A_{3,3}^{-1},\ 2mA_{1,3}^{-1}A_{2,4}^{-1}A_{3,3}^{-1},\
mA_{1,3}^{-1}A_{2,4}^{-2}A_{3,3}^{-2},\ mA_{2,4}^{-1}A_{3,1}^{-1}A_{3,3}^{-1}$ and $2mA_{1,3}^{-1}A_{2,4}^{-1}A_{3,1}^{-1}A_{3,3}^{-1}.$
\[
\begin{array}{l}
\medskip
  mA_{2,4}^{-1}A_{3,3}^{-1}+mA_{2,4}^{-1}A_{3,1}^{-1}A_{3,3}^{-1}\leq\varphi_{\{2,3\}}(m)_{\leq4}, \\
  mA_{1,3}^{-1}(2A_{2,4}^{-1}A_{3,3}^{-1}+A_{2,4}^{-2}A_{3,3}^{-2}+2A_{2,4}^{-1}A_{3,1}^{-1}A_{3,3}^{-1})
  \leq\varphi_{\{2,3\}}( mA_{1,3}^{-1})_{\leq4}.
\end{array}
\]

\medskip
(22) $\a=\a_0+\a_1+\a_2+\a_3+\a_4$, $Y^{\a}=Y_{2,3}Y_{2,5}$. If $\beta\in C_\a\backslash\{-\a_5,\  \a_0+\a_1+\a_2+\a_5,\ \a_2+\a_3+\a_4+\a_5,\  \a_0+\a_1+2\a_2+\a_3+\a_5,\  \a_1+2\a_2+\a_3+\a_4+\a_5,\  \a_0+2\a_1+2\a_2+\a_3+\a_4+\a_5,\  \a_0+\a_1+2\a_2+2\a_3+\a_4+\a_5,\  \a_0+\a_1+\a_2+\a_3+\a_4+\a_5,\  \a_0+\a_1+2\a_2+\a_3+\a_4+\a_5,\  \a_0+2\a_1+3\a_2+2\a_3+\a_4+\a_5\}$, then $\chi_q(S(\a)\otimes S(\beta))_{\leq4}$ contains a unique dominant monomial and $S(\a)\otimes S(\beta)$ is simple.

For $\beta=-\a_5,\  \a_0+\a_1+\a_2+\a_5,\ \a_2+\a_3+\a_4+\a_5,\  \a_0+\a_1+2\a_2+\a_3+\a_5,\  \a_1+2\a_2+\a_3+\a_4+\a_5$, we have done.

If $\beta=\a_0+2\a_1+2\a_2+\a_3+\a_4+\a_5$, $Y^{\beta}=Y_{1,0}Y_{1,2}Y_{2,3}Y_{2,5}$, denote $m=Y^{\a}Y^{\beta}$. The only dominant monomials in $\chi_q(S(\a)\otimes S(\beta))_{\leq4}$ are $m$ and $mA_{1,3}^{-1}A_{2,4}^{-1}.$
\[
mA_{1,3}^{-1}A_{2,4}^{-1}\leq\varphi_2(mA_{1,3}^{-1})_{\leq4}.
\]

If $\beta=\a_0+\a_1+2\a_2+2\a_3+\a_4+\a_5$, $Y^{\beta}=Y_{3,0}Y_{3,2}Y_{2,3}Y_{2,5}$, denote $m=Y^{\a}Y^{\beta}$. The only dominant monomials in $\chi_q(S(\a)\otimes S(\beta))_{\leq4}$ are $m$ and $mA_{3,3}^{-1}A_{2,4}^{-1}.$
\[
mA_{3,3}^{-1}A_{2,4}^{-1}\leq\varphi_2(mA_{3,3}^{-1})_{\leq4}.
\]

If $\beta=\a_0+\a_1+\a_2+\a_3+\a_4+\a_5$, $Y^{\beta}=Y_{2,3}$, denote $m=Y^{\a}Y^{\beta}$. The only dominant monomials in $\chi_q(S(\a)\otimes S(\beta))_{\leq4}$ are $m$ and $mA_{2,4}^{-1}.$
\[
\varphi_2(m)_{\leq4}=m(1+A_{2,4}^{-1}).
\]

If $\beta=\a_0+\a_1+2\a_2+\a_3+\a_4+\a_5$, $Y^{\beta}=Y_{1,2}Y_{2,5}Y_{3,2}$, denote $m=Y^{\a}Y^{\beta}$. The only dominant monomials in $\chi_q(S(\a)\otimes S(\beta))_{\leq4}$ are $m$ and $mA_{1,3}^{-1}A_{3,3}^{-1}A_{2,4}^{-1}.$
\[
mA_{1,3}^{-1}A_{3,3}^{-1}A_{2,4}^{-1}\leq\varphi_2(mA_{1,3}^{-1}A_{3,3}^{-1})_{\leq4}.
\]

If $\beta=\a_0+2\a_1+3\a_2+2\a_3+\a_4+\a_5$, $Y^{\beta}=Y_{1,0}Y_{1,2}Y_{2,3}Y_{2,5}^{2}Y_{3,0}Y_{3,2}$, denote $m=Y^{\a}Y^{\beta}$. The only dominant monomials in $\chi_q(S(\a)\otimes S(\beta))_{\leq4}$ are $m$ and $mA_{1,3}^{-1}A_{3,3}^{-1}A_{2,4}^{-1}.$
\[
mA_{1,3}^{-1}A_{3,3}^{-1}A_{2,4}^{-1}\leq\varphi_2(mA_{1,3}^{-1}A_{3,3}^{-1})_{\leq4}.
\]

\medskip
(23) $\a=\a_0+2\a_1+2\a_2+\a_3+\a_4+\a_5$, $Y^{\a}=Y_{1,0}Y_{1,2}Y_{2,3}Y_{2,5}$. In this case, $\chi_q(L(Y^{\a}))$ has $2$ dominant monomials, so we need to consider every $\beta\in C_\a.$

For $\beta=\a_4,\ \a_1+\a_2,\ \a_0+\a_1,\ \a_1+\a_2+\a_5,\  \a_1+\a_2+\a_3+\a_4,\ \a_0+\a_1+\a_2+\a_3,\
 \a_0+\a_1+\a_2+\a_5,\ \a_2+\a_3+\a_4+\a_5,\ \a_1+2\a_2+\a_3+\a_5,\ \a_0+2\a_1+2\a_2+\a_3+\a_5,\ \a_1+2\a_2+\a_3+\a_4+\a_5,\ \a_0+\a_1+\a_2+\a_3+\a_4$, we have done.

If $\beta=\a_0+2\a_1+2\a_2+\a_3+\a_4+\a_5$, $Y^{\beta}=Y^{\a}$, denote $m=Y^{\a}Y^{\beta}$. The only dominant monomials in $\chi_q(S(\a)\otimes S(\beta))_{\leq4}$ are $m,\ 2mA_{1,3}^{-1}A_{2,4}^{-1},\ mA_{1,3}^{-2}A_{2,4}^{-2}\  \and\ \ 2m A_{1,1}^{-1}A_{1,3}^{-1}A_{2,4}^{-1}.$
\[
2mA_{1,3}^{-1}A_{2,4}^{-1}+ mA_{1,3}^{-2}A_{2,4}^{-2}+2mA_{1,1}^{-1}A_{1,3}^{-1}A_{2,4}^{-1}
\leq\varphi_{\{1,2\}}(m)_{\leq4}.
\]

If $\beta=\a_0+\a_1+2\a_2+2\a_3+\a_4+\a_5$, $Y^{\beta}=Y_{2,3}Y_{2,5}Y_{3,0}Y_{3,2}$, denote $m=Y^{\a}Y^{\beta}$. The only dominant monomials in $\chi_q(S(\a)\otimes S(\beta))_{\leq4}$ are $m,\ mA_{1,3}^{-1}A_{2,4}^{-1},\ mA_{3,3}^{-1}A_{2,4}^{-1},\ mA_{1,3}^{-1}A_{3,3}^{-1}A_{2,4}^{-2},\  \and\ \
2m A_{1,3}^{-1}A_{3,3}^{-1}A_{2,4}^{-1}.$
\[
\begin{array}{cc}
  mA_{3,3}^{-1}A_{2,4}^{-1}\leq\varphi_{2}(mA_{3,3}^{-1})_{\leq4}, &
  mA_{1,3}^{-1}(A_{2,4}^{-1}+A_{3,3}^{-1}A_{2,4}^{-2}+2A_{3,3}^{-1}A_{2,4}^{-1})
  \leq\varphi_{\{2,3\}}(mA_{1,3}^{-1})_{\leq4}.
\end{array}
\]

If $\beta=\a_0+2\a_1+3\a_2+2\a_3+\a_4+2\a_5$, $Y^{\beta}=Y_{1,0}Y_{1,2}Y_{2,3}Y_{2,5}Y_{3,0}Y_{3,2}$, denote $m=Y^{\a}Y^{\beta}$. The only dominant monomials in $\chi_q(S(\a)\otimes S(\beta))_{\leq4}$ are $m$,\ $2mA_{1,3}^{-1}A_{2,4}^{-1}$,\ $mA_{3,3}^{-1}A_{2,4}^{-1},\ mA_{1,3}^{-2}A_{2,4}^{-2}$,\
$2m A_{1,1}^{-1}A_{1,3}^{-1}A_{2,4}^{-1}$,
$4mA_{1,3}^{-1}A_{3,3}^{-1}A_{2,4}^{-1}$,\
$2mA_{1,3}^{-1}A_{3,3}^{-1}A_{2,4}^{-2},\ 3mA_{1,3}^{-2}A_{3,3}^{-1}A_{2,4}^{-2},$\
$4m A_{1,1}^{-1}A_{1,3}^{-1}A_{3,3}^{-1}A_{2,4}^{-1}$,\\
$m A_{1,1}^{-1}A_{1,3}^{-1}A_{2,2}^{-1}A_{2,4}^{-1}A_{3,1}^{-1}A_{3,3}^{-1}$,\
$2m A_{1,1}^{-1}A_{1,3}^{-1}A_{3,3}^{-1}A_{2,4}^{-2}$,\
$m A_{1,1}^{-1}A_{1,3}^{-1}A_{2,2}^{-1}A_{2,4}^{-2}A_{3,1}^{-1}A_{3,3}^{-1},$ and \\
$2m A_{1,1}^{-1}A_{1,3}^{-2}A_{2,2}^{-1}A_{2,4}^{-2}A_{3,1}^{-1}A_{3,3}^{-1}.$

We have
\[
\begin{array}{l}
\medskip
  2m A_{1,1}^{-1}A_{1,3}^{-1}(A_{3,3}^{-1}A_{2,4}^{-2}+2A_{3,3}^{-1}A_{2,4}^{-1})
  \leq\varphi_{\{2,3\}}(2m A_{1,1}^{-1}A_{1,3}^{-1})_{\leq4}, \\
  \medskip
  2mA_{1,3}^{-1}A_{2,4}^{-1}+mA_{1,3}^{-2}A_{2,4}^{-2}+2m A_{1,1}^{-1}A_{1,3}^{-1}A_{2,4}^{-1}
   \leq\varphi_{\{1,2\}}(m)_{\leq4}, \\
   \medskip
  mA_{3,3}^{-1}(A_{2,4}^{-1}+4A_{1,3}^{-1}A_{2,4}^{-1}+2A_{1,3}^{-1}A_{2,4}^{-2}+3A_{1,3}^{-2}A_{2,4}^{-2})
   \leq\varphi_{\{1,2\}}(mA_{3,3}^{-1})_{\leq4}, \\
   mA_{3,1}^{-1}A_{3,3}^{-1}(A_{1,1}^{-1}A_{1,3}^{-1}A_{2,2}^{-1}A_{2,4}^{-1}+A_{1,1}^{-1}A_{1,3}^{-1}A_{2,2}^{-1}A_{2,4}^{-2}
  +2m A_{1,1}^{-1}A_{1,3}^{-2}A_{2,2}^{-1}A_{2,4}^{-2})
   \leq\varphi_{\{1,2\}}(mA_{3,1}^{-1}A_{3,3}^{-1})_{\leq4}.
\end{array}
\]

If $\beta=\a_0+2\a_1+3\a_2+2\a_3+\a_4+\a_5$, $Y^{\beta}=Y_{1,0}Y_{1,2}Y_{2,3}Y_{2,5}^{2}Y_{3,0}Y_{3,2}$, denote $m=Y^{\a}Y^{\beta}$. The only dominant monomials in $\chi_q(S(\a)\otimes S(\beta))_{\leq4}$ are $m,\ mA_{1,3}^{-1}A_{2,4}^{-1},\ m A_{1,1}^{-1}A_{1,3}^{-1}A_{2,4}^{-1},\ 2mA_{1,3}^{-1}A_{3,3}^{-1}A_{2,4}^{-1},\ \\
mA_{1,3}^{-2}A_{3,3}^{-1}A_{2,4}^{-2}$ and $2m A_{1,1}^{-1}A_{1,3}^{-1}A_{3,3}^{-1}A_{2,4}^{-1}.$

We have
\[
\begin{array}{l}
\medskip
  mA_{1,3}^{-1}A_{2,4}^{-1}+m A_{1,1}^{-1}A_{1,3}^{-1}A_{2,4}^{-1}
  \leq\varphi_{\{1,2\}}(m)_{\leq4}, \\
  mA_{3,3}^{-1}(2A_{1,3}^{-1}A_{2,4}^{-1}+A_{1,3}^{-2}A_{2,4}^{-2}+2A_{1,1}^{-1}A_{1,3}^{-1}A_{2,4}^{-1})
  \leq\varphi_{\{1,2\}}(mA_{3,3}^{-1})_{\leq4}.
\end{array}
\]

If $\beta=\a_0+\a_1+\a_2+\a_3+\a_4+\a_5$, $Y^{\beta}=Y_{2,3}$, denote $m=Y^{\a}Y^{\beta}$. The only dominant monomials in $\chi_q(S(\a)\otimes S(\beta))_{\leq4}$ are $m,\ mA_{2,4}^{-1}$ and $2mA_{1,3}^{-1}A_{2,4}^{-1}.$

We have  $mA_{2,4}^{-1}+ 2mA_{1,3}^{-1}A_{2,4}^{-1}\leq\varphi_{\{1,2\}}(m)_{\leq4}.$

\medskip
(24) $\a=\a_0+2\a_1+2\a_2+2\a_3+\a_4+\a_5$, $Y^{\a}=Y_{1,0}Y_{2,3}^{2}Y_{2,5}Y_{3,0}$. In this case, $\chi_q(L(Y^{\a}))$ has $3$ dominant monomials, so we need to consider every $\beta\in C_\a.$

If $\beta\in \C_\a\backslash \{\a_0+2\a_1+2\a_2+2\a_3+\a_4+\a_5,\ \a_0+\a_1+\a_2+\a_3+\a_4+\a_5\}$, we have done.

If $\beta=\a_0+2\a_1+2\a_2+2\a_3+\a_4+\a_5$, $Y^{\beta}=Y^{\a}$, denote $m=Y^{\a}Y^{\beta}$. The only dominant monomials in $\chi_q(S(\a)\otimes S(\beta))_{\leq4}$ are $m,\ 2mA_{2,4}^{-1},\ mA_{2,4}^{-2},\
2mA_{1,1}^{-1}A_{3,1}^{-1}A_{2,2}^{-1}A_{2,4}^{-1},\  2mA_{1,1}^{-1}A_{3,1}^{-1}A_{2,2}^{-1}A_{2,4}^{-2}$ and
$mA_{1,1}^{-2}A_{3,1}^{-2}A_{2,2}^{-2}A_{2,4}^{-2}.$

We have

\noindent\medskip
  $2mA_{2,4}^{-1}+ mA_{2,4}^{-2}\leq\varphi_{2}(m)_{\leq4},$ \quad
  $mA_{1,1}^{-2}A_{3,1}^{-2}A_{2,2}^{-2}A_{2,4}^{-2}\leq\varphi_{\{2,3\}}(mA_{1,1}^{-2})_{\leq4},$\\
 $2mA_{1,1}^{-1}(A_{3,1}^{-1}A_{2,2}^{-1}A_{2,4}^{-1}+A_{3,1}^{-1}A_{2,2}^{-1}A_{2,4}^{-2})
   \leq\varphi_{\{2,3\}}(2mA_{1,1}^{-1})_{\leq4}.$

If $\beta=\a_0+\a_1+\a_2+\a_3+\a_4+\a_5$, $Y^{\beta}=Y_{2,3}$, denote $m=Y^{\a}Y^{\beta}$. The only dominant monomials in $\chi_q(S(\a)\otimes S(\beta))_{\leq4}$ are $m,\ 2mA_{2,4}^{-1}$ and $mA_{1,1}^{-1}A_{3,1}^{-1}A_{2,2}^{-1}A_{2,4}^{-1}.$

We have   $2mA_{2,4}^{-1}\leq\varphi_{2}(m)_{\leq4}$ and $mA_{1,1}^{-1}A_{3,1}^{-1}A_{2,2}^{-1}A_{2,4}^{-1}\leq\varphi_{2}(mA_{1,1}^{-1}A_{3,1}^{-1})_{\leq4}.$

\medskip
(25) $\a=\a_0+\a_1+\a_2+\a_3+\a_4+\a_5$, $Y^{\a}=Y_{2,3}$. In this case, for $\beta\in \C_\a\backslash \{\a_0+2\a_1+3\a_2+2\a_3+\a_4+2\a_5\}$, we have done.
So we need to consider $\beta=\a_0+2\a_1+3\a_2+2\a_3+\a_4+2\a_5$.

Now, $Y^{\beta}=Y_{1,0}Y_{1,2}Y_{2,3}Y_{2,5}Y_{3,0}Y_{3,2}$, denote $m=Y^{\a}Y^{\beta}$. The only dominant monomials in $\chi_q(S(\a)\otimes S(\beta))$ are
$m, mA_{2,4}^{-1}, 2mA_{1,3}^{-1}A_{2,4}^{-1},
2mA_{3,3}^{-1}A_{2,4}^{-1},\ 3mA_{1,3}^{-1}A_{3,3}^{-1}A_{2,4}^{-1}$ and $mA_{1,1}^{-1}A_{1,3}^{-1}A_{3,1}^{-1}A_{3,3}^{-1}A_{2,2}^{-1}A_{2,4}^{-1}.$

We have

\noindent\medskip
 $ mA_{2,4}^{-1}+2mA_{1,3}^{-1}A_{2,4}^{-1}\leq\varphi_{\{1,2\}}(m)_{\leq4},$ \quad $mA_{3,3}^{-1}(2A_{2,4}^{-1}+3A_{1,3}^{-1}A_{2,4}^{-1})\leq\varphi_{\{1,2\}}(mA_{3,3}^{-1})_{\leq4},$\\
$mA_{1,1}^{-1}A_{1,3}^{-1}A_{3,1}^{-1}A_{3,3}^{-1}A_{2,2}^{-1}A_{2,4}^{-1}
  \leq\varphi_{2}(mA_{1,1}^{-1}A_{1,3}^{-1}A_{3,1}^{-1}A_{3,3}^{-1})_{\leq4}.$

\medskip
(26) $\a=\a_0+2\a_1+3\a_2+2\a_3+\a_4+2\a_5$, $Y^{\a}=Y_{1,0}Y_{1,2}Y_{2,3}Y_{2,5}Y_{3,0}Y_{3,2}$. In this case, for $\beta\in \C_\a\backslash \{\a_0+2\a_1+3\a_2+2\a_3+\a_4+2\a_5\}$, we have done.
So we need to consider $\beta=\a_0+2\a_1+3\a_2+2\a_3+\a_4+2\a_5$.

Since $Y^{\beta}=Y^{\a}$, denote $m=Y^{2\a}$. Up to the symmetries $1\leftrightarrow 3$ in $I$, we reduce to the following dominant monomials of $\chi_q(S(\a)\otimes S(\beta))_{\leq4}$:
$m,\ 2mA_{1,3}^{-1}A_{2,4}^{-1},\ 2mA_{1,1}^{-1}A_{1,3}^{-1}A_{2,4}^{-1},\
8mA_{1,3}^{-1}A_{3,3}^{-1}A_{2,4}^{-1},\ \\
4mA_{1,3}^{-1}A_{3,3}^{-1}A_{2,4}^{-2},
6mA_{1,3}^{-2}A_{3,3}^{-2}A_{2,4}^{-2},\ 6mA_{1,3}^{-1}A_{3,3}^{-2}A_{2,4}^{-2},
mA_{1,3}^{-2}A_{2,4}^{-2},
8mA_{1,1}^{-1}A_{1,3}^{-1}A_{3,3}^{-1}A_{2,4}^{-1},
6mA_{1,1}^{-1}A_{1,3}^{-1}A_{3,3}^{-2}A_{2,4}^{-2}, \\
8mA_{1,1}^{-1}A_{1,3}^{-1}A_{3,1}^{-1}A_{3,3}^{-1}A_{2,4}^{-1},\
2mA_{1,1}^{-1}A_{1,3}^{-1}A_{3,1}^{-1}A_{3,3}^{-1}A_{2,2}^{-1}A_{2,4}^{-1},\
4mA_{1,1}^{-1}A_{1,3}^{-1}A_{3,1}^{-1}A_{3,3}^{-1}A_{2,4}^{-2},\ \\
2mA_{1,1}^{-1}A_{1,3}^{-1}A_{3,1}^{-1}A_{3,3}^{-1}A_{2,2}^{-1}A_{2,4}^{-2},\
4mA_{1,1}^{-1}A_{1,3}^{-1}A_{3,1}^{-1}A_{3,3}^{-2}A_{2,2}^{-1}A_{2,4}^{-2},\
6mA_{1,1}^{-1}A_{1,3}^{-2}A_{3,1}^{-1}A_{3,3}^{-2}A_{2,2}^{-1}A_{2,4}^{-2}.$

We have

\noindent \medskip
$2mA_{1,3}^{-1}A_{2,4}^{-1}+ mA_{1,3}^{-2}A_{2,4}^{-2}+2mA_{1,1}^{-1}A_{1,3}^{-1}A_{2,4}^{-1}
\leq\varphi_{\{1,2\}}(m)_{\leq4},$\\
\medskip
$2mA_{1,3}^{-1}(4A_{3,3}^{-1}A_{2,4}^{-1}+2A_{3,3}^{-1}A_{2,4}^{-2}+3A_{3,3}^{-2}A_{2,4}^{-2})
\leq\varphi_{\{2,3\}}(2mA_{1,3}^{-1})_{\leq4},$\\
\smallskip
$2mA_{1,1}^{-1}A_{1,3}^{-1}(4A_{3,3}^{-1}A_{2,4}^{-1}+2A_{3,3}^{-1}A_{2,4}^{-2}+3A_{3,3}^{-2}A_{2,4}^{-2}+
4A_{3,1}^{-1}A_{3,3}^{-1}A_{2,4}^{-1}+2A_{3,1}^{-1}A_{3,3}^{-1}A_{2,4}^{-2}+
A_{3,1}^{-1}A_{3,3}^{-1}A_{2,2}^{-1}A_{2,4}^{-1}+\\
\medskip
A_{3,1}^{-1}A_{3,3}^{-1}A_{2,2}^{-1}A_{2,4}^{-2}+2A_{3,1}^{-1}A_{3,3}^{-2}A_{2,2}^{-1}A_{2,4}^{-2})
\leq\varphi_{\{2,3\}}(2mA_{1,1}^{-1}A_{1,3}^{-1})_{\leq4},$\\
$6mA_{1,3}^{-2}A_{3,3}^{-2}A_{2,4}^{-2}\leq\varphi_{\{2,3\}}(mA_{1,3}^{-2})_{\leq4}$, \quad
$6mA_{1,1}^{-1}A_{1,3}^{-2}A_{3,1}^{-1}A_{3,3}^{-2}A_{2,2}^{-1}A_{2,4}^{-2}
 \leq\varphi_{\{2,3\}}(2mA_{1,1}^{-1}A_{1,3}^{-2})_{\leq4}.$

\medskip
(27) $\a=\a_0+2\a_1+3\a_2+2\a_3+\a_4+\a_5$, $Y^{\a}=Y_{1,0}Y_{1,2}Y_{2,3}Y_{2,5}^{2}Y_{3,0}Y_{3,2}$. In this case, for $\beta\in \C_\a\backslash \{\a_0+2\a_1+3\a_2+2\a_3+\a_4+2\a_5\}$, we have done.
So we only need to consider $\beta=\a_0+2\a_1+3\a_2+2\a_3+\a_4+\a_5$.

Since $Y^{\beta}=Y^{\a}$, denote $m=Y^{2\a}$. Up to the symmetries $1\leftrightarrow 3$ in $I$, we reduce to the following dominant monomials of $\chi_q(S(\a)\otimes S(\beta))_{\leq4}$:
$m,\ 2mA_{1,3}^{-1}A_{3,3}^{-1}A_{2,4}^{-1},\
mA_{1,3}^{-2}A_{3,3}^{-2}A_{2,4}^{-2},\
2mA_{1,1}^{-1}A_{1,3}^{-1}A_{3,3}^{-1}A_{2,4}^{-1}$ and
$2mA_{1,1}^{-1}A_{1,3}^{-1}A_{3,1}^{-1}A_{3,3}^{-1}A_{2,4}^{-1}.$
\[
\begin{array}{ll}
2mA_{1,3}^{-1}A_{3,3}^{-1}A_{2,4}^{-1}\leq\varphi_{\{2,3\}}(2mA_{1,3}^{-1})_{\leq4},&
mA_{1,3}^{-2}A_{3,3}^{-2}A_{2,4}^{-2}\leq\varphi_{\{2,3\}}(mA_{1,3}^{-2})_{\leq4},\\
2mA_{1,1}^{-1}A_{1,3}^{-1}(A_{3,1}^{-1}A_{3,3}^{-1}A_{2,4}^{-1}+A_{3,3}^{-1}A_{2,4}^{-1})
\leq\varphi_{\{2,3\}}(2mA_{1,1}^{-1}A_{1,3}^{-1})_{\leq4}.
\end{array}
\]

\medskip
(28) $\a=\a_0+\a_1+2\a_2+\a_3+\a_4+\a_5$, $Y^{\a}=Y_{1,2}Y_{2,5}Y_{3,2}$. In this case, for $\beta\in \C_\a\backslash \{\a_0+\a_1+2\a_2+\a_3+\a_4+\a_5\}$, we have done.
So we only need to consider $\beta=\a_0+\a_1+2\a_2+\a_3+\a_4+\a_5$.

Since $Y^{\beta}=Y^{\a}$, denote $m=Y^{2\a}$. The dominant monomials of $\chi_q(S(\a)\otimes S(\beta))_{\leq4}$ are $m,\ 2mA_{1,3}^{-1}A_{3,3}^{-1}A_{2,4}^{-1}$ and $mA_{1,3}^{-2}A_{3,3}^{-2}A_{2,4}^{-2}$.
\[
\begin{array}{ll}
2mA_{1,3}^{-1}A_{3,3}^{-1}A_{2,4}^{-1}\leq\varphi_{\{2,3\}}(2mA_{1,3}^{-1})_{\leq4}, &
mA_{1,3}^{-2}A_{3,3}^{-2}A_{2,4}^{-2}\leq\varphi_{2}(mA_{1,3}^{-2}A_{3,3}^{-2})_{\leq4}.
\end{array}
\]

This finishes the proof of Theorem~\ref{monoicat}.
\end{proof}

\begin{corollary}
For every exchange relation of cluster algebra $\A$, there exist an exact sequence of $\CC_2$ corresponding to it.
\end{corollary}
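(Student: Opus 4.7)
The plan is to deduce the corollary directly from Theorem~\ref{monoicat} together with the fact that, in a finite length abelian category, any object whose Grothendieck class is the sum of two simple classes admits a short exact sequence between those two simples. The argument splits into three short steps.

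First, I would recall the combinatorial setup. By the definition given in Section~\ref{s2.1}, every exchange relation in $\A$ has the shape $xx' = m_1 + m_2$, where $x,x'$ are two non-compatible cluster variables and $m_1,m_2$ are cluster monomials. Applying the ring isomorphism $\iota\colon \A \to R_2$ constructed in Proposition~\ref{ringiso}, and using that $\iota$ sends cluster variables to classes of real prime simple objects and cluster monomials to classes of real simple objects (the content of Theorem~\ref{monoicat}), the relation becomes
\[
[S(x)]\cdot[S(x')] \;=\; [S(m_1)] + [S(m_2)]
\]
in the Grothendieck ring $R_2$. Since the product on the left is the class of the tensor product $S(x)\otimes S(x')$, we obtain the Grothendieck class identity $[S(x)\otimes S(x')] = [S(m_1)] + [S(m_2)]$ in $R_2$.

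Second, I would translate this Grothendieck identity into a statement about composition series. Because $\CC_2$ is a finite length abelian monoidal category and both $S(m_1)$ and $S(m_2)$ are simple, the identity above forces $S(x)\otimes S(x')$ to have exactly $S(m_1)$ and $S(m_2)$ as its composition factors (each with multiplicity one). Moreover, $S(x)\otimes S(x')$ cannot itself be simple: if it were, it would be a real simple object whose class is a cluster monomial by Theorem~\ref{monoicat}, hence $x$ and $x'$ would belong to a common cluster, contradicting that $xx'=m_1+m_2$ is an exchange relation. Thus $S(x)\otimes S(x')$ has length exactly two, with simple socle and simple head both lying in $\{S(m_1),S(m_2)\}$.

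Third, after relabelling if necessary so that $S = S(m_1)$ is the (unique) simple subobject and $S' = S(m_2)$ is the simple quotient, the short exact sequence
\[
0 \longrightarrow S \longrightarrow S(x)\otimes S(x') \longrightarrow S' \longrightarrow 0
\]
matches the shape indicated in Table~1, and provides the exact sequence of $\CC_2$ corresponding to the given exchange relation. The main (and essentially only) subtle point is the non-simplicity of $S(x)\otimes S(x')$, which is what allows us to conclude the existence of the non-split short exact sequence; this is precisely what the monoidal categorification theorem gives us, since compatibility of cluster variables is characterized by simplicity of the corresponding tensor product.
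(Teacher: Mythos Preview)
Your argument is correct and is exactly the standard deduction the paper has in mind; in fact the paper states this corollary without proof, treating it as an immediate consequence of Theorem~\ref{monoicat}, so your write-up supplies the details the authors omit. One small remark: your separate justification that $S(x)\otimes S(x')$ is not simple is redundant, since you have already shown its class equals $[S(m_1)]+[S(m_2)]$ with $S(m_1),S(m_2)$ simple, which forces length two directly; the detour through ``real simple $\Rightarrow$ cluster monomial $\Rightarrow$ compatible'' is unnecessary (and would require you to first check reality of the tensor product).
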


\begin{example}
For exchange relation
\[
x[-\a_0]x[\a_0]=x[-\a_1]+x_7,
\]
the corresponding exact sequence is
\[
0 \rightarrow L(Y_{1,4}Y_{2,1}) \rightarrow L(Y_{1,0})\otimes L(Y_{1,2}Y_{1,4}) \rightarrow L(Y_{1,0}Y_{1,2}Y_{1,4}) \rightarrow 0.
\]

For exchange relation
\[
\begin{array}{c}
x[-\a_2]x[\a_0+\a_1+\a_2]=x[\a_0+\a_1]x_6+x[\a_0]x[-\a_3]x[-\a_5],
\end{array}
\]
the corresponding exact sequence is
\[
\begin{array}{c}
0 \rightarrow L(Y_{3,4})\otimes L(Y_{2,1}Y_{2,3}Y_{2,5}) \rightarrow L(Y_{1,4}Y_{2,1}Y_{3,4})\otimes L(Y_{1,2}Y_{2,5}) \rightarrow L(Y_{1,2}Y_{1,4})\otimes L(Y_{2,5})\otimes L(Y_{2,1}Y_{3,4}) \rightarrow 0.
\end{array}
\]
Further more,
\[
\begin{array}{c}
  L(Y_{3,4})\otimes L(Y_{2,1}Y_{2,3}Y_{2,5})\cong L(Y_{2,1}Y_{2,3}Y_{2,5}Y_{3,4}), \\
  \\
  L(Y_{1,2}Y_{1,4})\otimes L(Y_{2,5})\otimes L(Y_{2,1}Y_{3,4})\cong L(Y_{1,2}Y_{1,4}Y_{2,1}Y_{2,5}Y_{3,4}).
\end{array}
\]
\end{example}

\begin{appendix}
\section{Explicit Formulas for Cluster Variables}
The following result is obtained by computer.
We list the sequences of mutations and explicit formulas for cluster variables:

\noindent $x\big[-\alpha_{0}\big]=x_0,$ \qquad  $x\big[-\alpha_{1}\big]=x_1,$ \qquad $ x\big[-\alpha_{2}\big]=x_2,$\\
$x\big[-\alpha_{3}\big]=x_3,$ \qquad  $x\big[-\alpha_{4}\big]=x_4,$  \qquad  $ x\big[-\alpha_{5}\big]=x_5,$\\
$\mu_{0}: \ x\big[\alpha_{0}\big]=(x_1 + x_7)/x_0,$ \qquad  \qquad $\mu_4: \ x\big[\alpha_{4}\big]=(x_3 + x_8)/x_4,$\\
$\mu_1: \ x\big[\alpha_{1}\big]=(x_0 x_2 + x_7)/x_1,$ \qquad \quad $\mu_3: x\big[\alpha_{3}\big]=(x_2 x_4 + x_8)/x_3,$\\
$\mu_2: \ x\big[\alpha_{2}\big]=(x_1 x_3 x_5 + x_6)/x_2,$ \qquad $\mu_5: \ x\big[\alpha_{5}\big]=(x_2 + x_6)/x_5,$\\
$\mu_{1}\mu_{0}:
x\big[\alpha_{0}+\alpha_{1}\big]=\frac{1}{x_0x_1}(x_1 + x_0x_2 + x_7),$\\
$\mu_3\mu_4:
x\big[\alpha_3+\alpha_{4}\big]=\frac{1}{x_3 x_4}(x_3 + x_2 x_4 + x_8),$\\
$\mu_2\mu_1:
x\big[\alpha_{1}+\alpha_{2}\big]=\frac{1}{x_1 x_2}(x_0 x_2 x_6 + x_1 x_3 x_5 x_7 + x_6 x_7),$\\
$\mu_3\mu_2:
x\big[\alpha_{2}+\alpha_{3}\big]=\frac{1}{x_2 x_3}(x_2 x_4 x_6 + x_1 x_3 x_5 x_8 + x_6 x_8),$\\
$\mu_5\mu_2:
x\big[\alpha_{2}+\alpha_{5}\big]=\frac{1}{x_2 x_5}(x_2 + x_1 x_3 x_5 + x_6),$\\
$\mu_2\mu_1\mu_0:
x\big[\alpha_0+\alpha_1+\alpha_{2}\big]=\frac{1}{x_0 x_1 x_2}(x_1^2 x_3 x_5 + x_1 x_6 + x_0 x_2 x_6 + x_1 x_3 x_5 x_7 + x_6 x_7),$\\
$\mu_2\mu_3\mu_4:
x\big[\alpha_2+\alpha_3+\alpha_{4}\big]=\frac{1}{x_2 x_3 x_4}(x_1 x_3 x_5 (x_3 + x_8) + x_6 (x_3 + x_2 x_4 + x_8)),$\\
$\mu_3\mu_2\mu_1:
x\big[\alpha_{1}+\alpha_{2}+\alpha_{3}\big]=\frac{1}{x_1 x_2 x_3}(x_0 x_2 x_6 (x_2 x_4 + x_8) + x_7 (x_2 x_4 x_6 + (x_1 x_3 x_5 + x_6) x_8)),$\\
$\mu_5\mu_2\mu_3:
x\big[\alpha_{2}+\alpha_{3}+\alpha_{5}\big]=\frac{1}{x_2 x_3 x_5}(x_2^2 x_4 + x_2 x_4 x_6 + (x_2 + x_1 x_3 x_5 + x_6) x_8),$\\
$\mu_5\mu_2\mu_1:
x\big[\alpha_{1}+\alpha_{2}+\alpha_{5}\big]=\frac{1}{x_1 x_2 x_5}(x_0 x_2 (x_2 + x_6) + (x_2 + x_1 x_3 x_5 + x_6) x_7),$\\
$\mu_4\mu_3\mu_2\mu_1:\\
x\big[\alpha_{1}+\alpha_{2}+\alpha_{3}+\alpha_{4}\big]=\frac{1}{x_1 x_2 x_3 x_4}(x_0 x_2 x_6 (x_3 + x_2 x_4 + x_8) + x_7 (x_1 x_3 x_5 (x_3 + x_8) + x_6 (x_3 + x_2 x_4 + x_8))),$\\
$\mu_3\mu_2\mu_1\mu_0:\\
x\big[\alpha_{0}+\alpha_{1}+\alpha_{2}+\alpha_{3}\big]=\frac{1}{x_0 x_1 x_2 x_3}(x_1^2 x_3 x_5 x_8 + x_6 (x_0 x_2 + x_7) (x_2 x_4 + x_8) + x_1 (x_2 x_4 x_6 + (x_6 + x_3 x_5 x_7) x_8)),$\\
$\mu_5\mu_2\mu_3\mu_4:\\
x\big[\alpha_2+\alpha_3+\alpha_{4}+\alpha_5\big]=\frac{1}{x_2 x_3 x_4 x_5}(x_2^2 x_4 + (x_1 x_3 x_5 + x_6) (x_3 + x_8) + x_2 (x_3 + x_4 x_6 + x_8)),$\\
$\mu_5\mu_2\mu_1\mu_0: \\
x\big[\alpha_{0}+\alpha_{1}+\alpha_{2}+\alpha_{5}\big]=\frac{1}{x_0 x_1 x_2 x_5}(x_1^2 x_3 x_5 + (x_2 + x_6) (x_0 x_2 + x_7) + x_1 (x_2 + x_6 + x_3 x_5 x_7)),$\\
$\mu_3\mu_2\mu_1\mu_5:\\
x\big[\alpha_1+\alpha_2+\alpha_{3}+\alpha_5\big]=\frac{1}{x_1 x_2 x_3 x_5}(x_0 x_2 (x_2 + x_6) (x_2 x_4 + x_8) + x_7 (x_2^2 x_4 + (x_1 x_3 x_5 + x_6) x_8 + x_2 (x_4 x_6 + x_8))),$\\
$\mu_1\mu_5\mu_4\mu_3\mu_2\mu_1:\\
x\big[\alpha_{1}+2\alpha_{2}+\alpha_{3}+\alpha_{5}\big]=\frac{1}{x_1 x_2^2 x_3 x_5}((x_2 + x_1 x_3 x_5
+ x_6) x_7 (x_2 x_4 x_6 + (x_1 x_3 x_5 + x_6) x_8) + x_0 x_2 x_6 (x_2^2 x_4 +(x_1 x_3 x_5+ x_6) x_8 + x_2(x_4 x_6 + x_8))),$\\
$\mu_4\mu_3\mu_2\mu_1\mu_0:\\
x\big[\alpha_{0}+\alpha_{1}+\alpha_{2}+\alpha_{3}+\alpha_{4}\big]=\frac{1}{x_0 x_1 x_2 x_3 x_4}(x_1^2 x_3 x_5 (x_3 + x_8) +  x_6 (x_0 x_2 + x_7) (x_3 + x_2 x_4 + x_8) +x_1(x_3^2 x_5 x_7 + x_6(x_2 x_4 +  x_8) + x_3 (x_6 + x_5 x_7 x_8))),$\\
$\mu_4\mu_3\mu_2\mu_1\mu_5:\\
x\big[\alpha_1+\alpha_2+\alpha_{3}+\alpha_4+\alpha_5\big]=\frac{1}{x_1 x_2 x_3 x_4 x_5}(x_0 x_2 (x_2 + x_6) (x_3 + x_2 x_4 + x_8) + x_7 (x_2^2 x_4 + (x_1 x_3 x_5 + x_6)(x_3 + x_8) + x_2 (x_3+ x_4 x_6 + x_8))),$\\
$\mu_0\mu_1\mu_4\mu_3\mu_2\mu_1\mu_5:\\
x\big[\alpha_0+\alpha_1+\alpha_2+\alpha_{3}+\alpha_5\big]=\frac{1}{x_0 x_1 x_2 x_3 x_5}(x_1^2 x_3 x_5 x_8 + (x_2 + x_6) (x_0 x_2 + x_7) (x_2 x_4 + x_8) + x_1 (x_2^2 x_4 +(x_6 + x_3 x_5 x_7) x_8 + x_2 (x_4 x_6 + x_8))),$\\
$\mu_0\mu_1\mu_5\mu_4\mu_3\mu_2\mu_1:\\
x\big[\alpha_{0}+2\alpha_{1}+2\alpha_{2}+\alpha_{3}+\alpha_{5}\big]=\frac{1}{x_0 x_1^2 x_2^2 x_3 x_5}(x_0^2 x_2^2 x_6 (x_2 + x_6) (x_2 x_4 + x_8) + (x_2 + x_1 x_3 x_5 + x_6) x_7(x_1+x_7) (x_2 x_4 x_6+
(x_1 x_3 x_5 + x_6) x_8) + x_0 x_2 (x_1^2 x_3 x_5 x_6 x_8 + 2 x_6 (x_2 + x_6) x_7 (x_2 x_4 + x_8) +
 x_1 (x_2^2 x_4 x_6 + x_6 (x_6 +2 x_3 x_5 x_7) x_8 +x_2 (x_6 + x_3 x_5 x_7) (x_4 x_6 + x_8)))),$\\
$\mu_2\mu_0\mu_1\mu_5\mu_4\mu_3\mu_2\mu_1:\\
x\big[\alpha_{1}+2\alpha_{2}+2\alpha_{3}+\alpha_{4}+\alpha_{5}\big]=\frac{1}{x_1 x_2^2 x_3^2 x_4 x_5}(x_0 x_2 x_6 (x_3 + x_2 x_4 + x_8) (x_2^2 x_4 + (x_1 x_3 x_5 + x_6) x_8 + x_2(x_4 x_6+ x_8))+
x_7 (x_2^3 x_4^2 x_6 + (x_1 x_3 x_5 + x_6)^2 x_8 (x_3 + x_8) + x_2 (x_1 x_3 x_5 + x_6) (x_3 (x_4 x_6 + x_8) + x_8(2 x_4 x_6 + x_8)) +x_2^2 x_4 (x_6 (x_4 x_6 + 2 x_8) + x_3 (x_6 + x_1 x_5 x_8)))),$\\
$\mu_4\mu_2\mu_4\mu_0\mu_3\mu_5\mu_1\mu_2:\\
x\big[\alpha_0+\alpha_1+2\alpha_2+\alpha_{3}+\alpha_5\big]=\frac{1}{x_0 x_1 x_2^2 x_3 x_5}(x_1^3 x_3^2 x_5^2 x_8 + x_6 (x_2 + x_6) (x_0 x_2 + x_7) (x_2 x_4 + x_8) + x_1^2 x_3 x_5(x_2 x_4 x_6+x_2 x_8
+2 x_6 x_8 + x_3 x_5 x_7 x_8) + x_1 (x_2^2 x_4 x_6 + x_6 (x_6 + 2 x_3 x_5 x_7) x_8 + x_2 (x_4 x_6 (x_6 + x_3 x_5 x_7) +(x_6 + x_0 x_3 x_5 x_6 + x_3 x_5 x_7) x_8))),$\\
$\mu_0\mu_2\mu_4\mu_0\mu_3\mu_5\mu_1\mu_2:\\
x\big[\alpha_1+2\alpha_2+\alpha_{3}+\alpha_4+\alpha_5\big]=\frac{1}{x_1 x_2^2 x_3 x_4 x_5}((x_2 + x_1 x_3 x_5 + x_6) x_7 (x_1 x_3 x_5 (x_3 + x_8) + x_6 (x_3 + x_2 x_4 + x_8))+ x_0 x_2 x_6
(x_2^2 x_4 + (x_1 x_3 x_5 + x_6) (x_3 + x_8) + x_2 (x_3 + x_4 x_6 + x_8))),$\\
$\mu_1\mu_2\mu_3\mu_2\mu_0\mu_1\mu_5\mu_4\mu_3\mu_2\mu_1:\\
x\big[\alpha_{0}+\alpha_{1}+\alpha_{2}+\alpha_{3}+\alpha_{4}+\alpha_{5}\big]=\frac{1}{x_0 x_1 x_2 x_3 x_4 x_5}(x_1^2 x_3 x_5 (x_3 + x_8) + (x_2 + x_6) (x_0 x_2 + x_7) (x_3 + x_2 x_4 + x_8) +
 x_1 (x_2^2 x_4 + (x_6 + x_3 x_5 x_7) (x_3 + x_8) + x_2 (x_3 + x_4 x_6 + x_8))),$\\
$\mu_2\mu_4\mu_0\mu_3\mu_5\mu_1\mu_2:\\
x\big[\alpha_0+\alpha_1+2\alpha_2+\alpha_{3}+\alpha_4+\alpha_5\big]=\frac{1}{x_0 x_1 x_2^2 x_3 x_4 x_5}(x_1^3 x_3^2 x_5^2 (x_3 + x_8) + x_6 (x_2 + x_6) (x_0 x_2 + x_7) (x_3 + x_2 x_4 +x_8)+
x_1^2 x_3 x_5 ((2 x_6 + x_3 x_5 x_7) (x_3 + x_8) + x_2 (x_3 + x_4 x_6 + x_8)) + x_1 (x_2^2 x_4 x_6 + x_6 (x_6 + 2 x_3 x_5 x_7) (x_3 + x_8) +x_2 (x_3^2 x_5 (x_0 x_6 +
x_7) + x_6 (x_4 x_6 + x_8) + x_3 (x_6 + x_4 x_5 x_6 x_7 + x_0 x_5 x_6 x_8 + x_5 x_7 x_8)))),$\\
$\mu_2\mu_3\mu_2\mu_0\mu_1\mu_5\mu_4\mu_3\mu_2\mu_1:\\
x\big[\alpha_{0}+2\alpha_{1}+2\alpha_{2}+\alpha_{3}+\alpha_{4}+\alpha_{5}\big]=\frac{1}{x_0 x_1^2 x_2^2 x_3 x_4 x_5}(x_0^2 x_2^2 x_6 (x_2 + x_6) (x_3 + x_2 x_4 + x_8) + (x_2 + x_1 x_3 x_5+ x_6)
x_7 (x_1 +
 x_7) (x_1 x_3 x_5 (x_3 + x_8) + x_6 (x_3 + x_2 x_4 + x_8)) + x_0 x_2 (x_1^2 x_3 x_5 x_6 (x_3 + x_8) + 2 x_6 (x_2 + x_6) x_7 (x_3 + x_2 x_4+ x_8)+ x_1
(x_2^2 x_4 x_6 + x_6 (x_6 + 2 x_3 x_5 x_7) (x_3 + x_8) + x_2 (x_6 + x_3 x_5 x_7) (x_3 + x_4 x_6 + x_8)))),$\\
$\mu_2\mu_1\mu_2\mu_4\mu_3\mu_5\mu_0\mu_1\mu_2\mu_3:\\
x\big[\alpha_0+\alpha_1+2\alpha_2+2\alpha_{3}+\alpha_4+\alpha_5\big]=\frac{1}{x_0 x_1 x_2^2 x_3^2 x_4 x_5}(x_1^3 x_3^2 x_5^2 x_8 (x_3 + x_8) +  x_6 (x_2 + x_6) (x_0 x_2 + x_7) (x_2 x_4 + x_8)(x_3 +
x_2 x_4 + x_8) + x_1^2 x_3 x_5 (x_2^2 x_4 x_8 + (2 x_6 + x_3 x_5 x_7) x_8 (x_3 + x_8) + x_2 (x_3 (x_4 x_6 + x_8) + x_8 (2 x_4 x_6 + x_8))) + x_1(x_2^3 x_4^2 x_6 +
x_6 (x_6 + 2 x_3 x_5 x_7) x_8 (x_3 + x_8) + x_2^2 x_4 (x_6 (x_4 x_6 + 2 x_8) + x_3 (x_6 + x_0 x_5 x_6 x_8 + x_5 x_7 x_8)) + x_2 (x_6 x_8(2 x_4 x_6 +x_8) +
x_3^2 x_5 (x_4 x_6 x_7 + (x_0 x_6 + x_7) x_8) + x_3 (x_8 (x_6 + x_0 x_5 x_6 x_8 + x_5 x_7 x_8) + x_4 x_6 (x_6 + 2 x_5 x_7 x_8))))),$\\
$\mu_3\mu_5\mu_0\mu_1\mu_4\mu_3\mu_2\mu_1\mu_5:\\
x\big[\alpha_0+2\alpha_1+2\alpha_2+2\alpha_{3}+\alpha_4+\alpha_5\big]=\frac{1}{x_0 x_1^2 x_2^2 x_3^2 x_4 x_5} (x_0^2 x_2^2 x_6 (x_2 + x_6) (x_2 x_4 + x_8) (x_3 + x_2 x_4 + x_8) + x_7 (x_1 +
x_7)(x_2^3 x_4^2 x_6+ (x_1 x_3 x_5 + x_6)^2 x_8 (x_3 + x_8) + x_2 (x_1 x_3 x_5 + x_6) (x_3 (x_4 x_6 + x_8) + x_8 (2 x_4 x_6 + x_8)) + x_2^2 x_4 (x_6 (x_4 x_6 +
2 x_8) + x_3(x_6 + x_1 x_5 x_8)))+ x_0 x_2 (x_1^2 x_3 x_5 x_6 x_8 (x_3 + x_2 x_4 + x_8) + 2 x_6 (x_2 + x_6) x_7 (x_2 x_4 + x_8) (x_3 + x_2 x_4 + x_8)+
x_1(x_2^3 x_4^2 x_6 +x_6 (x_6 + 2 x_3 x_5 x_7) x_8 (x_3 + x_8) + x_2 (x_6 + x_3 x_5 x_7) (x_3 (x_4 x_6 + x_8) + x_8 (2 x_4 x_6 + x_8)) + x_2^2 x_4 (x_6(x_4 x_6 +
2 x_8) +x_3 (x_6 + x_5 x_7 x_8))))),$\\
$\mu_5\mu_2\mu_1\mu_0\mu_4\mu_2\mu_4\mu_0\mu_3\mu_5\mu_1\mu_2:\\
x\big[\alpha_0+2\alpha_1+3\alpha_2+2\alpha_{3}+\alpha_4+\alpha_5\big]=\frac{1}{x_0 x_1^2 x_2^3 x_3^2 x_4 x_5} (x_0^2 x_2^2 x_6^2 (x_3 + x_2 x_4 + x_8) (x_2^2 x_4 + (x_1 x_3 x_5 + x_6) x_8 + x_2(x_4 x_6 +
x_8))+ (x_2 + x_1 x_3 x_5 + x_6) x_7 (x_1 + x_7) (x_1^2 x_3^2 x_5^2 x_8 (x_3 + x_8) + x_6^2 (x_2 x_4 + x_8) (x_3 + x_2 x_4 + x_8) +x_1 x_3 x_5 x_6 (2 x_8 (x_3 +
x_8) + x_2 x_4 (x_3 + 2 x_8)))+ x_0 x_2 x_6 (x_1^3 x_3^2 x_5^2 x_8 (x_3 + x_8) + 2 x_6 (x_2 + x_6) x_7 (x_2 x_4 + x_8) (x_3 +x_2 x_4 + x_8) +x_1^2 x_3 x_5
(x_2^2 x_4 x_8 +2 (x_6 + x_3 x_5 x_7) x_8 (x_3 + x_8)+ x_2 (x_8 (2 x_4 x_6 + x_8) + x_3 (x_4 x_6 + x_8 + x_4 x_5 x_7 x_8)))
+x_1 (x_2^3 x_4^2 x_6 + x_6(x_6 +
4 x_3 x_5 x_7) x_8(x_3 + x_8) + x_2^2 x_4 (x_6 + x_3 x_5 x_7) (x_3 + x_4 x_6 + 2 x_8)+ x_2 (2 x_3^2 x_5 x_7 (x_4 x_6 + x_8)+x_6 x_8 (2 x_4 x_6 + x_8) + x_3
 (x_8(x_6 + 2 x_5 x_7 x_8) +x_4 x_6 (x_6 + 5 x_5 x_7 x_8)))))),$\\
$\mu_3\mu_2\mu_0\mu_1\mu_5\mu_4\mu_3\mu_2\mu_1:\\
 x\big[\alpha_{0}+2\alpha_{1}+3\alpha_{2}+2\alpha_{3}+\alpha_{4}+2\alpha_{5}\big]=\frac{1}{x_0 x_1^2 x_2^3 x_3^2 x_4 x_5^2}(x_0^2 x_2^2 x_6 (x_2 + x_6) (x_3 + x_2 x_4 + x_8) (x_2^2 x_4 + (x_1 x_3 x_5 + x_6)x_8+
 x_2 (x_4 x_6 + x_8)) + (x_2 + x_1 x_3 x_5 + x_6) x_7 (x_1 + x_7) (x_2^3 x_4^2 x_6 + (x_1 x_3 x_5 + x_6)^2 x_8 (x_3 + x_8) + x_2 (x_1 x_3 x_5 +x_6)(x_3(x_4 x_6 +
x_8)+ x_8 (2 x_4 x_6 + x_8)) + x_2^2 x_4 (x_6 (x_4 x_6 + 2 x_8) + x_3 (x_6 + x_1 x_5 x_8))) + x_0 x_2 (x_1^3 x_3^2 x_5^2 x_6 x_8 (x_3 + x_8) +2 x_6(x_2 +x_6)^2x_7(x_2 x_4 + x_8)(x_3 + x_2 x_4 + x_8) + x_1^2 x_3 x_5 (2 x_6 (x_6 + x_3 x_5 x_7) x_8 (x_3 + x_8) + x_2^2 x_4 x_6 (x_3 + 2 x_8) + x_2(x_3^2 x_5 x_7 x_8 +2 x_6 x_8
(x_4 x_6 + x_8)+ x_3 (x_4 x_6(x_6+ x_5 x_7 x_8) + x_8 (2 x_6 + x_5 x_7 x_8)))) +x_1 (x_2 + x_6) (x_2^3 x_4^2 x_6 + x_6 (x_6+ 4 x_3 x_5 x_7)x_8 (x_3 + x_8) + x_2^2 x_4 (x_6 (x_4 x_6 + 2 x_8)+ x_3 (x_6 +x_4 x_5 x_6 x_7 + x_5 x_7 x_8)) + x_2 (x_3^2 x_5 x_7 (2 x_4 x_6 + x_8) + x_6 x_8 (2 x_4 x_6 + x_8) +x_3 (x_8 (x_6 + x_5 x_7 x_8) + x_4 x_6 (x_6 + 5 x_5 x_7 x_8)))))).$

\section{Compatible Roots}\label{compatible}

For $\a\in \Phi_{\geq-1}$, denote $C_\a$ be the set of almost positive roots which are compatible with $\a$.
We list them in the following.

\noindent $C_{-\a_{i}}=\{-\a_{i}\} \cup \{\beta\in \Phi_{\geq-1} \mid [\beta : \a_i]=0\}, \ (i\in \widetilde{I}),$\\
$C_{\a_0}=\{-\a_{i} \mid i\in \widetilde{I},i\neq0\} \cup \{\a_0, \ \a_2,\  \a_3,\  \a_4,\  \a_5,\  \a_2+\a_3,\  \a_2+\a_5,\  \a_3+\a_4,\  \a_0+\a_1,\  \a_2+\a_3+\a_4,\  \a_0+
\a_1+\a_2,\a_2+\a_3+\a_5,\ \a_0+\a_1+\a_2+\a_3,\  \a_0+\a_1+\a_2+\a_5,\  \a_2+\a_3+\a_4+\a_5,\  \a_0+\a_1+\a_2+\a_3+
\a_5,\  \a_0+\a_1+2\a_2+\a_3+\a_5,\ \a_0+\a_1+\a_2+\a_3+\a_4,\  \a_0+\a_1+\a_2+\a_3+\a_4+\a_5,\  \a_0+\a_1+2\a_2+2\a_3+
\a_4+\a_5,\  \a_0+\a_1+2\a_2+\a_3+\a_4+\a_5 \}$,\\
$C_{\a_1}=\{-\a_{i} \mid i\in \widetilde{I},i\neq1\} \cup \{\a_{1},\  \a_3,\  \a_4,\  \a_5,\  \a_0+\a_1,\  \a_1+\a_2,\  \a_3+\a_4,\  \a_1+\a_2+\a_3,\  \a_1+\a_2+\a_5,\  \a_1+
\a_2+\a_3+\a_4,\ \a_1+\a_2+\a_3+\a_5,\ \a_1+2\a_2+\a_3+\a_4+\a_5 \}$,\\
$C_{\a_2}=\{-\a_{i} \mid i\in \widetilde{I},i\neq2\} \cup \{\a_0,\ \a_2,\ \a_4,\ \a_2+\a_3,\  \a_2+\a_5,\ \a_2+\a_3+\a_4,\  \a_0+\a_1+\a_2\}$,\\
$C_{\a_5}=\{-\a_{i} \mid i\in \widetilde{I},i\neq5\} \cup \{\a_0,\ \a_1,\ \a_3,\ \a_4,\ \a_5,\ \a_0+\a_1,\  \a_2+\a_5,\ \a_3+\a_4,\ \a_1+\a_2+\a_5,\  \a_2+\a_3+
\a_5,\  \a_0+\a_1+\a_2+\a_5,\ \a_2+\a_3+\a_4+\a_5,\  \a_1+\a_2+\a_3+\a_5,\ \a_0+\a_1+\a_2+\a_3+\a_5,\ \a_1+\a_2+\a_3+
\a_4+\a_5,\ \a_0+\a_1+\a_2+\a_3+\a_4+\a_5\}$,\\
$C_{\a_1+\a_2}=\{-\a_{i} \mid i\in \widetilde{I},i\neq1,2\} \cup \{\a_0,\ \a_1,\ \a_2,\ \a_4,\  \a_0+\a_1,\  \a_2+\a_5,\ \a_2+\a_3,\ \a_1+\a_2,\ \a_1+\a_2+\a_3,\  \a_2+
\a_3+\a_4,\ \a_0+\a_1+\a_2,\ \a_1+\a_2+\a_5,\  \a_1+\a_2+\a_3+\a_4,\ \a_0+\a_1+\a_2+\a_3,\ \a_0+\a_1+\a_2+\a_5,\ \a_1+2\a_2+
a_3+\a_5,\ \a_0+\a_1+2\a_2+\a_3+\a_5,\ \a_0+2\a_1+2\a_2+\a_3+\a_5,\ \a_1+2\a_2+\a_3+\a_4+\a_5,\ \a_0+\a_1+\a_2+
\a_3+\a_4,\ \a_0+2\a_1+2\a_2+\a_3+\a_4+\a_5,\ \a_0+\a_1+2\a_2+\a_3+\a_4+\a_5,\ \a_0+2\a_1+3\a_2+2\a_3+\a_4+\a_5 \}$,\\
$C_{\a_2+\a_5}=\{-\a_{i} \mid i\in \widetilde{I},i\neq2,5\} \cup \{\a_0,\ \a_2,\ \a_4,\ \a_5,\  \a_2+\a_3,\ \a_1+\a_2,\ \a_2+\a_5,\  \a_1+\a_2+\a_5,\  \a_2+\a_3+
\a_4,\ \a_0+\a_1+\a_2,\ \a_2+\a_3+\a_5,\ \a_0+\a_1+\a_2+\a_5,\  \a_2+\a_3+\a_4+\a_5,\ \a_1+2\a_2+\a_3+\a_5,\ \a_0+\a_1+
2\a_2+\a_3+\a_5,\  \a_1+2\a_2+\a_3+\a_4+\a_5,\ \a_0+\a_1+2\a_2+\a_3+\a_4+\a_5\}$,\\
$C_{\a_3+\a_4}=\{-\a_{i} \mid i\in \widetilde{I},i\neq3,4\} \cup \{\a_0,\ \a_1,\ \a_3,\ \a_4,\ \a_5,\  \a_0+\a_1,\  \a_3+\a_4,\ \a_2+\a_3,\  \a_1+\a_2+\a_3,\  \a_2+\a_3+
\a_4,\ \a_2+\a_3+\a_5,\ \a_0+\a_1+\a_2+\a_3,\ \a_1+\a_2+\a_3+\a_4,\ \a_2+\a_3+\a_4+\a_5,\ \a_1+\a_2+\a_3+\a_5,\ \a_0+\a_1+
\a_2+\a_3+\a_5,\ \a_1+2\a_2+2\a_3+\a_4+\a_5,\ \a_0+\a_1+\a_2+\a_3+\a_4,\ \a_1+\a_2+\a_3+\a_4+\a_5,\
\a_0+\a_1+\a_2+
\a_3+\a_4+\a_5,\ \a_0+2\a_1+2\a_2+2\a_3+\a_4+\a_5,\ \a_0+\a_1+2\a_2+2\a_3+\a_4+\a_5\}$,\\
$C_{\a_1+\a_2+\a_3}=\{-\a_0,\ -\a_4,\ -\a_5\} \cup \{ \a_1,\ \a_3,\  \a_0+\a_1,\  \a_1+\a_2,\ \a_3+\a_4,\ \a_2+\a_3,\  \a_1+\a_2+\a_3,\  \a_1+\a_2+
\a_5,\ \a_2+\a_3+\a_5,\ \a_0+\a_1+\a_2+\a_3,\ \a_1+\a_2+\a_3+\a_4,\ \a_1+2\a_2+\a_3+\a_5,\ \a_1+\a_2+\a_3+\a_5,\ \a_0+
\a_1+\a_2+\a_3+\a_5,\ \a_1+\a_2+\a_3+\a_4+\a_5,\ \a_0+2\a_1+2\a_2+\a_3+\a_5,\ \a_1+2\a_2+2\a_3+\a_4+\a_5,\ \a_0+
2\a_1+2\a_2+2\a_3+\a_4+\a_5\}$,\\
$C_{\a_0+\a_1+\a_2}=\{-\a_3,\ -\a_4,\ -\a_5\} \cup \{ \a_0,\ \a_2,\  \a_4,\ \a_0+\a_1,\  \a_1+\a_2,\ \a_2+\a_5,\ \a_2+\a_3,\ \a_0+\a_1+\a_2,\  \a_2+
\a_3+\a_4,\  \a_0+\a_1+\a_2+\a_5,\ \a_0+\a_1+2\a_2+\a_3+\a_5,\
\a_0+\a_1+\a_2+\a_3+\a_4,\ \a_0+\a_1+2\a_2+\a_3+\a_4+
\a_5\}$,\\
$C_{\a_1+\a_2+\a_5}=\{-\a_0,\ -\a_3,\ -\a_4\} \cup \{ \a_1,\ \a_4,\  \a_5,\ \a_0+\a_1,\  \a_1+\a_2,\ \a_2+\a_5,\  \a_1+\a_2+\a_3,\ \a_2+\a_3+
\a_5,\ \a_1+\a_2+\a_5,\ \a_0+\a_1+\a_2+\a_5,\ \a_1+\a_2+\a_3+\a_4,\ \a_2+\a_3+\a_4+\a_5,\ \a_1+2\a_2+\a_3+\a_5,\ \a_1+
\a_2+\a_3+\a_5,\ \a_0+\a_1+\a_2+\a_3+\a_5,\ \a_1+\a_2+\a_3+\a_4+\a_5,\ \a_0+2\a_1+2\a_2+\a_3+\a_5,\ \a_1+2\a_2+\a_3+
\a_4+\a_5,\ \a_1+2\a_2+2\a_3+\a_4+\a_5,\ \a_0+\a_1+\a_2+\a_3+\a_4+\a_5,\ \a_0+2\a_1+2\a_2+2\a_3+\a_4+\a_5,\
\a_0+
2\a_1+2\a_2+\a_3+\a_4+\a_5,\ \a_0+2\a_1+3\a_2+2\a_3+\a_4+2\a_5\}$,\\
$C_{\a_1+\a_2+\a_3+\a_4}=\{-\a_0,\ -\a_5\} \cup \{ \a_1,\ \a_4,\  \a_0+\a_1,\  \a_1+\a_2,\ \a_2+\a_3,\  \a_3+\a_4,\  \a_1+\a_2+\a_3,\ \a_2+\a_3+
\a_4,\ \a_1+\a_2+\a_5,\ \a_1+\a_2+\a_3+\a_4,\ \a_0+\a_1+\a_2+\a_3,\ \a_2+\a_3+\a_4+\a_5,\ \a_1+2\a_2+\a_3+\a_5,\ \a_1+\a_2+
\a_3+\a_4+\a_5,\ \a_0+2\a_1+2\a_2+\a_3+\a_5,\ \a_1+2\a_2+\a_3+\a_4+\a_5,\ \a_1+2\a_2+2\a_3+\a_4+\a_5,\ \a_0+\a_1+
\a_2+\a_3+\a_4,\
\a_0+\a_1+\a_2+\a_3+\a_4+\a_5,\ \a_0+2\a_1+2\a_2+2\a_3+\a_4+\a_5,\ \a_0+2\a_1+2\a_2+\a_3+\a_4+
\a_5,\ \a_0+\a_1+2\a_2+2\a_3+\a_4+\a_5,\ \a_0+2\a_1+3\a_2+2\a_3+\a_4+\a_5,\ \a_0+2\a_1+3\a_2+2\a_3+\a_4+2\a_5\}$,\\
$C_{\a_0+\a_1+\a_2+\a_5}=\{-\a_3,\ -\a_4\} \cup \{ \a_0,\ \a_4,\ \a_5,\  \a_0+\a_1,\  \a_1+\a_2,\ \a_2+\a_5,\  \a_0+\a_1+\a_2,\ \a_2+\a_3+\a_5,\ \a_1+
\a_2+\a_5,\ \a_0+\a_1+\a_2+\a_5,\ \a_0+\a_1+\a_2+\a_3,\ \a_2+\a_3+\a_4+\a_5,\ \a_1+2\a_2+\a_3+\a_5,\ \a_0+\a_1+\a_2+
\a_3+\a_5,\ \a_0+2\a_1+2\a_2+\a_3+\a_5,\ \a_0+\a_1+2\a_2+\a_3+\a_5,\ \a_1+2\a_2+\a_3+\a_4+\a_5,\ \a_0+\a_1+\a_2
+\a_3+\a_4,\
\a_0+\a_1+\a_2+\a_3+\a_4+\a_5,\ \a_0+\a_1+2\a_2+\a_3+\a_4+\a_5,\ \a_0+2\a_1+2\a_2+\a_3+\a_4+\a_5,\
\a_0+\a_1+2\a_2+2\a_3+\a_4+\a_5,\ \a_0+2\a_1+3\a_2+2\a_3+\a_4+\a_5,\ \a_0+2\a_1+3\a_2+2\a_3+\a_4+2\a_5\}$,\\
$C_{\a_1+\a_2+\a_3+\a_5}=\{-\a_0,\ -\a_4\} \cup \{ \a_1,\ \a_3,\ \a_5,\  \a_0+\a_1,\  \a_3+\a_4,\  \a_1+\a_2+\a_3,\ \a_2+\a_3+\a_5,\ \a_1+\a_2+\a_5,\ \a_1+\a_2+\a_3+\a_5,\ \a_0+\a_1+\a_2+\a_3+\a_5,\ \a_1+\a_2+\a_3+\a_4+\a_5\}$,\\
$C_{\a_1+2\a_2+\a_3+\a_5}=\{-\a_0,\ -\a_4\} \cup \{\a_1+\a_2,\ \a_2+\a_5,\ \a_2+\a_3,\ \a_1+\a_2+\a_3,\ \a_1+2\a_2+\a_3+\a_5,\ \a_1+\a_2+\a_3+\a_4,\ \a_1+\a_2+\a_5,\ \a_2+\a_3+\a_5,\ \a_0+\a_1+\a_2+\a_3,\ \a_0+\a_1+\a_2+\a_5,\ \a_2+\a_3+\a_4+\a_5,\ \a_0+2\a_1+2\a_2+\a_3+\a_5,\ \a_0+\a_1+2\a_2+\a_3+\a_5,\ \a_1+2\a_2+\a_3+\a_4+\a_5,\ \a_1+2\a_2+2\a_3+\a_4+\a_5,\  \a_0+2\a_1+2\a_2+\a_3+\a_4+\a_5,\ \a_0+\a_1+2\a_2+2\a_3+\a_4+\a_5,\ \a_0+2\a_1+3\a_2+2\a_3+\a_4+\a_5,\ \a_0+2\a_1+3\a_2+2\a_3+\a_4+2\a_5\}$,\\
$C_{\a_1+\a_2+\a_3+\a_4+\a_5}=\{-\a_0\} \cup \{\a_1+\a_2,\ \a_2+\a_5,\ \a_2+\a_3,\ \a_1+\a_2+\a_3,\ \a_2+\a_3+\a_5,\ \a_1+\a_2+\a_5,\ \a_1+\a_2+\a_3+\a_4,\ \a_2+\a_3+\a_4+\a_5,\ \a_1+\a_2+\a_3+\a_5,\ \a_1+\a_2+\a_3+\a_4+\a_5,\ \a_0+\a_1+\a_2+\a_3+\a_5,\ \a_0+\a_1+\a_2+\a_3+\a_4+\a_5,\ \a_1+2\a_2+2\a_3+\a_4+\a_5,\ \a_0+2\a_1+2\a_2+2\a_3+\a_4+\a_5\},\\$
$C_{\a_0+2\a_1+2\a_2+\a_3+\a_5}=\{-\a_4\} \cup \{\a_0+\a_1,\ \a_1+\a_2,\ \a_1+\a_2+\a_3,\ \a_2+\a_3+\a_5,\ \a_1+\a_2+\a_5,\ \a_0+\a_1+\a_2+\a_3,\ \a_0+\a_1+\a_2+\a_5,\ \a_1+\a_2+\a_3+\a_4,\ \a_1+2\a_2+\a_3+\a_5,\ \a_0+\a_1+\a_2+\a_3+\a_5,\ \a_0+2\a_1+2\a_2+\a_3+\a_5,\ \a_1+2\a_2+2\a_3+\a_4+\a_5,\ \a_0+\a_1+\a_2+\a_3+\a_4+\a_5,\  \a_0+2\a_1+2\a_2+2\a_3+\a_4+\a_5,\ \a_0+2\a_1+2\a_2+\a_3+\a_4+\a_5,\  \a_0+2\a_1+3\a_2+2\a_3+\a_4+2\a_5\}$,\\
$C_{\a_0+\a_1+2\a_2+\a_3+\a_5}=\{-\a_4\} \cup \{\a_0,\ \a_1+\a_2,\ \a_2+\a_3,\ \a_2+\a_5,\ \a_0+\a_1+\a_2,\ \a_2+\a_3+\a_5,\  \a_0+\a_1+\a_2+\a_3,\ \a_0+\a_1+\a_2+\a_5,\ \a_2+\a_3+\a_4+\a_5,\  \a_1+2\a_2+\a_3+\a_5,\ \a_0+\a_1+2\a_2+\a_3+\a_5,\ \a_1+2\a_2+\a_3+\a_4+\a_5,\ \a_0+\a_1+\a_2+\a_3+\a_4,\ \a_0+\a_1+2\a_2+\a_3+\a_4+\a_5,\  \a_0+\a_1+2\a_2+2\a_3+\a_4+\a_5,\ \a_0+2\a_1+3\a_2+2\a_3+\a_4+\a_5\}$,\\
$C_{\a_0+\a_1+\a_2+\a_3+\a_4}=\{-\a_5\} \cup \{\a_0,\ \a_4,\ \a_1+\a_2,\ \a_2+\a_3,\ \a_3+\a_4,\ \a_0+\a_1,\ \a_0+\a_1+\a_2,\  \a_1+\a_2+\a_3+\a_4,\ \a_0+\a_1+\a_2+\a_3,\ \a_0+\a_1+\a_2+\a_5,\ \a_2+\a_3+\a_4+\a_5,\ \a_0+\a_1+\a_2+\a_3+\a_4,\ \a_0+\a_1+2\a_2+\a_3+\a_5,\ \a_2+\a_3+\a_4,\ \a_1+2\a_2+\a_3+\a_4+\a_5,\ \a_0+\a_1+\a_2+\a_3+\a_4+\a_5,\ \a_0+2\a_1+2\a_2+\a_3+\a_4+\a_5,\ \a_0+\a_1+2\a_2+2\a_3+\a_4+\a_5,\ \a_0+\a_1+2\a_2+\a_3+\a_4+\a_5,\ \a_0+2\a_1+3\a_2+2\a_3+\a_4+\a_5\}$,\\
$C_{\a_0+2\a_1+2\a_2+\a_3+\a_4+\a_5}=\{\a_4,\ \a_1+\a_2,\ \a_0+\a_1,\ \a_1+\a_2+\a_5,\  \a_1+\a_2+\a_3+\a_4,\ \a_0+\a_1+\a_2+\a_3,\  \a_0+\a_1+\a_2+\a_5,\ \a_2+\a_3+\a_4+\a_5,\ \a_1+2\a_2+\a_3+\a_5,\ \a_0+2\a_1+2\a_2+\a_3+\a_5,\ \a_1+2\a_2+\a_3+\a_4+\a_5,\ \a_0+\a_1+\a_2+\a_3+\a_4+\a_5,\ \a_0+\a_1+2\a_2+2\a_3+\a_4+\a_5,\ \a_0+2\a_1+2\a_2+\a_3+\a_4+\a_5,\ \a_0+
\a_1+\a_2+\a_3+\a_4,\ \a_0+2\a_1+3\a_2+2\a_3+\a_4+2\a_5,\ \a_0+2\a_1+3\a_2+2\a_3+\a_4+\a_5\}$,\\
$C_{\a_0+2\a_1+2\a_2+2\a_3+\a_4+\a_5}=\{\a_3+\a_4,\ \a_0+\a_1,\ \a_1+\a_2+\a_3,\  \a_1+\a_2+\a_5,\  \a_2+\a_3+\a_5,\ \a_1+\a_2+\a_3+\a_4,\ \a_0+\a_1+\a_2+\a_3,\ \a_1+\a_2+\a_3+\a_4+\a_5,\ \a_0+\a_1+\a_2+\a_3+\a_5,\ \a_0+2\a_1+2\a_2+\a_3+\a_5,\ \a_1+
2\a_2+2\a_3+\a_4+\a_5,\  \a_0+\a_1+\a_2+\a_3+\a_4+\a_5,\ \a_0+2\a_1+2\a_2+2\a_3+\a_4+\a_5\}$,\\
$C_{\a_0+\a_1+\a_2+\a_3+\a_4+\a_5}=\{\a_0,\ \a_4,\ \a_5,\ \a_3+\a_4,\ \a_0+\a_1,\ \a_1+\a_2+\a_5,\  \a_2+\a_3+\a_5,\ \a_1+\a_2+\a_3+\a_4,\ \a_0+\a_1+\a_2+\a_3,\ \a_0+\a_1+\a_2+\a_5,\ \a_2+\a_3+\a_4+\a_5,\ \a_1+\a_2+\a_3+\a_4+\a_5,\ \a_0+\a_1+\a_2+\a_3+\a_5,\ \a_0+2\a_1+2\a_2+\a_3+\a_5,\ \a_1+2\a_2+2\a_3+\a_4+\a_5,\ \a_0+\a_1+\a_2+\a_3+\a_4,\  \a_0+2\a_1+2\a_2+\a_3+\a_4+\a_5,\ \a_0+\a_1+\a_2+\a_3+\a_4+\a_5,\ \a_0+2\a_1+2\a_2+2\a_3+\a_4+\a_5,\  \a_0+\a_1+2\a_2+2\a_3+\a_4+\a_5,\
\a_0+2\a_1+3\a_2+2\a_3+\a_4+2\a_5\}$,\\
$C_{\a_0+2\a_1+3\a_2+2\a_3+\a_4+2\a_5}=\{\a_1+\a_2+\a_5,\  \a_2+\a_3+\a_5,\ \a_1+\a_2+\a_3+\a_4,\ \a_0+\a_1+\a_2+\a_3,\ \a_0+\a_1+\a_2+\a_5,\ \a_1+2\a_2+\a_3+\a_5,\ \a_0+2\a_1+2\a_2+\a_3+\a_5,\ \a_1+2\a_2+2\a_3+\a_4+\a_5,\ \a_0+2\a_1+2\a_2+\a_3+\a_4+\a_5,\ \a_2+\a_3+\a_4+\a_5,\ \a_0+\a_1+2\a_2+2\a_3+\a_4+\a_5,\ \a_0+\a_1+\a_2+\a_3+\a_4+\a_5,\ \a_0+2\a_1+3\a_2+2\a_3+\a_4+2\a_5\}$,\\
$C_{\a_0+2\a_1+3\a_2+2\a_3+\a_4+\a_5}=\{\a_1+\a_2,\ \a_2+\a_3,\  \a_2+\a_3+\a_4+\a_5,\ \a_1+\a_2+\a_3+\a_4,\ \a_0+\a_1+\a_2+\a_3,\ \a_0+\a_1+\a_2+\a_5,\ \a_1+2\a_2+\a_3+\a_5,\ \a_0+\a_1+2\a_2+\a_3+\a_5,\ \a_1+2\a_2+\a_3+\a_4+\a_5,\ \a_0+\a_1+\a_2+\a_3+\a_4,\ \a_0+\a_1+2\a_2+2\a_3+\a_4+\a_5,\ \a_0+2\a_1+2\a_2+\a_3+\a_4+\a_5,\ \a_0+2\a_1+3\a_2+2\a_3+\a_4+\a_5\}$,\\
$C_{\a_0+\a_1+2\a_2+\a_3+\a_4+\a_5}=\{\a_0,\ \a_4,\ \a_1+\a_2,\ \a_2+\a_3,\  \a_2+\a_5,\ \a_2+\a_3+\a_4,\ \a_0+\a_1+\a_2,\ \a_2+\a_3+\a_4+\a_5,\ \a_0+\a_1+\a_2+\a_5,\ \a_0+\a_1+2\a_2+\a_3+\a_5,\ \a_1+2\a_2+\a_3+\a_4+\a_5,\ \a_0+\a_1+\a_2+\a_3+\a_4,\ \a_0+\a_1+2\a_2+\a_3+\a_4+\a_5\}$.

\end{appendix}

\end{document}